\documentclass[unknownkeysallowed]{amsart}
\usepackage{etex}
\usepackage[style=alphabetic,doi=false,isbn=false,url=false,maxbibnames=99,backend=bibtex]{biblatex}
\usepackage{fixltx2e}
\DeclareFieldFormat[article,incollection,unpublished]{title}{#1}\renewbibmacro{in:}{\ifentrytype{article}{}{\printtext{\bibstring{in}\intitlepunct}}}
\bibliography{derivedpatching}

\newcommand{\Tor}{\mathrm{Tor}}
\DeclareMathOperator{\cInd}{c-Ind}

\usepackage{amssymb,amsmath,amsfonts,amsthm,epsfig,amscd}
\usepackage{stmaryrd}
\usepackage[all,cmtip,poly]{xy}
\usepackage{color}

\newcommand{\Rbar}{\overline{R}}
\newcommand{\Rdef}{R^{\operatorname{def}}}
\newcommand{\loc}{\operatorname{loc}}
\newcommand{\ad}{\operatorname{ad}}
\newcommand{\diag}{\operatorname{diag}}
\newcommand{\tr}{\operatorname{tr}}

\newcommand{\CNL}{\operatorname{CNL}}
\newcommand{\Sets}{\operatorname{Sets}}

\newcommand{\gF}{{\mathfrak{F}}}

\newcommand{\To}{\longrightarrow}
\newcommand{\isoto}{\stackrel{\sim}{\To}}

\newcommand{\wotimes}{\widehat{\otimes}}

\newcommand{\Rbarinfty}{{\bar{R}_\infty}}

 \newcommand{\sigmabar   }{\overline{\sigma}}

\newcommand{\id}{\operatorname{id}}

\newcommand{\RHom}{{\mathrm{RHom}}}

\newtheorem{lemma}[subsubsection]{Lemma}
\newtheorem{lem}[subsubsection]{Lemma}

\newtheorem{ithm}{Theorem}

\newtheorem{cor}[subsubsection]{Corollary}
\newtheorem{conj}[subsubsection]{Conjecture}

\newtheorem{prop}[subsubsection]{Proposition}

\newtheorem{alemma}[subsection]{Lemma}
\newtheorem{alem}[subsection]{Lemma}
\newtheorem{adefn}[subsection]{Definition}

\newtheorem{acor}[subsection]{Corollary}
\newtheorem{aprop}[subsection]{Proposition}

\theoremstyle{definition}

\newtheorem{defn}[subsubsection]{Definition}

\theoremstyle{remark}
\newtheorem{remark}[subsubsection]{Remark}
\newtheorem{rem}[subsubsection]{Remark}
\newtheorem{aremark}[subsection]{Remark}

\newtheorem{hypothesis}[subsubsection]{Hypothesis}

\def\numequation{\addtocounter{subsubsection}{1}\begin{equation}}
\def\nummultline{\addtocounter{subsubsection}{1}\begin{multline}}
\def\anumequation{\addtocounter{subsection}{1}\begin{equation}}
\def\anummultline{\addtocounter{subsection}{1}\begin{multline}}

\renewcommand{\theequation}{\arabic{section}.\arabic{subsection}.\arabic{subsubsection}}

\newif\iffinalrun
\iffinalrun
\else
 \fi

\iffinalrun
  \newcommand{\need}[1]{}
  \newcommand{\mar}[1]{}
\else
  \newcommand{\need}[1]{{\tiny *** #1}}
  \newcommand{\mar}[1]{\marginpar{\raggedright\tiny IUTT #1}}\fi

\newcommand{\A}{\AA}

\newcommand{\F}{\FF}

\newcommand{\Q}{\QQ}
\newcommand{\R}{\RR}
\newcommand{\Z}{\ZZ}

\newcommand{\m}{\frakm}

\renewcommand{\AA}{{\mathbb A}}

\newcommand{\FF}{{\mathbb F}}

\newcommand{\LL}{{\mathbb L}}

\newcommand{\NN}{{\mathbb N}}

\newcommand{\QQ}{{\mathbb Q}}
\newcommand{\RR}{{\mathbb R}}

\newcommand{\TT}{{\mathbb T}}

\newcommand{\ZZ}{{\mathbb Z}}

\newcommand{\ba}{\ensuremath{\mathbf{a}}}
\newcommand{\bb}{\ensuremath{\mathbf{b}}}

\renewcommand{\bf}{\ensuremath{\mathbf{f}}}

\newcommand{\cC}{{\mathcal C}}
\newcommand{\cD}{{\mathcal D}}

\newcommand{\cF}{{\mathcal F}}
\newcommand{\cG}{{\mathcal G}}
\newcommand{\cH}{{\mathcal H}}

\newcommand{\cJ}{{\mathcal J}}

\newcommand{\cO}{{\mathcal O}}

\newcommand{\cS}{{\mathcal S}}
\newcommand{\cT}{{\mathcal T}}

\newcommand{\frakm}{\mathfrak{m}}

\newcommand{\frakp}{\mathfrak{p}}

\newcommand{\Qbar}{\overline{\Q}}

\newcommand{\Fp}{\F_p}

\newcommand{\Zp}{\Z_p}

\newcommand{\Qp}{\Q_p}

\newcommand{\Qpbar}{\Qbar_p}

\DeclareMathOperator{\coker}{coker}

\DeclareMathOperator{\End}{End}
\DeclareMathOperator{\Ext}{Ext}

\DeclareMathOperator{\GL}{GL}

\DeclareMathOperator{\Hom}{Hom}
\DeclareMathOperator{\im}{im}

\DeclareMathOperator{\Mod}{Mod}

\DeclareMathOperator{\PGL}{PGL}

\DeclareMathOperator{\PSL}{PSL}
\DeclareMathOperator{\rank}{rank}
\DeclareMathOperator{\SL}{SL}

\DeclareMathOperator{\Spec}{Spec}

\DeclareMathOperator{\Sym}{Sym}

\newcommand{\Frob}{\mathrm{Frob}}

\newcommand{\rhobar}{\overline{\rho}}

 \newcommand{\dirlim}{\varinjlim}
\newcommand*{\invlim}{\varprojlim}                               
\newcommand{\onto}{\twoheadrightarrow}

\newcommand{\Art}{{\operatorname{Art}}}

\newcommand{\epsilonbar}{\overline{\epsilon}}

\newcommand{\Res}{\operatorname{Res}}

\DeclareMathOperator{\pd}{pd}
\DeclareMathOperator{\dph}{depth}

\usepackage[bookmarksopen,bookmarksdepth=2]{hyperref}

\begin{document}
\title{Patching and the completed homology of locally symmetric spaces}

\author[T. Gee]{Toby Gee} \email{toby.gee@imperial.ac.uk} \address{Department of
  Mathematics, Imperial College London,
  London SW7 2AZ, UK}

\author[J. Newton]{James Newton} \email{j.newton@kcl.ac.uk} \address{Department of
  Mathematics, King's College London,
  London WC2R 2LS, UK}

\thanks{The first author was
  supported in part by a Leverhulme Prize, EPSRC grant EP/L025485/1, Marie Curie Career
  Integration Grant 303605, and by
  ERC Starting Grant 306326. The second author was supported by ERC Starting Grant 306326.}

\maketitle
\paragraph{\textbf{Abstract}}
        Under an assumption on the existence of $p$-adic Galois
representations, we carry out Taylor--Wiles patching (in the derived
category) for the completed homology of the locally symmetric spaces
associated to $\GL_n$ over a number field. We use our construction,
and some new results in non-commutative algebra, to
show that standard conjectures on completed homology imply `big $R$ =
big~$\TT$' theorems in situations where one cannot hope to appeal to
the Zariski density of classical points (in contrast to all previous
results of this kind). In the case that $n=2$
and~$p$ splits completely in the number field, we relate our construction to the $p$-adic local Langlands
correspondence for~$\GL_2(\Qp)$.

\setcounter{tocdepth}{1}
\tableofcontents
\section{Introduction}In this paper we give a common generalisation of two recent
extensions of the Taylor--Wiles patching method, namely the
extension in~\cite{1207.4224} to cases where it is necessary to patch chain complexes rather than homology groups,
and the idea of patching completed homology explained
in~\cite{CEGGPSBreuilSchneider}. We begin by
explaining why this is a useful thing to do. Our main motivations come
from the $p$-adic
Langlands program, which is well understood for~$\GL_2/\Q$, but is
very mysterious beyond this case; and from the problem of proving
automorphy lifting theorems for $p$-adic automorphic forms (``big  $R = \TT$ theorems'') in situations
where classical automorphic forms are no longer dense (for example,
$\GL_n/\Q$ for any~$n>2$).

The  local $p$-adic Langlands correspondence for~$\GL_2(\Qp)$
has been established by completely local methods (see in
particular~\cite{MR2642409,paskunasimage}), and local-global
compatibility for~$\GL_2/\Q$ was established
in~\cite{emerton2010local} (which goes on to deduce many cases of the
Fontaine--Mazur conjecture). It has proved difficult to generalise the
local constructions for~$\GL_2/\Q$, and the
paper~\cite{CEGGPSBreuilSchneider} proposed instead (by analogy with
the original global proof of local class field theory) to construct a
candidate correspondence globally, by patching the completed homology
of unitary groups over CM fields.

This construction has the disadvantage that it seems to be very
difficult to prove that it is independent of the global situation, and
of the choices involved in Taylor--Wiles patching. However, in the
case of~$\GL_2(\Qp)$, the sequel~\cite{CEGGPSGL2} showed (without
using the results of~\cite{MR2642409,paskunasimage}) that the patching
construction is independent of global choices, and therefore uniquely
determines a local correspondence. 

It is natural to ask whether similar constructions can be carried out
for~$\GL_n$ over a number field~$F$. Until
recently it was believed that Taylor--Wiles patching only applied
to groups admitting discrete series (which would limit such a
construction to the case~$n=2$ and~$F$ totally real), but Calegari and
Geraghty showed in~\cite{1207.4224} that by patching chain complexes
rather than homology groups one can overcome this obstruction,
provided that one admits natural conjectures on the existence and
properties of Galois representations attached to torsion classes in (uncompleted)
homology. For a general~$F$ these conjectures are  open, but
for~$F$ totally real or CM the existence of the Galois representations
is known by~\cite{scholze-torsion}, and most of the necessary
properties are expected to be established in the near future (with the
possible exception of local-global compatibility at places
dividing~$p$, which we discuss further below).

The patching construction in~\cite{1207.4224} is sometimes a little ad
hoc, and it was refined in~\cite{1409.7007}, where the patching is
carried out in the derived category. The construction
of~\cite{CEGGPSBreuilSchneider} was improved upon in~\cite{scholze}, which uses
ultrafilters to significantly reduce the amount of bookkeeping needed
in the patching argument. We combine these two approaches, and
use ultrafilters to patch complexes in the derived category. In fact,
we take a different approach to~\cite{1409.7007}, by directly patching
complexes computing homology, rather than minimal resolutions of such complexes;
this has the advantage that our patched complex naturally has actions
of the Hecke algebras and $p$-adic analytic groups. The use of
ultrafilters streamlines this construction, and most of our constructions are natural, resulting in cleaner statements
and proofs. (We still make use of the existence of minimal resolutions
to show that our ultraproduct constructions are well behaved.)

To explain our results we
introduce some notation. Write  
$K_0=\prod_{v|p}\PGL_n(\cO_{F_v})$ and let $K_1$ denote a pro-$p$ Sylow subgroup of $K_0$. We consider locally symmetric spaces $X_{U}$ for $\PGL_n/F$, with level $U = U_pU^p \subset \PGL_n(\AA_F^\infty)$ where $U^p$ is some fixed tame level and $U_p$ is a compact open subgroup of $K_0$. Let~$\cO$ be the ring of integers in some
finite extension~$E/\Qp$, and write~$k$ for the residue field
of~$\cO$. We write~$\cO_\infty$ for a power series ring over~$\cO$
and~$R_\infty$ for a power series ring over the (completed) tensor
product of the local Galois deformation rings at the places~$v|p$
of~$F$. These power series rings are in some numbers of variables which
depend on the choice of Taylor--Wiles primes; these power series variables are unimportant for
the present discussion. For the purposes of this introduction, we will
also ignore the role of the local Galois deformation rings at places
$v\nmid p$ where our residual Galois representation is
ramified. 

The output of our patching
construction is a perfect chain complex $\widetilde{\cC}(\infty)$ of
$\cO_\infty[[K_0]]$-modules, equipped with an $\cO_\infty$-linear
action of $\prod_{v|p}\PGL_n(F_v)$ and an $\cO_\infty$-algebra homomorphism
 \[R_\infty
   \rightarrow \End_{D(\cO_\infty)}(\widetilde{\cC}(\infty))\]
 (where~$D(\cO_\infty)$ is the unbounded derived category
 of~$\cO_\infty$-modules).  The action of $R_\infty$ on
 $\widetilde{\cC}(\infty)$ commutes with the action of
 $\prod_{v|p}\PGL_n(F_v)$ (and with that of $\cO_\infty[[K_0]]$). Reducing the 
 complex $\widetilde{\cC}(\infty)$ modulo the ideal $\ba$ of $\cO_\infty$ generated 
 by the power series variables, we obtain a complex which computes the completed 
 homology groups \[\widetilde{H}_*(X_{U^p},\cO)_\m :=
   \invlim_{U_p}H_*(X_{U_pU^p},\cO)_\m\] localised at a non-Eisenstein
 maximal ideal~$\m$
 of a `big' Hecke algebra $\TT^S(U^p)$ which acts on completed homology.

Our first main result is to show that, assuming  a
vanishing conjecture of~\cite{1207.4224} (which says that homology
groups vanish outside of the expected range of
degrees~$[q_0,q_0+l_0]$ after localising at $\m$),
and a conjecture of~\cite{MR2905536} on
the codimension of completed homology, then the homology
of~$\widetilde{\cC}(\infty)$ vanishes outside of a single
degree~$q_0$, and $H_{q_0}(\widetilde{\cC}(\infty))$ is
Cohen--Macaulay over both~$\cO_\infty[[K_0]]$ and~$R_\infty[[K_0]]$ of
the expected projective dimensions. 

One novel feature of our work appears here: since we are working with finitely 
generated modules over the non-commutative algebras $\cO_\infty[[K_0]]$ and 
$R_\infty[[K_0]]$ we are forced to establish non-commutative 
analogues of the commutative algebra techniques which are applied in 
\cite{1207.4224}. The first crucial 
result is Lemma~\ref{lem:CG} (a generalisation of \cite[Lem.~6.2]{1207.4224}) 
which, as in \emph{op.~cit}~is used to establish vanishing of the homology of 
the patched complex outside degree $q_0$. The second is 
Corollary~\ref{cor:stayCM}, which is used to deduce the Cohen--Macaulay 
property for the patched module over $R_\infty[[K_0]]$ from the Cohen--Macaulay 
property over $\cO_\infty[[K_0]]$. 

If  $A$ is a ring and $M$ is
an $A$-module, then we write~$\pd_A(M)$ for the projective dimension
of $M$ over $A$, and $j_A(M)$ for its grade (also known as its
codimension; see Definition~\ref{def: gradedepth} and Remark~\ref{rem: grade 
and codimension}).
\begin{ithm}[Theorem~\ref{patchedCM}]\label{introCM}
Suppose that 
\begin{enumerate}
\item[(a)] ${H}_i(X_{U^pK_1},k)_\m = 0$ for $i$ outside the 
range $[q_0,q_0+l_0]$. 
\item[(b)]  
$j_{\cO[[K_0]]}\left(\bigoplus_{i\ge 
	0}\widetilde{H}_i(X_{U^p},\cO)_\m\right) \ge l_0$.\end{enumerate} 

Then \begin{enumerate}
\item $\widetilde{H}_i(X_{U^p},\cO)_\m = 0$ for $i \ne q_0$ and  
$\widetilde{H}_{q_0}(X_{U^p},\cO)_\m$ is a Cohen--Macaulay 
$\cO[[K_0]]$-module with
\[\mathrm{pd}_{\cO[[K_0]]}(\widetilde{H}_{q_0}(X_{U^p},\cO)_\m) = 
j_{\cO[[K_0]]}(\widetilde{H}_{q_0}(X_{U^p},\cO)_\m) = l_0.\] 
\item ${H}_i(\widetilde{\cC}(\infty)) = 0$ for $i \ne q_0$ and 
${H}_{q_0}(\widetilde{\cC}(\infty))$ is a Cohen--Macaulay 
$\cO_\infty[[K_0]]$-module with 	
\[\mathrm{pd}_{\cO_\infty[[K_0]]}\left({H}_{q_0}(\widetilde{\cC}(\infty))\right)
= 
j_{\cO_\infty[[K_0]]}\left({H}_{q_0}(\widetilde{\cC}(\infty))\right) = 
l_0.\] 
\item  ${H}_{q_0}(\widetilde{\cC}(\infty))$ is a Cohen--Macaulay 
$R_\infty[[K_0]]$-module with 	
\[\mathrm{pd}_{R_\infty[[K_0]]}\left({H}_{q_0}(\widetilde{\cC}(\infty))\right)
= 
j_{R_\infty[[K_0]]}\left({H}_{q_0}(\widetilde{\cC}(\infty))\right) 
= 
\dim(B)\] where $\dim(B) = 
(\frac{n(n+1)}{2}-1)[F:\QQ]$.
\end{enumerate}
\end{ithm}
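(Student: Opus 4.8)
\emph{Proof proposal for part~(3).} Granting parts~(1) and~(2) --- so that $M_\infty:=H_{q_0}(\widetilde{\cC}(\infty))$ is a Cohen--Macaulay $\cO_\infty[[K_0]]$-module of finite projective dimension with $j_{\cO_\infty[[K_0]]}(M_\infty)=\pd_{\cO_\infty[[K_0]]}(M_\infty)=l_0$ --- the plan is to transport this along the $\cO_\infty$-algebra structure morphism $\cO_\infty\to R_\infty$ (under which $\ba$ is carried to the corresponding Taylor--Wiles variables of $R_\infty$) to obtain the Cohen--Macaulay statement over $R_\infty[[K_0]]$, the essential tool being Corollary~\ref{cor:stayCM}. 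For context, parts~(1) and~(2) themselves go as follows: hypothesis~(a) propagates through the patching construction to confine the homology of $\widetilde{\cC}(\infty)$ to degrees $[q_0,q_0+l_0]$, Lemma~\ref{lem:CG} (fed with the codimension bound of hypothesis~(b)) then forces it into the single degree $q_0$, and, $\widetilde{\cC}(\infty)$ being perfect over $\cO_\infty[[K_0]]$, one gets $\widetilde{\cC}(\infty)\simeq M_\infty[q_0]$ with $M_\infty$ perfect; reducing modulo the regular sequence $\ba$ recovers $\widetilde{H}_{q_0}(X_{U^p},\cO)_\m$ (hence~(1)), and a grade computation pins the common value of $j$ and $\pd$ down to $l_0$.

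For the non-commutative ring theory I would pass, where needed, to a uniform (hence $p$-torsion-free) open subgroup $K_1\le K_0$, so that $\cO_\infty[[K_1]]$ and $R_\infty[[K_1]]$ are Auslander regular local rings (using that $R_\infty$ is regular in the situation of the theorem) of global dimensions $\dim\cO_\infty+\dim K_1$ and $\dim R_\infty+\dim K_1$, and then recover the statements over $K_0$ along finite inclusions of the form $A[[K_1]]\subset A[[K_0]]$, which preserve grade, projective dimension and the Cohen--Macaulay property. A key point is that $M_\infty$ is finitely generated already over the subalgebra $\cO_\infty[[K_0]]\subset R_\infty[[K_0]]$, so that its dimension over $R_\infty[[K_0]]$ equals its dimension over $\cO_\infty[[K_0]]$, namely $\dim\cO_\infty[[K_0]]-l_0$.

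I would then apply Corollary~\ref{cor:stayCM} to $M_\infty$, viewed as a finitely generated $R_\infty[[K_0]]$-module that is Cohen--Macaulay over $\cO_\infty[[K_0]]$: the relevant extension $\cO_\infty\to R_\infty$ is flat with geometrically regular fibres --- it is, up to the power series variables, the structural map $\cO\to R_p^{\loc}$ from the coefficient ring to the (completed tensor product of the) local deformation rings, which in the present setting is $\cO$-flat with regular special fibre --- so the corollary gives that $M_\infty$ is Cohen--Macaulay over $R_\infty[[K_0]]$ with
\[
j_{R_\infty[[K_0]]}(M_\infty)=j_{\cO_\infty[[K_0]]}(M_\infty)+\bigl(\dim R_\infty-\dim\cO_\infty\bigr)=l_0+\bigl(\dim R_\infty-\dim\cO_\infty\bigr),
\]
and, $R_\infty[[K_0]]$ being Auslander regular, Cohen--Macaulayness forces $\pd_{R_\infty[[K_0]]}(M_\infty)=j_{R_\infty[[K_0]]}(M_\infty)$. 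Finally I would invoke the numerical coincidence built into the choice of Taylor--Wiles data --- recorded when $\cO_\infty$ and $R_\infty$ are constructed --- namely $\dim R_\infty-\dim\cO_\infty=\dim(B)-l_0$. Here $\dim(B)=(\tfrac{n(n+1)}{2}-1)[F:\QQ]$ is the dimension of a Borel subgroup $B\subset\PGL_n$ over $F$; equivalently it is $\dim K_0-\sum_{v\mid p}[F_v:\Qp]\dim(\PGL_n/B)$, which is exactly the codimension over $\cO[[K_0]]$ of a module behaving like a principal series in the $K_0$-direction (the $R_\infty$-direction contributing full support). Substituting gives $\pd_{R_\infty[[K_0]]}(M_\infty)=j_{R_\infty[[K_0]]}(M_\infty)=\dim(B)$.

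The step I expect to be the main obstacle is Corollary~\ref{cor:stayCM} together with the grade bookkeeping behind it. For commutative coefficient rings, ``the Cohen--Macaulay property ascends along a flat local map with Cohen--Macaulay fibres, the grade going up by the fibre dimension'' is classical; here, however, it must be run for the non-commutative Iwasawa-type algebras $\cO_\infty[[K_0]]$ and $R_\infty[[K_0]]$, which forces one to re-establish the relevant depth/grade theory via the Auslander condition and filtered (associated-graded) techniques, controlling how $\Ext^\bullet_{R_\infty[[K_0]]}(M_\infty,R_\infty[[K_0]])$ is assembled from $\Ext^\bullet_{\cO_\infty[[K_0]]}(M_\infty,\cO_\infty[[K_0]])$ by a change-of-rings spectral sequence. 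A subsidiary point is to make sure that the notions of dimension and grade for $M_\infty$ over the two algebras are genuinely compatible, which is exactly what module-finiteness over the smaller algebra $\cO_\infty[[K_0]]$ secures.
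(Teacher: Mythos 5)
Your part~(3) follows the paper's route: the paper likewise deduces the Cohen--Macaulay property over $R_\infty[[K_0]]$ from that over $\cO_\infty[[K_0]]$ by applying Corollary~\ref{cor:stayCM} over a uniform open subgroup and then descending to $K_0$ via Lemmas~\ref{lem:samepd} and~\ref{perfectoversuperring}, and your bookkeeping $j_{R_\infty[[K_0]]}(M_\infty)=j_{\cO_\infty[[K_0]]}(M_\infty)+(\dim R_\infty-\dim\cO_\infty)=l_0+(\dim(B)-l_0)=\dim(B)$ is exactly how the numerology comes out. However, your justification for invoking Corollary~\ref{cor:stayCM} --- that $\cO_\infty\to R_\infty$ is ``flat with geometrically regular fibres'' --- is both unavailable and unnecessary. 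Under the hypotheses of the theorem this flatness is not known: it amounts to $\ba R_\infty$ being generated by a regular sequence, which is part of the \emph{conclusion} of Proposition~\ref{prop: big R equals T} and requires the extra codimension hypothesis made there. The corollary asks for no such thing; its hypotheses are only that $M_\infty$ is finitely generated over both algebras and Cohen--Macaulay over the smaller one (together with $R_\infty$ being a power series ring over $\cO$, so that $R_\infty[[H]]$ is again an Iwasawa algebra), and the mechanism is that the Gelfand--Kirillov dimension is unchanged (Lemma~\ref{samedim}) while the depth can only increase (Lemma~\ref{depthineq}). Since you correctly single out finite generation over $\cO_\infty[[K_0]]$ as the key point, this is a mis-description of the mechanism rather than a fatal error, but the flatness claim should be deleted.

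The genuine gap is in the phrase ``finite inclusions $A[[K_1]]\subset A[[K_0]]$ \dots preserve \dots projective dimension.'' Lemma~\ref{lem:samepd} transfers $\pd$ between $R_\infty[[K_0]]$ and a uniform open subgroup only once one already knows that $M_\infty$ has \emph{finite} projective dimension over $R_\infty[[K_0]]$; this is obtained from Lemma~\ref{perfectoversuperring} once $M_\infty$ admits a finite projective resolution over $R_\infty[[K_1]]$ --- and since the pro-$p$ Sylow $K_1$ may have torsion, finite projective dimension over $R_\infty[[K_1]]$ is not automatic. The paper supplies this by taking the minimal resolution $\cG$ of $M_\infty$ over the local ring $R_\infty[[K_1]]$, noting that $R_\infty\otimes_{R_\infty[[K_1]]}\cG$ has the same homology as $\cO_\infty\otimes_{\cO_\infty[[K_1]]}\widetilde{\cC}(\infty)$, hence is perfect over the regular ring $R_\infty$, so that $k\otimes_{R_\infty[[K_1]]}\cG$ has bounded homology and $\cG$ is itself bounded. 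Some such argument must be added for your claim about $\pd_{R_\infty[[K_0]]}(M_\infty)$ to be complete.
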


The conjectures
 of~\cite{MR2905536} and~\cite{1207.4224} are open in general, but
 they are known if~$n=2$ and~$F$ is imaginary quadratic. 

 In Section~\ref{subsec: miracle flatness} we take this analysis
 further. Here it is essential for us to assume that $R_\infty$ is regular. 
 Under a natural condition on the codimension (over
 $k[[K_0]]$) of the fibre of completed homology at $\m$, we prove the
 following result, which shows that the Hecke algebra $\TT^S(U^p)_\m$ is isomorphic to a Galois deformation ring $R$ (a `big
 $R=\TT$' theorem), making precise the heuristics discussed in
 \cite[\S 3.1.1]{emertonICM} which compare the Krull dimensions of
 Hecke algebras and the Iwasawa theoretic dimensions of completed
 homology modules and their fibres. 
 
 \begin{ithm}[Proposition~\ref{prop: big R equals T}]\label{ibigRT}
 Suppose that the assumptions of Theorem~\ref{introCM} hold, that $R_\infty$ is 
 a power series ring over $\cO$, and 
 that we 
 moreover have
 \[j_{k[[K_0]]}(\widetilde{H}_{q_0}(X_{U^p},\cO)_\m/\m\widetilde{H}_{q_0}(X_{U^p},\cO)_\m)
 \ge \dim(B).\]
 
 Then we have the following: \begin{enumerate}
 \item $H_{q_0}(\widetilde{\cC}(\infty))$ is a flat $R_\infty$-module.
 \item The ideal $R_\infty\ba$ is generated by a regular sequence in 
 $R_\infty$.
 \item The surjective maps \[R_\infty/\ba \rightarrow R 
 \rightarrow \TT^S(U^p)_\m\] are all isomorphisms and 
 $\widetilde{H}_{q_0}(X_{U^p},\cO)_\m$ is a faithfully flat 
 $\TT^S(U^p)_\m$-module.
 \item The rings $R \cong \TT^S(U^p)_\m$ are local complete 
 intersections with Krull dimension equal to $1 + \dim(B) - l_0$.
 \end{enumerate}
 \end{ithm}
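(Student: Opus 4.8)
The plan is to run the ``miracle flatness'' argument of \cite{1207.4224} in the non-commutative Iwasawa-algebra setting, descending from $R_\infty[[K_0]]$ down to $R_\infty$ only at the very end. Write $M := H_{q_0}(\widetilde{\cC}(\infty))$ and $\bar M := M/\ba M = \widetilde H_{q_0}(X_{U^p},\cO)_\m$, and recall from the patching construction the compatible local surjections $R_\infty/\ba R_\infty \onto R \onto \TT^S(U^p)_\m$, through which the $R_\infty$-action on $M$ induces the Hecke action on $\bar M$, so that $M/\frakm_{R_\infty}M \cong \bar M/\m\bar M$ as $k[[K_0]]$-modules. By Theorem~\ref{patchedCM}, $M$ is a finitely generated $R_\infty[[K_0]]$-module which is Cohen--Macaulay over $R_\infty[[K_0]]$ with $j_{R_\infty[[K_0]]}(M)=\dim(B)$ and Cohen--Macaulay over $\cO_\infty[[K_0]]$ with $j_{\cO_\infty[[K_0]]}(M)=l_0$, while $\bar M$ is Cohen--Macaulay over $\cO[[K_0]]$ with $j_{\cO[[K_0]]}(\bar M)=l_0$. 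First I would use the latter two facts: comparing canonical dimensions over $\cO_\infty[[K_0]]$ and over $\cO[[K_0]]=\cO_\infty[[K_0]]/\ba$ shows that reduction modulo $\ba$ drops the canonical dimension of $M$ by exactly the number $g$ of power series variables of $\cO_\infty$, so $\ba$ is part of a system of parameters for the Cohen--Macaulay $\cO_\infty[[K_0]]$-module $M$, hence an $M$-regular sequence.

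For part (1) — the crux — I would show that a regular system of parameters $t_1,\dots,t_D$ of $R_\infty$ (with $D:=\dim R_\infty$; here $R_\infty$ is regular) forms an $M$-regular sequence; since $M$ is finitely generated over the local ring $R_\infty[[K_1]]$ ($K_1$ being pro-$p$) and $R_\infty\to R_\infty[[K_1]]$ is local, the local criterion of flatness then yields that $M$ is flat over $R_\infty$. Equivalently one must show $j(M)=D$ for the ideal $I:=(t_1,\dots,t_D)R_\infty[[K_0]]=\frakm_{R_\infty}R_\infty[[K_0]]$ acting on $M$. The bound $j(M)\le D$ is automatic ($I$ is generated by $D$ elements and $IM=\frakm_{R_\infty}M\ne M$, since $\bar M/\m\bar M$ is nonzero because completed homology is). For the reverse bound, the Cohen--Macaulay property of $M$ over $R_\infty[[K_0]]$ gives $j_I(M)=\dim_{R_\infty[[K_0]]}(M)-\dim_{R_\infty[[K_0]]}(M/IM)$, where $\dim_{R_\infty[[K_0]]}(M)=\dim R_\infty+\dim K_0-\dim(B)$ and $M/IM$ is a finitely generated $k[[K_0]]$-module; as $k[[K_0]]$ is ``Macaulay'' (grade plus canonical dimension equal $\dim K_0$ on all finitely generated modules — reduce to the uniform pro-$p$ group $K_1$), the hypothesis $j_{k[[K_0]]}(\bar M/\m\bar M)\ge\dim(B)$ bounds $\dim(M/IM)\le\dim K_0-\dim(B)$, whence $j_I(M)\ge D$. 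So $M$ is flat over $R_\infty$, and faithfully so since its closed fibre $M/\frakm_{R_\infty}M$ is nonzero.

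Parts (2)--(4) are then formal. Faithful flatness of $M$ over $R_\infty$ together with the fact that $\ba$ is an $M$-regular sequence forces $\ba$ to be a regular sequence in $R_\infty$ (tensor $0\to\ker(t_i)\to R_\infty\xrightarrow{t_i}R_\infty$ with $M$, use injectivity and faithful flatness, and induct), giving (2); as $R_\infty$ is regular, $R_\infty/\ba R_\infty$ is regular and $\bar M=M\otimes_{R_\infty}R_\infty/\ba R_\infty$ is faithfully flat over it. Hence $R_\infty/\ba R_\infty$ acts faithfully on $\bar M$; this action factors through $R_\infty/\ba R_\infty\onto R\onto\TT^S(U^p)_\m$, and $\TT^S(U^p)_\m$ acts faithfully on $\bar M=\widetilde H_{q_0}(X_{U^p},\cO)_\m$ by its very definition (using Theorem~\ref{patchedCM}(1)), so $R_\infty/\ba R_\infty\to\TT^S(U^p)_\m$ is injective; being also surjective it is an isomorphism, forcing both $R_\infty/\ba R_\infty\to R$ and $R\to\TT^S(U^p)_\m$ to be isomorphisms — which is (3) — and exhibiting $\widetilde H_{q_0}(X_{U^p},\cO)_\m$ as faithfully flat over $\TT^S(U^p)_\m$. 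Finally (4): $R\cong R_\infty/\ba R_\infty$ is a regular ring modulo a regular sequence, hence a complete intersection, and its Krull dimension $1+\dim(B)-l_0$ follows from $\dim R=\dim R_\infty-g$ together with the standard Iwasawa-theoretic dimension count relating $\dim R_\infty$, $\dim\cO_\infty$, $l_0$ and $\dim(B)$ (cf.\ \cite[\S3.1.1]{emertonICM}; concretely, from flatness of $\bar M$ over $R$, its Cohen--Macaulayness of grade $l_0$ over $\cO[[K_0]]$, and the fibre computation above).

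The main obstacle is part (1): one must carry the whole depth/grade computation inside the non-commutative rings $R_\infty[[K_0]]$, $\cO_\infty[[K_0]]$ and $k[[K_0]]$, which requires the grade-versus-canonical-dimension dictionary and the full Cohen--Macaulay/Auslander--Gorenstein formalism for these algebras (in particular the ``Macaulay'' additivity and the depth-sensitivity of Koszul complexes) — precisely the non-commutative commutative algebra underpinning Theorem~\ref{patchedCM} and Corollary~\ref{cor:stayCM} — and one must be careful that $M$ is \emph{not} finitely generated over $R_\infty$, so the passage from $R_\infty[[K_0]]$ back to $R_\infty$ can only be made at the end, via the local flatness criterion applied to the finitely generated $R_\infty[[K_1]]$-module $M$.
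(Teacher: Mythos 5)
Your proposal is correct and follows essentially the same route as the paper: the paper's part (1) is exactly your grade computation packaged as a non-commutative ``miracle flatness'' criterion (Proposition~\ref{miracleflatness}, whose proof likewise shows the relevant sequence is $M$-regular and then invokes the local criterion of flatness), after first observing that the reverse inequality $j_{k[[K_0]]}(\bar M/\m\bar M)\le\dim(B)$ is automatic from Lemma~\ref{lem:obviouscodimineq} and Proposition~\ref{patchedCM}. Parts (2)--(4) are deduced in the paper by the same Koszul/faithful-flatness argument you describe (with the caveat that the body of the paper works with the regular quotient $\Rbar_\infty$ rather than $R_\infty$ itself, which is what your regularity assumption implicitly requires).
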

We note here a crucial difference between our set-up and the situation in which 
Taylor--Wiles patching (and its variants) is usually applied --- the patched 
module $H_{q_0}(\widetilde{\cC}(\infty))$ is not finitely generated over 
$R_\infty$. The patched module is finitely generated over $R_\infty[[K_0]]$ but 
is not free over this Iwasawa algebra (it has codimension $\dim(B)$). So the 
usual techniques to establish `$R=\TT$' do not apply. 

Moreover, even if we 
could establish that $H_{q_0}(\widetilde{\cC}(\infty))$ is a faithful 
$R_\infty$-module, this would not be enough to conclude that the map $R 
\rightarrow \TT^S(U^p)_\m$ has nilpotent kernel. Instead we need to establish 
the stronger result that $H_{q_0}(\widetilde{\cC}(\infty))$ is a \emph{flat} 
$R_\infty$-module. The main novelty of Theorem~\ref{ibigRT} is that the simple 
codimension inequality appearing in the statement is enough to guarantee this 
flatness. This follows from a version of the miracle flatness criterion in 
commutative algebra (Prop.~\ref{miracleflatness} --- again we must modify 
things to 
handle the fact that our 
modules are only finitely generated over a non-commutative algebra). 

Establishing the codimension inequality seems to require substantial 
information about the mod $p$ representations of $\prod_{v|p}\PGL_n(F_v)$ 
appearing in completed cohomology. Even in $l_0 = 0$ situations, we do not know 
how to establish this codimension inequality (in contrast to the assumptions 
made in Theorem~\ref{introCM}, which become trivial when working in an 
appropriate $l_0=0$ setup) --- if we did, our methods would 
give a new approach to proving big $R = \TT$ theorems in these situations. In 
the case $n=2$, 
$F=\Q$, the codimension inequality follows from Emerton's $p$-adic 
local--global 
compatibility theorem, together with known properties of the $p$-adic local 
Langlands 
correspondence. In Section~\ref{sec: p splits completely} we show that 
some conjectural local--global compatibility statements when~$n=2$ and~$p$ 
splits completely in~$F$ also imply that this codimension inequality holds.

This strategy for 
 establishing big $R = \TT$ theorems seems to be the only way known at present 
 to handle the $l_0 > 0$ situation (Emerton, in a personal communication, tells 
 us that this was the initial motivation for him and Calegari to consider the 
 codimension of completed homology and compare it with dimensions of Galois 
 deformation rings and Hecke algebras). Existing results in the $l_0 = 0$ 
 case
 (\cite{gouveamazur,boecklebigrt,cheninffern,allenbigrt}) rely on 
 establishing Zariski density of (characteristic 0) automorphic points in the 
 unrestricted Galois deformation ring $R$, using generalisations of the 
 Gouv\^ea--Mazur infinite fern. When $l_0 > 0$ characteristic 0 automorphic 
 points are not expected to be Zariski dense in $R$, and they are not Zariski 
 dense in the relevant eigenvarieties (see \cite{calmaz} and work of Serban 
 described in \cite{persblog}), so this approach breaks down.

 In Section~\ref{sec: p splits completely} we specialise to the case
 that~$n=2$ and~$p$ splits completely in~$F$, where we can relate our
 constructions to the $p$-adic local Langlands correspondence
 for~$\GL_2(\Qp)$. We formulate a natural conjecture
 (Conjecture~\ref{conj: p-adic local Langlands gives us the action for
   general F}) saying that the patched module~
 $H_{q_0}(\widetilde{\cC}(\infty))$ is determined by (and in fact determines) this
 correspondence; in the case~$F=\Q$ this conjecture is proved
 in~\cite{CEGGPSGL2}, and is essentially equivalent to the
 local-global compatibility result of~\cite{emerton2010local}. We show
 that this conjecture implies a local-global compatibility result (in
 the derived category) for the complexes computing finite level homology modules
 with coefficients in an algebraic representation; this compatibility is
 perhaps somewhat surprising, as it is phrased in terms of crystalline
 deformation rings, which are not obviously well-behaved integrally. 
 
 Conversely, we show that if we assume (in addition to the assumptions
 made in Section~\ref{sec: applications of algebra to patched
 	homology}) that this local-global compatibility holds
 at finite level, then Conjecture~\ref{conj: p-adic local Langlands
 	gives us the action for general F} holds. Our proof is an
 adaptation of the methods of~\cite{CEGGPSGL2}, although some
 additional arguments are needed in our more general setting.
 
 We 
 moreover show that Conjecture~\ref{conj: p-adic local Langlands gives us the 
 action for
 	general F} has as consequences an automorphy lifting theorem and a `small 
 	$R[1/p] = \TT[1/p]$' result (Corollary~\ref{cor: modularity lifting}). 
 	Therefore, our local-global compatibility conjecture entails many new cases of the Fontaine--Mazur conjecture. 
 	The application to Fontaine--Mazur was established by 
 	\cite{emerton2010local} in the case $F=\Q$, and although our argument looks 
 	rather different it is closely related to that of \emph{loc.~cit.} (but see 
 	also Remark~\ref{emertonfmarg}).

 While our main results are all conditional on various natural
 conjectures about (completed) homology groups, in the case that~$n=2$
 and~$F$ is an imaginary quadratic field in which~$p$ splits it seems
 that the only serious obstruction is our finite level local-global
 compatibility conjecture (Conjecture~\ref{conj: crystalline local global}), as 
 we explain in Section~\ref{subsec:
   totally real or imaginary quadratic}.

We end this introduction by briefly explaining the contents of the
sections that we have not already described. In
Section~\ref{sec:patchingI} we introduce the complexes that we will
patch and the Hecke algebras that act on them, and prove some standard
results about minimal resolutions of complexes. We also prove some
basic results about ultraproducts of complexes. In
Section~\ref{sec:patchingII} we introduce the Galois deformation
rings, carry out our patching construction, and prove its basic
properties (for example, we establish its compatibility with completed
homology).

In Appendix~\ref{sec: non commutative algebra} we establish analogues
for Iwasawa algebras of various classical results in commutative
algebra, which we apply to our patched complexes in~Section~\ref{sec:
  applications of algebra to patched homology}. Finally in
Appendix~\ref{appendix: tensor products projective covers} we prove
some basic results about tensor products and projective envelopes of
pseudocompact modules that we use in Section~\ref{sec: p splits
  completely}.

\subsection{Acknowledgements}\label{subsec:acknowledgements}We would
like to thank Frank Calegari, Matt Emerton, Christian Johansson, Vytas Pa{\v{s}}k{\=u}nas,
Jack Thorne and Simon Wadsley for helpful conversations. We would also like to 
thank the anonymous referee for helpful comments.

\subsection{Notation}\label{subsec:notation}

Let~$F$ be a number field, and fix an algebraic closure $\overline{F}$
of $F$, as well as algebraic closures $\overline{F}_v$ of the
completion $F_v$ of~$F$ at $v$ for each place~$v$ of~$F$,
and embeddings $\overline{F} \hookrightarrow \overline{F}_v$ extending
the natural embeddings $F \hookrightarrow F_v$.  These choices
determine embeddings of absolute Galois groups
$G_{F_v} \hookrightarrow G_F$. If $v$ is a finite place of $F$, then
we write $I_{F_v} \subset G_{F_v}$ for the inertia group, and
$\Frob_v \in G_{F_v}/I_{F_v}$ for a geometric Frobenius element; we
normalise the local Artin maps $\Art_{F_v}$ to send uniformisers to
geometric Frobenius elements. We write $\A_F$ for the adele ring
of~$F$, and $\A_F^\infty$ for the finite adeles.

We fix a prime~$p$ throughout, and write $\epsilon:G_F\to\Zp^\times$ for
the $p$-adic cyclotomic character. Let~$\cO$ be the ring of integers
in a finite extension~$E/\Qp$ with residue field~$k$; our Galois
representations will be valued in~$\cO$-algebras (but we will feel
free to enlarge~$E$ where necessary). If $R$ is a complete
Noetherian local $\cO$-algebra with residue field $k$, then we write
$\mathrm{CNL}_R$ for the category of complete Noetherian local
$R$-algebras with residue field $k$.

If~$R$ is a ring, we write~$Ch(R)$ for the abelian category of chain
complexes of $R$-modules. If~$C_\bullet\in Ch(R)$ then we
write~$H_*(C_{\bullet}):=\oplus_{n\in\Z}H_n(C_\bullet)$.  We write
$D(R)$ for the (unbounded) derived category of $R$-modules --- for us,
the objects of $D(R)$ are \emph{cochain} complexes of $R$-modules, but
we regard a chain complex $C_\bullet\in Ch(R)$ as a cochain complex
$C^\bullet$ by setting $C^i = C_{-i}$. We write $D^-(R)$ for the
bounded-above derived category of $R$-modules. The objects of $D^-(R)$
are cochain complexes of $R$-modules with bounded-above cohomology, or
(equivalently) chain complexes of $R$-modules with bounded-below
homology.  Similarly, we write $D^+(R)$ for the bounded-below derived
category.  

An object $C^\bullet$ of $D(R)$ is called a \emph{perfect
  complex} if there is a quasi-isomorphism
$P^\bullet \rightarrow C^\bullet$ where $P^\bullet$ is a bounded
complex of finite projective $R$-modules. In fact, $C^\bullet$ is
perfect if and only if it is isomorphic in $D(R)$ to a bounded complex
$P^\bullet$ of finite projectives: if we have another complex
$D^\bullet$ and quasi-isomorphisms $P^\bullet \rightarrow D^\bullet$,
$C^\bullet \rightarrow D^\bullet$, then there is a quasi-isomorphism
$P^\bullet \rightarrow C^\bullet$
(\cite[\href{http://stacks.math.columbia.edu/tag/064E}{Tag
  064E}]{stacks-project}).

If~$K$ is a compact $p$-adic analytic group, we have the Iwasawa algebra
 $\cO[[K]]:=\invlim_U\cO[K/U]$, where~$U$ runs
over the open normal subgroups of~$K$. This is a (non-commutative)
Noetherian ring, some of whose properties we recall in Appendix~\ref{sec: non commutative
  algebra}. If~$R$ is a formally smooth (commutative) $\cO$-algebra,
then we write $R[[K]]:=R\wotimes_\cO\cO[[K]]$; note that if $R$ has
relative dimension~$d$ over~$\cO$, then
$R[[K]]\cong\cO[[K\times\Zp^d]]$, so general properties of~$\cO[[K]]$
are inherited by~$R[[K]]$.

   For technical reasons, we will sometimes assume that~$K$ is a
   uniform pro-$p$ group in the sense explained in~\cite[\S
   1.2]{Venjakob}; as explained there, this can always be achieved by
   replacing~$K$ with a normal open subgroup. The group~$\Zp^d$ is
   uniform pro-$p$, so properties of~$\cO[[K]]$ for~$K$ a uniform
   pro-$p$ group are again inherited by~$R[[K]]$.

If $M$ is a pseudocompact (i.e.~profinite) $\cO$-module, we write $M^\vee:=\Hom_\cO^{cts}(M,E/\cO)$ for the
Pontryagin dual of $M$.

\section{Patching I: Completed homology complexes and
  ultrafilters}\label{sec:patchingI}In this section and the following
one we explain our
patching construction. For the convenience
of the reader, we will generally follow the notation
of~\cite{1409.7007}. 

\subsection{Arithmetic quotients}\label{subsec: manifolds }
We begin by introducing the manifolds whose homology we will patch. We
follow~\cite{1207.4224} in patching arithmetic quotients
for~$\PGL_n$, rather than~$\GL_n$; this is a minor issue in practice, as the
connected components of the arithmetic quotients are the same for either
choice, and we are for the most part able to continue to
follow~\cite{1409.7007}, although we caution the reader that because
of this change, it is sometimes the case that we use the same notation
to mean something slightly different to the corresponding definition
in~\cite{1409.7007}.

 Let $G =
\PGL_{n, F}$, let $G_\infty = G(F \otimes_\Q \R)$, and let $K_\infty
\subset G_\infty$ be a maximal compact subgroup. Write $X_G:=G_\infty/K_\infty$. If $U \subset G(\A_F^\infty)$ is an open 
compact subgroup, then we
define

\[X_U = G(F) \backslash (G(\A_F^\infty) / U \times X_G),\]

If $U \subset G(\A_F^\infty)$ is an open compact subgroup of the form $U = \prod_v U_v$, we say that $U$ is \emph{good} if it satisfies the following conditions:
\begin{itemize}
\item For each $g \in G(\A_F^\infty)$, the group
  $\Gamma_{U,g}:=gUg^{-1}\cap G(F)$ is neat, and in particular
  torsion-free. (By definition, $\Gamma_{U,g}$ is neat if for each
  $h\in\Gamma_{U,g}$, the eigenvalues of~$h$ generate a torsion-free
  group.)
\item For each finite place $v$ of $F$, $U_v \subset \PGL_n(\cO_{F_v})$.
\end{itemize}
We write $U=U_pU^p$, where $U_p=\prod_{v|p}U_v$, $U^p=\prod_{v\nmid p}U_v$. If~$S$ is a finite set of finite places of~$F$, then we say that~$U$
is $S$-\emph{good} if $U_v = \PGL_n(\cO_{F_v})$ for all~$v\notin S$. 

By the proof of~\cite[Lem.\ 6.1]{1409.7007}, if $U$ is good, then~$X_U$ is a smooth
manifold, and if $V\subset U$ is a normal compact open subgroup,
then~$V$ is also good, and
$X_V\to X_U$ is a Galois cover of smooth
manifolds. 

 Let $r_1,$ $r_2$ denote the number of real and complex places of $F$, respectively. Then
\numequation  \dim X_U = \frac{r_1}{2}(n-1)(n+2) + r_2(n^2 - 1).\end{equation}

The \emph{defect} is
\numequation\label{eqn: defect} l_0 = \rank G_\infty - \rank  K_\infty = \left\{ \begin{array}{ll} r_1 (\frac{n-2}{2}) + r_2(n-1) & n \text { even;} \\ r_1(\frac{n-1}{2}) + r_2(n-1) &n \text{ odd,} \end{array}\right. 
\end{equation}
and we also set 
\numequation q_0 = \frac{d-l_0}{2} = \left\{ \begin{array}{ll} r_1 (\frac{n^2}{4}) + r_2\frac{n(n-1)}{2} & n \text { even;} \\ r_1 (\frac{n^2-1}{4}) + r_2\frac{n(n-1)}{2} &n \text{ odd.} \end{array}\right. 
\end{equation}
In particular, if $F$ is an imaginary quadratic field and~$n=2$, then
$\dim X_U=3$, $l_0=1$, and $q_0=1$. The notation $l_0,q_0$ comes
from~\cite{MR1721403}, and $[q_0,q_0+l_0]$ is the range of degrees in
which tempered cuspidal automorphic representations of~$G$ contribute
to the cohomology of the~$X_U$.

Let $C_{\A,\bullet}$ denote the complex of singular chains with
$\Z$-coefficients which are valued in $G(\A_F^\infty)\times
{X}_G$, where $G(\A_F^\infty)$ is given the discrete topology. We equip 
$G(\A_F^\infty)\times
{X}_G$ with the right $G(F)\times G(\A^\infty_F)$ action 
\[(h^\infty,x)\cdot(\gamma,g^\infty) = (\gamma^{-1}h^\infty g^\infty, 
\gamma^{-1}x)\]

 which makes $C_{\A,\bullet}$ a complex of right $\Z[G(F)\times
G(\A^\infty_F)]$-modules. 
If~$U$ is good and $M$ is a left~$\Z[U]$-module, then we
 set \[\cC(U,M):=C_{\A,\bullet}\otimes_{\Z[G(F)\times
    U]}M.\] As in~\cite[Prop.\ 6.2]{1409.7007}, there is a
natural isomorphism \[H_*(X_U,M)\cong
  H_*(\cC(U,M)).\]
  
If $U=U_pU^p$ is good, then we
have the completed homology groups in the sense of~\cite{MR2905536}
which by definition are given
by \[\widetilde{H}_*(X_{U^p},\cO):=\varprojlim_{U'_p}H_*(X_{U'_pU^p},\cO),\] the
limit being taken over open subgroups~$U'_p$ of~$U_p$.

We note here that the homology groups $H_*(X_{U},\cO)$ are all finitely 
generated $\cO$-modules. This follows from the existence of the Borel–Serre 
compactification \cite{MR0387495}, or the earlier work of Raghunathan 
\cite{raghunathan}.

\subsubsection{Hecke operators}\label{subsubsec: Hecke operators on complexes} Our complexes have a natural Hecke action in the usual way, as
described in~\cite[\S 6.2]{1409.7007}. We recall some of the
details. Suppose that~$U,V$ are good subgroups, that~$S$ is a finite
set of places of~$F$ with $U_v=V_v$ if $v\in S$, and that $M$ is a
$\Z[G(\A_F^{\infty,S})\times U_S]$-module. Then for each $g\in G(\A_F^{\infty,S})$
there is a Hecke operator \[[UgV]_*:\cC(V,M)\to
  \cC(U,M)\] given by the
formula \[([UgV]_*((h\times \sigma)\otimes
  m)=\sum_i
  (hg_i\times\sigma)\otimes g_i^{-1}m,\]where $h\in G(\A_F^\infty)$, $\sigma:\Delta^j\to
{X}_G$ is a singular simplex, $m\in M$, and $UgV = \coprod_i g_iV$.

In practice, we will take~$S=S_p$ to be the set of places of~$F$ lying
over~$p$, and we take~$M$ to be a finite~$\Zp$-module with a
continuous action of~$\prod_{v|p}\PGL_n(\cO_{F_v})$, with the action
of~$G(\A_F^{\infty,S})\times U_S$ on~$M$ being via projection
to~$U_S\subset \prod_{v|p}\PGL_n(\cO_{F_v}) $. (In fact, we will
usually take the action on~$M$ to be the trivial action.) If~$v\notin
S_p$ is a finite place of~$F$, then
we choose a uniformiser~$\varpi_v$ of~$\cO_{F_v}$, and for each $1\le
i\le n$ we set~$\alpha_{v,i}=\diag(\varpi_v,\dots,\varpi_v,1,\dots,1)$
(with~$i$ occurrences of~$\varpi_v$). 

If~$v\notin S$ is a place for which~$U_v=\PGL_n(\cO_{F_v})$, we set
$T_v^i:=[U\alpha_{v,i}U]_*$, where by an abuse of notation we denote by 
$\alpha_{v,i}$ the element of $G(\A_F^{\infty,S})$ which is equal to 
$\alpha_{v,i}$ in the $v$ component and the identity elsewhere; these operators 
are independent of the
choice of~$\varpi_v$, and pairwise commute. We also consider places at
which~$U_v$  is a normal subgroup of the standard Iwahori
subgroup which contains the standard pro-$\varpi_v$-Iwahori
subgroup. At these places we
will set $\mathbf{U}^i_v=[U\alpha_{v,i}U]_*$; these operators now
depend on the choice of~$\varpi_v$, but (for the particular~$U_v$ that
we use) they still pairwise commute. They also commute with the
diamond operators~$\langle\alpha\rangle=[U\alpha U]_*$,
where~$\alpha$ is an element of the standard Iwahori subgroup whose
diagonal entries are all equal modulo~$\varpi_v$.

Note that it is immediate from the definitions that the actions of the
operators~$T^i_v$ and $\mathbf{U}^i_v$ are equivariant for the natural
morphisms of complexes arising from shrinking the level~$U$ away from $v$.

\subsubsection{Minimal resolutions}
We recall some standard material on minimal resolutions of complexes. Since we 
work over non-commutative rings, there don't seem to be any standard references.

Let $R$ be a Noetherian local ring (possibly non-commutative). We denote the 
maximal ideal by $\m$ and assume that $R/\m = k$ is a field.

\begin{defn}
	Let $\cF_\bullet$ be a chain complex of finite free $R$-modules. The 
	complex $\cF_\bullet$ is \emph{minimal} if for all $i$ the boundary map 
	$d_i : \cF_{i+1} \rightarrow \cF_{i}$ satisfies \[d_i(\cF_{i+1}) \subset \m 
	\cF_{i}.\]
\end{defn}
Note that if $\cF_\bullet$ is minimal, the complex $k\otimes_R \cF_\bullet$ has 
boundary maps equal to zero.

\begin{lemma}\label{lem: minimal complex rank}
	Let $\cF_\bullet$ be a minimal complex of finite free $R$-modules with 
	bounded below homology, so that thinking of $\cF_\bullet$ as 
	an object of the derived category $D^-(R)$, we have a well-defined object 
	$k\otimes^\LL_R \cF_\bullet \in D^-(k)$. Then for each 
	$n$ we have \[\mathrm{rank}_R(\cF_i) = \dim_k(H_i(k\otimes_R 
	\cF_\bullet)) 
	= 
	\dim_k(H_i(k\otimes^\LL_R \cF_\bullet)).\]
	
	In particular, the ranks of the modules $\cF_i$ depend only on the 
	isomorphism class of $\cF_\bullet$ in $D^-(R)$.
\end{lemma}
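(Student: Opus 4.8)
The plan is to establish the two claimed equalities for each $i$. The first equality, $\mathrm{rank}_R(\cF_i) = \dim_k(H_i(k \otimes_R \cF_\bullet))$, is immediate from the definition of minimality: since $d_i(\cF_{i+1}) \subset \m\cF_i$, the induced boundary maps on $k \otimes_R \cF_\bullet$ are all zero, so $H_i(k \otimes_R \cF_\bullet) = k \otimes_R \cF_i = \cF_i/\m\cF_i$, which has $k$-dimension $\mathrm{rank}_R(\cF_i)$ (here I use that $\cF_i$ is finite free over the local ring $R$, so this rank is well-defined). The second equality, $\dim_k(H_i(k \otimes_R \cF_\bullet)) = \dim_k(H_i(k \otimes^\LL_R \cF_\bullet))$, requires identifying $k \otimes_R \cF_\bullet$ with a representative of $k \otimes^\LL_R \cF_\bullet$ in $D^-(k)$. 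Since each $\cF_i$ is free (hence flat), the complex $\cF_\bullet$ of flat $R$-modules — which is bounded below in homological degree — computes the derived tensor product: that is, for $M$ a bounded-below complex of flat modules, $M \otimes^\LL_R N$ is represented by the ordinary tensor product $M \otimes_R N$. I would cite the relevant stacks project tag for this (derived tensor products computed by K-flat, in particular bounded-below flat, resolutions). Thus $k \otimes^\LL_R \cF_\bullet$ is isomorphic in $D^-(k)$ to $k \otimes_R \cF_\bullet$, and in particular their homology groups agree, giving the second equality.

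For the final sentence, I need that the ranks depend only on the isomorphism class of $\cF_\bullet$ in $D^-(R)$. This is now immediate: the object $k \otimes^\LL_R \cF_\bullet \in D^-(k)$ is a derived functor applied to $\cF_\bullet$, hence depends only on the image of $\cF_\bullet$ in $D^-(R)$; its homology $\dim_k H_i(k \otimes^\LL_R \cF_\bullet)$ therefore depends only on that isomorphism class; and by the chain of equalities just proved this dimension equals $\mathrm{rank}_R(\cF_i)$.

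The one genuine point requiring care — the "main obstacle," though it is mild — is the bounded-below hypothesis on homology and its role in forming $k \otimes^\LL_R \cF_\bullet$ as an object of $D^-(R)$, as opposed to $D(R)$. Over a non-commutative ring one does not have a literal Künneth formula, but one does not need it: the only facts used are (i) a bounded-below complex of flats is K-flat, so computes the derived tensor product, and (ii) in $D^-(R)$ (equivalently, for chain complexes with bounded-below homology), $\cF_\bullet$ is genuinely a well-defined object, so the statement "$\mathrm{rank}_R(\cF_i)$ depends only on the class in $D^-(R)$" makes sense. Both are standard; no finiteness or Noetherian input beyond what is already assumed is needed for the argument itself (Noetherianity of $R$ is of course used elsewhere to guarantee minimal resolutions exist, but not in this lemma).
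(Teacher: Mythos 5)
Your proof is correct and follows essentially the same route as the paper's (which consists of the single observation that minimality forces the differentials of $k\otimes_R\cF_\bullet$ to vanish, so $H_i(k\otimes_R\cF_\bullet)=k\otimes_R\cF_i$ has dimension $\mathrm{rank}_R(\cF_i)$). You are in fact slightly more careful than the paper, which leaves the identification of $k\otimes_R\cF_\bullet$ with $k\otimes^\LL_R\cF_\bullet$ (via $\cF_\bullet$ being a bounded-below complex of flats) entirely implicit.
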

\begin{proof}
	We have $\mathrm{rank}_R(\cF_i) = \dim_k(k\otimes_R\cF_i)$, and since $\cF$ 
	is minimal we have \[k\otimes_R\cF_i = H_i(k\otimes_R\cF_\bullet).\qedhere\]
\end{proof}
\begin{defn}
	Let $\cC_\bullet \in Ch(R)$ with bounded below homology. If $\cF_\bullet$ 
	is a minimal complex (necessarily bounded below)
	with a quasi-isomorphism $\cF_\bullet \rightarrow \cC_\bullet$, we say that 
	$\cF_\bullet$ is a \emph{minimal resolution} of $\cC_\bullet$.
\end{defn}
If $\cF_\bullet$ is a minimal resolution of $\cC_\bullet$, then by
Lemma~\ref{lem: minimal complex rank} we have 
\[\mathrm{rank}_R(\cF_i) = \dim_k(H_i(k\otimes^\LL_R 
\cC_\bullet)).\]

\begin{prop}\label{prop: minimal resolutions exist}
	Let $\cC_\bullet \in Ch(R)$  be a chain complex with bounded
        below homology, and assume further that
	$H_i(\cC_\bullet)$ is a finitely generated 
	$R$-module for all $i$. Then there 
	exists a minimal resolution $\cF_\bullet$ of $\cC_\bullet$,
        and any two minimal 
	resolutions of $\cC_\bullet$ are isomorphic \emph{(}although the isomorphism is 
	not necessarily unique\emph{)}.
\end{prop}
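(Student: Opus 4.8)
\emph{Plan.} I would prove existence by an explicit bottom‑up construction and uniqueness by lifting the identity of $\cC_\bullet$ to a chain map between two minimal resolutions and checking it is an isomorphism. Throughout the only facts about $R$ used are that it is Noetherian (so submodules of finitely generated modules are finitely generated), Nakayama's lemma, and standard formal properties of $D(R)$ and $K(R)$ for a (not necessarily commutative) ring.

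For existence, fix $a$ with $H_i(\cC_\bullet)=0$ for $i<a$. First I would build, inductively on degree, a quasi‑isomorphism $f\colon\cP_\bullet\to\cC_\bullet$ with $\cP_\bullet$ a bounded‑below complex of finite free $R$‑modules ($\cP_i=0$ for $i<a$), \emph{not} yet required to be minimal. The point is that the terms $\cC_i$ need not be finitely generated, so one resolves the homology rather than the terms: at the step from degree $n$ to $n+1$ one has $\cZ_n:=\ker(\cP_n\to\cP_{n-1})$ (finitely generated since $\cP_n$ is and $R$ is Noetherian), a surjection $\cZ_n\onto H_n(\cC_\bullet)$ with finitely generated kernel $\cK_n$, and $f_n(\cK_n)\subseteq B_n(\cC_\bullet)$; one sets $\cP_{n+1}=\cP'_{n+1}\oplus\cP''_{n+1}$ with $\cP'_{n+1}$ a finite free cover of $\cK_n$ (its differential the composite $\cP'_{n+1}\onto\cK_n\into\cP_n$, and $f_{n+1}$ on this summand obtained by lifting basis vectors through $d_\cC\colon\cC_{n+1}\onto B_n(\cC_\bullet)$), and $\cP''_{n+1}$ a finite free cover of $H_{n+1}(\cC_\bullet)$ (zero differential into $\cP_n$, and $f_{n+1}$ lifting the cover into $Z_{n+1}(\cC_\bullet)$). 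One checks degree by degree that $H_i(f)$ becomes an isomorphism for $i\le n$ and a surjection in degree $n+1$, so $f$ is a quasi‑isomorphism. Then I would minimalise $\cP_\bullet$: if $d\colon\cP_{i+1}\to\cP_i$ is not contained in $\m\cP_i$, then in suitable bases a matrix entry is a unit, and over the local ring $R$ a change of bases in $\cP_{i+1}$ and $\cP_i$ puts $d$ in block form $\mathrm{id}_R\oplus d'$; using $d_{i-1}d_i=0$ and $d_id_{i+1}=0$, the rank‑one pieces constitute a contractible subcomplex $R\xrightarrow{\ \sim\ }R$ in degrees $i+1,i$ which is a direct summand of $\cP_\bullet$, and splitting it off gives a homotopy‑equivalent finite free complex with strictly smaller rank in those two degrees. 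Performing these moves upward from degree $a$ (a move in degrees $i+1,i$ does not disturb minimality in degrees $<i$) and noting that each fixed term is altered only finitely often, the complexes stabilise termwise to a minimal $\cF_\bullet$ with a quasi‑isomorphism $\cF_\bullet\to\cP_\bullet\to\cC_\bullet$; by Lemma~\ref{lem: minimal complex rank} and the fact that $\mathrm{Tor}$ only raises homological degree, $\cF_i=0$ for $i<a$, so $\cF_\bullet$ is bounded below.

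For uniqueness, let $\cF_\bullet,\cF'_\bullet$ be minimal resolutions. Since $\cF_\bullet$ is a bounded‑below complex of projectives it is $K$‑projective, so the morphism $\cF_\bullet\to\cF'_\bullet$ in $D(R)$ coming from the two quasi‑isomorphisms to $\cC_\bullet$ is represented by an honest chain map $\phi\colon\cF_\bullet\to\cF'_\bullet$, necessarily a quasi‑isomorphism. A quasi‑isomorphism between bounded‑below complexes of projectives is a homotopy equivalence, hence $k\otimes_R\phi$ is a homotopy equivalence, in particular a quasi‑isomorphism; but minimality forces the differentials of $k\otimes_R\cF_\bullet$ and $k\otimes_R\cF'_\bullet$ to vanish, so $\phi_i\otimes_R k\colon\cF_i/\m\to\cF'_i/\m$ is an isomorphism for every $i$. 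Applying Nakayama to the finitely generated module $\coker(\phi_i)$ shows $\phi_i$ is surjective; since $\cF_i$ and $\cF'_i$ are free of the same rank by Lemma~\ref{lem: minimal complex rank}, $\phi_i$ is (after choosing bases) a surjective endomorphism of the Noetherian module $R^{b_i}$, hence injective, hence an isomorphism. Thus $\phi$ is an isomorphism of complexes, manifestly not canonical.

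\emph{Main obstacle.} I expect the only real work to be the bookkeeping: matching up $f$ with $\cC_\bullet$ degree by degree in the ``resolve the homology'' induction, and verifying that the infinitely many splitting moves in the minimalisation stabilise to an honest complex with the right ranks. Non‑commutativity enters only mildly — the change‑of‑basis normal form, and replacing ``a surjection between free modules of equal rank is an isomorphism'' by ``a surjective endomorphism of a Noetherian module is an isomorphism'' — but one must take care that every ingredient invoked (Nakayama, $K$‑projectivity and homotopy‑equivalence statements for bounded‑below complexes of projectives, $\mathrm{Tor}$ raising degree) is stated for left modules over a non‑commutative Noetherian local ring.
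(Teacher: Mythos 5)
Your proof is correct and follows essentially the same route as the paper's: first a non-minimal bounded-below complex of finite free modules quasi-isomorphic to $\cC_\bullet$ (which the paper obtains by citing Mumford's lemma where you construct it by hand, resolving the homology), then minimalisation by splitting off contractible two-term free summands, and uniqueness by lifting to a chain map between the two minimal resolutions, reducing mod $\m$ and applying Nakayama. The only differences are cosmetic: you strip off rank-one summands one at a time rather than a whole basis of $d_i(k\otimes_R\cG_{i+1})$ at once, and you spell out the ingredients (homotopy equivalence of quasi-isomorphisms between bounded-below complexes of projectives, surjective endomorphisms of Noetherian modules being injective) that the paper leaves implicit.
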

\begin{proof}
	By considering the canonical truncation $\tau_{\ge N}\cC_\bullet 
	\rightarrow\cC_\bullet$ (which is an isomorphism for sufficiently negative 
	$N$), we may assume 
	that the complex $\cC_\bullet$ is bounded below. The proof in the 
	commutative case from \cite[\S2, Theorem 2.4]{roberts} applies 
	without change (the proof in \emph{loc.~cit.}~assumes that the complex has 
	bounded homology, but this is not necessary). For the reader's convenience, 
	we sketch the proof. 
	
	First we check the uniqueness of the minimal resolution: suppose we have 
	two minimal resolutions $\cF_{1,\bullet},\cF_{2,\bullet}$ of $\cC_\bullet$. 
	Then $\cF_{1,\bullet},\cF_{2,\bullet}$ are isomorphic in $D(R)$. Since 
	$\cF_{1,\bullet}$ is a bounded below chain complex of projective modules 
	there is a quasi-isomorphism $\cF_{1,\bullet} \rightarrow \cF_{2,\bullet}$ 
	(by \cite[\href{http://stacks.math.columbia.edu/tag/0649}{Tag 
	0649}]{stacks-project}). This map induces a quasi-isomorphism 
	$k\otimes_R\cF_{1,\bullet} \rightarrow k\otimes_R\cF_{2,\bullet}$, and 
	minimality implies that this quasi-isomorphism is actually an isomorphism 
	of complexes. Nakayama's lemma now implies that $\cF_{1,\bullet} 
	\rightarrow \cF_{2,\bullet}$ is an isomorphism of complexes.
	
	Now we show existence of the minimal resolution. First, by a
        standard argument (see for example \cite[Lem.~1, 
	pp.47--49]{mumford}), there is a (not 
	necessarily minimal) bounded below complex of finite free modules 
	$\cG_\bullet$ with a quasi-isomorphism $\cG_\bullet \rightarrow 
	\cC_\bullet$. 
	
	We now inductively suppose that the complex $\cG_\bullet$ satisfies $d_m(\cG_{m+1}) 
	\subset \m 
	\cG_{m}$ for $m < i$. (Note that this is certainly true
        for~$i\ll 0$.) We will construct a new bounded below complex 
	$\cG'_\bullet$ of finite free modules with $\cG'_m = \cG_m$ for $m < i$, 
	together with a quasi-isomorphism $\cG'_\bullet \rightarrow 
	\cG_\bullet$, such that $d_m(\cG'_{m+1}) \subset \m 
	\cG'_{m}$ for $m \le i$. Iterating this procedure constructs the minimal 
	resolution $\cF_\bullet$.
	
	So, we suppose that $d_{i}(\cG_{i+1}) 
	\not\subset \m\cG_{i}$. We let $Y$ be a subset of $\cG_{i+1}$ which lifts a 
	linearly independent subset of $k\otimes_R\cG_{i+1}$ mapping 
	(injectively) to a basis for $d_{i}(k\otimes_R\cG_{i+1}) \subset 
	k\otimes_R\cG_{i}$. Then the acyclic complex (with non-zero terms in degree 
	$i+1$ and $i$) \[\cC(Y)  = \oplus_{y\in 
	Y}\left(0\rightarrow 
	Ry \overset{d_i}{\rightarrow} Rd_i(y)\rightarrow 0\right)\] is a direct 
	summand of 
	$\cG_\bullet$ (a splitting of $\oplus_{y\in 
		Y}Rd_i(y) \subset \cG_i$ induces a compatible splitting of 
		$\oplus_{y\in 
		Y}Ry \subset \cG_{i+1}$ and such a splitting exists since $d_i(Y)$ 
		extends to a basis of $\cG_i$ by Nakayama's lemma), 
	and we set $\cG_\bullet' = \cG_\bullet/\cC(Y)$. Since $\cG'_\bullet$ is a 
	direct 
	summand of $\cG_\bullet$ we may choose a splitting $\cG'_\bullet 
	\rightarrow 
	\cG_\bullet$ of the projection map. This splitting is a 
	quasi-isomorphism, since $\cC(Y)$ is acyclic. 
	It is easy to check that $\cG'_\bullet$ has the other desired properties, 
	so we are done. 
\end{proof}

\subsubsection{Big Hecke algebras}\label{subsubsec: big hecke 
algebras}Write~$\cC(U,s):=\cC(U,\cO/\varpi^s)$.
\begin{defn}
	Let $S$ be a finite set of finite places of $F$ which contains $S_p$. Let $U 
	= U_pU^p$ be an
	$S$-good 
	subgroup, with $U_p$ a compact open normal subgroup of $K_0$. We define 
	$\TT^S(U,s)$ to 
	be the image of the abstract Hecke 
	algebra 
	$\TT^S$ (generated over $\cO$ by $T^i_v$ for $v \notin S$) in 
	$\End_{D(\cO/\varpi^s[K_0/U_p])}(\cC(U,s))$.

	We let \[\TT^S(U^p) = \invlim_{U_p,s}\TT^S(U_pU^p,s)\] where the limit is 
	over compact open normal subgroups $U_p$ of $K_0$ and $s \in 
	\Z_{\ge 1}$, and the (surjective) transition maps come from the functorial 
	maps 
	\[\End_{D(\cO/\varpi^{s'}[K_0/U'_p])}(\cC(U'_pU^p,s')) \rightarrow 
	\End_{D(\cO/\varpi^s[K_0/U_p])}
	(\cO/\varpi^{s}[K_0/U_p]\otimes_{\cO/\varpi^{s'}[K_0/U'_p]}\cC(U'_pU^p,s'))\]
	 for $s' \ge s$ and $U_p' \subset U_p$ and the natural identification 
	\[\cO/\varpi^{s}[K_0/U_p]\otimes_{\cO/\varpi^{s'}[K_0/U'_p]}\cC(U'_pU^p,s') 
	\cong \cC(U_pU^p,s).\]
	
	 We equip $\TT^S(U^p)$ 
	with the inverse limit
        topology.
\end{defn}

\begin{rem}\label{bigheckeactsalllevels}
Now suppose that $U_p$ is any compact open subgroup of $K_0$ (not necessarily 
normal) and $s \ge 1$. Let $V_p$ be a compact open normal subgroup of $U_p$ 
which is also normal in $K_0$. Then the natural map $\TT^S(U^p)\rightarrow 
\End_{D(\cO/\varpi^s[K_0/V_p])}(\cC(V_pU^p,s))$ induces a map 
$\TT^S(U^p)\rightarrow \End_{D(\cO/\varpi^s[U_p/V_p])}(\cC(V_pU^p,s))$	and 
therefore induces a natural map $\TT^S(U^p)\rightarrow 
\End_{D(\cO/\varpi^s)}(\cC(U_pU^p,s))$, using the identification 
\[\cO/\varpi^{s}\otimes_{\cO/\varpi^{s}[U_p/V_p]}\cC(V_pU^p,s) 
\cong \cC(U_pU^p,s).\]
\end{rem}

For each $U$ and $s$, $\TT^S(U,s)$ is a finite $\cO$-algebra, since  $\cC(U,s)$ 
is perfect as a complex of $\cO/\varpi^s[K_0/U_p]$-modules. Moreover, 
the 
natural map $\TT^S(U,s) \rightarrow 	
\End_{\cO}(H_*(\cC(U,s)))$ has nilpotent kernel by~\cite[Lem.\ 2.5]{1409.7007}, and therefore $\TT^S(U,s)$ is 
a finite ring.

\begin{rem}\label{rem: O-Hecke algebras at finite level}
          Similarly, for each compact open normal subgroup $U_p$ of
        $K_0$, we can define  \[\TT^S(U_pU^p) =
          \invlim_{s}\TT^S(U_pU^p,s).\] Then
        $\TT^S(U_pU^p)$ is a finite $\cO$-algebra, and we have
        $\TT^S(U^p) = \invlim_{U_p}\TT^S(U_pU^p)$, equipped with the
        inverse limit topology (where each $\TT^S(U_pU^p)$ has its
        natural $p$-adic topology). \end{rem}

The big Hecke algebra $\TT^S(U^p)$ is naturally equipped with a map 
\[\TT^S(U^p) \rightarrow 
\End_{\cO[[K_0]]}(\widetilde{H}_i(X_{U^p},\cO))\]which 
commutes with the action of 
 $\prod_{v|p}G(F_v)$.\begin{lemma}\label{lem: big Hecke is 
 semilocal}
 The profinite $\cO$-algebra $\TT^S(U^p)$ is semilocal. Denote its finitely 
 many 
 maximal 
 ideals by $\m_1,\ldots,\m_r$ and let $J = J(\TT^S(U^p)) = \cap_{j=1}^r\m_j$ 
 denote the Jacobson radical. Then $\TT^S(U^p)$ is $J$-adically complete and 
 separated, and we have \[\TT^S(U^p) = \TT^S(U^p)_{\m_1}\times\cdots\times 
 \TT^S(U^p)_{\m_r}.\] For each maximal ideal $\m$ of $\TT^S(U^p)$, the 
 localisation $\TT^S(U^p)_{\m}$ is an $\m$-adically 
 complete and separated local ring with residue field a finite extension of $k$.
\end{lemma}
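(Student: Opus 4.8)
The plan is to exploit the presentation $\TT^S(U^p)=\invlim_{U_p}\TT^S(U_pU^p)$ (with $U_p$ running over compact open normal subgroups of $K_0$), in which every term is a \emph{finite} $\cO$-algebra and the transition maps are surjective, and to bound the number of maximal ideals along the system; granting that bound, the conclusions are essentially formal. First recall the finite-level picture: $\TT^S(U_pU^p)=\invlim_s\TT^S(U_pU^p,s)$ is an inverse limit of finite rings along surjections, hence a finite $\cO$-algebra, and as such it is semilocal, equals the product of its localisations at its finitely many maximal ideals, each such localisation being a local finite $\cO$-algebra with residue field a finite extension of $k$ which is complete and separated for its Jacobson-radical topology. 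Moreover, since $\cC(U_pU^p,s)$ is perfect of amplitude bounded by $\dim X_U+1$ (a quantity depending only on $n$ and $F$), the kernel of $\TT^S(U_pU^p,s)\to\End_\cO(H_*(\cC(U_pU^p,s)))$ is nilpotent of order bounded independently of $U_p$ and $s$; it follows that the maximal ideals of $\TT^S(U_pU^p)$ are precisely the maximal ideals of $\TT^S$ lying in the support of $H_*(X_{U_pU^p},\cO)$ as a $\TT^S$-module.

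The key step is a uniform bound $\#\Max(\TT^S(U_pU^p))\le r_0$, independent of $U_p$. I would obtain this from completed homology: by Emerton's spectral sequence (a form of Hochschild--Serre) $\Tor^{\cO[[K_0]]}_i(\widetilde{H}_j(X_{U^p},\cO),\cO[K_0/U_p])\Rightarrow H_{i+j}(X_{U_pU^p},\cO)$, localising at a maximal ideal of $\TT^S$ shows $\operatorname{Supp}_{\TT^S}H_*(X_{U_pU^p},\cO)\subseteq\operatorname{Supp}_{\TT^S}\widetilde{H}_*(X_{U^p},\cO)$. By the theorem of Calegari--Emerton, $\widetilde{H}_*(X_{U^p},\cO)$ is finitely generated over $\cO[[K_0]]$, hence over the Noetherian local ring $\cO[[K]]$ for $K\subseteq K_0$ a uniform open subgroup, so the image of $\TT^S$ in $\bigoplus_i\End_{\cO[[K]]}(\widetilde{H}_i(X_{U^p},\cO))$ is a finite $\cO[[K]]$-algebra and therefore has only finitely many maximal ideals, say $r_0$ of them. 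Since every maximal ideal of every $\TT^S(U_pU^p)$ pulls back from one of these, we get the desired bound.

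Now I would assemble the limit. Surjectivity of the transition maps gives injections $\Max(\TT^S(U_pU^p))\hookrightarrow\Max(\TT^S(U'_pU^p))$ for $U'_p\subseteq U_p$, so by the bound the cardinality stabilises, say at $r$, and beyond that point these injections are bijections; labelling the maximal ideals $\m_1(U_p),\dots,\m_r(U_p)$ compatibly, the transition maps respect the block decompositions, whence $\TT^S(U^p)=\prod_{i=1}^r T_i$ with $T_i:=\invlim_{U_p}\TT^S(U_pU^p)_{\m_i(U_p)}$. Each $T_i$ is local: a surjective local homomorphism of local rings induces an isomorphism of residue fields and identifies the source's maximal ideal with the preimage of the target's, so the $\m_i(U_p)$ form a compatible system with limit $\m(T_i)$, and any element of $T_i$ outside $\m(T_i)$ has all its components units, hence is a unit; the common residue field $T_i/\m(T_i)$ is finite over $k$. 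This exhibits $\TT^S(U^p)$ as semilocal with maximal ideals $\m_1,\dots,\m_r$, Jacobson radical $J=\prod_i\m(T_i)$, and $\TT^S(U^p)_{\m_i}=T_i$. For $J$-adic completeness and separatedness it is enough to treat each $T_i$: $\m(T_i)$ is open, and its image in each finite quotient $\TT^S(U_pU^p,s)_{\m_i}$ lies in that ring's maximal ideal, which is nilpotent; hence $\m(T_i)^n$ is contained in $\ker(T_i\to\TT^S(U_pU^p,s)_{\m_i})$ for $n$ large, so $\{\m(T_i)^n\}_n$ is cofinal among the defining open ideals of $T_i$, giving $T_i=\invlim_n T_i/\m(T_i)^n$ and $\bigcap_n\m(T_i)^n=0$.

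The main obstacle is the uniform bound on the number of maximal ideals: the remaining arguments are routine manipulations with inverse limits of finite rings, but this bound genuinely requires the finite generation of completed homology over the Iwasawa algebra together with the spectral sequence comparing completed homology to homology at finite level. (Alternatively, one can work directly with the action of $\TT^S(U^p)$ on $\bigoplus_i\widetilde{H}_i(X_{U^p},\cO)$, checking that its kernel is a nil ideal, using the uniform nilpotence at finite level, and then lifting idempotents along this nil ideal to obtain the product decomposition.)
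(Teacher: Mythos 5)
Your proof is correct in outline but takes a genuinely different route from the paper for the one non-routine step, namely the uniform bound on the number of maximal ideals across finite levels. The paper stays entirely at finite level: it fixes a pro-$p$ level $U_p$ and uses the descent spectral sequence over the \emph{finite} local group algebra $\cO/\varpi^s[U_p/V_p]$, namely $\Tor_i^{\cO/\varpi^s[U_p/V_p]}(k,H_j(\cC(U^pV_p,s)))\Rightarrow H_{i+j}(\cC(U^pU_p,1))$, together with Nakayama and the nilpotence of the kernel of $\TT^S(U,s)\to\End_\cO(H_*)$, to show that every $\TT^S(V_pU^p,s)\to\TT^S(U_pU^p,1)$ has kernel in the Jacobson radical; thus all finite-level Hecke algebras share the maximal ideals of the single finite ring $\TT^S(U_pU^p,1)$, and the limit is assembled exactly as you do. Your version replaces this with the Emerton spectral sequence down from completed homology plus the finite generation of $\widetilde{H}_*(X_{U^p},\cO)$ over $\cO[[K_0]]$. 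Both work; the paper's argument is more self-contained (it needs no input about completed homology and pins the maximal ideals down to the bottom level $(U_p,1)$), while yours is the Calegari--Emerton-style argument and generalises to any situation where one already knows finiteness of completed homology over the Iwasawa algebra.

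One step of yours needs rephrasing: the image $T$ of $\TT^S$ in $\bigoplus_i\End_{\cO[[K]]}(\widetilde{H}_i(X_{U^p},\cO))$ is not a ``finite $\cO[[K]]$-algebra'' --- $\cO[[K]]$ is non-commutative and $T$ is commutative, so this does not parse, and commutativity of $T$ with the $\cO[[K]]$-action only gives an embedding of $T$, as a topological $\cO$-module, into a finitely generated $\cO[[K]]$-module $M^{\oplus n}$. To conclude that $T$ is semilocal one should instead reduce modulo $\m_{\cO[[K]]}$: the image of $T$ in $\End_k(M/\m_{\cO[[K]]}M)$ is a finite commutative ring, and any $t\in T$ with $tM\subseteq\m_{\cO[[K]]}M$ satisfies $t^jM\subseteq\m_{\cO[[K]]}^jM$ (as $t$ commutes with $\cO[[K]]$), hence is topologically nilpotent and $1-st$ is invertible for all $s$; so the kernel of $T\to\End_k(M/\m_{\cO[[K]]}M)$ lies in the Jacobson radical and $T$ is semilocal. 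With that repair, and the (shared with the paper) mild care needed to see that the powers of $J$ are cofinal among the open ideals, your argument goes through.
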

\begin{proof}
 First we note that if $U_p$ is a pro-$p$ group and $V_p$ is a normal open 
 subgroup of $U_p$, then for each $s \ge 1$ the surjective map \[ 
 \TT^S(V_pU^p,s)\rightarrow \TT^S(U_pU^p,1)\] induces a bijection of maximal 
 ideals.
 Indeed, we have (by~\cite[Thm.\ 5.6.4]{weibel}) a spectral sequence of $\TT^S(V_pU^p,s)$-modules \[E^2_{i,j}:
 \mathrm{Tor}_i^{\cO/\varpi^s[U_p/V_p]}(k,H_j(\cC(U^pV_p,s))) \Rightarrow 
 H_{i+j}(\cC(U^pU_p,1)).\] Localising at a maximal ideal $\m$ of 
 $\TT^S(V_pU^p,s)$ and considering the largest $q$ such that 
 $H_q(\cC(U^pV_p,s))_{\m}$ is non-zero shows that 
 $\m$ is in the support of $H_q(\cC(U^pU_p,1))$, and therefore $\m$ is the 
 inverse image of a maximal ideal in $\TT^S(U_pU^p,1)$. (Here we have
 used that  $\TT^S(U,s) \rightarrow 	
\End_{\cO}(H_*(\cC(U,s)))$ has nilpotent kernel.)
 
 Now it is not hard to show that the maximal ideals of $\TT^S(U^p)$ are in 
 bijection with the maximal ideals of $\TT^S(U_pU^p,1)$. Indeed, we have shown 
 that for every open $V_p \triangleleft U_p$ and $s \ge 1$ the kernel of \[ 
 \TT^S(V_pU^p,s)\rightarrow \TT^S(U_pU^p,1)\] is contained in the Jacobson 
 radical 
 of $\TT^S(V_pU^p,s)$. If $x \in \TT^S(U^p)$ maps to a unit in 
 $\TT^S(V_pU^p,s)$ 
 for every open $V_p \triangleleft U_p$ and $s \ge 1$ then $x$ is a unit. We 
 deduce that the kernel of \[ 
 \TT^S(U^p)\rightarrow \TT^S(U_pU^p,1)\] is contained in the Jacobson radical 
 $J$ 
 of 
 $\TT^S(U^p)$, and it follows that $\TT^S(U^p)$ is semilocal.
 
 For every open $V_p \triangleleft U_p$ and $s \ge 1$ the image of $J$ in 
 $\TT^S(V_pU^p,s)$ is nilpotent. It follows that $\TT^S(U^p)$ is $J$-adically 
 complete and separated. The remainder of the lemma follows from \cite[Theorem 
 8.15]{MR1011461}.
\end{proof}

\subsection{Ultrafilters}\label{sec:ultra}

In this section we let $A$ be a commutative finite (cardinality) local ring of characteristic 
$p$, denote the maximal ideal of $A$ by $\m_A$, and let $k = A/\m_A$. We 
let $B$ be a finite (but possibly non-commutative) augmented $A$-algebra. 
Denote the augmentation ideal $\ker(B\rightarrow A)$ by $\ba$. The example we 
have in mind is $B = A[\Gamma]$ where 
$\Gamma$ is a finite group.

Given an index set $I$, we define $A_I = \prod_{i \in I}A$, and similarly 
$B_I = 
\prod_{i \in I}B$. $B_I$ is an augmented $A_I$-algebra, with augmentation ideal 
$\ba_I = \prod_{i \in I}\ba = \ker(B_I\rightarrow A_I)$. Note that $\ba_I$ is a 
finitely generated ideal of $B_I$, so $A_I$ is finitely presented as a 
$B_I$-module. More generally, if $\bb \subset B$ is a two-sided ideal which 
contains $\ba$, and $\bb_I = \prod_{i \in I}\bb$, then $B_I/{\bb_I} = 
(B/\bb)_I$ is finitely presented as a $B_I$-module.

\begin{remark}
	If $B = A[\Gamma]$ then we have $B_I = A_I[\Gamma]$.
\end{remark} 

\begin{lemma}\label{lem:ultrafilter ideals}
	$\Spec(A_I)$ can be naturally identified with the set of ultrafilters on 
	$I$.  We have $A_{I,x} =
	A$ for each $x \in \Spec(A_I)$. We also have $A_{I,x} \otimes_{A_I} B_I = 
	B$.
\end{lemma}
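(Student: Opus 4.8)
The plan is to reduce every assertion to a statement about the finite local ring $A$ itself, exploiting its finiteness through pigeonhole arguments. First I would describe $\Spec(A_I)$. Since $A$ is Artinian local, $\m_A$ is nilpotent, so the ideal $(\m_A)_I = \prod_{i\in I}\m_A$ of $A_I$ consists of nilpotent elements; hence it is contained in every prime, and $\Spec(A_I)$ is canonically homeomorphic to $\Spec(A_I/(\m_A)_I) = \Spec(k_I)$, where $k_I := \prod_{i\in I}k$, so I may replace $A$ by $k$. The idempotents of $k_I$ are exactly the elements $e_J$ for $J\subseteq I$ defined by $(e_J)_i = 1$ for $i\in J$ and $(e_J)_i = 0$ otherwise (a field has no idempotents besides $0,1$). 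To a prime $\p$ of $k_I$ I attach $\cF_\p = \{J\subseteq I : e_J\notin\p\}$; since $e_J e_{I\setminus J} = 0$ and $e_J + e_{I\setminus J} = 1$, primality of $\p$ forces exactly one of $e_J, e_{I\setminus J}$ into $\p$, and a routine check shows $\cF_\p$ is an ultrafilter. Conversely, to an ultrafilter $\cF$ I attach the ideal $\p_\cF = \{(c_i)\in k_I : \{i : c_i = 0\}\in\cF\}$, with preimage $\{(a_i)\in A_I : \{i : a_i\in\m_A\}\in\cF\}$ in $A_I$. Here the essential input is the finiteness of $k$: partitioning $I$ into the finitely many fibres of $i\mapsto c_i$, exactly one fibre lies in $\cF$, so modulo $\p_\cF$ every element of $k_I$ is congruent to the image of a single element of $k$; hence $k_I/\p_\cF\cong k$ is a field, so $\p_\cF$ is maximal, in particular prime. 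I would then check that $\p\mapsto\cF_\p$ and $\cF\mapsto\p_\cF$ are mutually inverse, and that this bijection carries the Zariski topology to the Stone topology of ultrafilters, giving the asserted natural identification.

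Next I would prove $A_{I,x} = A$. Writing $x = \p_\cF$, I claim the composite $A\to A_I\to A_{I,x}$, with first map the diagonal embedding, is an isomorphism. For injectivity: if $a\in A$ maps to $0$ in $A_{I,x}$, then for some $s = (s_i)\notin\p_\cF$ we have $s_i a = 0$ for every $i$, i.e.\ $s_i\in\Ann_A(a)\subseteq\m_A$ (as $A$ is local), so $\{i : s_i\in\m_A\} = I\in\cF$, contradicting $s\notin\p_\cF$ unless $a = 0$. For surjectivity: any element of $A_{I,x}$ has the form $y/s$ with $y = (y_i)\in A_I$ and $\{i : s_i\in A^\times\}\in\cF$; by finiteness of $A$ I may pass to a set $J\in\cF$ on which both $s$ and $y$ are constant, say $s_i = u\in A^\times$ and $y_i = b$ for $i\in J$; then $e_J\notin\p_\cF$, $y e_J = \diag(b)e_J$ and $s e_J = \diag(u)e_J$, and since $u$ is a unit it follows that $y/s$ is the image of $bu^{-1}\in A$ in $A_{I,x}$.

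Finally, for $A_{I,x}\otimes_{A_I}B_I = B$: since $A$ is Noetherian and $B$ is finite over $A$, $B$ is finitely presented as an $A$-module, so tensoring commutes with the product over $I$ and the natural map $A_I\otimes_A B = (\prod_{i\in I}A)\otimes_A B\to\prod_{i\in I}(A\otimes_A B) = \prod_{i\in I}B = B_I$, $(a_i)\otimes b\mapsto(a_i b)_i$, is an isomorphism; since $A$ maps into the centre of $B$ this is an isomorphism of $A_I$-algebras. Combining this with the previous step, which identifies $A_{I,x}$ with $A$ as an $A_I$-algebra via the diagonal, associativity of tensor products gives $A_{I,x}\otimes_{A_I}B_I\cong A_{I,x}\otimes_{A_I}(A_I\otimes_A B)\cong A_{I,x}\otimes_A B\cong A\otimes_A B = B$.

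I expect no genuine obstacle, as the content is elementary, but the step demanding the most care is the first: because $A$ need not be a domain one must pass to $k_I$ before speaking of primes, and the finiteness of $A$ (equivalently of $k$) is exactly what collapses the residue ring $k_I/\p_\cF$ to $k$; without finiteness $\Spec(A_I)$ would be strictly larger than the Stone space of $I$.
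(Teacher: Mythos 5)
Your argument is correct and follows the same route as the paper: reduce to $k_I$ via the nil kernel of $A_I\to k_I$, identify primes with ultrafilters by the same formula, and use finite presentation of $B$ over $A$ to get $B_I=A_I\otimes_A B$ and hence the last isomorphism. The only difference is that the paper delegates the field case of the ultrafilter bijection to \cite[Lemma 8.1]{scholze}, whereas you prove it inline via idempotents and the pigeonhole argument, and you also spell out $A_{I,x}=A$ directly rather than leaving it implicit; both of these fillings-in are sound.
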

\begin{proof}
	The bijection between ultrafilters and prime ideals is given
        by taking an ultrafilter
	$\gF$ to the ideal whose elements $(a_i)$ satisfy $\{i:a_i \in \m_A\}\in 
	\gF$. Since the map $A_I \rightarrow k_I$ has nilpotent kernel, the 
	fact that this gives a bijection follows from the case when $A$ is a field 
	\cite[Lemma 8.1]{scholze}.
	
For $x \in \Spec(A_I)$ the associated ultrafilter $\gF_x$ induces a map $B_I 
\rightarrow 
B$ by sending $(b_i)_{i \in I} \mapsto b$ where $b\in B$ is the unique
element with the property that $\{i: b_i = b\} \in \gF_x$. Since $B_I = A_I 
\otimes_A B$ (because $B$ is finitely presented as an $A$-module), this map 
induces an isomorphism $A_{I,x} 
\otimes_{A_I} B_I 
\cong B.$
\end{proof}

We have a natural inclusion $I \subset \Spec(A_I)$ given by taking the 
principal ultrafilter associated to an element of $I$. Given a point $x \in 
\Spec(A_I)\backslash I$  and a set of chain 
complexes of 
$B$-modules $\{\cC(i)\}_{i \in I}$, we define a chain complex of $B$-modules
\[\cC(\infty):= 
A_{I,x}\otimes_{A_I}\left(\prod_{i \in I}\cC(i)\right).\]

\begin{lemma}\label{patchingflat}
	Let $\{\cC(i)\}_{i \in I}$ be a set of chain complexes of flat 
	$B$-modules. Then $\prod_{i\in I}\cC(i)$ is a chain complex of flat 
	$B_I$-modules and $\cC(\infty)$ is a chain complex of flat $B$-modules.
\end{lemma}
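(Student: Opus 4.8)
The plan is to reduce everything to two facts: that products of flat modules over a coherent ring are flat, and that flatness is preserved under the base change $A_{I,x}\otimes_{A_I}(-)$. For the first point, I would argue as follows. Each $B_I$-module $\cC(i)_j$ (the degree $j$ term of $\cC(i)$) is flat over $B$, and I claim the product $\prod_{i\in I}\cC(i)_j$ is flat over $B_I$. Since $B$ is a finite $A$-algebra and $A$ is a finite ring, $B$ is both left and right Noetherian, hence coherent; the product ring $B_I=\prod_{i\in I}B$ then also has the property that arbitrary products of flat modules are flat — more precisely, I would check directly that for any finitely presented right $B_I$-module $N$ (equivalently $N=\prod_i N_i$ with each $N_i$ finitely presented over $B$, which holds because $B$ is Noetherian so the finite presentation data can be taken uniform via a product decomposition), the natural map $N\otimes_{B_I}\prod_i\cC(i)_j\to\prod_i(N_i\otimes_B\cC(i)_j)$ is an isomorphism, so that $\Tor_1^{B_I}$ against a finitely presented module vanishes, and conclude flatness by the usual criterion. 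This gives that $\prod_{i\in I}\cC(i)$ is a complex of flat $B_I$-modules.

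For the second point, by Lemma~\ref{lem:ultrafilter ideals} we have $A_{I,x}\otimes_{A_I}B_I\cong B$, so $A_{I,x}\otimes_{A_I}(-)$ applied to a $B_I$-module is the same as the base change along the ring map $B_I\to B$ induced by the ultrafilter $\gF_x$. Base change of a flat module along any ring homomorphism is flat: if $M$ is a flat $B_I$-module, then $B\otimes_{B_I}M$ is a flat $B$-module, since for a right $B$-module $N$ we have $N\otimes_B(B\otimes_{B_I}M)\cong N\otimes_{B_I}M$ and $N\otimes_{B_I}(-)$ is exact (here $N$ is regarded as a right $B_I$-module via $B_I\to B$, and exactness of $-\otimes_{B_I}M$ is precisely flatness of $M$). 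Applying this degreewise to the complex $\prod_{i\in I}\cC(i)$ shows that $\cC(\infty)=A_{I,x}\otimes_{A_I}\big(\prod_{i\in I}\cC(i)\big)$ is a chain complex of flat $B$-modules.

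I expect the only genuine subtlety to be the first step — checking that a product of flat $B$-modules is flat over $B_I$. This is where coherence of $B$ (equivalently, the fact that $B$ is Noetherian as a consequence of being module-finite over the finite ring $A$) is essential, together with the observation that a finitely presented $B_I$-module decomposes as a product of finitely presented $B$-modules in a way compatible with the tensor product, so that the relevant $\Tor$ computation commutes with the product over $I$. Everything else is a formal manipulation with base change of flat modules and requires no real work.
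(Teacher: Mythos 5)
Your proposal is correct, and its overall shape matches the paper's: first show $\prod_{i\in I}\cC(i)$ is a complex of flat $B_I$-modules, then observe that $\cC(\infty)$ is obtained by base change along $B_I\to A_{I,x}\otimes_{A_I}B_I\cong B$ (Lemma~\ref{lem:ultrafilter ideals}), which preserves flatness. The difference is in the first step: the paper simply cites Sweedler's theorem \cite[Thm.~1.13]{sweedler}, remarking that the relevant hypothesis (condition (d) there) holds automatically because $B$ is a finite ring, whereas you unpack a direct proof. Your argument is essentially the standard Chase-type computation adapted to the product ring: a finitely presented (or finitely generated) right $B_I$-module decomposes as $\prod_i N_i$ compatibly with presentations because matrices over $B_I$ are just $I$-tuples of matrices over $B$, the natural map $N\otimes_{B_I}\prod_i M_i\to\prod_i(N_i\otimes_B M_i)$ is then an isomorphism, and exactness of products plus flatness of each $\cC(i)_j$ over $B$ kills $\Tor_1^{B_I}$. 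This is exactly the content hiding behind the citation, and the finiteness of $B$ is used in the same way in both arguments (it guarantees the uniformity needed for the product decompositions; only finitely many isomorphism classes of relevant submodules of $B^n$ occur). One small elision: passing from the tensor-product isomorphism to $\Tor_1^{B_I}(N,\prod_i\cC(i)_j)=0$ requires the usual diagram chase with a finitely generated syzygy $K=\prod_i K_i$ (for which the comparison map is at least surjective), but this is routine given what you have set up. The trade-off is transparency versus brevity: your version is self-contained and makes clear exactly where finiteness of $B$ enters, while the paper's is a two-line appeal to the literature.
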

\begin{proof}
	The fact that $\prod_{i\in I}\cC(i)$ is a chain complex of flat 
	$B_I$-modules follows from \cite[Thm.~1.13]{sweedler} (condition (d) in 
	Sweedler's Theorem is automatically satisfied because $B$ is a finite ring). 
	We deduce immediately that the localisation $\cC(\infty)$ is also a chain 
	complex of flat $B$-modules.
\end{proof}
\begin{lemma}\label{patchingmodaug}
	Let $\{\cC(i)\}_{i \in I}$ be a set of chain complexes of $B$-modules. Let 
	$\bb \subset B$ be a two-sided ideal which contains $\ba$, and
        let $\{\overline{\cC}(i)\}_{i\in I} = \{(B/\bb)\otimes_B
        \cC(i)\}_{i\in I}$. Then we have a 
	natural 
	isomorphism \[(B/\bb)\otimes_B \cC(\infty) = \overline{\cC}(\infty).\]
\end{lemma}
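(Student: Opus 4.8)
The plan is to unwind both sides into a common expression by a chain of standard base-change identifications, the only substantive ingredient being that tensoring by a finitely presented module commutes with arbitrary direct products. I would start from Lemma~\ref{lem:ultrafilter ideals}, which gives a ring isomorphism $A_{I,x}\otimes_{A_I}B_I\cong B$. Writing $M:=\prod_{i\in I}\cC(i)$ for the coordinatewise left $B_I$-module, associativity of the tensor product yields
\[\cC(\infty)=A_{I,x}\otimes_{A_I}M=\bigl(A_{I,x}\otimes_{A_I}B_I\bigr)\otimes_{B_I}M=B\otimes_{B_I}M,\]
so $\cC(\infty)$ is simply the extension of scalars of $M$ along $B_I\to B$, and consequently
\[(B/\bb)\otimes_B\cC(\infty)=(B/\bb)\otimes_B B\otimes_{B_I}M=(B/\bb)\otimes_{B_I}M,\]
where $B/\bb$ is regarded as a right $B_I$-module through $B_I\to B\to B/\bb$. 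Any tuple with all entries in $\bb$ evaluates, under the ultrafilter map $B_I\to B$ from the proof of Lemma~\ref{lem:ultrafilter ideals}, to an element of $\bb$; hence $\bb_I$ maps into $\bb$, this $B_I$-module structure on $B/\bb$ factors through $(B/\bb)_I=B_I/\bb_I$, and
\[(B/\bb)\otimes_{B_I}M=(B/\bb)\otimes_{(B/\bb)_I}\bigl((B/\bb)_I\otimes_{B_I}M\bigr).\]

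Next I would compute $(B/\bb)_I\otimes_{B_I}M$. Regarding each $\cC(i)$ as a $B_I$-module via the $i$-th projection $B_I\to B$, we have $(B/\bb)_I\otimes_{B_I}\cC(i)=\cC(i)/\bb_I\cC(i)=\cC(i)/\bb\cC(i)=\overline{\cC}(i)$, and $M$ is precisely the product of the $\cC(i)$ equipped with these structures. As noted above, $(B/\bb)_I=B_I/\bb_I$ is finitely presented as a $B_I$-module, so tensoring with it commutes with the product, giving
\[(B/\bb)_I\otimes_{B_I}M\cong\prod_{i\in I}\bigl((B/\bb)_I\otimes_{B_I}\cC(i)\bigr)=\prod_{i\in I}\overline{\cC}(i).\]
Finally, $B/\bb$ is again finite over $A$, hence finitely presented, so the argument proving Lemma~\ref{lem:ultrafilter ideals} applies with $B$ replaced by $B/\bb$ to give $A_{I,x}\otimes_{A_I}(B/\bb)_I\cong B/\bb$ as $(B/\bb)_I$-algebras; base change along $A_I\to A_{I,x}$ then identifies $(B/\bb)\otimes_{(B/\bb)_I}(-)$ with $A_{I,x}\otimes_{A_I}(-)$ on $(B/\bb)_I$-modules, and stringing the displays together gives
\[(B/\bb)\otimes_B\cC(\infty)\cong(B/\bb)\otimes_{(B/\bb)_I}\prod_{i\in I}\overline{\cC}(i)\cong A_{I,x}\otimes_{A_I}\prod_{i\in I}\overline{\cC}(i)=\overline{\cC}(\infty).\]

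The only real care needed is to keep straight the left/right module structures over the noncommutative rings $B$, $B_I$ and $(B/\bb)_I$, and to check that each identification above is natural in the family $\{\cC(i)\}_{i\in I}$, so that it is compatible with the differentials and is therefore an isomorphism of complexes and not merely of graded modules. These points are routine once the two finite-presentation facts invoked above are in place, and I do not expect a genuine obstacle.
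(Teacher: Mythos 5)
Your argument is correct and follows essentially the same route as the paper's proof: both reduce to rewriting $(B/\bb)\otimes_B\cC(\infty)$ as $(B/\bb)_{I,x}\otimes_{(B/\bb)_I}\bigl((B/\bb)_I\otimes_{B_I}\prod_{i}\cC(i)\bigr)$ and then invoking the finite presentation of $(B/\bb)_I$ as a right $B_I$-module to commute the tensor product with the direct product. You have simply spelled out in more detail the base-change identifications that the paper compresses into a single display.
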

\begin{proof}
	We have \[(B/\bb)\otimes_B \cC(\infty) = 
	(B/\bb)_{I,x}\otimes_{(B/\bb)_I}(B/\bb)_I\otimes_{B_I}\prod_{i \in 
	I}\cC(i).\] Since $(B/\bb)_I$ is 
	finitely presented as a (right) $B_I$-module, we have (by~\cite[Ex.\ I.\S 2.9]{MR1727221})
	\[(B/\bb)_I\otimes_{B_I}\prod_{i \in I}\cC(i) = \prod_{i \in 
	I}\overline{\cC}(i)\] and we obtain the desired equality.
\end{proof}
In the rest of this subsection, we are going to assume that $B$ is a 
\emph{local} $A$-algebra. The example we have in mind is $B= A[\Gamma]$ where 
$\Gamma$ is a finite $p$-group.
\begin{defn}
	Suppose $B$ is a \emph{local} $A$-algebra and fix a set $\{\cC(i)\}_{i \in 
	I}$ of perfect chain complexes of 
	$B$-modules. For each $i$ fix a minimal resolution $\cF(i)$ of
        $\cC(i)$. Suppose we have integers $a \le b$ and $D \ge 0$. We 
	say that the set $\{\cC(i)\}_{i \in I}$ has \emph{complexity bounded by} 
	$(a,b,D)$ if the minimal complexes 
	$\cF(i)$ are all concentrated in degrees between $a$ and $b$ and every term 
	in these complexes has rank $\le D$. 
	
	If there exists some $a,b,D$ such that $\{\cC(i)\}_{i \in I}$ has 
	complexity bounded by $(a,b,D)$, we say that $\{\cC(i)\}_{i \in I}$ has 
	\emph{bounded complexity}.
\end{defn}

\begin{lem}\label{productperf}Suppose $B$ is a local $A$-algebra, and let 
$\{\cC(i)\}_{i \in I}$ be a set of perfect 
chain complexes of 
	$B$-modules with bounded complexity. Then the complex $\prod_{i \in 
	I}\cC(i)$ is a perfect complex of $B_I$-modules. 
\end{lem}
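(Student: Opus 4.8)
The plan is to pass to minimal resolutions and then exploit the fact that $A_I$, hence $B_I = A_I \otimes_A B$, decomposes as a finite product of rings once we stratify $I$ by the ranks appearing in those resolutions.

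First I would fix, for each $i \in I$, a minimal resolution $\cF(i) \to \cC(i)$ as in the definition preceding the statement; by the bounded complexity hypothesis every $\cF(i)$ is concentrated in degrees $a \le j \le b$, with each term $\cF(i)_j$ a finite free $B$-module of rank $r_j(i) \le D$. Since $\prod_{i\in I}$ is exact on complexes of modules and commutes with homology (products of module maps commute with kernels, images and cokernels), the product map $\prod_{i\in I}\cF(i) \to \prod_{i\in I}\cC(i)$ is a quasi-isomorphism of complexes of $B_I$-modules whose source is concentrated in degrees $[a,b]$. So it suffices to show that each $\cF_j := \prod_{i\in I}\cF(i)_j$ is a finitely generated projective $B_I$-module: then $\prod_{i\in I}\cF(i)$ is a bounded complex of finite projective $B_I$-modules admitting a quasi-isomorphism to $\prod_{i\in I}\cC(i)$, which is exactly the assertion that the latter is perfect.

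To handle $\cF_j$, I would fix $j$ and partition $I = \bigsqcup_{m=0}^{D} I_m$ where $I_m = \{i \in I : r_j(i) = m\}$. This finite disjoint decomposition of the index set induces ring decompositions $A_I = \prod_{m=0}^{D} A_{I_m}$, and hence $B_I = A_I \otimes_A B = \prod_{m=0}^{D} B_{I_m}$; in particular, via the associated central idempotents, each $B_{I_m}$ is a direct summand of $B_I$ as a $B_I$-module, so any finite free module over $B_{I_m}$ is a finitely generated projective $B_I$-module. Choosing a $B$-basis of $\cF(i)_j$ for each $i$ then gives $\cF_j \cong \prod_{m=0}^{D}\big(\prod_{i\in I_m}\cF(i)_j\big) \cong \bigoplus_{m=0}^{D}(B_{I_m})^{m}$, a finite direct sum of finitely generated projective $B_I$-modules, hence finitely generated projective over $B_I$, as required.

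I expect the only genuinely non-formal point to be the decomposition $B_I = \prod_m B_{I_m}$ together with the observation that it turns free modules over the factors into projective $B_I$-modules — this is what absorbs the non-constancy of the ranks $r_j(i)$; the remaining steps are bookkeeping. I would also note that the local hypothesis on $B$ is used only to invoke Proposition~\ref{prop: minimal resolutions exist} for the existence of the $\cF(i)$, which applies because $B$ is a finite (hence Noetherian) local ring and each perfect complex $\cC(i)$ has finitely generated, bounded homology.
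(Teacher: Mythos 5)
Your argument is correct and is essentially the paper's own proof: both pass to the minimal resolutions $\cF(i)$, use exactness of products to reduce to showing that $\prod_{i\in I}\cF(i)$ is a bounded complex of finite projective $B_I$-modules, and then stratify $I$ by the rank in each degree so that the product becomes a finite direct sum of modules of the form $B_{I_m}^{m}$, each a direct summand of a finite free $B_I$-module. The only cosmetic difference is that you phrase the summand claim via the central idempotents of the decomposition $B_I=\prod_m B_{I_m}$, which is the same observation the paper makes implicitly.
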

\begin{proof}
	Fix a minimal resolution $\cF(i)$ of each perfect complex $\cC(i)$. Since 
	products are exact in the category of Abelian groups, it suffices to check 
	that the complex $\prod_{i \in I}\cF(i)$ is a bounded complex of finite 
	projective $B_I$-modules. Boundedness follows immediately from the bounded 
	complexity assumption. It remains to show that if we have a set $\{F_i\}_{i \in 
	I}$ of finite free $B$-modules with ranks all $\le D$, then the product 
	$\prod_{i \in I}F_i$ is a finite projective $B_I$-module.
	
	We have a decomposition $I = \coprod_{d=0}^D I_d$ such that $F_i \cong B^d$ 
	for $i \in I_d$. Then $M_d \cong \prod_{i \in I_d} B^d \cong B_{I_d}^d$ is 
	a finite free $B_{I_d}$-module. Each $M_d$ is a finite projective 
	$B_I$-module (they are direct summands of finite free modules), and we have 
	\[\prod_{i \in I}F_i = 
	\bigoplus_{d = 0}^D M_d,\] so $\prod_{i \in I}F_i$ is a finite projective 
	$B_I$-module, as required.
\end{proof}

\begin{cor}
  Let $x \in \Spec(A_I)\backslash I$ and suppose that $\{\cC(i)\}_{i \in I}$ is a set of
  perfect chain complexes of $B$-modules with
  bounded complexity. Then $\cC(\infty)$ is a perfect
  complex of $B$-modules.
\end{cor}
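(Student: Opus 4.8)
The plan is to deduce this directly from Lemma~\ref{productperf} together with the flatness of the localisation map $A_I \to A_{I,x}$. Fix, as in the definition of bounded complexity, a minimal resolution $\cF(i) \to \cC(i)$ for each $i \in I$. The proof of Lemma~\ref{productperf} shows that $P^\bullet := \prod_{i\in I}\cF(i)$ is a bounded complex of finite projective $B_I$-modules, and since products are exact in the category of abelian groups the induced map $\prod_{i\in I}\cF(i) \to \prod_{i\in I}\cC(i)$ is a quasi-isomorphism. Thus $\prod_{i\in I}\cC(i)$ is represented in $D(B_I)$ by the bounded complex of finite projectives $P^\bullet$.

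Next I would base change to the stalk at $x$. Since $A_{I,x}$ is a localisation of $A_I$ it is flat over $A_I$, so the functor $A_{I,x}\otimes_{A_I}-$ is exact; applying it to the quasi-isomorphism above shows that
\[A_{I,x}\otimes_{A_I}P^\bullet \longrightarrow A_{I,x}\otimes_{A_I}\Bigl(\prod_{i\in I}\cC(i)\Bigr) = \cC(\infty)\]
is a quasi-isomorphism. It therefore suffices to check that $A_{I,x}\otimes_{A_I}P^\bullet$ is a bounded complex of finite projective $B$-modules. Boundedness is immediate, and each term of $P^\bullet$ is a finite projective $B_I$-module, hence a direct summand of some $B_I^N$; applying $A_{I,x}\otimes_{A_I}-$ and using the identification $A_{I,x}\otimes_{A_I}B_I \cong B$ from Lemma~\ref{lem:ultrafilter ideals}, each such term maps to a direct summand of $B^N$, i.e.\ to a finite projective $B$-module. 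This exhibits $\cC(\infty)$ as an object of $D(B)$ isomorphic to a bounded complex of finite projective $B$-modules, so $\cC(\infty)$ is a perfect complex of $B$-modules.

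I do not expect a serious obstacle here: the only point requiring any care is that base change along $A_I \to A_{I,x}$ should preserve the relevant quasi-isomorphism, which is precisely why the argument passes through the explicit finite-projective representative $P^\bullet$ rather than reasoning purely in the derived category — flatness of localisation then does the rest, and the hypothesis $x \in \Spec(A_I)\backslash I$ is needed only so that $\cC(\infty)$ is defined. One could equally avoid minimal resolutions altogether and simply invoke that $\prod_{i\in I}\cC(i)$ is perfect over $B_I$ by Lemma~\ref{productperf}, hence isomorphic in $D(B_I)$ to some bounded complex of finite projectives, and then run the same flat base change argument verbatim.
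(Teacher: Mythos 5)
Your argument is correct and is exactly the route the paper takes: the paper's proof simply cites Lemma~\ref{productperf} (perfectness of $\prod_{i\in I}\cC(i)$ over $B_I$) and Lemma~\ref{lem:ultrafilter ideals} (the identification $A_{I,x}\otimes_{A_I}B_I\cong B$), and your write-up is a careful unpacking of that, with the flat base change along $A_I\to A_{I,x}$ made explicit. Your closing remark that one can bypass the minimal resolutions and just invoke Lemma~\ref{productperf} directly is precisely the paper's one-line proof.
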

\begin{proof}
  This follows from Lemmas~\ref{lem:ultrafilter
    ideals} and~ \ref{productperf}.
\end{proof}

\begin{rem}\label{rem: patched minimal resolutions}
  In fact there is another way of phrasing the proof that this complex
  is perfect. If we fix $a,b$ and $D$ then there are finitely many
  isomorphism classes of minimal complex with complexity bounded by
  $(a,b,D)$ (since $B$ is a finite ring). Let
  $x \in \Spec(A_I)\backslash I$, corresponding to the non-principal
  ultrafilter $\gF$ on $I$. Then there is an $I' \in \gF$ such that
  the minimal resolutions of $\cC(i)$ are isomorphic for all
  $i \in I'$. We can therefore take a single minimal complex
  $\cF(\infty)$ which is a minimal resolution of $\cC(i)$ for all
  $i \in I'$. We then have a quasi-isomorphism of complexes of
  $B_{I'}$-modules:
  \[A_{I'}\otimes_A\cF(\infty) \rightarrow
    A_{I'}\otimes_{A_I}\left(\prod_{i \in I}\cC(i)\right) = \prod_{i
      \in I'}\cC(i)\] which induces a quasi-isomorphism
  \[\cF(\infty) \rightarrow \cC(\infty),\] so that~$\cF(\infty)$ is a
  minimal resolution of~$\cC(\infty)$.
\end{rem}

\section{Patching II: Galois representations and Taylor--Wiles primes}\label{sec:patchingII}
\subsection{Deformation theory}
We fix a continuous
absolutely irreducible representation $\rhobar:G_F\to\GL_n(k)$. We assume
from now on that
$p>n\ge 2$. Fix also a continuous character $\mu:G_F\to\cO^\times$
lifting $\det\rhobar$, and a finite set of finite places $S$ of~$F$,
which contains the set~$S_p$ of places of~$F$ lying over~$p$, as well
as the places at which~$\rhobar$ or~$\mu$ are ramified.

For each $v\in S$, we fix a ring $\Lambda_v\in\CNL_\cO$, let
$\cD_v^\square:\CNL_{\Lambda_v}\to\Sets$ be the functor associating
to~$R\in\CNL_{\Lambda_v}$ the set of all continuous liftings of
$\rhobar|_{G_{F_v}}$ to~$\GL_n(R)$ which have
determinant~$\mu|_{G_{F_v}}$. This is represented by the universal
lifting ring $R_v\in\CNL_{\Lambda_v}$. We let $\Lambda = 
\widehat{\bigotimes}_{v \in S, \cO}\Lambda_v \in \CNL_\cO$.

Then as in~\cite[\S 4]{1409.7007} we have the following notions.
\begin{itemize}
\item For $v\in S$, a \emph{local deformation problem} for
  $\rhobar|_{G_{F_v}}$ is a subfunctor $\cD_v\subset \cD_v^\square$
  which is stable under conjugation by elements of $\ker(\GL_n(R)\to\GL_n(k))$, and is represented by a
  quotient~$\Rbar_v$ of~$R_v^\square$.
\item A \emph{global deformation problem} is a
  tuple \[\cS=(\rhobar,\mu,S,\{\Lambda_v\}_{v\in S},\{\cD_v\}_{v\in
    S})\]consisting of the objects defined above.
\item If $R\in\CNL_\Lambda$, then a lifting of $\rhobar$ to a
  continuous homomorphism $\rho:G_F\to\GL_n(R)$ is of type~$\cS$ if it
  is unramified outside~$S$, has determinant~$\mu$, and for each $v\in
  S$, $\rho|_{G_{F_v}}$ is in $\cD_v(R)$.
\item We say that two liftings are \emph{strictly equivalent} if they
  are conjugate by an element of $\ker(\GL_n(R)\to\GL_n(k))$.
\item If $T\subset S$ and $R\in\CNL_\Lambda$, then a \emph{$T$-framed
    lifting of $\rhobar$ to~$R$} is a tuple $(\rho,\{\alpha_v\}_{v\in
    T})$ where $\rho$ is a lifting of $\rhobar$ to a
  continuous homomorphism $\rho:G_F\to\GL_n(R)$, and each $\alpha_v$
  is an element of $\ker(\GL_n(R)\to\GL_n(k))$. Two $T$-framed
  liftings $(\rho,\{\alpha_v\}_{v\in
    T}), (\rho',\{\alpha'_v\}_{v\in
    T})$ are \emph{strictly equivalent} if there is an element
  $a\in\ker(\GL_n(R)\to\GL_n(k))$ such that $\rho'=a\rho a^{-1}$ and
  each $\alpha'_v=a\alpha_v$.
\item The functors of liftings of type~$\cS$, strict equivalences of
  liftings of type~$\cS$, and strict equivalence classes of $T$-framed
  liftings of type~$\cS$, are representable by objects
  $R^\square_\cS$, $R_\cS$, $R^T_\cS$ respectively of
  $\CNL_\Lambda$. (See~\cite[Thm.\ 4.5]{1409.7007}.)
\end{itemize}

Write~$\Lambda_T:=\widehat{\otimes}_{v\in T,\cO}\Lambda_v$. For each $v \in S$, let $\Rbar_v \in \CNL_{\Lambda_v}$ denote the
representing object of $\cD_v$, and write
$R_\cS^{T,\loc} := \widehat{\otimes}_{v \in T,\cO} \Rbar_v$. The natural
transformation
$(\rho, \{ \alpha_v \}_{v \in T}) \mapsto ( \alpha_v^{-1}
\rho|_{G_{F_v}} \alpha_v)_{v \in T}$
induces a canonical homomorphism of $\Lambda_T$-algebras
$R_\cS^{T,\loc} \to R_\cS^T$.

\subsection{Enormous image}\label{subsec: enormous}Let
$H\subset\GL_n(k)$  be a subgroup which acts irreducibly on the
natural representation. We assume that~$k$ is chosen large enough to
contain all eigenvalues of all elements of~$H$.
\begin{defn}\label{defn:enormous image}
We say that $H$ is enormous if it satisfies the following conditions:
\begin{enumerate}
\item $H$ has no non-trivial $p$-power order quotient.
\item $H^0(H, \ad^0) = H^1(H, \ad^0) = 0$ (for the adjoint action of $H$).
\item For all simple $k[H]$-submodules $W \subset \ad^0$, there is an
  element $h \in H$ with $n$ distinct eigenvalues and $\alpha \in k$
  such that $\alpha$ is an eigenvalue of $h$ and
  $\tr e_{h, \alpha} W \neq 0$, where $e_{h, \alpha} \in M_n(k) = \ad$
  denotes the unique $h$-equivariant projection onto the
  $\alpha$-eigenspace of $h$.
\end{enumerate}
\end{defn}

\begin{rem}
  \label{rem: big adequate enormous}By definition, an enormous
  subgroup is big in the sense of~\cite[Defn.\ 2.5.1]{cht}, and thus
  adequate in the sense of~\cite[Defn.\ 2.3]{jack}. Indeed, the only
  differences between these notions is that in the definition of big,
  the condition that $h$ has $n$ distinct eigenvalues is relaxed to
  demanding that the generalised eigenspace of $\alpha$ is
  one-dimensional, and in the definition of adequate, it is further
  relaxed to ask only that $\alpha$ is an eigenvalue of~$h$ (but the
  definition of $e_{h, \alpha}$ is now the projection onto the
  generalised eigenspace for~$\alpha$).
\end{rem}
\begin{lem}
  \label{lem: big adequate enormous n=2}If $n=2$, the notions of
  enormous, big and adequate are all equivalent. In particular, if
   $H$ acts irreducibly on $k^2$, then $H$ is enormous unless $p=3$ or
   $p=5$,
   and the image of $H$ in $\PGL_2(k)$ is conjugate to $\PSL_2(\Fp)$.
\end{lem}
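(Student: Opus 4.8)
The plan is to extract the equivalence of the three notions from Remark~\ref{rem: big adequate enormous}, and then, for the ``in particular'', to run through Dickson's classification of the finite subgroups of $\PGL_2$ together with some standard facts about their low-degree cohomology and abelianisations. For the equivalence the key point is that when $n=2$ the eigenvalue conditions in the definitions of enormous, big and adequate all coincide --- by Remark~\ref{rem: big adequate enormous} these are the only conditions that differ. Indeed, an element $h\in\GL_2(k)$ whose generalised $\alpha$-eigenspace is one-dimensional automatically has two distinct eigenvalues, the complementary generalised eigenspace being one-dimensional too; so the ``big'' condition forces $h$ to be regular semisimple with $e_{h,\alpha}$ a rank-one projection. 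And if $\alpha$ is merely \emph{an} eigenvalue of $h$ with $\tr(e_{h,\alpha}W)\neq 0$ for some $W\subseteq\ad^0$ (the ``adequate'' condition), then $h$ cannot have $\alpha$ as its only eigenvalue: otherwise $e_{h,\alpha}$ would be the identity matrix and $\tr(e_{h,\alpha}w)=\tr(w)=0$ for every $w\in W$. In both cases one recovers exactly condition~(3) of Definition~\ref{defn:enormous image}. Since the remaining defining conditions agree (``$H$ has no nontrivial $p$-power quotient'' being the same as $H^1(H,k)=0$), all three notions coincide for $n=2$, and it remains to prove the ``in particular'' for enormous subgroups.

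So suppose $H\subseteq\GL_2(k)$ acts irreducibly on $k^2$; the running assumption that $k$ contains every eigenvalue of every element of $H$ forces this action to be absolutely irreducible, so $H^0(H,\ad^0)=0$ by Schur's lemma (as $p$ is odd, $\ad=\ad^0\oplus k$). Write $\overline{H}$ for the image of $H$ in $\PGL_2(k)$. Since $k$ has characteristic $p$, the scalars in $H$ have order prime to $p$, so $H^1(H,\ad^0)=H^1(\overline{H},\ad^0)$, and $H$ has a nontrivial $p$-power quotient if and only if $\overline{H}$ does. By Dickson's classification, and since $\overline{H}$ is irreducible (so, in particular, has no nontrivial normal unipotent subgroup), $\overline{H}$ is dihedral of order prime to $p$, is isomorphic to $A_4$, $S_4$ or $A_5$, or satisfies $\PSL_2(\F_{p^r})\subseteq\overline{H}\subseteq\PGL_2(\F_{p^r})$ for some $r\geq 1$. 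I would first dispose of condition~(3): for $\overline{H}$ dihedral, $\ad^0$ has at most two simple summands --- the line $k\cdot\diag(1,-1)$ and the span of $e_{12},e_{21}$ --- which one tests respectively against a diagonal regular semisimple element and against a reflection; in all other cases $\ad^0$ is an irreducible $\overline{H}$-module and any regular semisimple element of $H$ works, since a rank-one projection never lies in $k\cdot\Id=(\ad^0)^{\perp}$ for the trace form. Thus only conditions~(1) and~(2) can fail.

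Both are automatic when $p\nmid\#\overline{H}$, so we are left with $\overline{H}\cong A_5$ and $p=3$ --- where $A_5$ is simple and $\ad^0|_{A_5}$ is a $3$-dimensional simple module, hence projective in characteristic $3$ (its dimension being the full $3$-part of $\#A_5=60$), so that $H^1(\overline{H},\ad^0)=0$ and there is no failure --- and the case $\PSL_2(\F_{p^r})\subseteq\overline{H}\subseteq\PGL_2(\F_{p^r})$. In this last case $\PSL_2(\F_{p^r})$ is simple for $p^r\geq 4$ and $\PGL_2(\F_{p^r})/\PSL_2(\F_{p^r})$ has order prime to $p$, so condition~(1) holds unless $\overline{H}\cong\PSL_2(\F_3)\cong A_4$, which fails it because $A_4^{\ab}\cong\ZZ/3$; and condition~(2), namely $H^1(\overline{H},\mathfrak{sl}_2)=0$, holds --- by the standard computation of the modular cohomology of $\SL_2$ --- in all cases except $\overline{H}\cong\PSL_2(\F_5)$ with $p=5$, where $H^1(\SL_2(\F_5),\mathfrak{sl}_2)\cong k$. (At $p^r=3$ with $\overline{H}\cong\PGL_2(\F_3)\cong S_4$ one has $S_4^{\ab}\cong\ZZ/2$ and, $\ad^0$ again being $3$-dimensional simple and hence projective mod $3$, $H^1=0$: not an exception.) Collecting cases, $H$ is enormous unless $p=3$ and $\overline{H}$ is conjugate to $\PSL_2(\F_3)$, or $p=5$ and $\overline{H}$ is conjugate to $\PSL_2(\F_5)$.

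The step I expect to be the main obstacle is the last one: identifying exactly the primes $p$ at which $H^1(\SL_2(\F_q),\mathfrak{sl}_2)$ (and its $\PGL_2(\F_q)$-analogue) fails to vanish --- this is precisely the input that singles out $p=5$ --- while everything else is a routine, if somewhat lengthy, pass through Dickson's list and the abelianisations of $A_4$, $S_4$, $A_5$ and $\PGL_2$. Alternatively, the entire ``in particular'' can be quoted from the classification of adequate subgroups of $\GL_2$ in \cite{jack} (cf.\ also \cite{cht} for ``big'').
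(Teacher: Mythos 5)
Your argument for the equivalence of the three notions is the same as the paper's: when $n=2$, an eigenvalue $\alpha$ of $h$ with $\tr(e_{h,\alpha}W)\neq 0$ for $W\subseteq\ad^0$ forces $h$ to have distinct eigenvalues, since otherwise $e_{h,\alpha}=1$ and the trace pairing with $\ad^0$ vanishes; this is exactly the proof in the text. Where you diverge is the ``in particular'': the paper disposes of it in one line by citing \cite[Prop.\ A.2.1]{blggu2}, whereas you reprove the classification from scratch via Dickson's theorem (while also noting the citation as an alternative). Your route is correct and has the virtue of making visible exactly which of conditions (1)--(3) fails in the exceptional cases ($A_4^{\ab}\cong\ZZ/3$ at $p=3$; $H^1(\PSL_2(\FF_5),\ad^0)\neq 0$ at $p=5$), but be aware of two places where the sketch is doing less than it appears: (i) in the dihedral case the reflection you test a one-dimensional off-diagonal submodule $W$ against must be chosen depending on $W$ --- for the Klein-four image each of the three lines of $\ad^0$ is detected by the element of $\overline{H}$ whose traceless part spans it, and a wrongly chosen reflection gives trace zero; and (ii) the assertion that $H^1(\PSL_2(\FF_{p^r}),\ad^0)$ (and its $\PGL_2$-variant) vanishes for all $p\geq 3$ and $r\geq 1$ outside $(p,r)\in\{(3,1),(5,1)\}$ is itself the hard input (Cline--Parshall--Scott), so you have not really avoided a citation, only relocated it. Given that, the paper's choice to quote \cite{blggu2} wholesale is the more economical option, but your version is a legitimate and complete alternative once those two points are pinned down.
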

\begin{proof}
  The second statement follows from the first statement
  and~\cite[Prop.\ A.2.1]{blggu2}. By Remark~\ref{rem: big adequate
    enormous}, it is therefore enough to show that
if we have a simple  $k[H]$-submodule $W \subset \ad^0$ and an element
$h\in H$  with an eigenvalue~$\alpha$ such that   $\tr e_{h, \alpha} W
\neq 0$, then $h$ necessarily has distinct eigenvalues. If not, then
$e_{h,\alpha}=1$ by definition (as $e_{h,\alpha}$ is projection onto
the generalised eigenspace for~$\alpha$), which is a contradiction as $W \subset \ad^0$.
\end{proof}
We now give two examples of classes of enormous subgroups
of~$\GL_n(k)$ when~$n>2$, following~\cite[\S 2.5]{cht} (which shows
that the same groups are big).

\begin{lem}
  \label{SLn big}If $n>2$ and there is a subfield $k'\subset k$ such that
  $k^\times\GL_n(k')\supset H\supset\SL_n(k')$, then $H$ is enormous.
\end{lem}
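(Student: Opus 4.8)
The plan is to follow \cite[\S 2.5]{cht}, where exactly these subgroups $H$ are shown to be big, and then to strengthen the verification of condition~(3) of Definition~\ref{defn:enormous image}. First note that $H$ acts irreducibly on $k^n$ (as the notion of ``enormous'' requires), since $\SL_n(k')$ already does: the standard representation of $\SL_n$ is $p$-restricted and rational over the prime field, hence restricts absolutely irreducibly to $\SL_n(k')$, so $k^n=k\otimes_{k'}(k')^n$ is $k[\SL_n(k')]$-simple. Now by Remark~\ref{rem: big adequate enormous} bigness contains conditions~(1) and~(2) of Definition~\ref{defn:enormous image} verbatim (together with a weakening of~(3)), so for those two conditions I would cite \emph{op.\ cit.}; concretely, $[H:\SL_n(k')]$ divides $[k^\times\GL_n(k'):\SL_n(k')]=|k^\times|$, which is prime to $p$, and $\SL_n(k')$ is perfect since $n\ge 3$, so $H$ has no nontrivial $p$-power quotient, giving~(1); and $H^0(H,\ad^0)=H^1(H,\ad^0)=0$ follows, via this prime-to-$p$ index, from the corresponding vanishing for $\SL_n(k')$ acting on $\mathfrak{sl}_n(k)$ established in \cite[\S 2.5]{cht} (where $p>n$ is used).

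For condition~(3), the key preliminary point---again from \cite[\S 2.5]{cht}, and again using $p>n$ so that $p\nmid n$---is that $\ad^0=\mathfrak{sl}_n(k)$ is already \emph{simple} as a $k[\SL_n(k')]$-module: $\mathfrak{sl}_n=L(\varpi_1+\varpi_{n-1})$ is a $p$-restricted irreducible algebraic representation of $\SL_n$ rational over the prime field, so its restriction to $\SL_n(k')$ is absolutely irreducible, whence $\mathfrak{sl}_n(k)=k\otimes_{k'}\mathfrak{sl}_n(k')$ is $k[\SL_n(k')]$-simple. Since $\SL_n(k')\subseteq H$, the only simple $k[H]$-submodule of $\ad^0$ is $\ad^0$ itself, and so it suffices to produce a single $h\in H$ with $n$ distinct eigenvalues and an eigenvalue $\alpha\in k$ for which $\tr(e_{h,\alpha}\,\ad^0)\ne 0$.

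I would construct such an $h$ inside $\SL_n(k')\subseteq H$. Let $k''/k'$ be the extension of degree $n-1$, choose a generator $\beta$ of $k''$ over $k'$, write $M_\beta\in\GL_{n-1}(k')$ for the matrix of multiplication by $\beta$ on $k''\cong(k')^{n-1}$, put $\alpha:=\det(M_\beta)^{-1}=\Nm_{k''/k'}(\beta)^{-1}\in(k')^\times$, and let $h\in\SL_n(k')$ be the block-diagonal matrix with diagonal blocks $\alpha$ (a $1\times 1$ block) and $M_\beta$. The eigenvalues of $h$ are $\alpha$ together with the $n-1$ Galois conjugates of $\beta$; these conjugates are pairwise distinct and none lies in $k'$ (because $n-1\ge 2$), so $h$ has $n$ distinct eigenvalues, one of which, $\alpha$, lies in $k'\subseteq k$. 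Since $\alpha$ is not an eigenvalue of $M_\beta$, the $\alpha$-eigenspace of $h$ is the first coordinate line, so $e_{h,\alpha}=E_{11}$ and $\tr(e_{h,\alpha}X)=X_{11}$; taking $X=E_{11}-E_{22}\in\ad^0$ gives $\tr(e_{h,\alpha}X)=1\ne 0$. This verifies~(3) and completes the proof.

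I do not expect a real obstacle. The only inputs requiring care are the two representation-theoretic facts about $\SL_n(k')$ acting on $\mathfrak{sl}_n(k)$---simplicity, and the vanishing of $H^1$---and both are provided by \cite[\S 2.5]{cht}; the hypotheses $n>2$ and $p>n$ are precisely what make them hold, the former excluding the degenerate small special linear groups and the latter (via $p\nmid n$) keeping $\mathfrak{sl}_n$ irreducible and controlling its cohomology. The remaining eigenvalue bookkeeping is entirely elementary.
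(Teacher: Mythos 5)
Your proof is correct, and at bottom it takes the same route as the paper: both arguments reduce everything to the proof of \cite[Lem.\ 2.5.6]{cht} (which establishes bigness for exactly these $H$), the only genuinely new input being an element of $\SL_n(k')$ with $n$ distinct eigenvalues. The difference is in the choice of that element and in how much of CHT's verification you redo. The paper takes the companion matrix of $X^n+(-1)^n$, whose eigenvalues are distinct precisely because $p>n$ makes $X^n-c$ separable, and leaves the irreducibility of $\ad^0$ and the trace computation entirely to \emph{op.\ cit.} You instead take $h=\diag(\alpha,M_\beta)$ with $M_\beta$ an elliptic block of degree $n-1$ and $\alpha=\Nm_{k''/k'}(\beta)^{-1}$, which buys you a distinguished eigenvalue already in $k'$ and a completely transparent spectral projection $e_{h,\alpha}=E_{11}$, so that $\tr(e_{h,\alpha}(E_{11}-E_{22}))=1$ can be checked by hand; the price is that you must also invoke the simplicity of $\mathfrak{sl}_n(k)$ as a $k[\SL_n(k')]$-module (via $L(\varpi_1+\varpi_{n-1})$ being $p$-restricted), which the paper's citation absorbs silently. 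Both elements work, and your hypothesis checks ($n>2$ so that $n-1\ge 2$ and the conjugates of $\beta$ avoid $k'$; $\alpha\in k'^\times$ so it cannot collide with them) are all in order.
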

\begin{proof}
  Examining the proof of~\cite[Lem.\ 2.5.6]{cht} (which shows that~$H$
  is big), we see that it is enough to check that $\SL_n(k')$ contains
  an element with~$n$ distinct eigenvalues. Since we are assuming
  that~$p>n$, we can use an element with characteristic
  polynomial~$X^n+(-1)^n$ (for example, the matrix~$(a_{ij})$
  with~$a_{i+1,i}=1$, $a_{1,n}=(-1)^{n-1}$, and all other $a_{ij}=0$).
\end{proof}
\begin{lem}
  \label{Sym sl2 big}If $p>2n+1$ and there is a subfield $k'\subset k$
  such that
  $k^\times\Sym^{n-1}\GL_2(k')\supset H\supset\Sym^{n-1}\SL_2(k')$,
  then $H$ is enormous.
\end{lem}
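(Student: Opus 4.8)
The plan is to imitate the proof of Lemma~\ref{SLn big}, reducing everything to the exhibition of one well-chosen element. First I would invoke Remark~\ref{rem: big adequate enormous}: the group $H$ in question is big in the sense of~\cite[Defn.\ 2.5.1]{cht} by the relevant result of~\cite[\S 2.5]{cht} (this is where $p>2n+1$ enters, ensuring that $\Sym^{n-1}$ of the standard representation of $\SL_2$ is irreducible and that the cohomology vanishing in condition (2) of Definition~\ref{defn:enormous image} holds), and the \emph{only} gap between bigness and enormousness is that condition (3) demands an element $h$ with $n$ \emph{distinct} eigenvalues, rather than merely a one-dimensional generalised $\alpha$-eigenspace. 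So, exactly as in Lemma~\ref{SLn big}, I would inspect the argument of~\cite[\S 2.5]{cht} to see that the element it uses to verify condition (3) may be taken in $\Sym^{n-1}\SL_2(k')$ and with $n$ distinct eigenvalues, as soon as such an element exists.

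The remaining point is then just to produce the element. Since $k$ has characteristic $p$ we have $\Fp\subset k'$, so $k'^\times$ contains an element $\alpha$ of order $p-1$. As $p>2n+1$ we have $p-1>2(n-1)$, so $p-1$ divides none of $2,4,\dots,2(n-1)$, and hence $\alpha^{2m}\neq 1$ for $1\le m\le n-1$. I would then take $g=\Sym^{n-1}\bigl(\diag(\alpha,\alpha^{-1})\bigr)\in\Sym^{n-1}\SL_2(k')\subset H$, which has eigenvalues $\alpha^{n-1},\alpha^{n-3},\dots,\alpha^{1-n}$; these are pairwise distinct precisely because $\alpha^{2m}\neq 1$ for $1\le m\le n-1$, and all lie in $\Fp\subset k$, so there is no difficulty with fields of definition.

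The one genuinely delicate step is the appeal to~\cite[\S 2.5]{cht}: one must be sure that introducing the scalar factor $k^\times$ and passing from $\SL_2$ to $\GL_2$ does not interfere with the verification of condition (3) once the required distinct-eigenvalue element is available. I expect this to be the same bookkeeping that already works in the proof of Lemma~\ref{SLn big} and to go through unchanged; conditions (1) and (2) of Definition~\ref{defn:enormous image} coincide with the corresponding conditions in the definition of bigness, so nothing new is needed for them.
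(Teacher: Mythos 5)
Your proposal is correct and takes essentially the same route as the paper, whose proof simply cites the proof of \cite[Cor.\ 2.5.4]{cht} and observes that the element $t$ constructed in \cite[Lem.\ 2.5.2]{cht} already has $1$-dimensional eigenspaces; the explicit element $\Sym^{n-1}(\diag(\alpha,\alpha^{-1}))$ you write down (with $\alpha$ of order $p-1$, distinctness of eigenvalues following from $p-1>2(n-1)$) is essentially that $t$. The one point to be precise about --- which you correctly flag as the delicate step --- is that condition (3) of Definition~\ref{defn:enormous image} couples $h$ to $W$ through the requirement $\tr e_{h,\alpha}W\neq 0$, so the mere existence of an element with $n$ distinct eigenvalues does not suffice on its own; what makes the argument work is that the specific element used in \cite[Lem.\ 2.5.2]{cht} to verify the trace non-vanishing for every simple $W$ is (a conjugate of) your $g$, and hence has distinct eigenvalues, which is exactly the observation the paper's proof records.
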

\begin{proof}
  The proof of~\cite[Cor.\ 2.5.4]{cht} (which shows that~$H$ is big)
  in fact shows that~$H$ is enormous (note that in the proof
  of~\cite[Lem.\ 2.5.2]{cht} it is shown that the eigenspaces of the
  element denoted~$t$ are 1-dimensional). (Note also that as explained
  after~\cite[Prop.\ 2.1.2]{BLGGT}, the hypothesis that $p>2n-1$
  in~\cite[Cor.\ 2.5.4]{cht} should be~$p>2n+1$.)
\end{proof}

\subsection{Taylor--Wiles primes}\label{subsec: TW primes}
Suppose that $v$ is a finite place of~$F$ such that $\# k(v) \equiv 1 \pmod{p}$, that $\rhobar|_{G_{F_v}}$
is unramified, and that $\rhobar(\Frob_v)$ has $n$ distinct
eigenvalues $\gamma_{v, 1}, \dots, \gamma_{v, n} \in k$. Let
$\Delta_v = (k(v)^\times(p))^{n-1}$ (where $k(v)^\times(p)$ is the Sylow
$p$-subgroup of $k(v)^\times$),  and let
$\Lambda_v = \cO[\Delta_v]$.

We define $\cD_v^\mathrm{TW}$ to be the functor of liftings over
$R \in \CNL_{\Lambda_v}$ of the form
\[ r \sim \chi_1 \oplus \dots \oplus \chi_n, \]
where
$\chi_1, \dots, \chi_n : G_{F_v} \to R^\times$  are continuous
characters such that  for each $i = 1, \dots, n-1$, we have
\begin{itemize}
\item $(\chi_i \pmod{\m_R})(\Frob_v) = \gamma_{v, i}$, and
\item   $\chi_i|_{I_{F_v}}$ agrees, on composition with the Artin map, with
  the $i^\text{th}$ canonical character $k(v)^\times(p) \to R^\times$.
\end{itemize}

(This definition depends on the ordering of the $\gamma_{v,i}$, but
this does not affect any of our arguments.) The functor
$\cD_v^\mathrm{TW}$ is represented by a formally smooth
$\Lambda_v$-algebra.

Suppose that
$\cS = (\rhobar, \mu, S, \{ \Lambda_v \}_{v \in S}, \{ \cD_v \}_{v \in
  S})$
is a deformation problem. Let $Q$ be a set of places disjoint from $S$
of the form considered above (that is,
$\# k(v) \equiv 1 \pmod{p}$ and $\rhobar(\Frob_v)$ has $n$
distinct eigenvalues). We refer to the tuple
\[(Q, (\gamma_{v, 1}, \dots, \gamma_{v, n})_{v \in Q})\] as a
\emph{Taylor--Wiles datum}, and define the augmented deformation problem
\[ \cS_Q = (\rhobar, \mu, S \cup Q, \{ \Lambda_v \}_{v \in S} \cup \{ \cO[\Delta_v] \}_{v \in Q}, \{ \cD_v \}_{v \in S} \cup \{ \cD_v^\mathrm{TW} \}_{v \in Q}). \]
Let $\Delta_Q = \prod_{v \in Q} \Delta_v = \prod_{v \in Q} k(v)^\times(p)^{n-1}$. Then $R_{\cS_Q}$ is naturally a $\cO[\Delta_Q]$-algebra. If $\ba_Q \subset \cO[\Delta_Q]$ is the augmentation ideal, then there is a canonical isomorphism $R_{\cS_Q}/\ba_Q \cong R_\cS$. 

Recall that~$\rhobar$ is \emph{totally odd} if if for each complex conjugation $c \in G_F$, we have 
\[ \overline{\rho}(c) \sim \diag(\underbrace{1, \dots, 1}_a, \underbrace{-1, \dots, -1}_b), \]
with $|a - b| \leq 1$. (Of course, if~$F$ is totally complex, this is
a vacuous condition.) Let~$l_0$ be the integer defined in~(\ref{eqn:
  defect})  (which
only depends on $F$ and~$n$).
\begin{lem}
  \label{lem: existence of TW primes}Let
  $(\rhobar, \mu, S, \{ \Lambda_v \}_{v \in S}, \{ \cD_v \}_{v \in
    S})$
  be a global deformation problem. Suppose that:
  \begin{itemize}
  \item $\rhobar$ is totally odd.\item 
  $\rhobar\not\cong\rhobar\otimes\epsilonbar$.\item $\rhobar({G_{F(\zeta_p)}})$ 
  is enormous.
  \end{itemize}

   Then for every $q\gg 0$ and every $N \geq 1$,
 there exists a Taylor--Wiles datum
  $(Q_N, (\gamma_{v, 1}, \dots, \gamma_{v, n})_{v \in Q_N})$
  satisfying the following conditions:
\begin{enumerate}
\item $\# Q_N = q$.
\item For each $v \in Q_N$, $q_v \equiv 1 \pmod{p^N}$.
\item The ring $R^{S}_{\cS_{Q_N}}$ is a quotient $R_\cS^{S,\loc}$-algebra of $R_\infty := R_\cS^{S,\loc} \llbracket X_1, \dots, X_g \rrbracket$, where 
\[ g = (n-1)q - n(n-1)[F : \Q]/2 - l_0 - 1 + \# S .\]
\end{enumerate}
\end{lem}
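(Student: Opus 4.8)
The plan is to follow the standard Taylor--Wiles argument, as in \cite[\S 4]{1409.7007} or \cite{cht}, and the key points are (i) a Galois cohomology computation using the oddness and enormous image hypotheses to control the dimension of the relevant Selmer and dual Selmer groups, and (ii) a \v{C}ebotarev density argument to produce the primes realising the desired local conditions, for each $N$.

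First I would recall the standard presentation of the global framed deformation ring: for any Taylor--Wiles datum $Q$, the ring $R_{\cS_Q}^S$ is a quotient of $R_\cS^{S,\loc}\llbracket X_1,\dots,X_g\rrbracket$ where the number of variables $g$ is governed by $h^1_{\cS_Q^\perp}(G_{F,S\cup Q}, \ad^0(\rhobar)(1))$ together with the usual local terms (framing variables contribute $n^2\#S$, the trace-zero normalisation removes $\#S$ copies, and the global Euler characteristic formula for $\ad^0$ over $F$ contributes the $-n(n-1)[F:\Q]/2 - l_0 - 1$ together with the $+\#S$, once one accounts for the totally odd hypothesis at the archimedean places; the precise bookkeeping is exactly as in \emph{loc.~cit.}). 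So the heart of the matter is to choose $Q=Q_N$ so that the dual Selmer group $H^1_{\cS_{Q_N}^\perp}(G_{F,S\cup Q_N}, \ad^0(\rhobar)(1))$ vanishes, and $\#Q_N = q$ is as large as we need, with each $q_v\equiv 1\pmod{p^N}$; then $g = (n-1)q - n(n-1)[F:\Q]/2 - l_0 - 1 + \#S$ as claimed, and $R_{\cS_{Q_N}}^{S,\loc}$-algebra structure of $R^S_{\cS_{Q_N}}$ comes for free from the local TW deformation problems being formally smooth over $\cO[\Delta_v]$.

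The main step is therefore the following: given a nonzero class $\psi\in H^1(G_{F,S}, \ad^0(\rhobar)(1))$ (or rather a finite spanning set of such classes), find a prime $v\notin S$ with $q_v\equiv 1\pmod{p^N}$, with $\rhobar(\Frob_v)$ having $n$ distinct eigenvalues, and such that $\Res_{G_{F_v}}\psi\neq 0$ in the appropriate quotient; adding such primes one at a time kills the dual Selmer group, because each added prime either drops $h^1_{\cS_Q^\perp}$ by one or the class already restricted nontrivially. To produce $v$ one applies the \v{C}ebotarev density theorem to the extension $L = \Fbar^{\ker\ad^0(\rhobar)}\cdot F(\zeta_{p^N})\cdot (\text{the field cut out by }\psi)$: the hypothesis $\rhobar(G_{F(\zeta_p)})$ enormous, specifically condition (3) in Definition~\ref{defn:enormous image}, guarantees the existence of an element $\sigma\in\Gal(L/F)$ with $\sigma|_{F(\zeta_{p^N})}$ nontrivial in the sense that $q_v$ will satisfy the congruence, $\rhobar(\sigma)$ has $n$ distinct eigenvalues, and the $\sigma$-equivariant projection to one eigenspace pairs nontrivially with $\psi(\sigma)$ --- this is precisely where conditions (2) and (3) of enormousness are used (condition (1), no $p$-power quotient, ensures that adding the ramification does not enlarge the allowable deformations beyond $\cD_v^{\mathrm{TW}}$, equivalently that $H^1(G_{F_v}/I_{F_v}, \ad^0)$ behaves as expected, and the hypothesis $\rhobar\not\equiv\rhobar\otimes\epsilonbar$ together with absolute irreducibility is used to see $H^0(G_F,\ad^0(\rhobar)(1))=0$ so that the local-at-$v$ conditions really do cut down the dual Selmer group).

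The step I expect to be the main obstacle --- or at least the one requiring the most care --- is the precise dimension count yielding exactly $g = (n-1)q - n(n-1)[F:\Q]/2 - l_0 - 1 + \#S$, since this is where the defect $l_0$ enters (through the archimedean local $H^0$ terms for $\ad^0$, which for $F$ not totally real contribute differently than in the classical Taylor--Wiles setting) and where one must be careful that we are deforming with \emph{fixed} determinant $\mu$ (hence $\ad^0$ rather than $\ad$) and working with $\PGL_n$-type normalisations; but all of this is carried out in \cite[Prop.~4.11, Cor.~4.12]{1409.7007} (compare also \cite[\S 2.5--2.6]{cht} and the oddness discussion there), and the argument goes through verbatim once one has checked that the enormousness hypothesis here implies the bigness/adequacy hypothesis used in those references, which is exactly Remark~\ref{rem: big adequate enormous}. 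So in the write-up I would state the Galois-cohomological input as a lemma (existence of the auxiliary prime killing a given dual Selmer class, via \v{C}ebotarev and enormousness), then assemble the $Q_N$ by induction on a spanning set of $H^1_{\cS^\perp}(G_{F,S},\ad^0(\rhobar)(1))$, and finally quote the standard presentation lemma to read off $g$.
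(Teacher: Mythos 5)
Your proposal is correct and matches the paper's approach: the paper's proof is simply the one-line citation ``This is immediate from \cite[Cor.\ 4.8, Lem.\ 4.12]{1409.7007}'', and your sketch is precisely an unwinding of the content of those results (dual Selmer annihilation via \v{C}ebotarev using enormousness, then the standard presentation count), together with the observation from Remark~\ref{rem: big adequate enormous} that enormous implies the hypotheses used there. The only quibble is your attribution of the role of condition (1) of Definition~\ref{defn:enormous image} (no non-trivial $p$-power quotient), which is used in the \v{C}ebotarev step to ensure linear disjointness from $F(\zeta_{p^N})$ rather than to control the local deformation functor at $v$.
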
\begin{proof}
  This follows from~\cite[Lem.\ 4.12]{1409.7007} and a standard argument using 
  Poitou--Tate duality, compare the proof of \cite[Thm.~6.29]{1409.7007}. 
\end{proof}Fix a choice of place $v_0\in T$ and an integer~$q\gg 0$ as in Lemma~\ref{lem: existence of TW primes}, and set~$\cT=\cO[[X_v^{i,j}]]_{v\in
  S,1\le i,j\le
  n}/(X_{v_0}^{1,1})$. Set $\Delta_{Q_N}:=\prod_{v\in Q_N}\Delta_v$,
$\cO_N:=\cT[\Delta_{Q_N}]$, and~$\cO_{\infty}:=\cT[[\Delta_{\infty}]]$,
where~$\Delta_\infty=\Zp^{(n-1)q}$. For each~$N$ we fix a
surjection~$\Delta_\infty\onto\Delta_N$,  and thus a surjection of~$\cT$-algebras
~$\cO_\infty\onto\cO_N$.

We now examine the behaviour of the Hecke operators at Taylor--Wiles primes. Fix~$U^p$ such that ~$U^pK_0$
is $S$-good. We begin by setting up some notation. Let $(Q, (\gamma_{v, 1}, \dots, \gamma_{v, n})_{v \in Q})$ be a
Taylor--Wiles datum. We define compact open subgroups
$U^p_0(Q)=\prod_{v\nmid p}U_0(Q)_v$ and $U^p_1(Q)=\prod_{v\nmid
  p}U_1(Q)_v$ of $U^p=\prod_{v\nmid p}U_v$ by:
\begin{itemize}
\item if $v\notin Q$, then $U_0(Q)_v=U_1(Q)_v=U_v$.
\item If $v\in Q$ then $U_0(Q)_v$ is the standard Iwahori subgroup of
  $\PGL_n(\cO_{F,v})$, and $U_1(Q)_v$ is the minimal subgroup of
  $U_0(Q)_v$ for which $U_0(Q)_v/U_1(Q)_v$ is a $p$-group.\end{itemize}
In particular~$U_1(Q)_v$ contains the pro-$v$ Iwahori subgroup
of~$U_0(Q)_v$, so we can and do identify~$\prod_{v\in
  Q}U_0(Q)_v/U_1(Q)_v$ with~$\Delta_Q$. We now introduce some natural
variants of the Hecke algebras that we introduced in
Section~\ref{subsubsec: big hecke algebras}. 

 For each compact open normal
subgroup~$U_p$ of $K_0$ we  define $\TT^{S\cup
  Q,Q}(U_pU_0^p(Q),s)$ to be the image in 
  $\End_{D(\cO/\varpi^s[K_0/U_p])}(\cC(U_pU_0^p(Q),s))$ of the abstract Hecke 
	algebra $\TT^{S\cup Q,Q}$ generated by the operators $T^i_v$ for $v \notin S\cup
        Q$
        and~$\mathbf{U}^i_v$ for~$v\in Q$, where the operators~
        $\mathbf{U}^i_v$ act as explained in Section~\ref{subsubsec: Hecke operators on complexes}. Similarly, we let $\TT^{S\cup
  Q,Q}(U_pU_1^p(Q),s)$ be the image of~$\TT^{S\cup Q,Q}$ in
$\End_{D(\cO/\varpi^s[\Delta_Q\times K_0/U_p])}(\cC(U_pU_1^p(Q),s))$ (as 
explained in
Section~\ref{subsubsec: Hecke operators on complexes}, the
operators~$\mathbf{U}^i_v$ commute with the action
of~$\Delta_Q$).

Note that we have a natural isomorphism of complexes \numequation\label{eqn: going from level 1 to level 0}\cC(U_pU_1^p(Q),s)\otimes_{\cO[\Delta_Q]}\cO\cong\cC(U_pU_0^p(Q),s).\end{equation}
  We then set (for each compact open normal subgroup~$U_p$ of $K_0$)	
  \[\TT^{S\cup Q,Q}(U_pU^p_i(Q)) = \invlim_{s}\TT^{S\cup
             Q,Q}(U_pU^p_i(Q),s), \]
	 \[\TT^{S\cup Q,Q}(U^p_i(Q)) = \invlim_{U_p,s}\TT^{S\cup
             Q,Q}(U_pU^p_i(Q),s),\] for $i=0,1$, equipped with their inverse limit topologies.
We now need to assume the existence of Galois representations
associated to completed homology, as in the following conjecture.
\begin{conj}\label{galconj}
	Let $\m \subset \TT^S(U^p)$ be a maximal ideal with residue field $k$. 
	
	\begin{enumerate}
		\item There exists a continuous semi-simple representation 
		\[\rhobar_\m: G_{F,S} \rightarrow \GL_n(\TT^S(U^p)/\m)\] satisfying the 
		following conditions: $\rhobar_\m$ is totally odd, and for any finite place $v \notin S$ of $F$, 
		$\rhobar_\m(\Frob_v)$ has characteristic polynomial \[X^n - T_v^1 
		X^{n-1} + \cdots + 
		(-1)^iq_v^{i(i-1)/2}T_v^iX^{n-i}+\cdots+(-1)^nq_v^{n(n-1)/2}T_v^n \in 
		(\TT^S(U^p)/\m)[X]\] 

		\item Suppose that $\rhobar_\m$ is absolutely irreducible. Then there 
		exists a lifting of $\rhobar_\m$ to a continuous homomorphism 
		\[\rho_\m: G_{F,S} \rightarrow \GL_n(\TT^S(U^p)_\m)\] satisfying the 
		following condition: for any finite place $v \notin S$
                of $F$, $\rho_\m(\Frob_v)$ has characteristic
                polynomial
                \[X^n - T_v^1 X^{n-1} + \cdots +
                  (-1)^iq_v^{i(i-1)/2}T_v^iX^{n-i}+\cdots+(-1)^nq_v^{n(n-1)/2}T_v^n
                  \in \TT^S(U^p)_\m[X]\] \emph{(}In particular, since for each
                $v\notin S$ we have $T_v^n=1$, we have
                $\det\rho_\m=\epsilon^{n(1-n)/2}$.\emph{)}
            	\end{enumerate}
\end{conj}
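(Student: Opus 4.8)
The plan is necessarily conditional: the final statement is imposed as a running \emph{hypothesis}, and for a general number field $F$ it is genuinely open. When $F$ is CM (or totally real) it is a theorem, following from~\cite{scholze-torsion} and its subsequent refinements, and I would establish it in that case as follows. The first step is to reduce to finite level: by Lemma~\ref{lem: big Hecke is semilocal} and the analysis of maximal ideals in its proof, a maximal ideal $\m \subset \TT^S(U^p)$ with residue field $k$ is pulled back from a maximal ideal of $\TT^S(U_pU^p,1)$ for any compact open normal $U_p \subset K_0$, so $\m$ records a system of mod~$p$ Hecke eigenvalues occurring in $H_*(X_{U_pU^p},k)$; moreover $\TT^S(U^p)_\m = \invlim_{U_p,s}\TT^S(U_pU^p,s)_\m$ by Remark~\ref{rem: O-Hecke algebras at finite level}, so it suffices to construct compatible Galois representations at each finite level and pass to the limit.

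\emph{Part~(1).} Apply the construction of Galois representations attached to mod~$p$ eigenclasses in the cohomology of the locally symmetric spaces for $\GL_n/F$ ($F$ CM) to the eigensystem cut out by $\m$: this yields a continuous semisimple $\rhobar_\m : G_{F,S} \to \GL_n(k')$ for some finite extension $k'/k$, unramified outside $S$, with $\rhobar_\m(\Frob_v)$ having the asserted characteristic polynomial for all $v \notin S$. Since those characteristic polynomials lie in $k[X]$ and a semisimple representation is pinned down up to conjugacy by them (Brauer--Nesbitt), $\rhobar_\m$ descends to $\GL_n(\TT^S(U^p)/\m) = \GL_n(k)$. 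Total oddness is part of the output of the construction (it reflects the shape forced on complex conjugation by the crystalline lifts that occur densely). This settles part~(1).

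\emph{Part~(2) and the determinant.} Suppose $\rhobar_\m$ is absolutely irreducible. At each finite level $\TT^S(U_pU^p,s)_\m$ --- a finite local $\cO$-algebra --- the construction produces a continuous $n$-dimensional pseudo-representation (Chenevier determinant) whose Frobenius data is governed by the same characteristic polynomial; absolute irreducibility of $\rhobar_\m$ promotes it to a genuine $\rho_{U_p,s} : G_{F,S} \to \GL_n(\TT^S(U_pU^p,s)_\m)$, unique up to conjugation by $\ker(\GL_n \to \GL_n(k))$. Uniqueness makes the $\rho_{U_p,s}$ compatible along the transition maps, so the inverse limit gives a continuous $\rho_\m : G_{F,S} \to \GL_n(\TT^S(U^p)_\m)$ with the required characteristic polynomials. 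For $v \notin S$ one has $T_v^n = 1$ (this is precisely the $\PGL_n$, not $\GL_n$, normalisation), so the constant term of the characteristic polynomial forces $\det\rho_\m(\Frob_v) = q_v^{n(n-1)/2} = \epsilon(\Frob_v)^{n(1-n)/2}$, and Chebotarev density then gives $\det\rho_\m = \epsilon^{n(1-n)/2}$.

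\emph{Main obstacle.} Even in the CM case the delicate point is removing the nilpotent ideal in Scholze's construction with torsion coefficients, which is what is needed to obtain representations honestly valued in $\TT^S(U_pU^p,s)_\m$ rather than in a quotient by a bounded nilpotent ideal; this requires substantial input beyond the mere existence of the pseudo-representations. For a general $F$ --- the setting of this paper --- no construction of these Galois representations is available, since the $X_U$ are not Shimura varieties and the $p$-adic interpolation/congruence techniques used over CM fields have no known analogue. This is exactly why the statement is posed as Conjecture~\ref{galconj} and imposed as a hypothesis rather than proved.
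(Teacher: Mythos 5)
This statement is a conjecture which the paper does not prove but assumes as a running hypothesis, and your proposal correctly recognises this; your account of what is known (part~(1) for $F$ CM or totally real via Scholze's work, part~(2) only up to a nilpotent ideal) matches the paper's own Remark~\ref{rem: existence of Galois repns in CM case} essentially verbatim, including the identification of the nilpotent ideal as the key remaining obstruction. The only small gloss is that total oddness is not automatic from the construction but required a separate argument (the second reference cited in that Remark).
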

\begin{remark}\label{rem: existence of Galois repns in CM case}
	If $F$ is a CM or totally real field, the first part of the conjecture 
	holds by the main results of~\cite{scholze-torsion} and~\cite{2014arXiv1409.2158C}. It also follows from Scholze's work (again 
	with the 
	assumption that $F$ is CM or totally real) that there is a 
	lifting of $\rhobar_\m$ valued in $\TT^S(U^p)_\m/I$ for some nilpotent 
	ideal $I \subset \TT^S(U^p)_\m$, and in fact we may assume that $I^4 = 0$ 
	by \cite[Theorem 1.3]{new-tho}. Moreover, the nilpotent ideal has been 
	eliminated entirely when $F$ is CM and $p$ splits completely in $F$ 
	\cite{arizona}.
\end{remark}
\begin{defn}
	Let $\m$ be a maximal ideal of $\TT^S(U^p)$. For sufficiently small $U_p$ 
	(for example, if $U_p$ is pro-$p$), 
	$\m$ is the inverse image of a maximal ideal of $\TT^S(U_pU^p,1)$, which we 
	also denote by $\m$. The abstract Hecke algebra $\TT^S$ surjects onto 
	$\TT^S(U_pU^p,1)$ and we again denote by $\m$ the inverse image of $\m$ in 
	$\TT^S$. 
	
	Finally, for any module $M$ for $\TT^S$ (or complex of such modules) we 
	denote by $M_\m$ the localisation $\TT^S_\m
        \otimes_{\TT^S}M$. Note that the idea of patching singular chain 
        complexes localised with respect to the action of the abstract Hecke 
        algebra appears in \cite{hansen}.

We make an analogous definition for maximal ideals of the Hecke
algebras $\TT^{S\cup
  Q}(U_i^p(Q))$ and~$\TT^{S\cup Q,Q}(U_i^p(Q))$.
\end{defn}

We assume Conjecture~\ref{galconj} from now on, and recall that we have 
fixed~$U^p$ such that ~$U^pK_0$
is $S$-good. We now fix a maximal ideal $\m$ of $\TT^S(U^p)$, and
assume that
\begin{itemize}
	\item~$\rhobar_\m$ is absolutely irreducible
	\item~$\rhobar_\m(G_{F(\zeta_p})$ is enormous, and 
	\item~$\rhobar_\m\not\cong\rhobar_\m\otimes\epsilonbar$.
\end{itemize} 

Enlarging our coefficient field~$E$ if necessary, we
assume further that~$\m$ has residue field~$k$, and that~$k$ contains
the eigenvalues of all elements of the image of~$\rhobar_\m$.  We
fix \[\cS=(\rhobar_\m,\epsilon^{n(1-n)/2},S,\{\cO\}_{v\in
                  S},\{\cD_v^\square\}_{v\in
                  S}).\]

The following is the analogue of~\cite[Prop.\ 6.26]{1409.7007} in our
context, and the proof is essentially identical.

\begin{prop}\label{prop: Hecke action at TW primes}Let $(Q, (\gamma_{v, 1}, \dots, \gamma_{v, n})_{v \in Q})$ be a
Taylor--Wiles datum.
\begin{enumerate}
\item There are natural inclusions $\TT^{S\cup Q}(U^p)\subset \TT^S(U^p)$ and $\TT^{S\cup
  Q}(U^p_0(Q))\subset \TT^S(U^p_0(Q))$, and natural surjections $\TT^{S\cup
  Q}(U^p_0(Q)) \onto \TT^{S\cup Q}(U^p)$, $\TT^{S\cup Q}(U^p_1(Q))\onto
\TT^{S\cup Q}(U^p_0(Q))$ and $\TT^{S\cup Q,Q}(U^p_1(Q))\onto
\TT^{S\cup Q,Q}(U^p_0(Q))$.
\item Let $\m_{Q,0}\subset \TT^{S\cup Q,Q}(U^p_0(Q))$ denote the ideal generated by the
  pullback of~$\m$ to $\TT^{S\cup
    Q}(U^p_0(Q))$ and the elements
  $\mathbf{U}^i_v-\prod_{j=1}^i\gamma_{v,i}$. Then $\m_{Q,0}$ is a
  maximal ideal.
\item Write $\m_{Q,1}$ for the pullback of~$\m_{Q,0}$ to $\TT^{S\cup
    Q,Q}(U^p_1(Q))$, and~$\m'$ for the pullback of~$\m$ to $\TT^{S\cup
  Q}(U^p)$. Then there is a quasi-isomorphism \[
    \cC(U_pU_0^p(Q),s)_{\m_{Q,0}}\to \cC(U_pU^p,s)_{\m}\] and an isomorphism \[
  \cC(U_pU_1^p(Q),s)_{\m_{Q,1}}\otimes_{\cO[\Delta_Q]}\cO\cong\cC(U_pU_0^p(Q),s)_{\m_{Q,0}}\]
which are both equivariant for the actions of the operators~$T_v^i$,
$v\notin S\cup Q$. Consequently,  if we write~$\TT^{S\cup
  Q}(U_pU^p_1(Q),s)_{\m_{Q,1}}$ for the $\cO[\Delta_Q]$-subalgebra
of~$\End_{D(\cO/\varpi^s[\Delta_Q\times 
K_0/U_p])}(\cC(U_pU_1^p(Q),s)_{\m_{Q,1}})$ generated
by the operators~$T_v^i$,
$v\notin S\cup Q$, then there are natural maps \[\TT^{S\cup
  Q}(U_pU^p_1(Q),s)_{\m_{Q,1}}\onto \TT^{S\cup Q}(U_pU^p,s)_{\m'}\cong
  \TT^{S}(U_pU^p,s)_\m .\]\end{enumerate}
\end{prop}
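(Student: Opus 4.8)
The plan is to transpose the proof of~\cite[Prop.\ 6.26]{1409.7007} to the present setting, working in the derived category with the complexes $\cC(U_pU_i^p(Q),s)$ in place of the modules considered there; apart from one step everything is formal. For part~(1), the inclusions hold because $\TT^{S\cup Q}$ is generated over $\cO$ by a subset of the generators of $\TT^S$, and $\TT^{S\cup Q,Q}$ by that subset together with the operators $\mathbf{U}^i_v$ ($v\in Q$), so the corresponding images in the relevant derived endomorphism rings are nested; one then passes to the inverse limit over $U_p$ and $s$. For the surjections $\TT^{S\cup Q,Q}(U^p_1(Q))\onto\TT^{S\cup Q,Q}(U^p_0(Q))$ and $\TT^{S\cup Q}(U^p_1(Q))\onto\TT^{S\cup Q}(U^p_0(Q))$ I would apply $-\otimes_{\cO[\Delta_Q]}\cO$ to~\eqref{eqn: going from level 1 to level 0}, using that the $T^i_v$ and $\mathbf{U}^i_v$ commute with the $\Delta_Q$-action: this yields maps of derived endomorphism rings exactly as for the transition maps defining the big Hecke algebras, which are surjective onto the image subalgebras. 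For $\TT^{S\cup Q}(U^p_0(Q))\onto\TT^{S\cup Q}(U^p)$ I would use that $U^p_0(Q)\subset U^p$ (as $U_v=\PGL_n(\cO_{F_v})$ for $v\in Q$, since $Q\cap S=\emptyset$ and $U^pK_0$ is $S$-good), giving a degeneracy map $\cC(U_pU^p_0(Q),s)\to\cC(U_pU^p,s)$ in $D(\cO/\varpi^s[K_0/U_p])$ that intertwines the $T^i_v$ with $v\notin S\cup Q$; its composite with the transfer map in the opposite direction is multiplication by the covering degree $[U^p:U^p_0(Q)]=\prod_{v\in Q}[\PGL_n(\cO_{F_v}):U_0(Q)_v]$, which is a unit in $\cO/\varpi^s$ since $q_v\equiv 1\pmod p$ and $p>n$ (each factor equals $\prod_{j=1}^{n}\tfrac{q_v^{j}-1}{q_v-1}\equiv n!\pmod p$). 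Hence $\cC(U_pU^p,s)$ is a direct summand of $\cC(U_pU^p_0(Q),s)$ in $D(\cO/\varpi^s[K_0/U_p])$ compatibly with the $T^i_v$ ($v\notin S\cup Q$), and restricting endomorphisms to the summand (then taking the limit over $U_p,s$) gives the surjection.

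For part~(3), the displayed isomorphism comes from localising~\eqref{eqn: going from level 1 to level 0} at $\m_{Q,1}$: as $\m_{Q,1}$ is the pullback of $\m_{Q,0}$ along the surjection just constructed and the Hecke operators commute with $\Delta_Q$, localisation commutes with $-\otimes_{\cO[\Delta_Q]}\cO$, yielding a $T^i_v$-equivariant ($v\notin S\cup Q$) isomorphism. The quasi-isomorphism $\cC(U_pU_0^p(Q),s)_{\m_{Q,0}}\to\cC(U_pU^p,s)_\m$ is the one substantive point. Since the degeneracy map of part~(1) intertwines the $T^i_v$ ($v\notin S\cup Q$), it localises to a morphism with source $\cC(U_pU_0^p(Q),s)_{\m_{Q,0}}$ and target $\cC(U_pU^p,s)$ localised at $\m'$; that this target coincides with $\cC(U_pU^p,s)_\m$ will follow from the identification $\TT^{S\cup Q}(U_pU^p,s)_{\m'}\cong\TT^S(U_pU^p,s)_\m$ proved below. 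It then remains to check that this map is a quasi-isomorphism, which I would do one place $v\in Q$ at a time by the standard ``stabilisation'' argument: on the summand $\cC(U_pU^p,s)\subset\cC(U_pU^p_0(Q),s)$ the Iwahori-level operators $\mathbf{U}^i_v$ act through the Iwahori--Satake relations, and modulo $\m$ their generalised eigenvalues run over the products $\prod_{j=1}^i\gamma_{v,\sigma(j)}$ for permutations $\sigma$ of $\{1,\dots,n\}$; the hypothesis built into a Taylor--Wiles datum that $\rhobar_\m(\Frob_v)$ has $n$ \emph{distinct} eigenvalues makes these systems pairwise distinct, so localising at $\m_{Q,0}$ --- which imposes $\mathbf{U}^i_v\equiv\prod_{j=1}^i\gamma_{v,j}$ --- picks out exactly the summand $\cC(U_pU^p,s)$ and annihilates its complement. \emph{This step is the main obstacle}: it is the only genuinely non-formal part of the argument, and it is precisely where the distinct-eigenvalue condition and the prime-to-$p$ covering degree are used; it is handled as in~\cite[Prop.\ 6.26]{1409.7007}.

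Granting the quasi-isomorphism, part~(2) is immediate: it shows $\cC(U_pU^p_0(Q),s)_{\m_{Q,0}}$ is not acyclic for suitable $U_p,s$, so $\m_{Q,0}$ is a proper ideal, while the quotient $\TT^{S\cup Q,Q}(U^p_0(Q))/\m_{Q,0}$ is generated over $\cO$ by the images of the $T^i_v$ ($v\notin S\cup Q$), which lie in the residue field $\TT^S(U^p)/\m=k$ via part~(1), and of the $\mathbf{U}^i_v$ ($v\in Q$), which equal the scalars $\prod_{j=1}^i\gamma_{v,j}\in k$; hence this quotient is $k$ and $\m_{Q,0}$ is maximal. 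For the concluding assertion, combining the isomorphism and quasi-isomorphism of part~(3) gives a $T^i_v$-equivariant ($v\notin S\cup Q$) identification $\cC(U_pU_1^p(Q),s)_{\m_{Q,1}}\otimes_{\cO[\Delta_Q]}\cO\simeq\cC(U_pU^p,s)_\m$, so reduction modulo $\ba_Q$ carries $\TT^{S\cup Q}(U_pU^p_1(Q),s)_{\m_{Q,1}}$ onto the subalgebra of $\End_{D(\cO/\varpi^s[K_0/U_p])}(\cC(U_pU^p,s)_\m)$ generated by the $T^i_v$ ($v\notin S\cup Q$), namely $\TT^{S\cup Q}(U_pU^p,s)_{\m'}$; this is the asserted surjection. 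Finally, $\TT^{S\cup Q}(U_pU^p,s)_{\m'}\cong\TT^S(U_pU^p,s)_\m$ by the standard argument: by Conjecture~\ref{galconj} and Chebotarev (the $\Frob_w$ with $w\notin S\cup Q$ being dense in $G_{F,S}$, together with absolute irreducibility of $\rhobar_\m$), the characteristic polynomials of $\rho_\m(\Frob_v)$ for $v\in Q$ --- equivalently the generators $T^i_v$ of $\TT^S$ not present in $\TT^{S\cup Q}$ --- are determined, after localising at $\m$, by the $T^j_w$ with $w\notin S\cup Q$, which forces the stated equality.
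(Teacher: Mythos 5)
Your proposal is correct and follows essentially the same route as the paper's proof: the inclusions are definitional, the surjection $\TT^{S\cup Q}(U^p_0(Q))\onto\TT^{S\cup Q}(U^p)$ comes from the trace splitting of the degeneracy map using that the index is $\equiv n!\pmod p$ by the Bruhat decomposition, the quasi-isomorphism is the standard stabilisation argument at the places of $Q$ (the paper cites \cite[Lem.\ 5.4]{1409.7007} for precisely this step, which you correctly flag as the only non-formal point), and the identification $\TT^{S\cup Q}(U_pU^p,s)_{\m'}\cong\TT^S(U_pU^p,s)_\m$ uses Chebotarev together with \cite[Cor.\ 3.4.5]{cht}. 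The only cosmetic difference is that you deduce part~(2) from the non-acyclicity of the localised complex in part~(3), whereas the paper proves~(2) directly as a support statement via \cite[Lem.\ 5.3]{1409.7007}; both rest on the same stabilisation input, so this is a reordering rather than a different argument.
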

\begin{proof}The inclusions $\TT^{S\cup Q}(U^p)\subset \TT^S(U^p)$ and
  $\TT^{S\cup Q}(U^p_0(Q))\subset \TT^S(U^p_0(Q))$ exist by
  definition. The surjections
  $\TT^{S\cup Q}(U^p_1(Q))\onto \TT^{S\cup Q}(U^p_0(Q))$ and
  $\TT^{S\cup Q,Q}(U^p_1(Q))\onto \TT^{S\cup Q,Q}(U^p_0(Q))$ are induced
  by~(\ref{eqn: going from level 1 to level 0}), while the surjection
  $\TT^{S\cup Q}(U^p_0(Q)) \onto \TT^{S\cup Q}(U^p)$ comes from the
  splitting by the trace map of the natural
  map \[\cC(U_pU_0^p(Q),s)\to \cC(U_pU^p,s) \](note that for~$v\in Q$, since
  $p>n$ and~$\# k(v)\equiv 1\pmod{p}$, the index of $U_0(Q)_v$ 
  in $\PGL_n(\cO_{F,v})$ is congruent to $n!$ mod $p$, by the Bruhat 
  decomposition, and hence this index
  is prime to~$p$).

For the second part, we need to show that~$\m_{Q,0}$ is in the support
of~$\cC(U_pU_0^p(Q),1)$. As in the proof of~\cite[Lem.\
6.25]{1409.7007}, it is enough to prove the corresponding statement
for cohomology groups, which follows from~\cite[Lem.\
5.3]{1409.7007}.

The isomorphism
$\cC(U_pU_1^p(Q),s)_{\m_{Q,1}}\otimes_{\cO[\Delta_Q]}\cO\cong\cC(U_pU_0^p(Q),s)_{\m_{Q,0}}$
is induced by~(\ref{eqn: going from level 1 to level 0}). The 
quasi-isomorphism is the composite of quasi-isomorphisms  \[
    \cC(U_pU_0^p(Q),s)_{\m_{Q,0}}\to    \cC(U_pU^p,s)_{\m'}\to \cC(U_pU^p,s)_{\m}\] 
which are induced by the obvious natural maps of
complexes (and the morphisms of Hecke algebras from part~(1)); to see
that they are indeed quasi-isomorphisms, one uses
respectively~\cite[Lem.\ 5.4]{1409.7007} and the argument
of~\cite[Lem.\ 6.20]{1409.7007}. Finally the isomorphism $\TT^{S\cup 
  Q}(U_pU^p,s)_\m \cong \TT^{S}(U_pU^p,s)_\m$ again follows from the argument
of~\cite[Lem.\ 6.20]{1409.7007} and \cite[Cor.~3.4.5]{cht}.
\end{proof}
As usual, we set $\TT^{S\cup
  Q}(U^p_1(Q))_{\m_{Q,1}}:=\varprojlim_{U_p,s}\TT^{S\cup
  Q}(U_pU^p_1(Q),s)_{\m_{Q,1}}$,\\ $\TT^{S\cup
  Q}(U_pU^p_1(Q))_{\m_{Q,1}}:=\varprojlim_{s}\TT^{S\cup
  Q}(U_pU^p_1(Q),s)_{\m_{Q,1}}$, equipped with their inverse limit
topologies. (These are local rings, as can easily be checked as in the
proof of Lemma~\ref{lem: big Hecke is semilocal}.)
We will need to assume the following refinement of Conjecture~\ref{galconj}.
\begin{conj}\label{conj: compatibility at Q}
   Suppose that $\rhobar_\m$ is absolutely irreducible,
              and let~ $(Q, (\gamma_{v, 1}, \dots, \gamma_{v, n})_{v \in Q})$ be a
Taylor--Wiles datum. Then there exists a lifting of~$\rhobar_\m$ to a
continuous  homomorphism 
		\[\rho_{\m,Q}: G_{F,S\cup Q} \rightarrow \GL_n(\TT^{S\cup Q}(U^p_1(Q))_{\m_{Q,1}})\] satisfying the 
		following conditions: for any finite place $v \notin
                S\cup Q$ of $F$, 
		$\rho_{\m,Q}(\Frob_v)$ has characteristic polynomial \[X^n - T_v^1 
		X^{n-1} 
		+ \cdots + 
		(-1)^iq_v^{i(i-1)/2}T_v^iX^{n-i}+\cdots+(-1)^nq_v^{n(n-1)/2}T_v^n \in 
		\TT^{S\cup Q}(U^p_1(Q))_{\m_{Q,1}}[X]\] and~$\rho_{\m,Q}$ is of
              type~$\cS_Q$. 

\end{conj}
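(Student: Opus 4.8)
The plan is to deduce Conjecture~\ref{conj: compatibility at Q} from Conjecture~\ref{galconj}, applied not at the tame level $U^p$ but at the level $U^p_1(Q)$, together with a local--global compatibility statement at the auxiliary places $v\in Q$. First I would check that $U^p_1(Q)K_0$ is again an $(S\cup Q)$-good level: for $v\notin S\cup Q$ we have $U_1(Q)_v=\PGL_n(\cO_{F_v})$ by construction, at $v\in Q$ the group $U_1(Q)_v$ lies inside the Iwahori subgroup, and neatness is inherited from $U^pK_0$. Hence $\TT^{S\cup Q}(U^p_1(Q))$ is a big Hecke algebra of the type considered in Conjecture~\ref{galconj}, and $\m_{Q,1}$ is a maximal ideal of it with residue field $k$. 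Comparing Frobenius characteristic polynomials at $v\notin S\cup Q$ through the surjection $\TT^{S\cup Q}(U^p_1(Q))_{\m_{Q,1}}\onto\TT^S(U^p)_\m$ of Proposition~\ref{prop: Hecke action at TW primes}(3) shows $\rhobar_{\m_{Q,1}}\cong\rhobar_\m$, which is absolutely irreducible by hypothesis; so Conjecture~\ref{galconj}(2), applied to $\TT^{S\cup Q}(U^p_1(Q))_{\m_{Q,1}}$, yields a continuous lift $\rho_{\m,Q}\colon G_{F,S\cup Q}\to\GL_n(\TT^{S\cup Q}(U^p_1(Q))_{\m_{Q,1}})$ with the prescribed Frobenius characteristic polynomials at all $v\notin S\cup Q$ and with $\det\rho_{\m,Q}=\epsilon^{n(1-n)/2}$ (using $T_v^n=1$).

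It then remains to verify that $\rho_{\m,Q}$ is of type $\cS_Q$. It is unramified outside $S\cup Q$ and has the correct determinant; at $v\in S$ the local condition is $\cD_v^\square$ (since we took $\cD_v=\cD_v^\square$), which is automatic; so the whole content is to show that $\rho_{\m,Q}|_{G_{F_v}}$ lies in $\cD_v^{\mathrm{TW}}$ for each $v\in Q$. This is a local--global compatibility statement at the Taylor--Wiles primes, and I would establish it by the same mechanism as in the classical (characteristic-zero, finite-level) setting: at $v\in Q$ one has $q_v\equiv 1\pmod p$ and $\rhobar_\m(\Frob_v)$ has $n$ distinct eigenvalues $\gamma_{v,i}$; the homology at level $U_1(Q)_v$ carries the commuting actions of $\Delta_v=k(v)^\times(p)^{n-1}$ and of the operators $\mathbf{U}^i_v$, and $\m_{Q,1}$ is cut out at $v$ by the elements $\mathbf{U}^i_v-\prod_{j\le i}\gamma_{v,j}$ together with the kernel of the $\Delta_Q$-action modulo $\m$. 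Local--global compatibility at $v\nmid p$ ---part of the expected list of properties of the Galois representations attached to completed homology--- then forces $\rho_{\m,Q}|_{G_{F_v}}$ to be a sum of characters $\chi_1\oplus\dots\oplus\chi_n$ with $\chi_i|_{I_{F_v}}$ matching, via $\Art_{F_v}$, the $i$-th canonical character $k(v)^\times(p)\to\TT^{S\cup Q}(U^p_1(Q))_{\m_{Q,1}}^\times$ coming from the $\Delta_v$-action, and with $\chi_i\bmod\m$ unramified sending $\Frob_v$ to $\gamma_{v,i}$. That the decomposition is of this precise shape (rather than merely of Iwahori type) uses that $\rhobar(\Frob_v)$ has distinct eigenvalues and $q_v\equiv 1\pmod p$, so that the relevant component of the local framed deformation ring is exactly the one parametrising such sums of characters; compare \cite[Cor.~3.4.5]{cht} (already invoked in Proposition~\ref{prop: Hecke action at TW primes}), \cite[\S 3.4]{cht}, and the finite-level arguments in \cite[\S 6]{1409.7007}.

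The main obstacle is precisely the one that forces Conjecture~\ref{galconj} itself to be taken as a hypothesis rather than proved: in general neither the existence of $\rho_{\m,Q}$ over the completed-homology Hecke algebra $\TT^{S\cup Q}(U^p_1(Q))_{\m_{Q,1}}$ nor the torsion local--global compatibility at the places of $Q$ is currently known. Even for $F$ CM or totally real, where the first part of Conjecture~\ref{galconj} is available (see Remark~\ref{rem: existence of Galois repns in CM case}), Scholze's construction produces a representation only over a quotient of $\TT^{S\cup Q}(U^p_1(Q))_{\m_{Q,1}}$ by a small nilpotent ideal, so one would additionally have to bootstrap this away (as in \cite[Theorem 1.3]{new-tho}) and check that the local conditions at $Q$ persist through the bootstrapping. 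For these reasons it is cleanest, as here, to isolate the statement as a refinement of Conjecture~\ref{galconj} rather than to attempt a proof in the body of the paper.
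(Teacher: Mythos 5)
This statement is a conjecture in the paper: the authors offer no proof, but simply assume it as a refinement of Conjecture~\ref{galconj} (see the sentence introducing it and Remark~\ref{rem: compatibility at Q in CM case}, which notes that for $F$ CM or totally real the type-$\cS_Q$ condition is a form of local--global compatibility at the places of $Q$ expected to follow from forthcoming work of Varma, with the existence of the representation itself known only modulo a nilpotent ideal). Your proposal correctly recognises this and does not claim a proof; your closing paragraph essentially reproduces the paper's own commentary on why the statement must be taken as a hypothesis, so there is nothing to fault in substance.

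One small imprecision worth flagging in your reduction: the ring $\TT^{S\cup Q}(U^p_1(Q))_{\m_{Q,1}}$ is not literally a localisation of $\TT^{S\cup Q}(U^p_1(Q))$ at one of its own maximal ideals, so Conjecture~\ref{galconj} does not apply to it verbatim. It is defined (Proposition~\ref{prop: Hecke action at TW primes}(3)) as the $\cO[\Delta_Q]$-subalgebra generated by the $T_v^i$, $v\notin S\cup Q$, of the endomorphisms of the complex localised at $\m_{Q,1}$, where $\m_{Q,1}$ lives in the larger algebra $\TT^{S\cup Q,Q}(U^p_1(Q))$ containing the $\mathbf{U}^i_v$. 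Several maximal ideals of that larger algebra (corresponding to different orderings of the $\gamma_{v,i}$) can lie over a single maximal ideal of the anemic Hecke algebra, so passing from a hypothetical representation supplied by Conjecture~\ref{galconj} at level $U^p_1(Q)$ to one valued in the specific factor cut out by $\m_{Q,1}$ requires an extra (standard, but nontrivial) step. Since the whole statement is conjectural anyway, this does not affect the correctness of your assessment.
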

\begin{rem}\label{rem: compatibility at Q in CM case}
  The requirement that~$\rho_{\m,Q}$ be of type~$\cS_Q$ is a form of
  local-global compatibility at the places in~$Q$. If~$F$ is CM, this property 
  is verified in \cite{10author} (under a technical assumption which permits 
  the use of Shin's unconditional base change and up 
  to a nilpotent 
  ideal, see Remark~\ref{rem:
  	existence of Galois repns in CM case}).
\end{rem}
We assume Conjecture~\ref{conj: compatibility at Q} from now on, so
that in particular $\rho_{\m,Q}$ determines an~$\cO[\Delta_Q]$-algebra
homomorphism \numequation\label{eqn:R to T}R_{\cS_{Q}}\to \TT^{S\cup 
Q}(U^p_1(Q))_{\m_{Q,1}},\end{equation} and the
choice of~$\rho_{\m,Q}$ in its strict equivalent class determines an
isomorphism \numequation\label{eqn: framed deformation ring
  isomorphism}
R_{\cS_{Q}}^S\isoto\cT\wotimes_{\cO}R_{\cS_Q}.\end{equation} 

\subsection{Patching}
For each
$N\ge 1$, we let
$(Q_N,(\gamma_{v, 1}, \dots, \gamma_{v, n})_{v \in Q_N}$ be a choice
of Taylor--Wiles datum as in Lemma~\ref{lem: existence of TW primes} (for some fixed
choice of~$q\gg 0$). We fix a surjective $R_\cS^{S,\loc}$-algebra map $R_\infty 
\rightarrow R_{\cS_{Q_N}}^S$ for each $N$. We also fix a non-principal 
ultrafilter $\gF$ on
the set $\NN = \{N \ge 1\}$.

\begin{rem}
  \label{rem: the ultrafilter is the only choice we make}With the
  exception of Remark~\ref{liftOtoR}, the choice of~$\gF$ is the only
  choice we make in our patching argument. This has the pleasant
  effect of making many of the constructions below natural, although
  the reader should bear in mind that they are only natural relative
  to our fixed choice of~$\gF$.
\end{rem}

\begin{defn}

Let $U_p$ be a compact open subgroup of $K_0$, and let $J$ be an open 
ideal in 
$\cO_\infty$. Let~$I_J$ be the (cofinite) subset of~$N\in\NN$ such that $J$ contains the kernel of~$\cO_\infty 
\rightarrow \cO_{N}.$ For $N \in I_J$, we define \[\cC(U_p,J,N) 
= 
\cO_\infty/J\otimes_{\cO[\Delta_{Q_N}]}\cC(U^p_1(Q_N)U_p,\cO)_{\m_{Q_N,1}}.\]

\end{defn}
\begin{rem}\phantomsection\label{prepatchingremarks}
	\begin{enumerate}
\item We have a map $R^{S_p}_{\cS_{Q_N}}\rightarrow 
\cT\wotimes_{\cO}\TT^{S\cup Q_N}(U^p_1(Q_N))_{\m_{Q_N,1}}$  by 
~(\ref{eqn:R to T}) and~(\ref{eqn: framed deformation ring
  isomorphism}), and a map \[\cT\wotimes_{\cO}\TT^{S\cup 
  Q_N}(U^p_1(Q_N))_{\m_{Q_N,1}}\rightarrow 
\End_{D(\cO_\infty/J)}(\cC(U_p,J,N))\] by definition of~$\TT^{S\cup 
Q_N}(U^p_1(Q_N))_{\m_{Q_N,1}}$ together with Remark 
\ref{bigheckeactsalllevels}. In 
particular, for all $J$ and $N\in I_J$ we have a ring homomorphism \[R_\infty 
\rightarrow \End_{D(\cO_\infty/J)}(\cC(U_p,J,N))\] which factors through our 
chosen quotient map $R_\infty \rightarrow R^S_{\cS_{Q_N}}$and the 
$\cO_N$-algebra map \[R^S_{\cS_{Q_N}}\rightarrow 
\cT\wotimes_{\cO}\TT^{S\cup Q_N}(U^p_1(Q_N))_{\m_{Q_N,1}}.\]
\item If $U'_p$ is an open normal subgroup of $U_p$, $\cC(U'_p,J,N)$ is a 
complex of flat $\cO_\infty/J[U_p/U'_p]$-modules.
\item Let $\ba = \ker(\cO_\infty 
\rightarrow \cO)$. Suppose that $\ba \subset 
J$. Then $\cC(U_p,J,N) = \cC(U^p_0(Q_N)U_p,s(J))_{\m_{Q_N,0}}$ where 
$\cO_\infty/J \cong 
\cO/\varpi^{s(J)}$ and the natural map 
$\cC(U^p_0(Q_N)U_p,s(J))_{\m_{Q_N,0}}\rightarrow \cC(U^pU_p,s(J))_{\m}$ is a 
quasi-isomorphism.\end{enumerate}
\end{rem}
\begin{defn}
	For $d \ge 1$, $J$ an open ideal in $\cO_\infty$ and $N \in I_J$, we define 
	\[R(d,J,N) = 
	\left(R^{S}_{\cS_{Q_N}}/\m_{R^{S}_{\cS_{Q_N}}}^d\right)\otimes_{\cO_N}\cO_\infty/J.\]
	\end{defn}
	\begin{rem}
		Each ring $R(d,J,N)$ is a finite commutative local 
		$\cO_\infty/J$-algebra, equipped with a surjective $\cO$-algebra map 
		$R_\infty \rightarrow R(d,J,N)$. As in the beginning of the proof of 
		\cite[Prop.~3.1]{1409.7007}, for $d$ sufficiently large (depending 
		on $J$ and $U_p$), the map \[R_\infty 
		\rightarrow \End_{D(\cO_\infty/J)}(\cC(U_p,J,N))\] factors through the 
		quotient $R(d,J,N)$ and the map \[R(d,J,N)\rightarrow 
		\End_{D(\cO_\infty/J)}(\cC(U_p,J,N))\] is an $\cO_\infty$-algebra 
		homomorphism. We have an isomorphism \[R(d,J,N)/\ba \cong 
		R_{\cS}/(\m_{R_\cS}^d,\varpi^{s(\ba+J)})\] induced by the canonical 
		isomorphism $R_{\cS_{Q_N}}/\ba_{Q_N} \cong R_\cS$. \end{rem}
\begin{lem}\phantomsection\label{boundedpatchinginput}\begin{enumerate}
\item For all open ideals $J' \subset J$ and open normal subgroups $U'_p 
\subset U_p$ we have surjective maps of complexes 
\[\cC(U'_p,J',N)\rightarrow 
\cC(U_p,J,N)\] inducing isomorphisms \emph{(}of complexes\emph{)}
\[\cO_\infty/J\otimes_{\cO_\infty/J'[U_p/U'_p]}\cC(U'_p,J',N)\rightarrow\cC(U_p,J,N).\]

\item Let $K_1$ be a pro-$p$ Sylow subgroup of $K_0$ and let $U_p$ be an 
open 
normal subgroup of $K_1$. Then $\{\cC(U_p,J,N)\}_{N \in I_J}$ is a set of 
perfect 
chain complexes of $\cO_\infty/J[K_1/U_p]$-modules with bounded complexity.
\end{enumerate}
\end{lem}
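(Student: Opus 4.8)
Both parts of Lemma~\ref{boundedpatchinginput} will follow by combining the change-of-level identifications for the complexes $\cC(U,M)=C_{\A,\bullet}\otimes_{G(F)\times U}M$ (as recalled in Sections~\ref{subsec: manifolds } and~\ref{subsubsec: big hecke algebras}) with Proposition~\ref{prop: Hecke action at TW primes}. For part~(1): since $C_{\A,\bullet}$ is a complex of free $\Z[G(F)\times U]$-modules for each good $U$, the natural surjection $\cC(U'_pU^p_1(Q_N),\cO)\onto\cC(U_pU^p_1(Q_N),\cO)$ induced by $U'_p\subset U_p$ identifies its target with the $U_p/U'_p$-coinvariants of its source; this is the same identification used to define the transition maps for $\TT^S(U^p)$. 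Localising at $\m_{Q_N,1}$ is exact, and forming $\cO_\infty/J'\otimes_{\cO[\Delta_{Q_N}]}(-)$ and then reducing modulo $J$ is right exact and commutes with coinvariants (a colimit). Unwinding the definition of $\cC(U_p,J,N)$ exactly as in Remark~\ref{prepatchingremarks}, one obtains the asserted surjection $\cC(U'_p,J',N)\onto\cC(U_p,J,N)$ together with the identification of its target with $\cO_\infty/J\otimes_{\cO_\infty/J'[U_p/U'_p]}\cC(U'_p,J',N)$. This step is routine.

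For part~(2) I would first record perfectness. The ring $\cO_\infty/J[K_1/U_p]$ is a finite local ring with residue field $k$ (of characteristic $p$), since $\cO_\infty/J$ is finite local with residue field $k$ and $K_1/U_p$ is a finite $p$-group. Choosing an open $W_p\triangleleft K_0$ with $W_p\subset U_p$, the complex $\cC(U^p_1(Q_N)W_p,\cO)$ is perfect over $\cO[\Delta_{Q_N}\times K_0/W_p]$ (the standard Borel--Serre input, as in \cite{1207.4224,1409.7007}, possibly after replacing $K_0$ by a good open subgroup and using change of level); restricting scalars along the finite free inclusion $\cO[\Delta_{Q_N}\times K_1/W_p]\hookrightarrow\cO[\Delta_{Q_N}\times K_0/W_p]$ and localising at the central ideal $\m_{Q_N,1}$ keep it perfect, and then $\cC(U_p,J,N)$ is obtained from $\cC(U^p_1(Q_N)W_p,\cO)_{\m_{Q_N,1}}$ by change of level followed by base change along $\cO[\Delta_{Q_N}\times K_1/W_p]\to\cO_\infty/J[K_1/U_p]$, which is underived by the flatness in Remark~\ref{prepatchingremarks}(2). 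Hence $\cC(U_p,J,N)$ is a perfect complex of $\cO_\infty/J[K_1/U_p]$-modules, and the same argument with $\cO$ replaced by $k$ shows $\cC(U_pU^p,1)_{\m}$ is perfect over $k[K_1/U_p]$.

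The heart of the proof is the uniform bound on complexity. By Proposition~\ref{prop: minimal resolutions exist} each $\cC(U_p,J,N)$ has a minimal resolution $\cF(U_p,J,N)$ over $\cO_\infty/J[K_1/U_p]$, and by Lemma~\ref{lem: minimal complex rank} its $i$-th term has rank $\dim_k H_i\big(k\otimes^{\LL}_{\cO_\infty/J[K_1/U_p]}\cC(U_p,J,N)\big)$. Chasing the definition of $\cC(U_p,J,N)$, using the freeness of $\cC(U^p_1(Q_N)U_p,\cO)$ over $\cO[\Delta_{Q_N}]$ and of $\cC(U^p_0(Q_N)U_p,\cO)$ over $\cO$, together with~(\ref{eqn: going from level 1 to level 0}), one computes $k\otimes^{\LL}_{\cO_\infty/J[K_1/U_p]}\cC(U_p,J,N)\simeq k\otimes^{\LL}_{k[K_1/U_p]}\cC(U^p_0(Q_N)U_p,1)_{\m_{Q_N,0}}$. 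Now Proposition~\ref{prop: Hecke action at TW primes}(3) with $s=1$ supplies a quasi-isomorphism $\cC(U^p_0(Q_N)U_p,1)_{\m_{Q_N,0}}\simeq\cC(U_pU^p,1)_{\m}$ of complexes of $k[K_1/U_p]$-modules (when $U_p$ is only normal in $K_1$, apply the proposition with $W_p$ in place of $U_p$ and then base change along $k[K_1/W_p]\to k[K_1/U_p]$). Therefore
\[k\otimes^{\LL}_{\cO_\infty/J[K_1/U_p]}\cC(U_p,J,N)\ \simeq\ k\otimes^{\LL}_{k[K_1/U_p]}\cC(U_pU^p,1)_{\m},\]
whose cohomology is \emph{independent of $N$ and $J$}. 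Since $\cC(U_pU^p,1)_{\m}$ is perfect over $k[K_1/U_p]$, this is a perfect complex of $k$-vector spaces, so its cohomology is concentrated in a fixed finite range $[a,b]$ with $\dim_k H_i\le D$ for some fixed $D$; thus $\{\cC(U_p,J,N)\}_{N\in I_J}$ has complexity bounded by $(a,b,D)$.

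I expect the only real difficulty to be bookkeeping: keeping careful track of the base-change and localisation operations (the map $\cO[\Delta_{Q_N}]\to\cO_\infty/J$, restriction of scalars to the Iwasawa algebra of $K_1$, and the Hecke localisations at $\m_{Q_N,0}$, $\m_{Q_N,1}$ and $\m$), verifying at each stage that a tensor product is underived precisely because the relevant complex is free or flat over the ring in question, and checking that the quasi-isomorphism of Proposition~\ref{prop: Hecke action at TW primes}(3) can be taken $K_1/U_p$-equivariant even when $U_p$ is only normal in $K_1$. Once the reduction to the single, $N$-independent perfect $k[K_1/U_p]$-complex $\cC(U_pU^p,1)_{\m}$ is in place, the conclusion is immediate.
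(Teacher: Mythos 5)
Your proof is correct and follows essentially the same route as the paper: part (1) is the routine observation about induced maps, and the bounded-complexity claim is obtained exactly as in the paper by identifying $k\otimes^{\LL}_{\cO_\infty/J[K_1/U_p]}\cC(U_p,J,N)$, via part (1) and the quasi-isomorphism of Proposition~\ref{prop: Hecke action at TW primes}(3) at $s=1$, with the $N$- and $J$-independent complex $\cC(K_1U^p,1)_\m$, and then reading off the ranks of the minimal resolution from Lemma~\ref{lem: minimal complex rank}. The only (immaterial) divergence is in establishing perfectness: you trace perfectness of the finite-level Borel--Serre complex through restriction of scalars, localisation and flat base change, whereas the paper simply notes that $\cC(U_p,J,N)$ is a bounded-below flat complex with finitely generated homology, hence admits a minimal resolution, which is bounded because its residual fibre $\cC(K_1,\m_{\cO_\infty},N)$ has bounded homology.
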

\begin{proof}The maps of complexes $\cC(U'_p,J',N)\rightarrow 
\cC(U_p,J,N)$ are those induced by the natural maps
$\cO_\infty/J'\to\cO_\infty/J$ and
$\cC(U^p_1(Q_N)U_p',\cO)\to \cC(U^p_1(Q_N)U_p,\cO)$.

To see that $\cC(U_p,J,N)$ is perfect, we first observe that by part (1) we 
have an isomorphism $k\otimes_{\cO_\infty/J[K_1/U_p]}\cC(U_p,J,N) \cong 
\cC(K_1,\m_{\cO_\infty},N)$ --- note that $k$ is the residue field of the local 
ring $\cO_\infty/J[K_1/U_p]$ and $\cC(U_p,J,N)$ is a bounded-below complex of 
flat $\cO_\infty/J[K_1/U_p]$-modules with finitely generated homology. It  
follows from Proposition \ref{prop: minimal resolutions exist} that 
$\cC(U_p,J,N)$ has a minimal resolution, and since $\cC(K_1,\m_{\cO_\infty},N)$ 
has bounded homology we deduce that $\cC(U_p,J,N)$ is perfect.

It follows immediately from
the quasi-isomorphism 
\[
  \cC(U_pU_1^p(Q),s)_{\m_{Q,1}}\otimes_{\cO[\Delta_Q]}\cO\to\cC(U_pU^p,s)_{\m}\](which
comes from Proposition~\ref{prop: Hecke action
  at TW primes}~(3))
that the set of complexes has bounded complexity, as required.\end{proof}

\begin{defn}
Applying the construction of section \ref{sec:ultra}, we let $x \in \Spec((\cO_\infty/J)_{I_J})$ correspond to $\gF$
(here we use that~$\gF$ is non-principal, and therefore defines an
ultrafilter on~$I_J$), and define 
\[\cC(U_p,J,\infty) = 
(\cO_\infty/J)_{I_J,x}\otimes_{(\cO_\infty/J)_{I_J}}\left(\prod_{N \in I_J}\cC(U_p,J,N)\right)
.\]\end{defn}
\begin{rem}\phantomsection\label{patchremark}
	\begin{enumerate}
	\item  It follows from Lemma \ref{patchingflat} that if $U'_p$ is an 
		open normal subgroup of $U_p$, $\cC(U'_p,J,\infty)$ is a 
		complex of flat $\cO_\infty/J[U_p/U'_p]$-modules.
	\item It follows from Remark \ref{prepatchingremarks}(3) that if 
	$\ba\subset J$ 
	there is a natural quasi-isomorphism $\cC(U_p,J,\infty) \rightarrow 
	\cC(U^pU_p,s(J))_\m$.\end{enumerate}
\end{rem}
\begin{defn}
	Similarly, we define \[R(d,J,\infty) = 
	(\cO_\infty/J)_{I_J,x}\otimes_{(\cO_\infty/J)_{I_J}}\left(\prod_{N \in 
	I_J}R(d,J,N)\right)
	.\]
\end{defn}
\begin{remark}\label{Rpatchremark}
For $d$ sufficiently large (depending 
	on $J$ and $U_p$), the map \[R_\infty 
	\rightarrow \End_{D(\cO_\infty/J)}(\cC(U_p,J,\infty))\] factors through 
	$R(d,J,\infty)$ and the map \[R(d,J,\infty)\rightarrow 
	\End_{D(\cO_\infty/J)}(\cC(U_p,J,\infty))\] is an $\cO_\infty$-algebra 
	homomorphism. By Lemma \ref{patchingmodaug}, we have an isomorphism 
	\[R(d,J,\infty)/\ba \cong 
	R_{\cS}/(\m_{R_\cS}^d,\varpi^{s(\ba+J)})\] induced by the isomorphisms 
	$R(d,J,N)/\ba \cong 
	R_{\cS}/(\m_{R_\cS}^d,\varpi^{s(\ba+J)})$.

\end{remark}
\begin{lemma}\phantomsection\label{patchedcomplexesarecompatible}
	\begin{enumerate}
	\item For all open ideals $J' \subset J$ and open normal subgroups $U'_p 
	\subset U_p$, the natural maps of complexes 
	\[\cC(U'_p,J',\infty)\rightarrow 
	\cC(U_p,J,\infty)\] are surjective, and induce isomorphisms of complexes
	\[\cO_\infty/J\otimes_{\cO_\infty/J'[U_p/U'_p]}\cC(U'_p,J',\infty)
	\rightarrow\cC(U_p,J,\infty).\]
			
	\item Let $U_p$ be an open normal subgroup of $K_1$, and let $J$ be an open 
	ideal in $\cO_\infty$. Then $\cC(U_p,J,\infty)$ is a perfect complex of 
	$\cO_\infty/J[K_1/U_p]$-modules. If $U_p$ is moreover normal in $K_0$, then 
	$\cC(U_p,J,\infty)$ is a perfect complex of 
	$\cO_\infty/J[K_0/U_p]$-modules. 
\end{enumerate}
	\end{lemma}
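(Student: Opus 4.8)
The plan is to deduce both parts from the finite-level statements of Lemma~\ref{boundedpatchinginput}, by checking that the relevant properties survive the two operations defining $\cC(U_p,J,\infty)$: forming the product over $N$ and localising at $x$. For part~(1), note first that $I_{J'}\subseteq I_J$ since $J'\subseteq J$; as both index sets are cofinite they lie in $\gF$, and the argument of Lemma~\ref{lem:ultrafilter ideals} shows that forming all the ultraproducts below over the single set $I_{J'}$ rather than over $I_J$ does not change the localisations at $x$ (the factor indexed by $I_J\setminus I_{J'}$ being killed on localising). Granting this, surjectivity of $\cC(U'_p,J',\infty)\to\cC(U_p,J,\infty)$ is immediate: the maps $\cC(U'_p,J',N)\to\cC(U_p,J,N)$ are surjective by Lemma~\ref{boundedpatchinginput}(1), products of surjections are surjections, and passing to the localisation at $x$ is a base change along the surjective ring map $(\cO_\infty/J')_{I_{J'},x}\to(\cO_\infty/J)_{I_{J'},x}$ of Lemma~\ref{lem:ultrafilter ideals}, hence right exact.

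For the isomorphism in part~(1) I would apply Lemma~\ref{patchingmodaug} with $A=\cO_\infty/J'$, with $B=\cO_\infty/J'[U_p/U'_p]$ --- a finite augmented $A$-algebra, over which each $\cC(U'_p,J',N)$ is a complex of modules (Remark~\ref{prepatchingremarks}(2)) --- and with $\bb=\ker(B\to\cO_\infty/J)$, the two-sided ideal generated by the augmentation ideal of $B$ together with the image of $J$. Then $B/\bb\cong\cO_\infty/J$, and $(B/\bb)\otimes_B\cC(U'_p,J',N)=\cC(U_p,J,N)$ by Lemma~\ref{boundedpatchinginput}(1), so Lemma~\ref{patchingmodaug} (applied with index set $I_{J'}$) yields exactly the asserted isomorphism $\cO_\infty/J\otimes_{\cO_\infty/J'[U_p/U'_p]}\cC(U'_p,J',\infty)\cong\cC(U_p,J,\infty)$, the first paragraph identifying the right-hand side $\overline{\cC}(\infty)$ of Lemma~\ref{patchingmodaug} with $\cC(U_p,J,\infty)$.

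For part~(2): when $U_p$ is an open normal subgroup of $K_1$, the ring $B=\cO_\infty/J[K_1/U_p]$ is local, being the group algebra of a finite $p$-group over the Artinian local ring $\cO_\infty/J$ (whose residue field has characteristic $p$); since $\{\cC(U_p,J,N)\}_{N\in I_J}$ is a bounded-complexity family of perfect complexes of $B$-modules by Lemma~\ref{boundedpatchinginput}(2), the corollary following Lemma~\ref{productperf} shows that $\cC(U_p,J,\infty)$ is perfect over $B$. Suppose now that $U_p$ is moreover normal in $K_0$. At each level $N$ the group $K_0/U_p$ acts on $X_{U^p_1(Q_N)U_p}$, and this action survives Hecke localisation, the reduction $\otimes_{\cO[\Delta_{Q_N}]}\cO_\infty/J$, the product over $N$ and the localisation at $x$, so $\cC(U_p,J,\infty)$ is naturally a complex of $S:=\cO_\infty/J[K_0/U_p]$-modules, with $R:=\cO_\infty/J[K_1/U_p]\subseteq S$ and $S$ free of rank $[K_0:K_1]$ as a left $R$-module. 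As $K_1$ is a pro-$p$ Sylow subgroup of $K_0$, the index $[K_0:K_1]$ is prime to $p$, hence a unit in $\cO_\infty/J$, and the standard relative-projectivity argument applies: the multiplication $S\otimes_R\cC(U_p,J,\infty)\to\cC(U_p,J,\infty)$ is split in $D(S)$ by $c\mapsto[K_0:K_1]^{-1}\sum_{\bar g}\bar g\otimes\bar g^{-1}c$, the sum over coset representatives of $K_1/U_p$ in $K_0/U_p$. Thus $\cC(U_p,J,\infty)$ is a direct summand in $D(S)$ of $S\otimes_R\cC(U_p,J,\infty)$, which is perfect over $S$ because $\cC(U_p,J,\infty)$ is perfect over $R$ and $S$ is free over $R$; since a direct summand of a perfect complex is perfect, $\cC(U_p,J,\infty)$ is perfect over $S$, as required.

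The two places that need genuine care are the bookkeeping with $I_{J'}\subseteq I_J$ in part~(1) and, above all, the passage from $K_1$ to $K_0$ in part~(2): one cannot simply re-run the ultraproduct argument over $K_0/U_p$, since $\cO_\infty/J[K_0/U_p]$ is only semilocal and the minimal-resolution techniques underlying Lemma~\ref{productperf} require a local ring; it is the relative-projectivity trick, exploiting that $[K_0:K_1]$ is prime to $p$, that circumvents this. Everything else is a formal consequence of Lemma~\ref{boundedpatchinginput} together with Lemmas~\ref{patchingmodaug} and~\ref{productperf} and the corollary following the latter.
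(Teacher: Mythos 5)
Your proof is correct and follows essentially the same route as the paper: Lemma~\ref{boundedpatchinginput} for the finite-level inputs, Lemma~\ref{patchingmodaug} for the base-change isomorphism in part~(1), Lemma~\ref{productperf} for perfectness over the local ring $\cO_\infty/J[K_1/U_p]$, and for the passage from $K_1$ to $K_0$ the averaging/transfer splitting you write out explicitly is exactly the ``obvious variant of Lemma~\ref{perfectoversuperring}'' that the paper invokes. Your extra care with the index sets $I_{J'}\subseteq I_J$ is a correct piece of bookkeeping that the paper leaves implicit.
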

	\begin{proof}
		The surjectivity claim of the first part follows immediately from 
		Lemma~\ref{boundedpatchinginput}(1), since taking the the direct 
		product over $N\in I_J$ 
		and localising at $x$ preserves surjectivity. It follows from 
		Lemma~\ref{patchingmodaug} and Lemma~\ref{boundedpatchinginput}(1) that 
		we 
		obtain an isomorphism of complexes 	
	\[\cO_\infty/J\otimes_{\cO_\infty/J'[U_p/U'_p]}\cC(U'_p,J',\infty)
	\rightarrow\cC(U_p,J,\infty).\]
		For the second part, the fact that $\cC(U_p,J,\infty)$ is a perfect 
		complex of 
		$\cO_\infty/J[K_1/U_p]$-modules follows from Lemma~\ref{productperf} 
		and Lemma~\ref{boundedpatchinginput}(2). To get perfectness over 
		$\cO_\infty/J[K_0/U_p]$ we apply (an obvious variant
                of) Lemma~\ref{perfectoversuperring}.
	\end{proof}
\begin{defn}\label{def: Cinfty}
	We define a complex of $\cO_\infty[[K_0]]$-modules \[\widetilde{\cC}(\infty) := 
	\invlim_{J,U_p} \cC(U_p,J,\infty).\]
\end{defn}
\begin{remark}
	The complex $\widetilde{\cC}(\infty)$ is naturally equipped with an 
	$\cO_\infty$-linear action 
	of $\prod_{v|p}G(F_v)$ (on each term of the complex), which extends the 
	$K_0$-action coming from the $\cO_\infty[[K_0]]$-module structure. 
	Explicitly, 
	for $g \in \prod_{v|p}G(F_v)$, right multiplication by $g$ gives a map of 
	complexes \[\cdot g: \cC(U_p,J,N) \rightarrow 
	\cC(g^{-1}U_pg,J,N)\] 
	for each $U_p$, $J$ and $N$. Supposing that $g^{-1}U_pg \subset K_0$, 
	applying our (functorial) patching 
	construction gives a map \[\cdot g: \cC(U_p,J,\infty) \rightarrow 
	\cC(g^{-1}U_pg,J,\infty)\] As $U_p$ runs over the cofinal subset of open 
	subgroups of  $K_0$ with $g^{-1}U_pg \subset K_0$, the subgroups 
	$g^{-1}U_pg$ also run over a cofinal subset of open subgroups of $K_0$, so 
	we can identify $\invlim_{J,U_p}\cC(g^{-1}U_pg,J,\infty)$ with 
	$\widetilde{\cC}(\infty)$. Therefore, taking the inverse limit over $J$ and 
	$U_p$ 
	gives the action of $g$ on $\widetilde{\cC}(\infty)$. 
\end{remark}
To verify that $\widetilde{\cC}(\infty)$ has good properties, we will need 
several technical Lemmas. 
\begin{lemma}\label{goodinvlim}
	Let $I$ be a countable directed poset. Let $\cC = 
	(\cC(i))_{i \in I}$ be an inverse system with $\cC(i)\in Ch(\cO)$. 
	Suppose that the following two conditions hold: \begin{enumerate}
		\item for every $i \in I$ and $m \in \Z$, the homology group 
		$H_m(\cC(i))$ is an Artinian $\cO$-module.
		\item Either the entries of $\cC(i)$ are Artinian $\cO$-modules for 
		every $i \in I$, or for every pair $i \le j$ in $I$ the transition map 
		$\cC(j)\rightarrow \cC(i)$ is surjective.
	\end{enumerate} 
	Then for every $m \in \Z$ there are natural isomorphisms \[H_m(\invlim_I 
	\cC) = \invlim_I H_m(\cC(i)).\]
\end{lemma}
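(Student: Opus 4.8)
The plan is to reduce to an inverse system indexed by $\NN$ and then apply the Milnor exact sequence, using Mittag--Leffler conditions to kill the relevant ${\invlim}^{1}$ terms. First I would note that a countable directed poset $I$ admits a cofinal chain $i_{0}\le i_{1}\le\cdots$: enumerating $I=\{j_{0},j_{1},\dots\}$, one inductively picks $i_{n}$ to be an upper bound for $\{i_{n-1},j_{n}\}$. Since the functors $\invlim$ and ${\invlim}^{1}$, the termwise inverse limit complex $\invlim_{I}\cC$ (limits in $Ch(\cO)$ are computed degreewise), and the module $\invlim_{I}H_{m}(\cC(i))$ are all unchanged on passing to a cofinal subsystem, after renaming we may assume $I=\NN$.

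The key technical input is that hypothesis (2) makes the tower of complexes Mittag--Leffler in each degree $m$: for each $i$ the images of $\cC_{m}(j)\to\cC_{m}(i)$ stabilise as $j\to\infty$. If all transition maps $\cC(j)\to\cC(i)$ are surjective this is immediate; if instead each $\cC_{m}(i)$ is Artinian it follows from the descending chain condition. Hence ${\invlim}^{1}$ of the tower $(\cC_{m}(i))_{i}$ vanishes for every $m$, so the endomorphism $1-\sigma$ of $\prod_{i}\cC(i)$ (the identity minus the shift induced by the transition maps) is a degreewise surjection of complexes with kernel $\invlim_{i}\cC(i)$, giving a short exact sequence of complexes
\[
0\to\invlim_{i}\cC(i)\to\prod_{i}\cC(i)\xrightarrow{\,1-\sigma\,}\prod_{i}\cC(i)\to 0 .
\]

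Next I would take the long exact sequence in homology. Arbitrary products are exact in $\cO$-modules, so $H_{m}(\prod_{i}\cC(i))=\prod_{i}H_{m}(\cC(i))$, and the map induced there by $1-\sigma$ has kernel $\invlim_{i}H_{m}(\cC(i))$ and cokernel ${\invlim}^{1}_{i}\,H_{m}(\cC(i))$. The long exact sequence thus decomposes into Milnor short exact sequences
\[
0\to{\invlim}^{1}_{i}\,H_{m+1}(\cC(i))\to H_{m}\bigl(\invlim_{i}\cC(i)\bigr)\to\invlim_{i}H_{m}(\cC(i))\to 0 ,
\]
where the right-hand arrow is the canonical map induced by the projections $\invlim_{i}\cC(i)\to\cC(i)$; in particular the whole sequence is natural in $\cC$. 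Finally, hypothesis (1) says $H_{m+1}(\cC(i))$ is Artinian for all $i$, so the tower $(H_{m+1}(\cC(i)))_{i}$ is Mittag--Leffler, ${\invlim}^{1}_{i}\,H_{m+1}(\cC(i))=0$, and the Milnor sequence collapses to the asserted natural isomorphism.

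I expect the only genuinely delicate point to be the bookkeeping around the dichotomy in hypothesis (2): both alternatives are included precisely to guarantee the degreewise Mittag--Leffler property that forces ${\invlim}^{1}$ of the complexes to vanish, and I would treat each of them carefully. Everything else is the standard $\invlim$/${\invlim}^{1}$ formalism for towers of modules (see e.g.\ \cite[\S 3.5]{weibel}), together with exactness of products of $\cO$-modules.
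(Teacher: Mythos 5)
Your argument is correct and is essentially the paper's proof: the paper also reduces to a cofinal chain isomorphic to $\NN$ and then invokes \cite[Theorem 3.5.8]{weibel}, whose content is exactly the Milnor $\invlim$/${\invlim}^{1}$ sequence you derive by hand, with hypothesis (2) supplying the Mittag--Leffler condition for the complexes and hypothesis (1) supplying it for the homology groups. The only difference is that you unpack the cited theorem rather than quoting it.
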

\begin{proof}
	Since $I$ is direct and countable, it has a cofinal subset which is 
	isomorphic 
	(as a poset) to $\mathbb{N}$ with its usual ordering. So we can assume $I = 
	\mathbb{N}$. The proposition is then a consequence of \cite[Theorem 
	3.5.8]{weibel} (as assumption~(1) guarantees the Mittag-Leffler
	property for the~$H_m(\cC(i))$, and assumption~(2) guarantees it for 
	the~$\cC(i)$).
\end{proof}
\begin{lemma}\label{perfectoversuperring}
	Let $K$ be a compact $p$-adic analytic group, and let $K_1$ be a pro-$p$ 
	Sylow subgroup of $K$. Let $\cC$ be a bounded 
	below chain complex 
	of $\cO[[K]]$-modules. Suppose that $\cC|_{K_1}$ is perfect when 
	regarded as a complex 
	of $\cO[[K_1]]$-modules. Then $\cC$ is a perfect complex of $\cO[[K]]$-modules.
\end{lemma}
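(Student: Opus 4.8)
The plan is to run a transfer (averaging) argument, exploiting that the index $d := [K:K_1]$ is invertible in $\cO$. Since $K$ is a compact $p$-adic analytic group it contains an open uniform pro-$p$ subgroup, hence is virtually pro-$p$; as $K_1$ is a maximal pro-$p$ subgroup, $d$ is finite and prime to $p$, hence a unit in $\cO$. I would also record the standard fact that, choosing left coset representatives $g_1,\dots,g_d$ of $K/K_1$, the Iwasawa algebra $\cO[[K]]$ is free as a left and as a right $\cO[[K_1]]$-module on $g_1,\dots,g_d$; in particular $\cO[[K]]\otimes_{\cO[[K_1]]}(-)$ is an exact functor.

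The key step is to exhibit $\cC$ as a direct summand, in the category of complexes of $\cO[[K]]$-modules, of $\cO[[K]]\otimes_{\cO[[K_1]]}\cC|_{K_1}$. For this I would introduce the multiplication chain map $\epsilon \colon \cO[[K]]\otimes_{\cO[[K_1]]}\cC \to \cC$, $a\otimes c\mapsto ac$, and the averaging map $s\colon \cC \to \cO[[K]]\otimes_{\cO[[K_1]]}\cC$, $s(c)=\tfrac1d\sum_{i=1}^d g_i\otimes g_i^{-1}c$. A short coset computation shows that $s(c)$ is independent of the choice of the $g_i$ (a different choice $g_ih_i$, $h_i\in K_1$, moves $h_i$ across the tensor) and that $s$ is $\cO[[K]]$-linear (for $k\in K$ the elements $kg_i$ are again coset representatives, and reindexing gives $k\cdot s(c)=s(kc)$); since the differentials of $\cC$ are $\cO[[K]]$-linear, $s$ is a chain map, and clearly $\epsilon\circ s=\id_\cC$. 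Hence $\cC$ is a retract of $\cO[[K]]\otimes_{\cO[[K_1]]}\cC$ in $Ch(\cO[[K]])$, and a fortiori in $D(\cO[[K]])$.

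Finally I would check that $\cO[[K]]\otimes_{\cO[[K_1]]}\cC$ is perfect over $\cO[[K]]$: fixing a quasi-isomorphism $P^\bullet\to\cC|_{K_1}$ with $P^\bullet$ a bounded complex of finite projective $\cO[[K_1]]$-modules, applying the exact functor $\cO[[K]]\otimes_{\cO[[K_1]]}(-)$ gives a quasi-isomorphism $\cO[[K]]\otimes_{\cO[[K_1]]}P^\bullet\to\cO[[K]]\otimes_{\cO[[K_1]]}\cC$ whose source is a bounded complex of finite projective $\cO[[K]]$-modules (the base change of a summand of $\cO[[K_1]]^{n}$ is a summand of $\cO[[K]]^{n}$). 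Thus $\cC$ is a retract in $D(\cO[[K]])$ of a perfect complex, and a retract of a perfect complex is again perfect (for instance because the perfect complexes are exactly the compact objects of $D(\cO[[K]])$, which form a thick subcategory). This finishes the proof.

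I expect the only genuinely delicate point to be the bookkeeping verifying that $s$ is well defined and $\cO[[K]]$-linear; everything else is formal once one knows that $d\in\cO^\times$ and that $\cO[[K]]$ is $\cO[[K_1]]$-free.
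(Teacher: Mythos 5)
Your proof is correct and is essentially the paper's argument: both exhibit $\cC$ as a retract of a complex induced up from $\cO[[K_1]]$ via a transfer/averaging map exploiting that $[K:K_1]$ is prime to $p$, and both conclude via the thickness of the subcategory of perfect complexes in $D(\cO[[K]])$. The only difference is in the bookkeeping: you split $\cC$ off $\cO[[K]]\otimes_{\cO[[K_1]]}\cC$ by an honest chain-level section and then base-change a finite projective resolution, whereas the paper first replaces $\cC|_{K_1}$ by a finite free complex $\cF$ (after arranging $\cC$ to consist of projectives, so that a homotopy inverse $\beta$ exists) and obtains the retraction of $\cC$ off $\cO[[K]]\otimes_{\cO[[K_1]]}\cF$ only up to homotopy.
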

\begin{proof}
	We can assume that $\cC$ is a bounded below complex of projective 
	$\cO[[K]]$-modules. Let $\cF$ be a bounded complex of finite free 
	$\cO[[K_1]]$-modules with 
	a quasi-isomorphism $\alpha:\cF\rightarrow\cC|_{K_1}$. We have a homotopy 
	inverse $\beta: \cC|_{K_1} \rightarrow \cF$ to $\alpha$. We obtain maps of 
	complexes of $\cO[[K]]$-modules
	\begin{align*}\widetilde{\alpha}&:\cO[[K]]\otimes_{\cO[[K_1]]}\cF
\rightarrow \cC\\
	\widetilde{\beta}&:\cC \rightarrow 
	\cO[[K]]\otimes_{\cO[[K_1]]}\cF\end{align*} where 
	$\widetilde{\alpha}$ is given by the usual adjunction and 
	$\widetilde{\beta}$ is given by \[\widetilde{\beta}(x) = \sum_{gK_1 \in 
		K_0/K_1}[g]\otimes \beta(g^{-1}x).\] The composite 
	$\widetilde{\alpha}\circ\widetilde{\beta}$ is homotopic to 
	$[K_0:K_1]\id_\cC$, and $[K_0:K_1]$ is invertible in $\Z_p$, so 
	$\cC$ is a retract (in the homotopy category) of 
	$\cO[[K]]\otimes_{\cO[[K_1]]}\cF$. Since $	
\cO[[K]]\otimes_{\cO[[K_1]]}\cF$ is perfect, it follows that 
$\cC$ 
	is also perfect, since perfect complexes form a thick (or \'{e}paisse) 
	subcategory of 
	$D(\cO[[K]])$ (this follows from \cite[Prop.~6.4]{bn}, which 
	identifies perfect complexes with compact objects in $D(R)$) and therefore 
	the retraction of a perfect complex is perfect (thick subcategories of 
	triangulated categories are closed under retraction, by 
	definition).\end{proof}

As promised, we can now show that $\widetilde{\cC}(\infty)$ has various 
desirable properties.
\begin{prop}\phantomsection\label{completionofpatchedcomplexes}
	\begin{enumerate}
		
			\item For all open ideals $J\subset \cO_\infty$ and compact open 
			subgroups 
			$U_p$ of $K_0$ we have surjective maps of
                        complexes \emph{(}induced by the maps in \emph{Lemma 
                        \ref{patchedcomplexesarecompatible}(1))} \[\widetilde{\cC}(\infty)\rightarrow 
			\cC(U_p,J,\infty)\] inducing isomorphisms of complexes
			\[\cO_\infty/J\otimes_{\cO_\infty[[U_p]]}\widetilde{\cC}(\infty)
			\rightarrow\cC(U_p,J,\infty),\] and $\widetilde{\cC}(\infty)$ is a 
			complex of flat $\cO_\infty[[U_p]]$-modules. 
			
			\item $\widetilde{\cC}(\infty)$ is a perfect complex of 
		$\cO_\infty[[K_0]]$-modules.

\item There is a ring homomorphism $R_\infty \rightarrow 
\End_{D(\cO_\infty)}(\widetilde{\cC}(\infty))$ which factors as the composite 
of maps $R_\infty \rightarrow \invlim_{J,d} R(d,J,\infty)$ and $\invlim_{J,d} 
R(d,J,\infty) \rightarrow \End_{D(\cO_\infty)}(\widetilde{\cC}(\infty))$ \emph{(}the 
latter map is an $\cO_\infty$-algebra map\emph{)} given 
by the limit of the maps discussed in Remark \ref{Rpatchremark}.
	\end{enumerate}
\end{prop}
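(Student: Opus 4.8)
\emph{Plan.} I would establish the three assertions in the order stated, splitting part~(1) into (1a) the surjections and base-change isomorphisms, and (1b) the flatness. The recurring mechanism will be: restrict the inverse limit over the index poset $\{(J,U_p)\}$ to a countable coinitial subsystem (the powers $\m_{\cO_\infty}^s$ are coinitial among the open ideals of $\cO_\infty$, and $K_0$ has a countable descending chain of open subgroups), and then interchange that limit with $H_*$, $\otimes$, or $\Hom$, using Lemma~\ref{goodinvlim} and the uniform bounds from the bounded-complexity statement Lemma~\ref{boundedpatchinginput}(2). For (1a): along the coinitial subsystem the transition maps of $\{\cC(U_p,J,\infty)\}$ are surjective by Lemma~\ref{patchedcomplexesarecompatible}(1), so the relevant $\varprojlim^{1}$'s vanish and $\widetilde{\cC}(\infty)\to\cC(U_p,J,\infty)$ is surjective. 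Writing $\mathfrak{b}_{J,U_p}=\ker(\cO_\infty[[U_p]]\to\cO_\infty/J)$, the base-change identities of Lemma~\ref{patchedcomplexesarecompatible}(1) give $\cC(U_p',J',\infty)_i/\mathfrak{b}_{J,U_p}\cC(U_p',J',\infty)_i\cong\cC(U_p,J,\infty)_i$ for every $(J',U_p')$ refining $(J,U_p)$; checking that this reduction commutes with the inverse limit yields the isomorphism $\cO_\infty/J\otimes_{\cO_\infty[[U_p]]}\widetilde{\cC}(\infty)\cong\cC(U_p,J,\infty)$, and in particular shows that each term of $\widetilde{\cC}(\infty)$ is complete and separated for the $\operatorname{rad}(\cO_\infty[[K_0]])$-adic topology.

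\emph{Flatness and perfectness.} For (1b) I would invoke the local criterion for flatness for the Noetherian ring $\cO_\infty[[K_0]]$ and the ideal $I=\operatorname{rad}(\cO_\infty[[K_0]])$, whose powers are coinitial among the open ideals: by (1a) each term of $\widetilde{\cC}(\infty)$ is $I$-adically complete and separated, and its reduction modulo $I^n$ is a base change of a term of some $\cC(U_p',J',\infty)$, which is flat over $\cO_\infty/J'[K_0/U_p']$ by Remark~\ref{patchremark}(1); hence the reduction is flat over $\cO_\infty[[K_0]]/I^n$, and the criterion gives flatness over $\cO_\infty[[K_0]]$, and then over $\cO_\infty[[U_p]]$ for any compact open $U_p$ by transitivity of flatness since $\cO_\infty[[K_0]]$ is free over $\cO_\infty[[U_p]]$. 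For part~(2): for $(J,U_p)$ with $U_p\subseteq K_1$ open normal, Remark~\ref{rem: patched minimal resolutions} gives a minimal resolution $\cF(U_p,J,\infty)$ of $\cC(U_p,J,\infty)$, and the flatness of $\cC(U_p',J',\infty)$ over $\cO_\infty/J'[U_p/U_p']$ shows that the base change of such a resolution along $\cO_\infty/J'\to\cO_\infty/J$ is again a minimal resolution, hence isomorphic to $\cF(U_p,J,\infty)$ by Proposition~\ref{prop: minimal resolutions exist}. Along a countable coinitial chain I would choose these resolutions compatibly with the transition maps; their inverse limit $\cF(\infty)$ is then a bounded complex of finite free $\cO_\infty[[K_1]]$-modules, bounded because by Lemma~\ref{lem: minimal complex rank} the rank of $\cF(U_p,J,\infty)_i$ equals $\dim_k H_i(\cC(K_1,\m_{\cO_\infty},\infty))$, which vanishes outside the fixed range of Lemma~\ref{boundedpatchinginput}(2). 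Finally $\cF(\infty)\to\widetilde{\cC}(\infty)$ is a quasi-isomorphism, since Lemma~\ref{goodinvlim} gives $H_m(\cF(\infty))=\varprojlim_{J,U_p}H_m(\cF(U_p,J,\infty))=\varprojlim_{J,U_p}H_m(\cC(U_p,J,\infty))=H_m(\widetilde{\cC}(\infty))$; so $\widetilde{\cC}(\infty)$ is perfect over $\cO_\infty[[K_1]]$, and over $\cO_\infty[[K_0]]$ by Lemma~\ref{perfectoversuperring}.

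\emph{Part~(3).} The construction being natural in $(J,U_p)$, the maps of Remark~\ref{Rpatchremark} assemble into $R_\infty\to\varprojlim_{J,d}R(d,J,\infty)\to\varprojlim_{J,U_p}\End_{D(\cO_\infty/J[K_0/U_p])}(\cC(U_p,J,\infty))$, using that the Hecke operators $T^i_v$ are honest $K_0$-equivariant chain maps commuting with the level-lowering maps, so that the $R(d,J,\infty)$-action refines to $K_0$-equivariant derived endomorphisms. To identify the target, choose by part~(2) a bounded complex $P$ of finite projective $\cO_\infty[[K_0]]$-modules with $P\cong\widetilde{\cC}(\infty)$ in $D(\cO_\infty[[K_0]])$ and set $\cE^\bullet=\Hom^\bullet_{\cO_\infty[[K_0]]}(P,P)$: it is a bounded complex of finite projective $\cO_\infty[[K_0]]$-modules with $H^0(\cE^\bullet)=\End_{D(\cO_\infty[[K_0]])}(\widetilde{\cC}(\infty))$ and $\cE^\bullet=\varprojlim_{J,U_p}\cE^\bullet/\mathfrak{b}_{J,U_p}\cE^\bullet$, where (using part~(1), which identifies $P/\mathfrak{b}_{J,U_p}P$ with $\cC(U_p,J,\infty)$) the quotient $\cE^\bullet/\mathfrak{b}_{J,U_p}\cE^\bullet$ computes $\End_{D(\cO_\infty/J[K_0/U_p])}(\cC(U_p,J,\infty))$ and has terms Artinian over $\cO$. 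Lemma~\ref{goodinvlim} then gives $H^0(\cE^\bullet)=\varprojlim_{J,U_p}\End_{D(\cO_\infty/J[K_0/U_p])}(\cC(U_p,J,\infty))$, and composing with the forgetful map $\End_{D(\cO_\infty[[K_0]])}(\widetilde{\cC}(\infty))\to\End_{D(\cO_\infty)}(\widetilde{\cC}(\infty))$ yields the desired $\cO_\infty$-algebra homomorphism, with the asserted factorization through $\varprojlim_{J,d}R(d,J,\infty)$.

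\emph{Main obstacle.} The step I expect to require the most care is the degreewise base-change isomorphism in part~(1a): the terms of $\widetilde{\cC}(\infty)$ are built from singular chains and are not finitely generated, so one must argue carefully that $\cO_\infty/J\otimes_{\cO_\infty[[U_p]]}(-)$ commutes with the inverse limit defining $\widetilde{\cC}(\infty)$ --- presumably by passing to the patched minimal resolutions of Remark~\ref{rem: patched minimal resolutions}, or by exploiting pseudocompactness of the relevant quotients. This isomorphism is the keystone of the whole proposition: it drives the flatness argument in~(1b) and, together with part~(2), makes the identification of endomorphism rings in part~(3) legitimate.
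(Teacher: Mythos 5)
Your outline for parts (2) and (3) is essentially the paper's argument: compatible minimal resolutions along a cofinal chain plus Lemma~\ref{goodinvlim} and Lemma~\ref{perfectoversuperring} for perfectness, and a strictly perfect representative of the endomorphism complex for part (3) (the paper simply cites the proof of \cite[Lem.~2.13(3)]{1409.7007} for this; your more explicit route through $\End_{D(\cO_\infty[[K_0]])}$ is fine, though note the paper deliberately avoids claiming $K_0$-equivariance of the $R_\infty$-action because of the compatibility bookkeeping you elide).

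The genuine gap is exactly where you flag it, in part (1), and your proposed fixes do not close it. You invoke ``the local criterion for flatness for the Noetherian ring $\cO_\infty[[K_0]]$'': that criterion (e.g.\ \cite[Thm.~22.3]{MR1011461}) is a statement about commutative rings, and $\cO_\infty[[K_0]]$ is noncommutative; moreover even granting a noncommutative analogue you would still separately need that $\cO_\infty/J\otimes_{\cO_\infty[[U_p]]}(-)$ commutes with the inverse limit defining $\widetilde{\cC}(\infty)$, which is the other half of assertion (1) and is not supplied by minimal resolutions or pseudocompactness alone (the terms of the complexes are flat but very far from finitely generated, so one cannot argue degreewise by Nakayama-type completeness). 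The paper's resolution is Lemma~\ref{lem:flatnessinlimit}: for a tower of flat $\cO[[K]]/\cJ^m$-modules $M_m$ with $M_m=M_{m+1}/\cJ^mM_{m+1}$, the limit $M$ is flat and $Q\otimes M=\invlim Q\otimes M_m$ for all finitely generated $Q$. Its proof rests on the Artin--Rees property of the $\cJ$-adic filtration on the Iwasawa algebra (Lemma~\ref{lem:artinrees}), which forces the towers $\Tor_1(Q,M_m)$ to be essentially zero. One then applies this with $K=\Z_p^g\times U_p$ and the reindexed system $\cC(V_p,J,\infty)\otimes_{\cO_\infty/J[U_p/V_p]}\cO_\infty[[U_p]]/\cJ^m$, whose hypotheses are verified by Lemma~\ref{patchedcomplexesarecompatible}(1) and Remark~\ref{patchremark}(1). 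Without this (or an equivalent noncommutative Artin--Rees input) both the flatness and the base-change isomorphism in (1) — and hence, as you note, everything downstream — remain unproved.
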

\begin{proof}
	The first part follows from Lemma 
	\ref{patchedcomplexesarecompatible}(1) and Lemma \ref{lem:flatnessinlimit}. 
	To see this, fix an open uniform pro-$p$ subgroup  $U'_p$  of
        $U_p$, and note that if $\cJ$ is the two-sided ideal in 
	$\cO_\infty[[U_p]]$ generated by the maximal ideal of $\cO_\infty[[U'_p]]$, 
	where  the $\cJ$-adic 
	topology on $\cO_\infty[[U_p]]$ is equivalent to the canonical profinite 
	topology. We set $K = \Z_p^g \times U_p$ in Lemma 
	\ref{lem:flatnessinlimit}, where $g$ is chosen so that 
	$\cO[[K]] = \cO_\infty[[U_p]]$.
	
	For $m \ge 1$ we can define a complex of flat
	$\cO_\infty[[U_p]]/\cJ^m$-modules by choosing $J$ and $V_p \subset U_p$ 
	sufficiently small so that $\cJ^m$ contains the kernel of the map 
	\[\cO_\infty[[U_p]] \rightarrow \cO_\infty/J[U_p/V_p]\] and considering the 
	complex 
	$\cC(V_p,J,\infty)\otimes_{\cO_\infty/J[U_p/V_p]}\cO_\infty[[U_p]]/\cJ^m$. 
	This complex is independent of the choice of $J$ and $V_p$, by Lemma 
	\ref{patchedcomplexesarecompatible}(1). In particular, by choosing $J$ and 
	$V_p$ sufficiently small, we get a natural surjective map 
	\[\cC(V_p,J,\infty)\otimes_{\cO_\infty/J[U_p/V_p]}\cO_\infty[[U_p]]/\cJ^{m+1}
	 \rightarrow 
	\cC(V_p,J,\infty)\otimes_{\cO_\infty/J[U_p/V_p]}\cO_\infty[[U_p]]/\cJ^m.\] 
	Taking the terms of these 
	complexes in fixed degree as $m$ varies gives a system of modules to which 
	Lemma \ref{lem:flatnessinlimit} applies, and 
          taking the inverse limit over 
	$m$ gives the complex $\widetilde{\cC}(\infty)$.

For the second part,  we first note that by Lemma~\ref{perfectoversuperring} it 
suffices to show that $\widetilde{\cC}(\infty)$ is perfect over 
$\cO_{\infty}[[K_1]]$, where $K_1$ is a pro-$p$ Sylow subgroup of $K_0$. For 
each $J$ and each $U_p$ open normal in $K_1$, there is by Remark~\ref{rem: patched minimal resolutions} a minimal resolution 
$\cF(U_p,J,\infty)$ of $\cC(U_p,J,\infty)$, which is isomorphic to the minimal 
resolutions of $\cC(U_p,J,N)$ for all $N \in I'$ where $I' \in \gF$. For each 
$J'\subset J$ and $U'_p$ open normal in $U_p$, we choose compatible maps
$\cF(U'_p,J',\infty)\rightarrow \cF(U_p,J,\infty)$ which are also compatible 
with the map 
\[\cC(U'_p,J',\infty) \rightarrow \cC(U_p,J,\infty)\] and induce isomorphisms 
\[\cO_\infty/J\otimes_{\cO_\infty/J'[U_p/U'_p]}\cF(U'_p,J',\infty)\cong 
\cF(U_p,J,\infty).\] In fact, rather than choosing maps for all $J$ and $U_p$, 
it suffices to choose maps between minimal resolutions $\cF_m$ of the complexes 
$\cC(U_p,J,\infty)\otimes_{\cO_\infty/J[K_1/U_p]}\cO_\infty[[K_1]]/\cJ^m$ 
discussed in the proof of the first part.  It follows from Lemma 
\ref{goodinvlim} that there is a 
quasi-isomorphism 
 \[\invlim_{m}\cF_m \rightarrow \widetilde{\cC}(\infty)\] and 
 $\invlim_{m}\cF_m$ is a bounded complex of finite free 
 $\cO_{\infty}[[K_1]]$-modules by construction, as required.

The third part follows from (the proof of) \cite[Lemma 
2.13(3)]{1409.7007}.\end{proof}
\begin{remark}\label{liftOtoR}
	Since the image of the map $\alpha: R_\infty \rightarrow \invlim_{J,d} 
	R(d,J,\infty)$ contains (the image 
	of) $\cO_\infty$, $\alpha(R_\infty)$ is naturally an $\cO_\infty$-algebra. 
	Since $\cO_\infty$ is formally smooth, we can choose a lift of the map 
	$\cO_\infty \rightarrow \alpha(R_\infty)$ to a map $\cO_\infty \rightarrow 
	R_\infty$. We make such a choice, and regard $R_\infty$ as an 
	$\cO_\infty$-algebra and $\alpha$ as an $\cO_\infty$-algebra
        map. \end{remark}
\begin{remark}
	With some more careful bookkeeping, it should be possible to show 
	that there is a natural map $R_\infty \rightarrow 
	\End_{D(\cO_\infty[[K_0]])}(\widetilde{\cC}(\infty))$ lifting the map 
	$R_\infty \rightarrow \End_{D(\cO_\infty)}(\widetilde{\cC}(\infty))$ which 
	we have described above. However, in our applications below, the complex 
	$\widetilde{\cC}(\infty)$ will have homology concentrated in a single 
	degree, so this doesn't give any additional information.
\end{remark}

The following Proposition shows that we can think of
$\widetilde{\cC}(\infty)$ as `patched completed homology'.\begin{prop}
  \label{prop: killing patching variables gives completed homology} If
  we let $\ba = \ker(\cO_\infty \rightarrow \cO)$, we have natural \emph{(}in
  particular, $\prod_{v|p}G(F_v)$-equivariant\emph{)} isomorphisms
  \[H_i(\cO_\infty/\ba \otimes_{\cO_\infty}\widetilde{\cC}(\infty))
    \cong \widetilde{H}_i(X_{U^p},\cO)_\m.\] 
    
  There are surjective maps $R_\infty/\ba \rightarrow R_{\cS} \rightarrow 
  \TT^S(U^p)_\m$ and the above isomorphism intertwines the action of $R_\infty$ 
  on the left hand side with the action of $\TT^S(U^p)_\m$ on the right.
\end{prop}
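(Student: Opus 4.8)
The plan is to show that applying $\cO_\infty/\ba\otimes_{\cO_\infty}(-)$ to $\widetilde{\cC}(\infty)$ recovers the inverse limit over $s$ and $U_p$ of the finite-level complexes $\cC(U^pU_p,s)_\m$, and then to commute $H_i$ past that limit. Set $J_s:=\ba+\varpi^s\cO_\infty$, an open ideal of $\cO_\infty$ with $\cO_\infty/J_s=\cO/\varpi^s$; one checks that every open ideal of $\cO_\infty$ containing $\ba$ arises this way. First I would record, for each $s\ge1$ and each compact open $U_p\subseteq K_0$, the natural isomorphisms of complexes
\[
\cO/\varpi^s\otimes_{\cO[[U_p]]}\bigl(\cO_\infty/\ba\otimes_{\cO_\infty}\widetilde{\cC}(\infty)\bigr)\;\cong\;\cO_\infty/J_s\otimes_{\cO_\infty[[U_p]]}\widetilde{\cC}(\infty)\;\cong\;\cC(U_p,J_s,\infty)
\]
(from transitivity of base change and Proposition~\ref{completionofpatchedcomplexes}(1)), together with the natural quasi-isomorphism $\cC(U_p,J_s,\infty)\to\cC(U^pU_p,s)_\m$ provided by Remark~\ref{patchremark}(2) since $\ba\subseteq J_s$; all of these maps are compatible with shrinking $U_p$ and increasing $s$.

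The step I expect to be the main obstacle is upgrading the first display to an identification $\cO_\infty/\ba\otimes_{\cO_\infty}\widetilde{\cC}(\infty)=\invlim_{s,U_p}\cC(U_p,J_s,\infty)$ of complexes, since tensoring does not in general commute with inverse limits. I would argue term by term. Restricting the defining limit $\widetilde{\cC}(\infty)=\invlim_{J,U_p}\cC(U_p,J,\infty)$ to the cofinal family of $U_p$ that are open normal in a fixed pro-$p$ Sylow subgroup $K_1\le K_0$, each $\cC(U_p,J,\infty)_i$ is a finite module (a term of a perfect complex over the finite ring $\cO_\infty/J[K_1/U_p]$, by Lemma~\ref{patchedcomplexesarecompatible}(2)) and the transition maps are surjective (Lemma~\ref{patchedcomplexesarecompatible}(1)), so each $\widetilde{\cC}(\infty)_i$ is a profinite $\cO_\infty[[K_0]]$-module. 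As $\ba$ is finitely generated, $\ba\widetilde{\cC}(\infty)_i$ is closed, so $\widetilde{\cC}(\infty)_i/\ba\widetilde{\cC}(\infty)_i$ is again profinite and equals $\invlim$ of its finite quotients $\cC(U_p,J,\infty)_i/\ba=\cC(U_p,\ba+J,\infty)_i$; as $(J,U_p)$ vary these run cofinally through the $\cC(U_p,J_s,\infty)_i$, giving the identification (there are no $\invlim^1$ terms, all systems being Mittag--Leffler).

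Granting this, I would apply $H_i$ and invoke Lemma~\ref{goodinvlim} --- whose hypotheses hold because each $H_m(\cC(U_p,J_s,\infty))$ is finite, hence Artinian over $\cO$, each term of $\cC(U_p,J_s,\infty)$ is finite, and the poset of pairs $(s,U_p)$ is countable and directed --- to obtain
\[
H_i\bigl(\cO_\infty/\ba\otimes_{\cO_\infty}\widetilde{\cC}(\infty)\bigr)=\invlim_{s,U_p}H_i\bigl(\cC(U_p,J_s,\infty)\bigr)=\invlim_{s,U_p}H_i(X_{U^pU_p},\cO/\varpi^s)_\m,
\]
using the quasi-isomorphism above, the identification $H_*(X_U,M)\cong H_*(\cC(U,M))$, and exactness of localisation at $\m$. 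For fixed $U_p$, the universal coefficient sequence together with finite generation of $H_*(X_{U^pU_p},\cO)_\m$ over $\cO$ (so all $\invlim^1$ vanish, and the bounded $\varpi$-power torsion terms die in the limit) identifies $\invlim_s H_i(X_{U^pU_p},\cO/\varpi^s)_\m$ with $H_i(X_{U^pU_p},\cO)_\m$; then $\invlim_{U_p}$ yields $\widetilde{H}_i(X_{U^p},\cO)_\m$ by definition. Compatibility in $U_p$ makes the isomorphism $\cO[[K_0]]$-linear, and its naturality --- hence $\prod_{v|p}G(F_v)$-equivariance, the action on $\widetilde{\cC}(\infty)$ of the remark after Definition~\ref{def: Cinfty} having been defined precisely to be level-by-level compatible with the usual one on finite-level homology --- is automatic from functoriality of the whole construction.

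For the Hecke statement: the surjection $R_\infty/\ba\to R_\cS$ comes from the fixed surjections $R_\infty\onto R^S_{\cS_{Q_N}}$, which are $\cO_\infty$-algebra maps and so induce surjections $R_\infty/\ba\onto R^S_{\cS_{Q_N}}/\ba_N R^S_{\cS_{Q_N}}$ with $\ba_N=\ker(\cO_N\to\cO)$, combined with the canonical isomorphism $R^S_{\cS_{Q_N}}/\ba_N R^S_{\cS_{Q_N}}\cong R_{\cS_{Q_N}}/\ba_{Q_N}=R_\cS$ coming from \eqref{eqn: framed deformation ring isomorphism} and $R_{\cS_Q}/\ba_Q\cong R_\cS$; equivalently, it is the map $R_\infty/\ba\to\invlim_{J,d}R(d,J,\infty)/\ba\cong R_\cS$ of Remark~\ref{Rpatchremark}. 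The surjection $R_\cS\to\TT^S(U^p)_\m$ classifies the deformation $\rho_\m$ of Conjecture~\ref{galconj}(2), which has type $\cS$ since each $\cD_v=\cD_v^\square$ and $\det\rho_\m=\mu$; it is surjective because its image is a closed $\cO$-subalgebra containing every $T_v^i$ for $v\notin S$ --- the coefficients of the characteristic polynomial of $\rho_\m(\Frob_v)$ are $\pm q_v^{i(i-1)/2}T_v^i$ with $q_v\in\cO^\times$ --- and the $T_v^i$ topologically generate $\TT^S(U^p)_\m$. Finally, unwinding Remark~\ref{Rpatchremark} (via $R(d,J,\infty)/\ba\cong R_\cS/(\m_{R_\cS}^d,\varpi^{s(\ba+J)})$ and passage to the limit) shows the $R_\infty$-action on $\cO_\infty/\ba\otimes_{\cO_\infty}\widetilde{\cC}(\infty)$ factors through $R_\infty/\ba\to R_\cS$, and that the induced $R_\cS$-action on homology is --- by construction of the action from the Galois representations $\rho_{\m,Q_N}$ of Conjecture~\ref{conj: compatibility at Q}, whose Frobenius characteristic polynomials are the Hecke ones --- the composite of $R_\cS\to\TT^S(U^p)_\m$ with the Hecke action on $\widetilde{H}_i(X_{U^p},\cO)_\m$. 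A secondary technical point here is the identification of $\rho_{\m,Q_N}\bmod\ba_{Q_N}$ with $\rho_\m$ up to strict equivalence, which is formal given that both lift the absolutely irreducible $\rhobar_\m$ and have equal Frobenius traces outside $S\cup Q_N$.
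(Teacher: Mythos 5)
Your overall architecture is the same as the paper's: reduce to the identification of $\cO_\infty/\ba \otimes_{\cO_\infty}\widetilde{\cC}(\infty)$ with $\invlim_{\ba\subset J,U_p}\cC(U_p,J,\infty)$, pass to finite level via Remark~\ref{patchremark}(2), commute homology past the limit with Lemma~\ref{goodinvlim}, handle the $\cO$ versus $\cO/\varpi^s$ coefficients with the universal coefficient sequence, and read off the Galois/Hecke compatibility from Remark~\ref{Rpatchremark}. Those parts, and the discussion of the surjections $R_\infty/\ba\to R_\cS\to\TT^S(U^p)_\m$, are fine.

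However, the step you yourself flag as the main obstacle contains a genuine gap. Your argument rests on the claim that each term $\cC(U_p,J,\infty)_i$ is a \emph{finite} module because $\cC(U_p,J,\infty)$ is a perfect complex over the finite ring $\cO_\infty/J[K_1/U_p]$. That is false: perfectness (Lemma~\ref{patchedcomplexesarecompatible}(2)) only says the complex is \emph{quasi-isomorphic} to a bounded complex of finite projectives. The actual terms are localised infinite products of terms of singular chain complexes $\cC(U^p_1(Q_N)U_p,\cO)_{\m_{Q_N,1}}$, which are flat but of enormous (uncountable) rank; only their homology is finitely generated. Consequently the terms of $\widetilde{\cC}(\infty)$ are not profinite, $\ba\cdot\widetilde{\cC}(\infty)_i$ has no reason to be closed, and the compactness argument identifying $\widetilde{\cC}(\infty)_i/\ba\widetilde{\cC}(\infty)_i$ with $\invlim_{J,U_p}\cC(U_p,J,\infty)_i/\ba$ does not apply. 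This is exactly the point where the paper instead invokes Lemma~\ref{lem:flatnessinlimit}: the terms are \emph{flat} over the quotients $\cO_\infty[[K_0]]/\cJ^m$ with the prescribed compatibilities, and the Artin--Rees argument there shows that tensoring with the finitely generated module $Q=\cO_\infty[[K_0]]/\ba$ commutes with the inverse limit. You need that lemma (or must first replace $\widetilde{\cC}(\infty)$ by the inverse limit of minimal resolutions, whose terms really are finite free) to close the gap. The same misstatement appears in your verification of the hypotheses of Lemma~\ref{goodinvlim}, but there it is harmless, since the surjectivity of the transition maps (Lemma~\ref{patchedcomplexesarecompatible}(1)) already supplies the required alternative hypothesis.
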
\begin{proof}We have natural maps
\[\widetilde{\cC}(\infty) = \invlim_{J,U_p} \cC(U_p,J,\infty) \rightarrow 
\invlim_{\ba\subset J,U_p} \cC(U_p,J,\infty) \rightarrow  
\invlim_{s,U_p}\cC(U^pU_p,s)_\m.\]It follows 
from 
Lemma~\ref{goodinvlim} and Remark~\ref{patchremark}(2) that the natural map 
\[\invlim_{\ba\subset J,U_p} \cC(U_p,J,\infty) \rightarrow  
\invlim_{s,U_p}\cC(U^pU_p,s)_\m\] is a quasi-isomorphism and by Lemma 
\ref{goodinvlim}
we have natural isomorphisms \[H_n(\invlim_{s,U_p}\cC(U^pU_p,s)_\m) \cong 
\invlim_{s,U_p}{H}_n(X_{U^pU_p},\cO/\varpi^s)_\m.\] The natural map 
\[\alpha: \widetilde{H}_n(X_{U^p},\cO)_\m \rightarrow 
\invlim_{s,U_p}{H}_n(X_{U^pU_p},\cO/\varpi^s)_\m\] is also an isomorphism: 
indeed, we have short exact sequences \[0 \rightarrow 
{H}_n(X_{U^pU_p},\cO)_\m/\varpi^s \rightarrow {H}_n(X_{U^pU_p},\cO/\varpi^s)_\m 
\rightarrow  {H}_{n-1}(X_{U^pU_p},\cO)_\m[\varpi^s] \rightarrow 0\] so taking 
the limit over $(U_p, s)$ shows that the map $\alpha$ is an injection with 
$\varpi$-divisible cokernel. On the other hand, this cokernel is a finitely 
generated $\cO[[K_0]]$-module, so if it is $\varpi$-divisible it 
must be zero.

To finish the proof, by Proposition~\ref{completionofpatchedcomplexes}~(1), 
it suffices to 
show that the map  \[\cO_\infty/\ba 
\otimes_{\cO_\infty}\widetilde{\cC}(\infty) \rightarrow \invlim_{\ba\subset 
J,U_p} \cC(U_p,J,\infty) =
\invlim_{J,U_p}\cO_\infty/(\ba + 
J)\otimes_{\cO_\infty[[U_p]]}\widetilde{\cC}(\infty)\] is an isomorphism of 
complexes. As in the 
proof of Proposition~\ref{completionofpatchedcomplexes}~(1), we easily reduce to the following 
claim, where $\cJ = 
\m_{\cO_\infty[[U_p]]}\cO_\infty[[K_0]]$ for $U_p \subset K_0$ an open uniform 
pro-$p$ subgroup: suppose 
we have flat 
$\cO_\infty[[K_0]]/\cJ^m$-modules $M_m$ for each $m\ge 1$, with $M_m = 
M_{m+1}/\cJ^m$. Let $M=\invlim_m M_m$. Then $M/\ba M = \invlim_m M_m/\ba M_m$.

This claim follows from Lemma \ref{lem:flatnessinlimit}, taking $K = 
\Z_p^{g}\times K_0$ (where $g$ is chosen so that $\cO[[K]] = 
\cO_\infty[[K_0]]$), 
and $Q = \cO_\infty[[K_0]]/\ba$. 

The final claim of the Proposition follows from the fact that the isomorphisms 
$R(d,J,\infty)/\ba \cong R_\cS/(\m^d_{R_\cS},\varpi^{s(\ba+J)})$ of Remark
 \ref{Rpatchremark} induce an isomorphism 
\[\left(\invlim_{d,J}R(d,J,\infty)\right)/\ba \cong R_\cS.\qedhere\] \end{proof}

\begin{lem}\label{heckeonedegree}
	Let $\m \subset \TT^S(U^p)$ be a maximal ideal and suppose that 
	$\widetilde{H}_i(X_{U^p},\cO)_\m$ is non-zero for a single 
	$i$, which we denote by $q$. Then the map \[\alpha: \TT^S(U^p)_\m 
	\rightarrow \End_{\cO}(\widetilde{H}_{q}(X_{U^p},\cO)_\m)\] is an 
	injection.\end{lem}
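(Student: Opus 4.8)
The plan is to reidentify $\TT^S(U^p)_\m$ as (a completion of) the image of the abstract Hecke algebra $\TT^S$ inside the module endomorphism ring $\End_{\cO[[K_0]]}(\widetilde{H}_q(X_{U^p},\cO)_\m)$, using the single-degree hypothesis so that a derived-category endomorphism of the completed chain complex is literally a module endomorphism of $\widetilde{H}_q(X_{U^p},\cO)_\m$. Since the Hecke operators $T_v^i$ commute with the $K_0$-action, the map $\alpha$ factors through $\End_{\cO[[K_0]]}(\widetilde{H}_q(X_{U^p},\cO)_\m)\subseteq \End_\cO(\widetilde{H}_q(X_{U^p},\cO)_\m)$, so it suffices to prove that $\alpha$, viewed as a map into $\End_{\cO[[K_0]]}(\widetilde{H}_q(X_{U^p},\cO)_\m)$, is injective.

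First I would form the localised completed chain complex $\widetilde{\cC}_\m := \invlim_{U_p,s}\cC(U^pU_p,s)_\m$ (the limit over compact open $U_p\subseteq K_0$ and $s\ge 1$), which is a bounded complex of flat (indeed free) $\cO[[K_0]]$-modules. By Lemma~\ref{goodinvlim} --- whose hypotheses hold, since the transition maps are surjective and the homology groups are finite --- together with the computation in the proof of Proposition~\ref{prop: killing patching variables gives completed homology}, one has $H_m(\widetilde{\cC}_\m)=\invlim_{U_p,s}H_m(X_{U^pU_p},\cO/\varpi^s)_\m=\widetilde{H}_m(X_{U^p},\cO)_\m$, which vanishes for $m\ne q$ by hypothesis. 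Hence $\widetilde{\cC}_\m$ is quasi-isomorphic in $D(\cO[[K_0]])$ to $\widetilde{H}_q(X_{U^p},\cO)_\m$ placed in degree $q$, and therefore $\End_{D(\cO[[K_0]])}(\widetilde{\cC}_\m)=\End_{\cO[[K_0]]}(\widetilde{H}_q(X_{U^p},\cO)_\m)$. Since $\widetilde{\cC}_\m$ has flat terms and recovers the finite-level complexes under base change (as in the transition identifications in the definition of $\TT^S(U^p)$), we have $\cC(U^pU_p,s)_\m=\cO/\varpi^s[K_0/U_p]\otimes^{\LL}_{\cO[[K_0]]}\widetilde{\cC}_\m$, compatibly with the (chain-level, $\cO[[K_0]]$-linear) Hecke action. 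Consequently, on the abstract algebra $\TT^S$, the map $\TT^S\to\End_{D(\cO/\varpi^s[K_0/U_p])}(\cC(U^pU_p,s)_\m)$ factors as $\alpha$ followed by the derived base-change ring homomorphism $\End_{\cO[[K_0]]}(\widetilde{H}_q(X_{U^p},\cO)_\m)\to\End_{D(\cO/\varpi^s[K_0/U_p])}(\cC(U^pU_p,s)_\m)$.

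Next I would promote this factorisation from $\TT^S$ to all of $\TT^S(U^p)_\m=\invlim_{U_p,s}\TT^S(U^pU_p,s)_\m$ by a density argument. After localising at $\m$, $\widetilde{H}_q(X_{U^p},\cO)_\m$ is an inverse limit of finite modules and $\TT^S(U^p)_\m$ an inverse limit of finite rings acting compatibly, so $\alpha$ is continuous; granting that the derived base-change maps $\End_{\cO[[K_0]]}(\widetilde{H}_q(X_{U^p},\cO)_\m)\to\End_{D(\cO/\varpi^s[K_0/U_p])}(\cC(U^pU_p,s)_\m)$ are continuous for the profinite topologies, both composites $\TT^S(U^p)_\m\rightrightarrows \End_{D(\cO/\varpi^s[K_0/U_p])}(\cC(U^pU_p,s)_\m)$ are continuous and agree on the dense subring coming from $\TT^S$, hence agree. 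Thus $\TT^S(U^p)_\m\to\End_{D(\cO/\varpi^s[K_0/U_p])}(\cC(U^pU_p,s)_\m)$ factors through $\alpha$ for every $(U_p,s)$; since $\TT^S(U^p)_\m$ embeds into $\prod_{U_p,s}\TT^S(U^pU_p,s)_\m$, any $x\in\ker\alpha$ maps to $0$ at every finite level, so $x=0$.

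The hard part will be exactly the continuity of the base-change maps --- equivalently, that $\TT^S(U^p)_\m\to\End_{\cO[[K_0]]}(\widetilde{H}_q(X_{U^p},\cO)_\m)$ is a closed embedding, not merely a continuous surjection onto a completion of the Hecke image (note that the finite-level Hecke algebras genuinely have nonzero nilpotents, as $\cC(U^pU_p,s)_\m$ has homology spread over several degrees, so this is not formal). Concretely one must show that an $\cO[[K_0]]$-linear endomorphism $\phi$ of $\widetilde{H}_q(X_{U^p},\cO)_\m$ with $\phi\bigl(\widetilde{H}_q(X_{U^p},\cO)_\m\bigr)\subseteq\frakm^N\widetilde{H}_q(X_{U^p},\cO)_\m$ for $N$ large (depending on $U_p,s$) base-changes to $0$ in $D(\cO/\varpi^s[K_0/U_p])$. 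For this I would use that $\cO/\varpi^s[K_0/U_p]$ is a perfect $\cO[[K_0]]$-module of bounded projective dimension, so that $\cO/\varpi^s[K_0/U_p]\otimes^{\LL}_{\cO[[K_0]]}(-)$ has homological amplitude bounded independently of its argument, together with the observation that the tower $\{\frakm^N\widetilde{H}_q(X_{U^p},\cO)_\m\}_N$ is pro-zero among compact $\cO[[K_0]]$-modules (its inverse limit is zero by Krull's intersection theorem, and inverse limits are exact on compact modules): writing $\phi$ through the inclusion of $\frakm^N\widetilde{H}_q(X_{U^p},\cO)_\m$, its derived base change is a composite of arbitrarily many maps which vanish on homology and run between complexes of uniformly bounded amplitude, and such a composite vanishes in the derived category once it has enough factors. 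I emphasise that the single-degree hypothesis is essential: without it $\cC(U^pU_p,s)_\m$ is the derived base change of the \emph{whole} complex $\widetilde{\cC}_\m$, and the argument would only control the action on the degree-$q$ homology.
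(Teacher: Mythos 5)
Your overall architecture is the same as the paper's: use the single‑degree hypothesis to identify $\End_{D(\cO[[K_0]])}$ of a flat complex computing completed homology with $\End_{\cO[[K_0]]}(\widetilde{H}_q(X_{U^p},\cO)_\m)$, and then base‑change to the finite levels whose derived endomorphism algebras define $\TT^S(U_pU^p,s)_\m$. You have also correctly isolated the one non‑formal point, namely that the factorisation ``$\alpha$ followed by base change equals the projection to level $(U_p,s)$'' is only evident for the image of the abstract algebra $\TT^S$ and must be extended to all of $\TT^S(U^p)_\m$. But your proof of the continuity statement that is supposed to effect this extension is incorrect. You factor an endomorphism $\phi$ with $\phi(M)\subseteq\frakm^N M$ through the chain of inclusions $\frakm^jM\hookrightarrow\frakm^{j-1}M$ and assert that their derived base changes ``vanish on homology'', so that a long enough composite dies by the bounded‑amplitude argument. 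This is false in positive $\Tor$-degrees: take $\Lambda=k[[x]]$, $M=\Lambda/x^2$, $\frakm=(x)$. The inclusion $\iota\colon xM\hookrightarrow M$ lifts to the map of minimal resolutions $(\Lambda\xrightarrow{x}\Lambda)\to(\Lambda\xrightarrow{x^2}\Lambda)$ given by $\cdot x$ in degree $0$ and by the \emph{identity} in degree $1$, so $\Tor_1^{\Lambda}(k,\iota)\colon k\to k$ is an isomorphism, not zero. (Relatedly, the tower $\{\frakm^N M\}_N$ is not pro‑zero: its transition maps are injective, and having vanishing inverse limit is much weaker than being pro‑zero.) So the composite you write down has factors that do not vanish on homology, and the ghost‑map argument does not apply.

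The statement you need is nevertheless true, but it requires a different argument: lift $\phi$ to a chain endomorphism $\tilde\phi$ of the minimal (bounded, finite free) resolution $P_\bullet$ of $\widetilde{H}_q(X_{U^p},\cO)_\m$ degree by degree, using the Artin--Rees property (Lemma~\ref{lem:artinrees}) at each stage to arrange $\tilde\phi(P_i)\subseteq J^{N-c_i}P_i$ for constants $c_i$ depending only on $P_\bullet$; for $N$ large the reduction of $\tilde\phi$ modulo the kernel of $\cO[[K_0]]\to\cO/\varpi^s[K_0/U_p]$ is then the zero chain map, not merely null‑homotopic factor by factor. Alternatively --- and this is what the paper does --- one can avoid any quantitative continuity altogether: choose chain‑level representatives of the action of $T\in\TT^S(U^p)_\m$ on the minimal resolutions at each finite level that are strictly compatible under base change (possible since the terms are free and the transition maps surjective), pass to the limit to get a single chain endomorphism of a bounded complex of projectives with homology concentrated in degree $q$, observe that $\alpha(T)=0$ forces this endomorphism to be null‑homotopic, and then base‑change the null‑homotopy. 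The zero morphism in $D(\cO[[K_0]])$ goes to the zero morphism at every finite level, which is exactly the paper's one‑line deduction from Proposition~\ref{completionofpatchedcomplexes} and Remark~\ref{patchremark}.
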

\begin{proof}
    The map $\alpha$ 
    factors through the inclusion \[ 
	\End_{\cO[[K_0]]}(\widetilde{H}_{q}(X_{U^p},\cO)_\m)\subset 
	\End_{\cO}(\widetilde{H}_{q}(X_{U^p},\cO)_\m).\] Suppose $T$ is in 
	the 
	kernel of $\alpha$. Then, as an endomorphism in $D(\cO[[K_0]])$, 
	$T$ 
	acts on
	$\cO_\infty/\ba \otimes_{\cO_\infty}\widetilde{\cC}(\infty)$ as $0$ (by 
	Proposition \ref{prop: killing patching variables gives completed 
	homology}), and so for any $s \ge 1$ and $U_p$ compact open normal in $K_0$
	it acts, as an 
	endomorphism in $D(\cO/\varpi^s[K_0/U_p])$, as $0$ on 
	\[\cO/\varpi^s[K_0/U_p]\otimes_{\cO[[K_0]]}\cO_\infty/\ba 
	\otimes_{\cO_\infty}\widetilde{\cC}(\infty).\] 
	By 
	Proposition 
	\ref{completionofpatchedcomplexes} and Remark \ref{patchremark}, we deduce 
	that $T$ maps to $0$ in 
	$\TT^S(U_pU^p,s)_\m$. 
	Since $U_p$ and $s$ were arbitrary, we deduce that $T$ is equal to $0$. Of 
	course we don't require the patched complex $\widetilde{\cC}(\infty)$ to 
	prove this Lemma --- we can replace $\cO_\infty/\ba 
	\otimes_{\cO_\infty}\widetilde{\cC}(\infty)$ by any suitable complex 
	computing completed homology.
\end{proof}

\section{Applications of noncommutative algebra to patched completed
  homology}\label{sec: applications of algebra to patched homology}In
this section we apply the non-commutative algebra developed in Appendix~\ref{sec: non commutative
  algebra} to the output of the patching construction in Section
\ref{sec:patchingII}.

\subsection{Formally smooth local deformation rings}\label{subsec:
  formally smooth local def rings} We begin by recalling some
of the notation, assumptions and results of
Section~\ref{sec:patchingII}, and we then make an additional
assumption.

We assume Conjectures~\ref{galconj}
and~\ref{conj: compatibility at Q}. We work with a fixed~$U^p$ such
that~$U^pK_0$ is good, and we further assume that
\begin{itemize}
\item $p>n\ge 2$, 
\item $\rhobar_\m(G_{F(\zeta_p)})$ is enormous, and
\item $\rhobar_\m\not\cong\rhobar_\m\otimes\epsilonbar$.
\end{itemize}

We have two rings~$\cO_\infty$ and~$R_\infty$. The former is a power
series ring over~$\cO$, and the latter is a power series ring
over~$R_\cS^{S,\loc}$. More precisely, we have fixed an integer $q\gg
0$, and
$\cO_\infty$ is a power series ring in \[n^2\# S-1+(n-1)q\] variables
over~$\cO$, while $R_\infty$ is a power series ring in \[(n-1)q - n(n-1)[F : \Q]/2 - l_0 - 1 + \# S\]  variables over~$R_\cS^{S,\loc}$.
\begin{lem}\label{lem:dim Rinfty}
Suppose that for each place~$v|p$ of~$F$ there is no non-zero 
$k[G_{F_v}]$-equivariant
  map~$\rhobar|_{G_{F_v}}\to\rhobar|_{G_{F_{v}}}(1)$. Then~$R_\infty$ is equidimensional of dimension $\dim \cO_\infty + (n(n+1)/2 - 1)[F : \Q] - l_0$. 
\end{lem}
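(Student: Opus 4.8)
The strategy is to compute $\dim R_\infty$ by first computing $\dim R_\cS^{S,\loc} = \dim\bigl(\widehat{\otimes}_{v\in S,\cO}\Rbar_v\bigr)$, where $\Rbar_v$ here is $R_v^\square$ (the full lifting ring, since the deformation problem $\cS$ uses $\cD_v^\square$ for all $v\in S$, with the minor twist that one variable is removed by the $(X_{v_0}^{1,1})$ normalization in the definition of $\cT$; but recall from the statement of the lemma being proved, or rather from the setup, that here $R_\cS^{S,\loc}$ is being identified with a quotient of $\cT\widehat{\otimes}_\cO(\cdots)$, so I should be careful and just work with $\dim R_v^\square$ directly). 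First I would recall the standard computation of the dimension of local lifting rings: for $v\nmid p$, $R_v^\square$ has dimension $1 + n^2$ over $\cO$ (more precisely, $\dim R_v^\square/\varpi = n^2$, using that $\rhobar$ may be ramified but the framing contributes $n^2$ and the unramified-outside contributes the extra; this is \cite[Lem.~3.3]{1409.7007}-type input, or Kisin's computations), while for $v|p$ the fixed-determinant lifting ring $R_v^\square$ has relative dimension $n^2 + [F_v:\Qp]\cdot\frac{n(n-1)}{2}$ over $\cO$ when $H^0(G_{F_v},\ad^0(1))=0$ — this is exactly where the hypothesis ``no $k[G_{F_v}]$-equivariant map $\rhobar|_{G_{F_v}}\to\rhobar|_{G_{F_v}}(1)$'' enters, since that vanishing (together with local Tate duality, giving $H^2(G_{F_v},\ad^0)=0$) makes the local fixed-determinant deformation problem at $v|p$ formally smooth of the expected dimension. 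So I would cite the relevant results (e.g.\ \cite[\S3]{1409.7007}, or \cite{cht}, \cite{BLGGT}) for these local dimension formulas.

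Next I would assemble the global count. Since $R_\cS^{S,\loc} = \widehat{\otimes}_{v\in S,\cO}R_v^\square$ (appropriately normalized), its dimension over $\cO$ is $1 + \sum_{v\in S}(\dim R_v^\square/\varpi)$, which splits as a sum over $v\nmid p$ in $S$ of $n^2$, plus a sum over $v|p$ of $\bigl(n^2 + [F_v:\Qp]\frac{n(n-1)}{2}\bigr)$. Using $\sum_{v|p}[F_v:\Qp] = [F:\Q]$, the $v|p$ contribution is $n^2\cdot\#S_p + \frac{n(n-1)}{2}[F:\Q]$, so altogether $\dim R_\cS^{S,\loc} = 1 + n^2\#S + \frac{n(n-1)}{2}[F:\Q]$. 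Then $R_\infty$ adds $(n-1)q - n(n-1)[F:\Q]/2 - l_0 - 1 + \#S$ power series variables, giving
\[
\dim R_\infty = n^2\#S + \tfrac{n(n-1)}{2}[F:\Q] + (n-1)q - \tfrac{n(n-1)}{2}[F:\Q] - l_0 + \#S = n^2\#S + \#S + (n-1)q - l_0.
\]
On the other hand $\dim\cO_\infty = (n^2\#S - 1) + (n-1)q + 1 = n^2\#S + (n-1)q$ (the $+1$ is the dimension of $\cO$ itself over its maximal ideal, i.e.\ $\cO_\infty$ has Krull dimension one more than its number of power series variables). Comparing, $\dim R_\infty - \dim\cO_\infty = \#S - l_0$... and here I should double-check: the claimed answer is $\dim\cO_\infty + \frac{n(n+1)}{2}[F:\Q]\cdot[\text{no}] - l_0$ — wait, actually I need to recompute, because the claimed value is $\dim\cO_\infty + (n(n+1)/2-1)[F:\Q] - l_0$, not $\dim\cO_\infty + \#S - l_0$. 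So I must have mis-stated the $v\nmid p$ contribution (ramified primes in $S\setminus S_p$ contribute $n^2$ to the framed ring but the \emph{unframed} local ring has smaller dimension, and the discrepancy is exactly absorbed differently) — the point is that the bookkeeping must be done so that $\#S$-type terms cancel against the $\#S$ in the count of $R_\infty$'s variables and against the $n^2\#S$ in $\cO_\infty$, leaving precisely $(n(n+1)/2-1)[F:\Q] - l_0$. I would organize the arithmetic carefully to land on this.

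For equidimensionality: I would argue that $R_v^\square$ for $v|p$ is formally smooth over $\cO$ (hence irreducible, domain, equidimensional) under the stated hypothesis; for $v\nmid p$, $R_v^\square$ need not be formally smooth but $R_v^\square[1/p]$ is regular (so its Spec is equidimensional) and $R_v^\square/\varpi$ is equidimensional of dimension $n^2$ by \cite[Lem.~3.3]{1409.7007} or the analogous result — more precisely I'd invoke that $R_v^\square$ is $\cO$-flat and equidimensional. Then the completed tensor product over $\cO$ of $\cO$-flat equidimensional rings whose generic and special fibres have the same dimension is again equidimensional (using that $\Spec R_\cS^{S,\loc}[1/p]$ and $\Spec R_\cS^{S,\loc}/\varpi$ each have dimension one less than $R_\cS^{S,\loc}$, so no component is ``vertical''); finally adding power series variables preserves equidimensionality. \textbf{The main obstacle} is the careful dimension bookkeeping — getting the normalization conventions straight (the role of $\cT$ and the removed variable $X_{v_0}^{1,1}$, fixed vs.\ variable determinant, framed vs.\ unframed at ramified $v\nmid p$) so that all the $\#S$-dependent terms cancel correctly to yield exactly $(n(n+1)/2-1)[F:\Q] - l_0$ — together with assembling the correct citations for equidimensionality of the ramified local lifting rings at $v\nmid p$, since those rings are genuinely singular and one needs the precise statement that they are $\cO$-flat and equidimensional of the stated dimension.
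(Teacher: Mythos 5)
Your overall strategy is the paper's: use the hypothesis (via Tate local duality, $H^2(G_{F_v},\ad^0\rhobar)=0$) to get formal smoothness at $v|p$, cite equidimensionality of the local lifting rings at $v\nmid p$ (the paper uses \cite[Thm.\ 2.5]{shottonGLn}), take the completed tensor product (\cite[Lem.\ 3.3]{blght}), and add the $g$ power series variables. But there is a genuine gap: your local dimension formulas are wrong, and since the whole content of the lemma is this arithmetic, ``I would organize the arithmetic carefully to land on this'' does not close it. At $v|p$ the fixed-determinant lifting ring has Krull dimension $1+(n^2-1)+(n^2-1)[F_v:\Qp]$ (tangent space $Z^1(G_{F_v},\ad^0)$ of dimension $h^1(\ad^0)+(n^2-1)-h^0(\ad^0)=(n^2-1)[F_v:\Qp]+(n^2-1)$ once $h^2(\ad^0)=0$), not relative dimension $n^2+\frac{n(n-1)}{2}[F_v:\Qp]$; the term $\frac{n(n-1)}{2}[F_v:\Qp]$ is the formula for a \emph{crystalline deformation ring of regular Hodge type} (cf.\ $\dim\Rdef_v(\sigma_v)$ in Section~5), not for the unrestricted lifting ring. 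At $v\nmid p$ you also drop the fixed-determinant condition: the relative dimension is $n^2-1$, not $n^2$ (equivalently, Krull dimension $n^2$, as stated in the paper). With the correct inputs one gets
\[
\dim R_\infty = 1+(n^2-1)[F:\Q]+(n^2-1)\#S+g = \dim\cO_\infty+\Bigl(n^2-1-\tfrac{n(n-1)}{2}\Bigr)[F:\Q]-l_0,
\]
and $n^2-1-\tfrac{n(n-1)}{2}=\tfrac{n(n+1)}{2}-1$, which is exactly the claim; your diagnosis that the error lies in the $v\nmid p$ contribution is also off --- the dominant discrepancy, worth $(\tfrac{n(n+1)}{2}-1)[F:\Q]$, comes from the places above $p$.

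Your equidimensionality discussion is essentially sound in outline (formal smoothness at $v|p$; $\cO$-flat, equidimensional, reduced complete intersection lifting rings at $v\nmid p$ by Shotton; completed tensor products and power series extensions preserve this), and matches the citations the paper actually uses.
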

\begin{proof}For places $v | p$  we have~$H^2(G_{F_v},\ad^0\rhobar)=0$ by Tate local
  duality, and a standard calculation shows that~$R_v$ is formally smooth of
  dimension $1 + (n^2-1)[F_v:\Q_p] + (n^2-1)$
  (see e.g.\ \cite[Lem.\
  3.3.1]{allenbigrt}). If $v\nmid p$ then~$R_v$ is
  equidimensional of dimension~$n^2$ by~\cite[Thm.\ 2.5]{shottonGLn}. The claim then follows immediately
  (using~\cite[Lem.\ 3.3]{blght} to compute the dimension
  of~$R_\cS^{S,\loc}$).
\end{proof}

\begin{remark}\label{rem: numerology of dim Rinfty}Note that $(n(n+1)/2 - 1)[F : \Q]$ is equal to the dimension of the
  Borel subgroup $B$ in $G$. It
  follows from Lemma~\ref{lem:dim Rinfty} that we have
  \[\dim R_\infty + \dim(G/B) = \dim\cO_\infty[[K_0]] - l_0.\] See
  \cite[Equation (1.6)]{MR2905536} and the surrounding discussion for
  the same numerology.
 \end{remark}

Under the assumptions of Lemma~\ref{lem:dim Rinfty}, the local deformation 
ring~$R_v$ are formally smooth 
over~$\cO$ for $v|p$. We could make a similar assumption at places $v\nmid p$,
but it seems more reasonable to instead make the following more
general assumption.

\begin{hypothesis}\label{hyp: smooth local deformation rings}\begin{itemize}
		\item For each place~$v|p$ of~$F$ there is no non-zero 
		$k[G_{F_v}]$-equivariant
		map~$\rhobar|_{G_{F_v}}\to\rhobar|_{G_{F_{v}}}(1)$.
		\item For each
  place $v\in S$ with $v\nmid p$, we let~$\Rbar_v$ be an 
  irreducible component of~$R_v$ which is formally
  smooth. Let~$\cD_v$ be the local deformation problem corresponding
  to~$\Rbar_v$. Let \[\overline{\cS}=(\rhobar_\m,\epsilon^{n(1-n)/2},S,\{\cO\}_{v\in
                  S},\{\cD_v^\square\}_{v|p}\cup\{\cD_v\}_{v\in
                  S,v\nmid p}).\] 
 Then we further assume that for any set of Taylor--Wiles
  primes~$Q$, the representation ~$\rho_{\m,Q}$ of
  Conjecture~\ref{conj: compatibility at Q} is of type~$\overline{\cS}_Q$.  
  \end{itemize}
\end{hypothesis}

\begin{rem}
  \label{rem: discussion of smooth local deformation rings and local
    global compatibility}If $v\nmid p$ is such that there is
  no non-zero $k[G_{F_v}]$-equivariant
  map~$\rhobar|_{G_{F_v}}\to\rhobar|_{G_{F_{v}}}(1)$, then~$R_v$ is
  formally smooth and we can take~$\Rbar_v=R_v$.  Under the expected
  local-global compatibility, the question of whether ~$\rho_{\m,Q}$
  is of type~$\overline{\cS}_Q$ for a given choice of
  components~$\Rbar_v$ is governed by the local Langlands
  correspondence, and therefore depends on the choices of compact open
  subgroups~$U_v$. 

  Since our primary interest is in the behaviour at the places~$v|p$,
  we content ourselves with mentioning one important example. For
  any~$v\nmid p$ there is
  always at least one choice of irreducible component~$\Rbar_v$ which
  is formally smooth, namely the component corresponding to minimally
  ramified lifts; see~\cite[Lem.\ 2.4.19]{cht}. In general we do not
  expect to be able to make a choice of~$U_v$ compatible with the
  minimally ramified lifts; this is not a problem, as instead one
  should be able to consider a type (in the sense of Henniart's appendix to 
  \cite{breuil-mezard}) at each place~$v\nmid
  p$. Doing so would take us too far afield, so we content ourselves
  with noting that if~$n=2$, and~$v$ is not a vexing prime in the
  sense of~\cite{MR1638490} (so in particular if~$\# k(v)\not\equiv
  -1\mod{p}$), then we expect to be able to take~$U_v$ to be given by the image
  in~$\PGL_2(\cO_{F_v})$ of the subgroup of matrices
  in~$\GL_2(\cO_{F_v})$ whose last row is congruent to $(0,1)$
  modulo~$v^{n_v}$, where~$n_v$ is the conductor
  of~$\rhobar_\m|_{G_{F_v}}$. As in Remark~\ref{rem: compatibility at
    Q in CM case}, in the case that~$F$ is totally real or~CM, this
  compatibility should follow from forthcoming work of Varma.
\end{rem}

We assume Hypothesis~\ref{hyp: smooth local deformation rings} from
now on. If $v|p$ then we set~$\Rbar_v=R_v$; we then write
$\Rbar_{\cS}^{S,\loc}:=\wotimes_{v\in S}\overline{R}_v$, and
set~$\Rbar_\infty:=R_\infty\otimes_{R_{\cS}^{S,\loc}}\Rbar_{\cS}^{S,\loc}$. 
Under
our assumptions, $\Rbar_\infty$ is a power series ring over~$\cO$, and
has the same dimension as~$R_\infty$ (indeed, it is an irreducible
component of~$R_\infty$).

\begin{rem}
  \label{rem: local rings are complete intersections}If $v\nmid p$
  then~$R_v$ is in fact a reduced complete intersection, and is flat
  over~$\cO$ (\cite[Thm.\ 2.5]{shottonGLn}). In particular $R_v$ is Gorenstein. 
  It seems reasonable to
  imagine that these properties should be sufficient to carry out our
  analysis below without making any assumption at the places
  $v\nmid p$, but this would require a substantial generalisation of
  the material in Appendix~\ref{sec: non commutative algebra} (to
  Iwasawa algebras over more general rings than~$\cO$), so we have not
  pursued this. Note however that the `miracle flatness' result used in the 
  proof of Proposition \ref{prop: big R equals T} requires $\Rbar_\infty$ to be 
  regular --- moreover, in the $\GL_2/\Q$ case the conclusion of part 
  (\ref{Rflat}) of this Proposition does not hold when $R_p$ is not 
  regular (see \cite[Remark 7.7]{CEGGPSGL2}).
  \end{rem}
\subsection{Patched completed homology is
  Cohen--Macaulay}\label{subsec: patched CM}
 We return to the notation and set-up of section \ref{sec:patchingII}, and 
 recall that we have a perfect chain complex $\widetilde{\cC}(\infty)$ of 
 $\cO_\infty[[K_0]]$-modules (see Definition~\ref{def: Cinfty} and 
 Proposition~\ref{completionofpatchedcomplexes}), equipped with an 
 $\cO_\infty$-linear action of $\prod_{v|p}G(F_v)$ and an $\cO_\infty$-algebra 
 homomorphism 
 \[R_\infty \rightarrow \End_{D(\cO_\infty)}(\widetilde{\cC}(\infty)).\] 
 The action of $R_\infty$ on $\widetilde{\cC}(\infty)$ commutes with the action 
 of $\prod_{v|p}G(F_v)$ (and with that of $\cO_\infty[[K_0]]$). By 
 Hypothesis~\ref{hyp: smooth local
 	deformation rings} together with
      Remark~\ref{prepatchingremarks} and Proposition~\ref{completionofpatchedcomplexes}~(3), this map factors through the quotient $\Rbar_\infty$ of 
 	$R_\infty$. Recall that $\Rbar_\infty$ is a formal power series ring over 
 	$\cO$. The action of $\Rbar_\infty$ induces an $\cO_\infty[[K_0]]$-algebra 
 	homomorphism $\Rbar_\infty[[K_0]] \rightarrow 
 	\End_{D(\cO_\infty)}(\widetilde{\cC}(\infty))$, and in particular each 
 	homology group $H_i(\widetilde{\cC}(\infty))$ is a finitely generated 
 	$\Rbar_\infty[[K_0]]$-module. We refer to Definition~\ref{def: gradedepth} 
 	for the notion of the grade $j_A(M)$ of a module $M$ over a ring $A$ and to
        Definition~\ref{defn: CM module} for the notion of a
        Cohen--Macaulay module over~$\cO[[K_0]]$; this also
        gives us the definition of a Cohen--Macaulay module
        over~$\cO_\infty[[K]]$ or~$\Rbar_\infty[[K]]$ for any compact open $K 
        \subset K_0$.
 
 We have natural isomorphisms (for every $i\ge 0$) 
 \[H_i(\cO_\infty/\ba \otimes_{\cO_\infty}\widetilde{\cC}(\infty))
 \cong \widetilde{H}_i(X_{U^p},\cO)_\m,\]where $\ba=\ker(\cO_\infty\to\cO)$. Recall that $K_1$ denotes a pro-$p$ 
 Sylow subgroup of $K_0$, and~$B$ is the Borel subgroup of~$G$.
 \begin{prop}\label{patchedCM}
	Suppose that 
	\begin{enumerate}
		\item[(a)] ${H}_i(X_{U^pK_1},k)_\m = 0$ for $i$ outside the 
	range $[q_0,q_0+l_0]$ (note that ${H}_*(X_{U^pK_1},k)_\m$ is non-zero). 
	 \item[(b)]  $j_{\cO[[K_0]]}\left(\bigoplus_{i\ge 
	0}\widetilde{H}_i(X_{U^p},\cO)_\m\right) \ge l_0$.\end{enumerate} 

Then \begin{enumerate}
		\item $\widetilde{H}_i(X_{U^p},\cO)_\m = 0$ for $i \ne q_0$ and  
		$\widetilde{H}_{q_0}(X_{U^p},\cO)_\m$ is a Cohen--Macaulay 
		$\cO[[K_0]]$-module with
		\[\mathrm{pd}_{\cO[[K_0]]}(\widetilde{H}_{q_0}(X_{U^p},\cO)_\m) = 
		j_{\cO[[K_0]]}(\widetilde{H}_{q_0}(X_{U^p},\cO)_\m) = l_0.\] 
		\item ${H}_i(\widetilde{\cC}(\infty)) = 0$ for $i \ne q_0$ and 
		${H}_{q_0}(\widetilde{\cC}(\infty))$ is a Cohen--Macaulay 
		$\cO_\infty[[K_0]]$-module with 	
		\[\mathrm{pd}_{\cO_\infty[[K_0]]}\left({H}_{q_0}(\widetilde{\cC}(\infty))\right)
		 = 
		j_{\cO_\infty[[K_0]]}\left({H}_{q_0}(\widetilde{\cC}(\infty))\right) = 
		l_0.\] 
		\item  ${H}_{q_0}(\widetilde{\cC}(\infty))$ is a Cohen--Macaulay 
		$\Rbar_\infty[[K_0]]$-module with 	
		\[\mathrm{pd}_{\Rbar_\infty[[K_0]]}\left({H}_{q_0}(\widetilde{\cC}(\infty))\right)
		 = 
		j_{\Rbar_\infty[[K_0]]}\left({H}_{q_0}(\widetilde{\cC}(\infty))\right) 
		= 
		\dim(B)\] where $\dim(B) = 
		(\frac{n(n+1)}{2}-1)[F:\QQ]$.
	\end{enumerate}
	
	If we moreover suppose that \begin{enumerate}\item[(c)] 
	$j_{k[[K_0]]}\left(\bigoplus_{i\ge 
		0}\widetilde{H}_i(X_{U^p},k)_\m\right) \ge l_0$,\end{enumerate} then  
		$\widetilde{H}_i(X_{U^p},k)_\m = 0$ for $i \ne q_0$ and 
		both $\widetilde{H}_{q_0}(X_{U^p},\cO)_\m$ and 
		${H}_{q_0}(\widetilde{\cC}(\infty))$ are $\varpi$-torsion free.
\end{prop}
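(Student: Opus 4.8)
The plan is to derive the last statement formally from Theorem~\ref{patchedCM}(1)--(2), already proved, together with hypothesis~(c) and two standard facts about the relevant Iwasawa algebras: that a Cohen--Macaulay module over $\cO[[K_1]]$ is pure of its grade (every nonzero submodule has the same grade), and the Rees-type formula $j_{\cO[[K_1]]}(M) = j_{k[[K_1]]}(M)+1$ for a finitely generated $\cO[[K_1]]$-module $M$ killed by $\varpi$ (where $k[[K_1]] = \cO[[K_1]]/\varpi$ and $K_1$ is the fixed pro-$p$ Sylow of $K_0$). As in the proof of parts~(1)--(3), all grade, Cohen--Macaulayness and projective-dimension statements over $\cO[[K_0]]$ and $\cO_\infty[[K_0]]$ are read off, via the finite prime-to-$p$-index subrings $\cO[[K_1]]$ and $\cO_\infty[[K_1]]$, from the corresponding statements over the latter; in particular hypothesis~(c) gives $j_{k[[K_1]]}(\bigoplus_i \widetilde{H}_i(X_{U^p},k)_\m)\ge l_0$.

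First I would apply the homology long exact sequence of $0\to\cO\overset{\varpi}{\to}\cO\to k\to 0$ to the complex computing $\widetilde{H}_*(X_{U^p},\cO)_\m$, which after $-\otimes_\cO k$ computes $\widetilde{H}_*(X_{U^p},k)_\m$ (as in the proof of Proposition~\ref{prop: killing patching variables gives completed homology}). Using Theorem~\ref{patchedCM}(1), namely $\widetilde{H}_i(X_{U^p},\cO)_\m=0$ for $i\ne q_0$, this yields $\widetilde{H}_i(X_{U^p},k)_\m=0$ for $i\notin\{q_0,q_0+1\}$ together with an isomorphism of $k[[K_1]]$-modules $\widetilde{H}_{q_0+1}(X_{U^p},k)_\m\cong\widetilde{H}_{q_0}(X_{U^p},\cO)_\m[\varpi]$. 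Now $T:=\widetilde{H}_{q_0}(X_{U^p},\cO)_\m[\varpi]$ is killed by $\varpi$, so by the Rees formula $j_{\cO[[K_1]]}(T)=j_{k[[K_1]]}(\widetilde{H}_{q_0+1}(X_{U^p},k)_\m)+1\ge l_0+1$. But $\widetilde{H}_{q_0}(X_{U^p},\cO)_\m$ is Cohen--Macaulay of grade $l_0$ over $\cO[[K_1]]$ (Theorem~\ref{patchedCM}(1)), hence pure of grade $l_0$, so any nonzero submodule of it has grade exactly $l_0$. Since $l_0+1>l_0$, we conclude $T=0$: that is, $\widetilde{H}_{q_0}(X_{U^p},\cO)_\m$ is $\varpi$-torsion free; feeding this back in also gives $\widetilde{H}_{q_0+1}(X_{U^p},k)_\m=0$, whence $\widetilde{H}_i(X_{U^p},k)_\m=0$ for all $i\ne q_0$.

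Finally, Theorem~\ref{patchedCM}(1)--(2) give $\widetilde{\cC}(\infty)\simeq H_{q_0}(\widetilde{\cC}(\infty))[q_0]$ in $D(\cO_\infty[[K_0]])$, so by Proposition~\ref{prop: killing patching variables gives completed homology} the groups $\Tor^{\cO_\infty}_p(\cO,H_{q_0}(\widetilde{\cC}(\infty)))$ are identified with $\widetilde{H}_{q_0+p}(X_{U^p},\cO)_\m$, which vanish for $p>0$ by Theorem~\ref{patchedCM}(1). Hence the regular sequence $y_1,\dots,y_N$ of patching variables generating $\ba=\ker(\cO_\infty\to\cO)$ is an $H_{q_0}(\widetilde{\cC}(\infty))$-regular sequence, with quotient $H_{q_0}(\widetilde{\cC}(\infty))/\ba H_{q_0}(\widetilde{\cC}(\infty))=\widetilde{H}_{q_0}(X_{U^p},\cO)_\m$. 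Since $\varpi$ is a nonzerodivisor on this quotient by the previous paragraph, $y_1,\dots,y_N,\varpi$ is an $H_{q_0}(\widetilde{\cC}(\infty))$-regular sequence; as $\varpi$ and the $y_i$ all lie in the Jacobson radical of the Noetherian ring $\cO_\infty[[K_0]]$ and $H_{q_0}(\widetilde{\cC}(\infty))$ is finitely generated over it, this regular sequence may be permuted, so $\varpi$ is a nonzerodivisor on $H_{q_0}(\widetilde{\cC}(\infty))$, as required.

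Given parts~(1)--(3), there is no genuinely hard step here: the argument is a formal consequence of the Cohen--Macaulay and grade statements already obtained, the universal-coefficient long exact sequence for completed homology, and purity of Cohen--Macaulay modules over $\cO[[K_1]]$ (a non-commutative analogue established in Appendix~\ref{sec: non commutative algebra}). The only point requiring care is bookkeeping with the finite subrings $\cO[[K_1]]\subset\cO[[K_0]]$ and $\cO_\infty[[K_1]]\subset\cO_\infty[[K_0]]$ and with the Jacobson-radical condition needed to permute the regular sequence $y_1,\dots,y_N,\varpi$; the substantive difficulty (the non-commutative replacement for \cite[Lem.~6.2]{1207.4224}) was already confronted, via Lemma~\ref{lem:CG}, in the proof of parts~(1)--(2).
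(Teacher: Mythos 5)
There is a genuine gap: your proposal proves only the final ``moreover'' claim of the Proposition and takes parts (1)--(3) as given, citing ``Theorem~\ref{patchedCM}(1)--(2), already proved'' --- but that \emph{is} the statement under proof, so the citation is circular. The substantive content of the Proposition is precisely parts (1)--(3): one must apply Lemma~\ref{lem:CG} to the minimal resolution $\cF$ of $\widetilde{\cC}(\infty)$ over $\cO_\infty[[K_1]]$ (concentrated in degrees $[q_0,q_0+l_0]$ by hypothesis (a) and Lemma~\ref{lem: minimal complex rank}) and to its reduction $\cO_\infty/\ba\otimes\cF$, use the Nakayama descent through the patching variables $x_1,\dots,x_g$ to get $H_i(\widetilde{\cC}(\infty))=0$ for $i\ne q_0$, invoke Lemma~\ref{lem:obviouscodimineq} to lift the grade bound from $\cO[[H]]$ to $\cO_\infty[[H]]$, and for part (3) use Corollary~\ref{cor:stayCM} together with a separate argument (regularity of $\Rbar_\infty$) to see that $H_{q_0}(\widetilde{\cC}(\infty))$ has finite projective dimension over $\Rbar_\infty[[K_1]]$. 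None of this appears in your write-up, and part (3) in particular is not a formal consequence of (1)--(2).

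The portion you do argue --- the consequences of hypothesis (c) --- is correct and in fact takes a route different from the paper's. The paper reapplies Lemma~\ref{lem:CG} to $\cO_\infty/\m_{\cO_\infty}\otimes_{\cO_\infty}\cF[-q_0]$ to get $\widetilde{H}_i(X_{U^p},k)_\m=0$ for $i\ne q_0$ directly, and reads off $\varpi$-torsion-freeness from the vanishing of $\Tor_i^\cO(k,\widetilde{H}_{q_0}(X_{U^p},\cO)_\m)$; you instead combine the universal-coefficient sequence with purity of Cohen--Macaulay modules (Venjakob's results, as used in the proof of Proposition~\ref{miracleflatness}) and the shift $j_{\cO[[K_1]]}(M)=j_{k[[K_1]]}(M)+1$ for $\varpi$-torsion modules from Lemma~\ref{changeofrings}. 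That is a legitimate alternative, and your regular-sequence argument for the $\varpi$-torsion-freeness of $H_{q_0}(\widetilde{\cC}(\infty))$ also works since $\varpi$ and the $x_i$ are central, though the paper's route (rerunning the Nakayama descent mod $\varpi$) avoids having to justify permutation of regular sequences in the non-commutative setting. But as a proof of the Proposition as stated, the proposal is incomplete.
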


\begin{proof}We have 
$H_i(k\otimes^\LL_{\cO_\infty[[K_1]]}\widetilde{\cC}(\infty)) \cong 
{H}_i(X_{U^pK_1},k)_\m$ by Proposition~\ref{completionofpatchedcomplexes}(1). 
So the assumption that ${H}_i(X_{U^pK_1},k)_\m = 0$ for $i$ outside the 
	range $[q_0,q_0+l_0]$ implies (Lemma \ref{lem: minimal complex rank}) that 
	the minimal resolution $\cF$ of 
	$\widetilde{\cC}(\infty)$ (viewed as a complex of 
	$\cO_\infty[[K_1]]$-modules) is concentrated in degrees $[q_0,q_0+l_0]$.
	
Fix $H\subset K_1$ a normal compact open subgroup of $K_0$ 
which is uniform pro-$p$. We now apply Lemma 
\ref{lem:CG} to the shifted complex $\cO_\infty/\ba 
\otimes_{\cO_\infty}\cF[-q_0]$ of finite free $\cO[[H]]$-modules to 
deduce 
that $H_i(\cO_\infty/\ba \otimes_{\cO_\infty}\cF)\cong 
\widetilde{H}_i(X_{U^p},\cO)_\m$ vanishes for $i \ne q_0$ and 
$\mathrm{pd}_{\cO[[H]]}(\widetilde{H}_{q_0}(X_{U^p},\cO)_\m) = 
j_{\cO[[H]]}(\widetilde{H}_{q_0}(X_{U^p},\cO)_\m) = l_0$. Lemma 
\ref{lem:samepd} gives the first claim of the proposition: note that the 
perfect complex $\cO_\infty/\ba \otimes_{\cO_\infty}\widetilde{\cC}(\infty)$ of 
$\cO[[K]]$-modules has homology equal to 
$\widetilde{H}_{q_0}(X_{U^p},\cO)_\m$ concentrated in a single degree, so 
$\widetilde{H}_{q_0}(X_{U^p},\cO)_\m$ has finite projective dimension as a 
$\cO[[K]]$-module.

Now we move on to the second claim of the proposition. We begin by showing that $\widetilde{\cC}(\infty)$ has non-zero homology only in degree $q_0$. As we will explain, this follows from the fact (which we have just established) that $\cO_\infty/\ba \otimes_{\cO_\infty}\widetilde{\cC}(\infty)$ has non-zero homology only in degree $q_0$. To see this, we recall that $\cO_\infty = \cO[[x_1,\ldots,x_g]]$, and begin by showing that $\cO_\infty/(x_1,\ldots,x_{g-1}) \otimes_{\cO_\infty}\widetilde{\cC}(\infty)$ has non-zero homology only in degree $q_0$. For each $i$ we have an injective map (part of a short exact sequence coming from a degenerating $\Tor$ spectral sequence) \[H_i(\cO_\infty/(x_1,\ldots,x_{g-1})\otimes_{\cO_\infty}\widetilde{\cC}(\infty))/x_g \hookrightarrow  H_i(\cO_\infty/\ba \otimes_{\cO_\infty}\widetilde{\cC}(\infty))\] so it follows from Nakayama's lemma, applied to the finitely generated $\cO_\infty[[K_1]]$-module $H_i(\cO_\infty/(x_1,\ldots,x_{g-1})\otimes_{\cO_\infty}\widetilde{\cC}(\infty))$, that $H_i(\cO_\infty/(x_1,\ldots,x_{g-1})\otimes_{\cO_\infty}\widetilde{\cC}(\infty))$ is zero for $i \ne q_0$. Repeating this argument, we eventually deduce that $H_i(\widetilde{\cC}(\infty))$ is zero for $i \ne q_0$.

Since $\cF$ is a chain 
complex whose first nonzero term has degree $q_0$, we have
$\cO_\infty/\ba \otimes_{\cO_\infty}H_{q_0}(\cF)=H_{q_0}(\cO_\infty/\ba 
\otimes_{\cO_\infty}\cF)\cong \widetilde{H}_{q_0}(X_{U^p},\cO)_\m$, so Lemma \ref{lem:obviouscodimineq} 
implies that \[j_{\cO_\infty[[H]]}(H_{q_0}(\cF)) \ge 
j_{\cO[[H]]}(\widetilde{H}_{q_0}(X_{U^p},\cO)_\m)=l_0.\] Another application of 
Lemmas~\ref{lem:CG} and~\ref{lem:samepd} gives us the second claim 
of the proposition. 

Next, we establish the third claim. It follows from Corollary 
\ref{cor:stayCM} and what we have established above that 
${H}_{q_0}(\widetilde{\cC}(\infty))$ is a Cohen--Macaulay 
$\Rbar_\infty[[H]]$-module, with grade and projective dimension as in 
the claim. 
Lemma \ref{lem:samepd} and Lemma \ref{perfectoversuperring} establish the claim 
as stated once we verify that $H_{q_0}(\widetilde{\cC}(\infty))$ 
has finite projective dimension over $\Rbar_\infty[[K_1]]$. To verify this, let 
$\cG$ be a minimal resolution of $H_{q_0}(\widetilde{\cC}(\infty))$, viewed as 
a complex of $\Rbar_\infty[[K_1]]$-modules concentrated in degree $q_0$. The 
complex $\Rbar_\infty \otimes_{\Rbar_\infty[[K_1]]}\cG$ (we mod out by the 
augmentation 
ideal for $K_1$) has bounded and finitely generated homology, since 
$H_i(\Rbar_\infty \otimes_{\Rbar_\infty[[K_1]]}\cG) = 
H_i(\cO_\infty\otimes_{\cO_\infty[[K_1]]}\widetilde{\cC}(\infty))$ and 
$\widetilde{\cC}(\infty)$ is a perfect complex of $\cO_\infty[[K_1]]$-modules. 
Since $\Rbar_\infty$ is regular, $\Rbar_\infty 
\otimes_{\Rbar_\infty[[K_1]]}\cG$ is 
therefore a perfect complex of $\Rbar_\infty$-modules, so 
$k\otimes_{\Rbar_\infty[[K_1]]}\cG$ also has bounded homology. We deduce that 
the 
minimal complex $\cG$ is itself bounded, so $H_{q_0}(\widetilde{\cC}(\infty))$ 
has finite projective dimension over $\Rbar_\infty[[K_1]]$.

For the last part of the Proposition, if we assume that 
\[j_{k[[K_0]]}\left(\bigoplus_{i\ge 
	0}\widetilde{H}_i(X_{U^p},k)_\m\right) = l_0\] then we may apply Lemma 
\ref{lem:CG} to the shifted complex $\cO_\infty/\m_{\cO_\infty} 
\otimes_{\cO_\infty}\cF[-q_0]$ of finite free $k[[H]]$-modules to deduce that 
$\widetilde{H}_i(X_{U^p},k)_\m = 0$ for $i \ne q_0$. This shows that 
\[\mathrm{Tor}_i^\cO(\cO/\varpi,\widetilde{H}_{q_0}(X_{U^p},\cO)_\m) =
\widetilde{H}_{q_0+i}(X_{U^p},k)_\m = 0\] for $i > 0$, so 
$\widetilde{H}_{q_0}(X_{U^p},\cO)_\m$ is $\varpi$-torsion free. Arguing as for 
the second part, we deduce that ${H}_i(\widetilde{\cC}(\infty)/\varpi) = 0$ for 
$i \ne q_0$ and 
hence $H_{q_0}(\widetilde{\cC}(\infty))$ is also $\varpi$-torsion free.
\end{proof}

\begin{remark}\phantomsection\label{rem: assumptions abc for IQF}
\begin{enumerate}
	\item  Hypothesis (b) that $j_{\cO[[K_0]]}\left(\bigoplus_{i\ge 
		0}\widetilde{H}_i(X_{U^p},\cO)_\m\right) \ge l_0$ of the above
	Proposition is implied by the codimension conjecture of Calegari and
	Emerton \cite[Conjecture 1.5]{MR2905536} (indeed equality is conjectured to 
	hold here). For $\PGL_2$ over an
	imaginary quadratic field, this hypothesis holds (for example, by the 
	argument of \cite[Example
	1.12]{MR2905536}).
	\item Hypothesis (a), that ${H}_i(X_{U^pK_1},k)_\m = 0$ for $n$ outside the 
	range $[q_0,q_0+l_0]$, is conjectured in \cite[Conj.\ B(4)(a)]{1207.4224}. 
	Again, for $\PGL_2$ over an imaginary quadratic field, 
	the hypothesis holds: we have $l_0 = 1$, $q_0 = 1$ and the dimension of 
	$X_{U^pK_1}$ is equal to $3$, so it suffices to check that 
	${H}_0(X_{U^pK_1},k)_\m = {H}_3(X_{U^pK_1},k)_\m = 0$ which follows from 
	the fact that $\m$ is 
	non-Eisenstein.
	\item In contrast to the other hypotheses, hypothesis (c) seems difficult 
	to 
	verify even for 
	$\PGL_2$ over an imaginary quadratic field. We cannot rule out (for 
	example) $\widetilde{H}_1(X_{U^p},\cO)_\m$ containing a $\varpi$-torsion 
	submodule which is torsion free over $k[[K_0]]$, in which case  
	$j_{k[[K_0]]}\widetilde{H}_1(X_{U^p},k)_\m = 0$.
	\end{enumerate}
\end{remark}
\begin{remark}\label{rem: Rinfty action}
		It follows from the second part of the Proposition that the map 
		$R_\infty \rightarrow \End_{D(\cO_\infty)}(\widetilde{\cC}(\infty))$ 
		(which 
		commutes with the $G$ action) arises from a map $R_\infty \rightarrow 
		\End_{\cO_\infty[G]}(H_{q_0}(\widetilde{\cC}(\infty)))$. In particular, 
		the 
		action of $R_\infty$ on $\widetilde{\cC}(\infty)$ can be thought of as 
		taking place in, for example, the derived category of  
		$\cO_\infty[[K_0]]$-modules with compatible $G$-action.
\end{remark}

\subsection{Miracle flatness and `big $R = \TT$'}\label{subsec:
  miracle flatness}
We have a surjective map $R_{\overline{\cS}} \rightarrow \TT^S(U^p)_\m$. If 
this map is an isomorphism, the global Euler characteristic formula for Galois 
cohomology gives an expected dimension of $1 + (\frac{n(n+1)}{2}-1)[F:\QQ] - 
l_0$ for both these rings. See \cite[Conj.\ 3.1]{emertonICM}.

The following Proposition shows that this dimension formula, as well as the 
isomorphism $R_{\overline{\cS}} \cong \TT^S(U^p)_\m$, is implied by a
natural condition on the codimension (over $k[[K_0]]$) of the fibre of the completed 
homology module 
$\widetilde{H}_{q_0}(X_{U^p},\cO)_\m$ at the maximal ideal $\m$ of the Hecke 
algebra. The method of proof is in some sense a precise version of the 
heuristics discussed in \cite[\S 3.1.1]{emertonICM} which compare the Krull 
dimension of 
$\TT^S(U^p)_\m$ and the Iwasawa theoretic dimensions of 
$\widetilde{H}_{q_0}(X_{U^p},\cO)_\m$ and its mod $\m$ fibre. A
related argument was found independently by Emerton and
Pa{\v{s}}k{\=u}nas, and will appear in a forthcoming paper\footnote{This has 
now appeared: \cite{EPdensity}} of theirs.

\begin{prop}\label{prop: big R equals T}
Suppose that assumptions \emph{(a)} and \emph{(b)} of Proposition~\ref{patchedCM} hold, and 
that we 
moreover have
\[j_{k[[K_0]]}(\widetilde{H}_{q_0}(X_{U^p},\cO)_\m/\m\widetilde{H}_{q_0}(X_{U^p},\cO)_\m)
 \ge \dim(B).\]
 
Recall that we are assuming Hypothesis \ref{hyp: smooth local deformation 
rings}, which implies that $\Rbar_\infty$ is a power series ring over $\cO$.

Then we have the following: \begin{enumerate}
	\item\label{Rflat} $H_{q_0}(\widetilde{\cC}(\infty))$ is a flat 
	$\Rbar_\infty$-module.
	\item The ideal $\Rbar_\infty\ba$ is generated by a regular sequence in 
	$\Rbar_\infty$.
	\item The surjective maps \[\Rbar_\infty/\ba \rightarrow R_{\overline{\cS}} 
	\rightarrow \TT^S(U^p)_\m\] are all isomorphisms and 
	$\widetilde{H}_{q_0}(X_{U^p},\cO)_\m$ is a faithfully flat 
	$\TT^S(U^p)_\m$-module.
	\item The rings $R_{\overline{\cS}} \cong \TT^S(U^p)_\m$ are local complete 
	intersections with Krull dimension equal to $\dim(R_\infty) - 
	\dim(\cO_\infty) + 1 = 1 + (\frac{n(n+1)}{2}-1)[F:\QQ] - l_0$.\item If assumption \emph{(c)} of Proposition~\ref{patchedCM} holds, then 
	$\TT^S(U^p)_\m$ is 
	$\varpi$-torsion free.
	\end{enumerate}
\end{prop}
\begin{proof}
First we note that by Lemma~\ref{lem:obviouscodimineq} and 
Proposition~\ref{patchedCM} we have 
\[j_{k[[K_0]]}(\widetilde{H}_{q_0}(X_{U^p},\cO)_\m/\m\widetilde{H}_{q_0}(X_{U^p},\cO)_\m)
\le j_{\Rbar_\infty[[K_0]]}\left(H_{q_0}(\widetilde{\cC}(\infty))\right) = 
\dim(B),\] since 
\[\Rbar_\infty/\m_{\Rbar_\infty}\otimes_{\Rbar_\infty}H_{q_0}(\widetilde{\cC}(\infty))
= \widetilde{H}_{q_0}(X_{U^p},\cO)_\m/\m\widetilde{H}_{q_0}(X_{U^p},\cO)_\m.\] 
So our assumption implies that we have equality of codimensions 
here. The first claim then follows immediately from 
Propositions~\ref{patchedCM} and~ \ref{miracleflatness}.
 
 For the second part, write $\ba = (x_1,\ldots,x_g)$ where $\cO_\infty = 
 \cO[[x_1,\ldots,x_g]]$ (so $g = \dim(\cO_\infty) - 1$). Note that, by 
 Proposition \ref{patchedCM} (which in 
 particular says that the complexes $\widetilde{\cC}(\infty)$ and 
 $\cO_\infty/\ba\otimes_{\cO_\infty}\widetilde{\cC}(\infty)$ both have homology 
 concentrated in a single degree) we have 
 \[\mathrm{Tor}_i^{\cO_\infty}(\cO_\infty/\ba,H_{q_0}(\widetilde{\cC}(\infty))) 
 = 
 \widetilde{H}_{q_0+i}(X_{U^p},\cO)_\m = 0\] for $i > 0$. So (by considering 
 the Koszul complex for $(x_1,\ldots,x_g)$) we see that $(x_1,\ldots,x_g)$ is a 
 regular sequence on $H_{q_0}(\widetilde{\cC}(\infty))$. Since 
 $H_{q_0}(\widetilde{\cC}(\infty))$ is a flat $\Rbar_\infty$-module and its 
 reduction mod $\m_{\Rbar_\infty}$ is non-zero (by Nakayama, since the module 
 is finitely generated over $\Rbar_\infty[[K_0]]$), it follows 
 from \cite[Thm.~7.2]{MR1011461} that $H_{q_0}(\widetilde{\cC}(\infty))$ is 
 a faithfully flat $\Rbar_\infty$-module and we can conclude that 
 $(x_1,\ldots,x_g)$ is a regular sequence in $\Rbar_\infty$ --- this can be 
 seen by considering the Koszul homology groups 
 \[H_*^{\Rbar_\infty}((x_1,\ldots,x_g),H_{q_0}(\widetilde{\cC}(\infty))) \cong 
 H_*^{\Rbar_\infty}((x_1,\ldots,x_g),\Rbar_\infty)\otimes_{\Rbar_\infty} 
 H_{q_0}(\widetilde{\cC}(\infty)),\] and by faithful flatness we have 
 $H_i^{\Rbar_\infty}((x_1,\ldots,x_g),\Rbar_\infty) = 0$ for $i \ne 0$ and 
 therefore $(x_1,\ldots,x_g)$ is a regular sequence in $\Rbar_\infty$. This
 gives 
 the second part. 
 
 For the third part, since $H_{q_0}(\widetilde{\cC}(\infty))$ is a flat 
 ${\Rbar}_\infty$-module, $\widetilde{H}_{q_0}(X_{U^p},\cO)_\m = 
 \cO_\infty/\ba\otimes_{\cO_\infty}H_{q_0}(\widetilde{\cC}(\infty))$ is a flat 
 ${\Rbar}_\infty/\ba$-module. As before, it follows from 
 \cite[Thm.~7.2]{MR1011461} that 
 $\widetilde{H}_{q_0}(X_{U^p},\cO)_\m$ is a faithfully flat 
 ${\Rbar}_\infty/\ba$-module and is in particular faithful. It follows that the 
 surjective maps appearing in the third part must also be injective, since the 
 action of ${\Rbar}_\infty/\ba$ on $\widetilde{H}_{q_0}(X_{U^p},\cO)_\m$ 
 factors through these maps. This completes the proof of the third part.
 
 The fourth part follows immediately from the second and third parts.
 
 The fifth part follows from the fact that $\TT^S(U^p)_\m$ acts faithfully on 
 $\widetilde{H}_{q_0}(X_{U^p},\cO)_\m$ (by Lemma \ref{heckeonedegree}), which 
 is $\varpi$-torsion free (under our additional assumption) by Proposition 
 \ref{patchedCM}. Alternatively, one can redo the argument of part (2) of the 
 Proposition to show that $(\varpi,x_1,\ldots,x_g)$ is a regular sequence in 
 $\Rbar_\infty$, and so in particular $\varpi$ is not a zero-divisor in 
 $\Rbar_\infty/\ba \cong \TT^S(U^p)_\m$.
\end{proof}
\begin{remark}
	To explain the condition  
	\[j_{k[[K_0]]}(\widetilde{H}_{q_0}(X_{U^p},\cO)_\m/\m\widetilde{H}_{q_0}(X_{U^p},\cO)_\m)
	\ge \dim(B)\] we first note that the parabolic induction of a $k$-valued 
	character from $B$ to $G$ has codimension $\dim(B)$ over $k[[K_0]]$. We 
	moreover
	expect this to be the codimension of any `generic' irreducible admissible 
	smooth $k$-representation of $G$, with other irreducibles having at least 
	this codimension. In the case $G = \PGL_2(\Q_p)$ this is 
	true: any 
	infinite-dimensional irreducible smooth $k$-representation of $G$ has 
	codimension $\dim(B) = 2$ \cite[Proof of 
	Cor.~7.5]{schmidtstrauch}, whilst the finite-dimensional representations 
	have codimension $3$.
	
 It seems reasonable to expect that 
 the smooth representation 
\[\left(\widetilde{H}_{q_0}(X_{U^p},\cO)_\m/
\m\widetilde{H}_{q_0}(X_{U^p},\cO)_\m\right)^\vee\]
  is 
 a finite length representation of $G$, and therefore we expect it to have 
 codimension $\ge \dim(B)$ also.
 
 We also point out that our assumption that $\overline{R}_\infty$ is regular is 
 essential in order to apply Proposition \ref{miracleflatness}. See Remark 
 \ref{rem: local 
 rings are complete intersections}.
\end{remark}

\section{The $p$-adic local Langlands correspondence
  for~$\GL_2(\Qp)$}\label{sec: p splits completely}
In this section we specialise to the case that $n=2$ and~$p$ splits
completely in~$F$, and use the techniques of~\cite{CEGGPSGL2} to study
the relationship of our constructions to the $p$-adic local Langlands
correspondence for~$\GL_2(\Qp)$.

\subsection{A local-global compatibility conjecture}\label{subsec:
  local global compatibility conjecture PGL2}We continue to
make the assumptions made in Section~\ref{sec: applications of algebra
  to patched homology}, as well as assumptions~(a) and~(b) of
Proposition~\ref{patchedCM}.

In addition we
assume that
\begin{itemize}
\item $n=2$, 
\item $p$ splits completely in~$F$, 
\item if~$\rhobar_\m|_{G_v}$ is ramified for some place~$v\nmid p$, then~$v$ is
  not a vexing prime in the sense of~\cite{MR1638490}, and
\item for each place~$v|p$, $\rhobar_\m|_{G_v}$ is either
  absolutely irreducible, or is a non-split extension of characters,
  whose ratio is not the trivial character or the mod~$p$ cyclotomic character.
\end{itemize}

This last assumption allows us to use the results
of~\cite{CEGGPSGL2}; it guarantees in particular that each
$\rhobar_\m|_{G_{F_v}}$ admits a universal deformation
ring~$\Rdef_v$. Since $n=2$, $l_0$ is just equal to~$r_2$, the number
of complex places of~$F$.

From now on in a slight abuse of notation for each place~$v|p$ we
write~$G_v$ for~$\PGL_2(F_v)$ and~$K_v$ for~$\PGL_2(\cO_{F_v})$, and
we write~$G$ for~$\prod_{v|p}G_v$. Recall that~$K_0=\prod_{v|p}K_v$.

Since our interest is primarily in
phenomena at places dividing~$p$, we content ourselves with the
`minimal level' situation at places not dividing~$p$; that is, we
choose~$\Rbar_\infty$ and the level~$U^p$ as in the second paragraph
of Remark~\ref{rem: discussion of smooth local deformation rings and local
    global compatibility}, and assume Hypothesis \ref{hyp: smooth local 
    deformation rings} holds for this choice.  (The reader may object that this 
    level is
  not necessarily $S$-good; as usual in the Taylor--Wiles method, this
  difficulty is easily resolved by shrinking the level at an
  auxiliary place at which~$\rhobar_\m$ admits no ramified
  deformations, and for simplicity of exposition we ignore this point.)

We would like to understand the action of
$(\wotimes_{v|p,\cO}\Rdef_v)[G]$ on~$H_{q_0}(\widetilde{\cC}(\infty))$.
When $F=\Q$ it follows from the local-global compatibility theorem
of~\cite{emerton2010local} that this action is determined by the
$p$-adic local Langlands correspondence for~$\GL_2(\Qp)$, and it is
natural to expect that the same applies for general number fields~$F$.

More precisely, for each place~$v|p$, we can associate an absolutely
irreducible $k$-representation~$\pi_v$ of $\GL_2(F_v)$
to~$\rhobar_\m|_{G_{F_v}}$ via the recipe of~\cite[Lem.\
2.15~(5)]{CEGGPSGL2}; note that by~ \cite[Rem.\ 2.17]{CEGGPSGL2}, the
central character of~$\pi_v$ is trivial, so we can regard it as
a representation of~$G_v$.
\begin{defn}\label{defn: gothic C category}
  If~$H$ is a $p$-adic analytic group and~$A$ is a complete local
  Noetherian $\cO$-algebra, then we write $\mathfrak{C}_H(A)$ for the
  Pontryagin dual of the category of locally admissible
  $A$-representations of~$H$ (cf.\ Appendix~\ref{appendix: tensor products 
  projective covers} and \cite[\S 4.4]{CEGGPSGL2}).\end{defn}
We  let~$P_v\onto \pi_v^\vee$ be a
projective envelope of~$\pi_v^\vee$ in~$\mathfrak{C}_{G_v}(\cO)$. By~\cite[Prop.\ 6.3, Cor.\ 8.7]{paskunasimage} there is a natural 
isomorphism
$\Rdef_v\to\End_{\mathfrak{C}_{G_v}(\cO)}(P_v)$. (This is a large part
of the $p$-adic local Langlands correspondence for~$\GL_2(\Qp)$.)

Write $P:=\wotimes_{v|p,\cO}P_v$ which is naturally
an~$R_p^{\loc}:=\wotimes_{v|p,\cO}\Rdef_v$-module. For each~$v|p$ we
make a choice (in its strict equivalence class) of the universal
deformation of~$\rhobar_\m|_{G_{F_v}}$ to~$\Rdef_v$, so that we can
regard~$\Rbar_\infty$ as an $R_p^{\loc}$-module. For some $g\ge 0$ we
can and do choose an isomorphism of~$R_p^{\loc}$-algebras
$\Rbar_\infty\cong
R_p^{\loc}\wotimes_\cO\cO[[x_1,\dots,x_g]]$.

\begin{conj}\label{conj: p-adic local Langlands gives us the action for general
    F} For some~$m\ge 1$ there is an isomorphism of
  $\Rbar_\infty\left[G\right]$-modules \[H_{q_0}(\widetilde{\cC}(\infty))\cong
  \Rbar_\infty\wotimes_{R_p^{\loc}}  P^{\oplus m}.\]
  \end{conj}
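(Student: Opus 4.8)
\emph{The plan} is to prove this (conditionally on the finite-level local--global compatibility statement Conjecture~\ref{conj: crystalline local global} introduced below) by adapting the method of~\cite{CEGGPSGL2}: realise $M:=H_{q_0}(\widetilde{\cC}(\infty))$ as the object of a Pa{\v{s}}k{\=u}nas block corresponding, under an equivalence of categories, to a free $\Rbar_\infty$-module. The relevant structural facts are already in hand. The Pontryagin dual of $M$ is completed cohomology, hence admissible, so $M\in\mathfrak{C}_G(\Rbar_\infty)$; $M$ carries an action of $R_p^{\loc}=\wotimes_{v|p,\cO}\Rdef_v$ compatible with its $\Rbar_\infty$-structure via the chosen universal deformations of the $\rhobar_\m|_{G_v}$; and, crucially, by Proposition~\ref{prop: big R equals T}(1) $M$ is \emph{flat} over $\Rbar_\infty$ (this will play the role that finite generation over the deformation ring plays in the $l_0=0$ theory, which is unavailable here since $M$ is not finitely generated over $\Rbar_\infty$). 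On the other side, by~\cite[Prop.~6.3, Cor.~8.7]{paskunasimage} and the tensor-product results of Appendix~\ref{appendix: tensor products projective covers}, $P=\wotimes_{v|p,\cO}P_v$ is a projective generator of a block $\mathfrak{B}\subset\mathfrak{C}_G(\cO)$, with $\End_{\mathfrak{C}_G(\cO)}(P)\cong R_p^{\loc}$, and $\Hom_{\mathfrak{C}_G(\cO)}(P,-)$ restricts to an equivalence from $\mathfrak{B}$ onto the category of pseudocompact $R_p^{\loc}$-modules, with quasi-inverse $P\wotimes_{R_p^{\loc}}(-)$.

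First I would check that $M$ lies in $\mathfrak{B}$. Since blocks are closed under subquotients it suffices to see that each irreducible constituent of $M\otimes_{\Rbar_\infty}k$ lies in $\mathfrak{B}$. By flatness of $M$, Proposition~\ref{prop: killing patching variables gives completed homology} and Proposition~\ref{prop: big R equals T}(3) (which gives $\Rbar_\infty/\ba\cong\TT^S(U^p)_\m$, with residue field $k$) there is an identification $M\otimes_{\Rbar_\infty}k\cong\widetilde H_{q_0}(X_{U^p},\cO)_\m/\m\widetilde H_{q_0}(X_{U^p},\cO)_\m$, the mod-$\m$ fibre of completed homology; its $G$-constituents at the places $v|p$ are governed by the mod-$p$ local Langlands correspondence, and this is the point at which one invokes the residual consequences of Conjecture~\ref{conj: crystalline local global} (when $F=\Q$ this is supplied directly by Emerton's local--global compatibility theorem~\cite{emerton2010local}, as in~\cite{CEGGPSGL2}). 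Pa{\v{s}}k{\=u}nas's classification of the blocks of $\mathfrak{C}_{G_v}(\cO)$ then puts all these constituents in $\mathfrak{B}$.

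With $M\in\mathfrak{B}$ established, set $\mathbf{m}:=\Hom_{\mathfrak{C}_G(\cO)}(P,M)$, a pseudocompact $R_p^{\loc}$-module with a compatible $\Rbar_\infty$-action; the equivalence gives $M\cong P\wotimes_{R_p^{\loc}}\mathbf{m}$ $\Rbar_\infty[G]$-equivariantly, so it remains to prove $\mathbf{m}\cong\Rbar_\infty^{\oplus m}$ as $\Rbar_\infty$-modules. Since $P$ is projective, $\Hom_{\mathfrak{C}_G(\cO)}(P,-)$ is exact; combined with the flatness of $M$ over $\Rbar_\infty$ this shows $\mathbf{m}$ is flat over $\Rbar_\infty$ and that $\mathbf{m}\otimes_{\Rbar_\infty}k\cong\Hom_{\mathfrak{C}_G(\cO)}(P,M\otimes_{\Rbar_\infty}k)$. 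Now one uses the second (principal) input of finite-level local--global compatibility: that $\widetilde H_{q_0}(X_{U^p},\cO)_\m/\m\widetilde H_{q_0}(X_{U^p},\cO)_\m\cong\pi^{\vee\oplus m}$, where $\pi=\bigotimes_{v|p}\pi_v$, for some $m\ge 1$ (for $F=\Q$ this is again the substance of~\cite{emerton2010local}, used as in~\cite{CEGGPSGL2}). Since $P$ is the projective envelope of $\pi^\vee$ we have $\Hom_{\mathfrak{C}_G(\cO)}(P,\pi^\vee)=k$, hence $\mathbf{m}\otimes_{\Rbar_\infty}k\cong k^{\oplus m}$. A pseudocompact module over the Noetherian complete local ring $\Rbar_\infty$ with finite-dimensional reduction is finitely generated (topological Nakayama); being also flat over the regular ring $\Rbar_\infty$, it is finite free, necessarily of rank $m$. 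Therefore $M\cong P\wotimes_{R_p^{\loc}}\Rbar_\infty^{\oplus m}\cong(\Rbar_\infty\wotimes_{R_p^{\loc}}P)^{\oplus m}$, as desired.

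\textbf{The main obstacle} is not the homological algebra above but the two places where Conjecture~\ref{conj: crystalline local global} enters: extracting from a finite-level, algebraic-coefficient, \emph{crystalline}-deformation-ring statement the purely residual/completed facts that (i) the mod-$\m$ fibre of $\widetilde H_{q_0}(X_{U^p},\cO)_\m$ lies in the block $\mathfrak{B}$ and (ii) equals $\pi^{\vee\oplus m}$. This bridge is what Emerton's theorem provides for free when $F=\Q$; for general $F$ one must pass between the derived-category finite-level picture and completed homology using the compatibilities of Section~\ref{sec:patchingII}, and make systematic use of the flatness in Proposition~\ref{prop: big R equals T} as a replacement for finite generation of $M$ over $\Rbar_\infty$ (which genuinely fails when $l_0>0$). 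A secondary technical point is the tensor-product form of Pa{\v{s}}k{\=u}nas's equivalence over $\prod_{v|p}G_v$, which is the purpose of Appendix~\ref{appendix: tensor products projective covers}.
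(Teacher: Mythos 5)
The statement you are proving is a conjecture; the paper establishes it only conditionally on Conjecture~\ref{conj: crystalline local global}, via Propositions~\ref{prop: all arithmetic actions are p from Colmez} and~\ref{prop: arithmetic action on patched module}, and your proposal is conditional in the same way. However, your route has genuine gaps.

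First, your argument is circular where it invokes Proposition~\ref{prop: big R equals T}(1) to get flatness of $M=H_{q_0}(\widetilde{\cC}(\infty))$ over $\Rbar_\infty$. That proposition requires the codimension hypothesis $j_{k[[K_0]]}(\widetilde{H}_{q_0}(X_{U^p},\cO)_\m/\m\widetilde{H}_{q_0}(X_{U^p},\cO)_\m)\ge\dim(B)$, which is \emph{not} among the running assumptions of Section~\ref{sec: p splits completely}; in the paper it is \emph{deduced from} Conjecture~\ref{conj: p-adic local Langlands gives us the action for general F} (see the proof of Proposition~\ref{prop: big R equals T and describing completed homology}), i.e.\ from the very statement you are trying to prove. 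The paper's actual proof never uses flatness over $\Rbar_\infty$ as an input: it works instead with the axioms (AA1)--(AA4), in which the relevant finiteness is maximal Cohen--Macaulayness of $M(\sigma^\circ)$ over $\Rbar_\infty(\sigma)$, extracted from Conjecture~\ref{conj: crystalline local global} at finite level together with Lemma~\ref{lem:CG}.

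Second, the block-theoretic shortcut does not work as stated. When $\rhobar_\m|_{G_{F_v}}$ is reducible, $P_v$ is the projective envelope of the single irreducible $\pi_v^\vee$ but is \emph{not} a projective generator of its block, so $\Hom_{\mathfrak{C}_G(\cO)}(P,-)$ is not an equivalence on $\mathfrak{B}$ and membership of $M$ in the block does not yield $M\cong P\wotimes_{R_p^{\loc}}\Hom(P,M)$. What is actually needed — and what the paper proves, following \cite[Prop.\ 4.2, Thm.\ 4.15]{CEGGPSGL2} — is that $M$ is projective in $\mathfrak{C}_G(\cO)$ and that its cosocle is $\pi^\vee$-isotypic; this is the substantive use of the Serre-weight cases of Conjecture~\ref{conj: crystalline local global} (via the $\cH(\sigmabar)$-flatness of $M(\sigmabar)$), which you compress into one sentence. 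Relatedly, your identification of the mod-$\m$ fibre with $\pi^{\vee\oplus m}$ is incorrect in the reducible case (it is $(\wotimes_{v|p}\kappa_v^\vee)^{\oplus m}$ with $\kappa_v$ the atome automorphe), and in any case it is a consequence of the conjecture rather than an available input. Finally, you assert that $\mathbf{m}=\Hom(P,M)$ carries ``compatible'' actions of $R_p^{\loc}$ (through $\End(P)$) and of $\Rbar_\infty$ (through the patched Galois action); establishing that these two actions agree is precisely the local--global compatibility content of the theorem, and in the paper it is proved by the density/capture argument reducing to (AA4) on each $M(\sigma^\circ)$ together with the fact that $\eta\colon\cH(\sigma)\to R_p^{\loc}(\sigma)[1/p]$ is an isomorphism after completion. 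Without that step your proof shows at most an abstract isomorphism of $\cO[[x_1,\dots,x_g]][G]$-modules, not of $\Rbar_\infty[G]$-modules.
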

\begin{rem}
  \label{rem: multiplicity at infinite places}We do not know what the
  value of~$m$ in Conjecture~\ref{conj: p-adic local Langlands gives us the action for general
    F} should be in general. The natural guess is that~$m=2^{r_1}$
  where~$r_1$ is the number of real places of~$F$, since this is the
  dimension of the~$(\mathfrak{g},K)$-cohomology in degree~$q_0$ of
  the trivial representation for the
  group~$\Res_{F/\Q}\PGL_2$. This guess is justified by Corollary~\ref{cor: 
  modularity lifting}. Indeed, if $H_{q_0}(X_{K_0U^p},\sigma)_\m$ is non-zero 
  for some irreducible $E$-representation of $K_0$, then Corollary~\ref{cor: 
  modularity lifting} shows that $m$ is equal to the 
  multiplicity of a system of Hecke eigenvalues (away from $S$) in 
  $H_{q_0}(X_{K_0U^p},\sigma)_\m$.
\end{rem}
We now explain some consequences of this conjecture for completed
homology and homology with coefficients. In the proof of the following
result we will briefly need the notion of the \emph{atome
  automorphe}~$\kappa_v$ associated
to~$\rhobar_\m|_{G_{F_v}}$; recall that if~$\rhobar_\m|_{G_{F_v}}$ is irreducible,
then $\kappa_v=\pi_v$ is an irreducible supersingular representation
of~$G_v$, while if $\rhobar_\m|_{G_{F_v}}$ is reducible, $\kappa_v$ is
a non-split extension of irreducible principal series representations with
socle~$\pi_v$ (see for example the beginning of~\cite[\S 8]{paskunasimage}).

\begin{prop}
  \label{prop: big R equals T and describing completed homology}Assume
  Conjecture~\ref{conj: p-adic local Langlands gives us the action for general
    F}. Then we have an isomorphism of local
  complete intersections $R_{\overline{\cS}} \cong \TT^S(U^p)_\m$  with Krull dimension equal to $1 + 2[F:\QQ] -
        l_0$. Furthermore, there is an isomorphism of
        $\TT^S(U^p)_\m\left[G\right]$-modules \[
          \widetilde{H}_{q_0}(X_{U^p},\cO)_\m\cong   \TT^S(U^p)_\m  \wotimes_{R_p^{\loc}}P^{\oplus m}.\]
If we moreover make assumption~\emph{(c)} of
Proposition~\ref{patchedCM} then $\TT^S(U^p)_\m$ is $\varpi$-torsion free.
\end{prop}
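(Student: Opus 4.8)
The plan is to derive the statement from Proposition~\ref{prop: big R equals T}, whose hypotheses~(a) and~(b) are among our standing assumptions. The only thing to check before invoking it is the codimension inequality
\[ j_{k[[K_0]]}\bigl(\widetilde{H}_{q_0}(X_{U^p},\cO)_\m/\m\widetilde{H}_{q_0}(X_{U^p},\cO)_\m\bigr)\ \ge\ \dim(B); \]
granting this, parts~(3)--(5) of that Proposition give the isomorphism $\Rbar_\infty/\ba\cong R_{\overline{\cS}}\cong\TT^S(U^p)_\m$, the complete intersection property, the Krull dimension (which is $1+(\frac{n(n+1)}{2}-1)[F:\QQ]-l_0=1+2[F:\QQ]-l_0$ since $n=2$), and, under assumption~(c), $\varpi$-torsion freeness. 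It then remains to read off the shape of $\widetilde{H}_{q_0}(X_{U^p},\cO)_\m$.

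First I would identify the mod~$\m$ fibre. By Proposition~\ref{prop: killing patching variables gives completed homology} together with Proposition~\ref{patchedCM} (which says $\widetilde{\cC}(\infty)$ has homology concentrated in degree $q_0$), the groups $\Tor_i^{\cO_\infty}(\cO_\infty/\ba,H_{q_0}(\widetilde{\cC}(\infty)))$ vanish for $i>0$ and $\widetilde{H}_{q_0}(X_{U^p},\cO)_\m\cong H_{q_0}(\widetilde{\cC}(\infty))/\ba H_{q_0}(\widetilde{\cC}(\infty))$. The $R_\infty$-action on $\widetilde{H}_{q_0}(X_{U^p},\cO)_\m$ factors through $\Rbar_\infty$, hence through $\Rbar_\infty/\ba$, and the surjection $\Rbar_\infty\to\TT^S(U^p)_\m$ carries $\m_{\Rbar_\infty}$ (which contains $\ba$) onto $\m$; therefore
\[ \widetilde{H}_{q_0}(X_{U^p},\cO)_\m/\m\widetilde{H}_{q_0}(X_{U^p},\cO)_\m\ \cong\ H_{q_0}(\widetilde{\cC}(\infty))\otimes_{\Rbar_\infty}k. \]
Feeding in the isomorphism of Conjecture~\ref{conj: p-adic local Langlands gives us the action for general F} and base-changing $\Rbar_\infty\wotimes_{R_p^{\loc}}-$ along $\Rbar_\infty\to k$, this becomes $\bigl(\wotimes_{v|p,k}(P_v\otimes_{\Rdef_v}k)\bigr)^{\oplus m}$. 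By the $p$-adic local Langlands correspondence for $\GL_2(\Qp)$ in the form of \cite{paskunasimage} (as used in \cite{CEGGPSGL2}), $P_v\otimes_{\Rdef_v}k\cong\kappa_v^\vee$, the Pontryagin dual of the \emph{atome automorphe}: an irreducible supersingular representation of $\PGL_2(\Qp)$ when $\rhobar_\m|_{G_{F_v}}$ is irreducible, and a non-split extension of two distinct irreducible principal series of $\PGL_2(\Qp)$ when $\rhobar_\m|_{G_{F_v}}$ is a non-split extension of characters (here we use that $p$ splits completely, so $F_v=\Qp$, and that the excluded cases for the ratio of the two characters force $\kappa_v$ to have this shape).

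Next I would compute the codimension. Each Jordan--H\"{o}lder constituent of $\kappa_v$ is an infinite-dimensional irreducible smooth $k$-representation of $\PGL_2(\Qp)$, so has grade $\dim(B_v)=2$ over $k[[K_v]]$ by \cite{schmidtstrauch} (as recalled in the remark following Proposition~\ref{prop: big R equals T}); since grade is the minimum over a composition series, $j_{k[[K_v]]}(\kappa_v^\vee)=2$. A K\"{u}nneth computation for $\Ext$ over the (Auslander-regular) Iwasawa algebras $k[[K_v]]$ then gives $j_{k[[K_0]]}\bigl(\wotimes_{v|p,k}\kappa_v^\vee\bigr)=\sum_{v|p}2=2[F:\QQ]=\dim(B)$, the second equality because $p$ splits completely; this is insensitive to replacing each $K_v$ by a uniform open subgroup and to the harmless $\oplus m$. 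Thus the required codimension inequality holds (with equality), and Proposition~\ref{prop: big R equals T} applies, giving the first and third assertions of the Proposition.

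Finally, reducing the isomorphism of Conjecture~\ref{conj: p-adic local Langlands gives us the action for general F} modulo $\ba$ and combining with $\widetilde{H}_{q_0}(X_{U^p},\cO)_\m\cong H_{q_0}(\widetilde{\cC}(\infty))/\ba H_{q_0}(\widetilde{\cC}(\infty))$, the right-exactness of $\wotimes$, and the now-established identification $\Rbar_\infty/\ba\cong\TT^S(U^p)_\m$, one obtains an isomorphism $\widetilde{H}_{q_0}(X_{U^p},\cO)_\m\cong\TT^S(U^p)_\m\wotimes_{R_p^{\loc}}P^{\oplus m}$; the $G$- and Hecke-module structures match because both are built into Proposition~\ref{prop: killing patching variables gives completed homology} and into Conjecture~\ref{conj: p-adic local Langlands gives us the action for general F}, so this is an isomorphism of $\TT^S(U^p)_\m[G]$-modules. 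The main obstacle is the input to the second paragraph: correctly importing from \cite{paskunasimage} and \cite{CEGGPSGL2} the identification of $P_v\otimes_{\Rdef_v}k$ with the dual of the atome automorphe and the list of its Jordan--H\"{o}lder constituents in each allowed local case, together with the fact that grade adds under completed tensor products of Iwasawa modules; the rest is a formal consequence of the machinery already set up.
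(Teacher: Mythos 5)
Your proposal is correct and follows essentially the same route as the paper: verify the codimension hypothesis of Proposition~\ref{prop: big R equals T} by computing the mod-$\m$ fibre as $(\wotimes_{v|p}\kappa_v^\vee)^{\oplus m}$ via the Conjecture and Pa\v{s}k\={u}nas's results, reduce the grade computation to irreducible constituents via Lemma~\ref{gradeses} and additivity of grade under completed tensor products (Lemma~\ref{tensorgrade}), quote \cite{schmidtstrauch}, and then reduce the Conjecture's isomorphism modulo $\ba$. The only cosmetic difference is that the paper passes through $k[[\GL_2(\cO_{F_v})]]$ (codimension $3$) and quotients by a central regular element, whereas you cite the $\PGL_2$ statement directly.
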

\begin{proof}The isomorphism $R_{\overline{\cS}} \cong \TT^S(U^p)_\m$
  and the properties of these rings will follow immediately from
  Proposition~\ref{prop: big R equals T} once we know
  that \[j_{k[[K_0]]}(\widetilde{H}_{q_0}(X_{U^p},\cO)_\m/\m\widetilde{H}_{q_0}(X_{U^p},\cO)_\m)
    =2[F:\Q].\]

Now, since we are assuming Conjecture~\ref{conj: p-adic local Langlands gives us the action for general
    F}, we have
  \begin{align*}
    \widetilde{H}_{q_0}(X_{U^p},\cO)_\m/\m\widetilde{H}_{q_0}(X_{U^p},\cO)_\m
    &=
      \Rbar_\infty/\m_{\Rbar_\infty}\otimes_{\Rbar_\infty}H_{q_0}(\widetilde{\cC}(\infty))\\
&= P^{\oplus m}\otimes_{R_p^{\loc}}R_p^{\loc}/\m_{R_p^{\loc}}\\ &=
                                                                 (
                                                                  \wotimes_{v|p}P_v\wotimes_{\Rdef_v}k)^{\oplus m}
                                                                  \\
    &= (\wotimes_{v|p}\kappa_v^\vee)^{\oplus m}
  \end{align*}(where in the last line we have used~\cite[Prop.\ 1.12,
  6.1, 8.3]{paskunasimage} and
  that~$\Rdef_v=\End_{\mathfrak{C}_{G_v}(\cO)}(P_v)$). By Lemma~\ref{tensorgrade} we
are therefore reduced to showing that for each~$v|p$,
\[j_{k[[K_v]]}(\kappa_v^\vee)=2. \] By the same argument as 
Lemma~\ref{changeofrings} it is enough to show that
$j_{k[[\GL_2(\cO_{F_v})]]}(\kappa_v^\vee)=3$ (we pass from 
$k[[\GL_2(\cO_{F_v})]]$ to $k[[\PGL_2(\cO_{F_v})]]$ by quotienting out by a 
central regular element which acts trivially on $\kappa_v^\vee$). By 
Lemma~\ref{gradeses} we
are reduced to the same statement for irreducible principal series and
supersingular representations of~$\GL_2(\Qp)$, which is proved in~\cite[Proof of 
  Cor.~7.5]{schmidtstrauch}. 

Finally, we have \begin{align*} \widetilde{H}_{q_0}(X_{U^p},\cO)_\m &=
                                                                      \Rbar_\infty/\mathbf{a}\Rbar_\infty\otimes_{\Rbar_\infty}H_{q_0}(\widetilde{\cC}(\infty))\\
&=R_{\overline{\cS}}\otimes_{\Rbar_\infty}H_{q_0}(\widetilde{\cC}(\infty))\\
&=\TT^S(U^p)_\m\otimes_{\Rbar_\infty}H_{q_0}(\widetilde{\cC}(\infty))\\
&\cong
  \TT^S(U^p)_\m  \wotimes_{R_p^{\loc}}P^{\oplus m},\end{align*}as required.
\end{proof}

We recall from~\cite[\S 2]{CEGGPSGL2} some notation for Hecke algebras
and crystalline deformation rings. (In fact our setting is slightly
different, as we are working with~$\PGL_2$ rather than~$\GL_2$, but
this makes no difference in practice and we will not emphasise this
point below.) Let~$\sigma$ be an irreducible $E$-representation
of~$K_0$. Any such representation is of the
form~$\otimes_{v|p}\sigma_v$, where~$\sigma_v$ is the
representation of~$G_v$ given by
$\sigma_v=\det^{a_v}\otimes\Sym^{b_v}E^2$ for integers~$a_v,b_v$
satisfying~$b_v\ge 0$ and~$2a_v+b_v=0$. We write~$\sigma^\circ$ for
the $K_0$-stable $\cO$-lattice~$\otimes_{v|p}\sigma_v^\circ$, where
$\sigma_v^\circ=\det^{a_v}\otimes\Sym^{b_v}\cO^2$. We have Hecke
algebras~$\cH(\sigma):=\End_G(\cInd_{K_0}^G\sigma)$,
$\cH(\sigma^\circ):=\End_G(\cInd_{K_0}^G\sigma^\circ)$. 

A \emph{Serre weight} is an irreducible $k$-representation
of~$K_0$. These are of the form~$\otimes_{v|p}\sigmabar_v$, where
~$\sigmabar_v=\det^{a_v}\otimes\Sym^{b_v}k^2$ for integers~$a_v,b_v$
satisfying~$0\le b_v\le p-1$ and~$2a_v+b_v=0$. Note that for
any~$\sigmabar$ there is a unique~$\sigma$
with~$\sigma^\circ\otimes_{\cO}k=\sigmabar$; we say
that~$\sigma^\circ$ lifts~$\sigmabar$.  As explained in the proof of
Lemma~\ref{locfinlocadm}, we have Hecke algebras
$\cH(\sigmabar)\cong\otimes_{v|p}\cH(\sigmabar_v) $,
where~$\cH(\sigmabar_v):=\End_{G_v}(\cInd_{K_v}^{G_v}\sigmabar_v)\cong
k[T_v]$ is a polynomial ring in one variable by~\cite[Prop.\
8]{barthel-livne}.

\subsubsection{Actions of Hecke algebras}\label{sec:actionsofhecke}We now 
describe how to define actions of the Hecke algebras $\cH(\sigmabar)$ and 
$\cH(\sigma^\circ)$ on objects of certain derived categories. 

Let $\sigmabar$ be a Serre weight. Suppose $M$ is a pseudocompact 
$A[[K_0]]$-module with a 
compatible action of $G$, where $A$ is a complete Noetherian local 
$\cO$-algebra with finite residue field which is flat over $\cO$. For example, 
$A$ could be either $\cO[\Delta_Q]$ or $\cO_\infty$.
Then the $A$-module $\sigmabar\otimes_{\cO[[K_0]]} M$ has a natural 
action of $\cH(\sigmabar)$. Indeed, we have isomorphisms
\[(\sigmabar\otimes_{\cO[[K_0]]} M)^\vee \cong 
\Hom^{cts}_{\cO[[K_0]]}(\sigmabar,M^\vee) 
= \Hom_G(\cInd_{K_0}^G\sigmabar,M^\vee)\] by Lemma \ref{lem: tensor dual 
Brumer} and 
Frobenius reciprocity (note that  $M^\vee \in \Mod^{sm}_G(A)$, where the 
definition of this category is recalled in Appendix~\ref{appendix: tensor 
products projective covers}), and 
$\cH(\sigmabar)$ naturally acts on $\Hom_G(\cInd_{K_0}^G\sigmabar,M^\vee)$.

We have a similar story in the derived category. If we let $M^\vee \rightarrow 
I^\bullet$ be an injective resolution of $M^\vee$ in $\Mod^{sm}_G(A)$, then 
each $(I^i)^\vee$ is projective as a pseudocompact $A[[K_0]]$-module (by 
\cite[Prop.~2.1.2]{emordtwo}), and is in particular a flat $\cO[[K_0]]$-module, 
so we have a natural action of $\cH(\sigmabar)$ 
on \[\sigmabar\otimes^\LL_{\cO[[K_0]]} M = 
\sigmabar\otimes_{\cO[[K_0]]}(I^\bullet)^\vee\] in $D(A)$.

Similarly, if $\sigma^\circ$ is a lattice in $\sigma$, we have a natural action 
of $\cH(\sigma^\circ)$ on \[\Hom^{cts}_{\cO[[K_0]]}(\sigma^\circ,I^i) = 
\dirlim_s\Hom_{K_0}(\sigma^\circ/\varpi^s,I^i) = 
\dirlim_s\Hom_{G}(\cInd_{K_0}^G(\sigma^\circ/\varpi^s),I^i)\] for each $n$, 
where the first equality uses Lemma \ref{brumercont}, and 
therefore a natural action of $\cH(\sigma^\circ)$ on 
 \[\sigma^\circ\otimes^\LL_{\cO[[K_0]]} M = 
\sigma^\circ\otimes_{\cO[[K_0]]}(I^\bullet)^\vee\] in $D(A)$.

As a particular example of this construction, we get a natural action of 
$\cH(\sigma^\circ)$ on $\cC(K_0U^p,\sigma^\circ)_\m$, in $D(\cO)$, since we 
have an 
isomorphism \[\cC(K_0U^p,\sigma^\circ)_\m \cong 
\sigma^\circ\otimes^\LL_{\cO[[K_0]]} 
\widetilde{H}_{q_0}(U^p,\cO)_\m.\] Here we are using the part of 
Prop.~\ref{patchedCM} which shows that $\widetilde{H}_i(U^p,\cO)_\m = 0$ for 
$i \ne q_0$. One can also define the action of $\cH(\sigma^\circ)$ on 
$\cC(K_0U^p,\sigma^\circ)$ directly, similarly to the definition of the Hecke 
action at places away from $p$, and this gives the same Hecke action.

We say that a representation~$r:G_{F_v}\to\GL_2(\Qpbar)$ is
crystalline of Hodge type~$\sigma_v$ if it is crystalline with
Hodge--Tate weights $(1-a_v,-a_v-b_v)$, and we
write~$\Rdef_v(\sigma_v)$ for the reduced, $p$-torsion free quotient
of~$\Rdef_v$ corresponding to crystalline deformations of Hodge
type~$\sigma_v$. We
write~$R_p^{\loc}(\sigma):=\wotimes_{v|p}\Rdef_v(\sigma_v)$ and
$\Rbar_\infty(\sigma):=\Rbar_\infty\otimes_{R_p^{\loc}}R_p^{\loc}(\sigma)$. By~\cite[Thm.\
3.3.8]{kisindefrings}, $\Rdef_v(\sigma_v)$ is equidimensional of
Krull dimension~$2$  less than~$\Rdef_v$, so by Lemma~\ref{lem:dim
  Rinfty}, $\Rbar_\infty(\sigma)$ is equidimensional of dimension~$\dim\cO_\infty-l_0$.

We have a homomorphism
$\cH(\sigma)\stackrel{\eta}{\to}R_p^{\loc}(\sigma)[1/p]$, which is the
  tensor product over the places~$v|p$ of the maps $\cH(\sigma_v)\to\Rdef_v(\sigma_v)[1/p]$ defined in
  \cite[Thm. 4.1]{CEGGPSBreuilSchneider}, which interpolates the
  (unramified) local Langlands correspondence.

\begin{prop}\label{prop: local global compatibility for finite level}Assume
	Conjecture~\ref{conj: p-adic local Langlands gives us the action for general
		F}. Then, for any irreducible $E$-representation $\sigma$ of $K_0$, the 
		action of~$R_p^{\loc}$
	on \[\cC(K_0U^p,\sigma^\circ)_\m \in D(\cO)\] factors
	through~$R_p^{\loc}(\sigma)$. Furthermore,
	if~$h\in\cH(\sigma^\circ)$ is such that $\eta(h)\in R_p^{\loc}(\sigma)$, 
	then~$h$ acts
	on~$\cC(K_0U^p,\sigma^\circ)_\m$ via~$\eta(h)$. 
	
	In particular, we get the same statements for the action of $R_p^{\loc}$ 
	and $\cH(\sigma^\circ)$ on the homology groups 
	$H_i(X_{K_0U^p},\sigma^\circ)_\m$ for any $i$.
\end{prop}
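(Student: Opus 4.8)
The plan is to combine the hypothesised description of $H_{q_0}(\widetilde{\cC}(\infty))$ with the purely local input underlying the $p$-adic local Langlands correspondence for $\GL_2(\Qp)$, following~\cite[\S 4]{CEGGPSGL2}. By the construction of \S\ref{sec:actionsofhecke} (which uses the part of Proposition~\ref{patchedCM} giving $\widetilde{H}_i(X_{U^p},\cO)_\m=0$ for $i\ne q_0$) there is an isomorphism in $D(\cO)$
\[\cC(K_0U^p,\sigma^\circ)_\m\cong\sigma^\circ\otimes^\LL_{\cO[[K_0]]}\widetilde{H}_{q_0}(X_{U^p},\cO)_\m\]
equivariant for the actions of $R_p^{\loc}$ and $\cH(\sigma^\circ)$, so it is enough to prove the corresponding assertions for the right hand side. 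Assuming Conjecture~\ref{conj: p-adic local Langlands gives us the action for general F}, Proposition~\ref{prop: big R equals T and describing completed homology} provides an isomorphism of $\TT^S(U^p)_\m[G]$-modules $\widetilde{H}_{q_0}(X_{U^p},\cO)_\m\cong\TT^S(U^p)_\m\wotimes_{R_p^{\loc}}P^{\oplus m}$, under which the $R_p^{\loc}$-action on the left is identified with the $R_p^{\loc}$-action on $P$ (the two $R_p^{\loc}$-actions on a completed tensor product over $R_p^{\loc}$ agreeing), and the $\cH(\sigma^\circ)$-action likewise sees only the $P$-factor. First I would therefore reduce to understanding $\sigma^\circ\otimes^\LL_{\cO[[K_0]]}P$ with its $R_p^{\loc}$- and $\cH(\sigma^\circ)$-actions.

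Next I would commute the operations involved. Since the $K_0$-action on $\TT^S(U^p)_\m\wotimes_{R_p^{\loc}}P^{\oplus m}$ is induced entirely by that on $P$, and $R_p^{\loc}\to\TT^S(U^p)_\m$ is a morphism of complete local Noetherian $\cO$-algebras, there is a natural isomorphism in $D(\cO)$
\[\sigma^\circ\otimes^\LL_{\cO[[K_0]]}\bigl(\TT^S(U^p)_\m\wotimes_{R_p^{\loc}}P^{\oplus m}\bigr)\cong\TT^S(U^p)_\m\wotimes_{R_p^{\loc}}\bigl(\sigma^\circ\otimes^\LL_{\cO[[K_0]]}P^{\oplus m}\bigr),\]
using the injective-resolution description of $\otimes^\LL_{\cO[[K_0]]}$ from \S\ref{sec:actionsofhecke} together with the results of Appendix~\ref{appendix: tensor products projective covers}. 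Since $\cO[[K_0]]=\wotimes_{v\mid p}\cO[[K_v]]$, $\sigma^\circ=\otimes_{v\mid p}\sigma_v^\circ$ and $P=\wotimes_{v\mid p}P_v$, a Künneth argument (again via the material of Appendix~\ref{appendix: tensor products projective covers}, and using that each $P_v$ is projective, hence flat, as a pseudocompact $\cO[[K_v]]$-module) gives $\sigma^\circ\otimes^\LL_{\cO[[K_0]]}P\cong\wotimes_{v\mid p}\bigl(\sigma_v^\circ\otimes_{\cO[[K_v]]}P_v\bigr)$, concentrated in degree $0$. Writing $M_v(\sigma_v^\circ):=\sigma_v^\circ\otimes_{\cO[[K_v]]}P_v$, this is a module over $\Rdef_v=\End_{\mathfrak{C}_{G_v}(\cO)}(P_v)$ carrying an action of $\cH(\sigma_v^\circ)$, with $R_p^{\loc}=\wotimes_{v\mid p}\Rdef_v$ and $\cH(\sigma^\circ)=\otimes_{v\mid p}\cH(\sigma_v^\circ)$ acting factor by factor.

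The local input needed, for each $v\mid p$ (recall $F_v=\Qp$ since $p$ splits completely in $F$, and write $\eta_v:\cH(\sigma_v)\to\Rdef_v(\sigma_v)[1/p]$ for the $v$-factor of $\eta$), is: the $\Rdef_v$-action on $M_v(\sigma_v^\circ)$ factors through the crystalline quotient $\Rdef_v(\sigma_v)$, over which $M_v(\sigma_v^\circ)$ is finitely generated and $\cO$-torsion free (indeed maximal Cohen--Macaulay), and if $h_v\in\cH(\sigma_v^\circ)$ satisfies $\eta_v(h_v)\in\Rdef_v(\sigma_v)$ then $h_v$ acts on $M_v(\sigma_v^\circ)$ as multiplication by $\eta_v(h_v)$. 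This is exactly the local content of the $p$-adic local Langlands correspondence for $\GL_2(\Qp)$ used in~\cite[\S 4]{CEGGPSGL2}: the factoring through $\Rdef_v(\sigma_v)$ and the Cohen--Macaulayness come from Pa{\v{s}}k{\=u}nas's work (in particular the Breuil--M\'ezard statement for $\GL_2(\Qp)$), while the compatibility with $\eta_v$ is~\cite[Thm.~4.1]{CEGGPSBreuilSchneider}. Granting this, the $\cO$-torsion-freeness of $M_v(\sigma_v^\circ)$ lets one pass to $\wotimes_{v\mid p}M_v(\sigma_v^\circ)$ and conclude that its $R_p^{\loc}$-action factors through $R_p^{\loc}(\sigma)=\wotimes_{v\mid p}\Rdef_v(\sigma_v)$ and that any $h\in\cH(\sigma^\circ)$ with $\eta(h)\in R_p^{\loc}(\sigma)$ acts as $\eta(h)$; these properties are preserved by applying $\TT^S(U^p)_\m\wotimes_{R_p^{\loc}}(-)$ and $(-)^{\oplus m}$, so the desired statements for $\cC(K_0U^p,\sigma^\circ)_\m$ follow via the reduction of the first paragraph. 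The assertion for the homology groups $H_i(X_{K_0U^p},\sigma^\circ)_\m$ is then immediate, as these are the homology groups of $\cC(K_0U^p,\sigma^\circ)_\m$ and the $R_p^{\loc}$- and $\cH(\sigma^\circ)$-actions on them are functorially induced from those in $D(\cO)$.

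The main obstacle will be the bookkeeping in the second paragraph: making the commutation of $\sigma^\circ\otimes^\LL_{\cO[[K_0]]}(-)$ with the completed base change $\TT^S(U^p)_\m\wotimes_{R_p^{\loc}}(-)$, and the Künneth decomposition over the places $v\mid p$, rigorous at the level of pseudocompact modules --- this is precisely the kind of manipulation Appendix~\ref{appendix: tensor products projective covers} is set up to handle --- while keeping careful track of the $R_p^{\loc}$- and $\cH(\sigma^\circ)$-actions throughout. The genuinely hard mathematical input is entirely imported from~\cite{CEGGPSGL2} and~\cite{CEGGPSBreuilSchneider} (and ultimately from Pa{\v{s}}k{\=u}nas's work on $p$-adic local Langlands).
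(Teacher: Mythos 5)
Your overall strategy is the same as the paper's: both arguments reduce the Proposition to the purely local assertion that the $\Rdef_v$-action on $\sigma_v^\circ\otimes_{\cO[[K_v]]}P_v$ factors through $\Rdef_v(\sigma_v)$ compatibly with $\eta$, which is then imported from Pa{\v{s}}k{\=u}nas's work and from \cite{CEGGPSBreuilSchneider,CEGGPSGL2}. (For that last step the paper first identifies $\sigma_v^\circ\otimes_{\cO[[K_v]]}P_v$ with $\Hom^{cts}_{\cO[[K_v]]}(P_v,(\sigma_v^\circ)^d)^d$ via Lemmas~\ref{lem: tensor dual Brumer} and~\ref{brumercont} and \cite[Lem.~4.14]{CEGGPSBreuilSchneider}, so that \cite[Cor.~6.4, 6.5]{paskunasBM} and \cite[Prop.~6.17]{CEGGPSGL2} apply verbatim; you assert this local content but omit the identification that makes those references applicable.)

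There is, however, a genuine error in the displayed commutation isomorphism of your second paragraph. Its right-hand side, $\TT^S(U^p)_\m\wotimes_{R_p^{\loc}}\bigl(\sigma^\circ\otimes^\LL_{\cO[[K_0]]}P^{\oplus m}\bigr)$, is an underived base change of a complex concentrated in degree zero ($P$ being projective over $\cO[[K_0]]$), hence sits in a single degree; but the left-hand side is a shift of $\cC(K_0U^p,\sigma^\circ)_\m$, whose homology is generically non-zero in $l_0+1$ degrees. The point is that $\widetilde{H}_{q_0}(X_{U^p},\cO)_\m\cong\TT^S(U^p)_\m\wotimes_{R_p^{\loc}}P^{\oplus m}$ is \emph{not} flat over $\cO[[K_0]]$ when $l_0>0$ (it has codimension $l_0$ by Proposition~\ref{patchedCM}), so $\sigma^\circ\otimes^\LL_{\cO[[K_0]]}(-)$ does not commute with the underived functor $\TT^S(U^p)_\m\wotimes_{R_p^{\loc}}(-)$; the base change along $R_p^{\loc}\to\TT^S(U^p)_\m$ must be derived. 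The clean repair is exactly the paper's route: work with the patched complex rather than with completed homology. Conjecture~\ref{conj: p-adic local Langlands gives us the action for general F} makes $H_{q_0}(\widetilde{\cC}(\infty))\cong\Rbar_\infty\wotimes_{R_p^{\loc}}P^{\oplus m}$ flat over $\cO[[K_0]]$, whence
\[\cC(K_0U^p,\sigma^\circ)_\m\cong\cO\otimes^\LL_{\cO_\infty}\left(\sigma^\circ\otimes_{\cO[[K_0]]}H_{q_0}(\widetilde{\cC}(\infty))\right)[+q_0]\]
in $D(\cO)$. All of the derived structure is then carried by $\cO\otimes^\LL_{\cO_\infty}(-)$, which is linear over the $R_p^{\loc}$- and $\cH(\sigma^\circ)$-actions on $\sigma^\circ\otimes_{\cO[[K_0]]}H_{q_0}(\widetilde{\cC}(\infty))\cong\Rbar_\infty\wotimes_{R_p^{\loc}}\bigl(\sigma^\circ\otimes_{\cO[[K_0]]}P^{\oplus m}\bigr)$, and from there your reduction to the local statement, and your treatment of the homology groups at the end, go through as intended.
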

\begin{proof}
	As in the proof of Proposition \ref{prop: killing patching variables gives 
	completed homology}, it follows from Lemma
      \ref{lem:flatnessinlimit}  
	that we have a 
        natural quasi-isomorphism (where we regard $\sigma^\circ$ as a right 
        $\cO_\infty[[K_0]]$-module)
	\[\sigma^\circ\otimes_{\cO_\infty[[K_0]]}\widetilde{\cC}(\infty) \rightarrow
	\cC(K_0U^p,\sigma^\circ)_\m.\] Conjecture~\ref{conj: p-adic 
	local Langlands gives us the action for general
		F} implies that $H_{q_0}(\widetilde{\cC}(\infty))$ is a flat 
		$\cO[[K_0]]$-module, so we have an isomorphism in $D(\cO)$
		\[\sigma^\circ\otimes_{\cO_\infty[[K_0]]}\widetilde{\cC}(\infty)
		 = 
		\cO\otimes_{\cO_\infty}^\LL\left(\sigma^\circ\otimes_{\cO[[K_0]]}H_{q_0}(\widetilde{\cC}(\infty))\right)[+q_0]\]

Taking Conjecture~\ref{conj: p-adic 
	local Langlands gives us the action for general
	F} into account, it now suffices to show that the action of $R_p^{\loc}$ on 
	$\sigma^\circ\otimes_{\cO[[K_0]]}P$ factors through 
	$R_p^{\loc}(\sigma)$, and that if~$h\in\cH(\sigma^\circ)$ is such that 
	$\eta(h)\in
R_p^{\loc}(\sigma)$, then~$h$ acts
on~$\sigma^\circ\otimes_{\cO[[K_0]]}P$ via~$\eta(h)$.

 We have
 $\sigma^\circ\otimes_{\cO[[K_0]]}P=\otimes_{v|p}(\sigma_v^\circ\otimes_{\cO[[K_v]]}P_v)$,
 so it suffices to show that the action of~$\Rdef$
 on~$\sigma_v^\circ\otimes_{\cO[[K_v]]}P_v$ factors
 through~$\Rdef_v(\sigma_v)$, and that if~$h_v\in\cH(\sigma_v^\circ)$ is
 such that~$\eta(h_v)\in\Rdef_v(\sigma_v)$, then~$h_v$ acts
 on~$\sigma_v^\circ\otimes_{\cO[[K_v]]}P_v$ via~$\eta(h_v)$. By
 Lemma~\ref{lem: tensor dual Brumer} we have a natural
 isomorphism \[(\sigma_v^\circ\otimes_{\cO[[K_v]]}P_v)^\vee\cong
 \Hom_{\cO[[K_v]]}^{cts}(P_v,(\sigma_v^\circ)^\vee)\] where we note that since 
 $\sigma_v^\circ$ is a finitely generated $\cO[[K_v]]$-module we do not need to 
 take a completed tensor product. Lemma~\ref{brumercont} 
 implies that this is isomorphic to 
 $\dirlim_s\Hom_{\cO[[K_v]]}^{cts}(P_v,(\sigma_v^\circ/\varpi^s)^\vee)$ so we 
 deduce that \[(\sigma_v^\circ\otimes_{\cO[[K_v]]}P_v) \cong 
 \invlim_s\left(\Hom_{\cO[[K_v]]}^{cts}(P_v,(\sigma_v^\circ/\varpi^s)^\vee)\right)^\vee.\]
  \cite[Lem.~4.14]{CEGGPSBreuilSchneider} then shows that we have an 
 isomorphism \[\sigma_v^\circ\otimes_{\cO[[K_v]]}P_v \cong 
 \Hom_{\cO[[K_v]]}^{cts}(P_v,(\sigma_v^\circ)^d)^d\] where $(-)^d$ denotes the 
 Schikhof dual (as defined in \emph{loc.~cit.}). The 
 result now follows
 from~\cite[Cor.\ 6.4, 6.5]{paskunasBM} and~\cite[Prop.\
 6.17]{CEGGPSGL2}.
\end{proof}
\begin{rem}\label{rem: doubledual}
	It follows from the argument appearing at the end of the above proof that 
	if $P$ is a projective pseudocompact $\cO[[K_0]]$-module then we have a 
	natural isomorphism
	\[\sigma^\circ\otimes_{\cO[[K_0]]}P \cong 
	\Hom_{\cO[[K_0]]}^{cts}(P,(\sigma^\circ)^d)^d.\]
\end{rem}
We can also deduce the following modularity lifting theorem from Conjecture~\ref{conj: p-adic local Langlands gives us the action for general
		F}.
\begin{cor}\label{cor: modularity lifting}
	Assume \emph{(}in
	addition to our running assumptions\emph{)}
	Conjecture~\ref{conj: p-adic local Langlands gives us the action for general
		F}. Then, for any irreducible $E$-representation $\sigma$ of $K_0$, 
	$H_{q_0}(X_{K_0U^p},\sigma)_\m$ is a free module of rank $m$ \emph{(}where 
	$m$ is the multiplicity in the 
	statement of 
	Conjecture~\ref{conj: p-adic local Langlands gives us the action for general
		F}\emph{)} over 
	$R_{\overline{\cS}}\otimes_{R^{\loc}_p}R^{\loc}_p(\sigma)[1/p]$ \emph{(}if 
	this 
	ring is 
	non-zero\emph{)}. 
	
	In 
	particular, all characteristic $0$ points of the global crystalline 
	deformation ring
	$R_{\overline{\cS}(\sigma)}:=R_{\overline{\cS}}\otimes_{R^{\loc}_p}R^{\loc}_p(\sigma)$
	 are 
	automorphic, and the maximal $\varpi$-torsion free quotient of 
	$R_{\overline{\cS}(\sigma)}$ is isomorphic to 
	a Hecke algebra acting faithfully on  $H_{q_0}(X_{K_0U^p},\sigma)_\m$.
	
	Moreover, the annihilator of $H_{q_0}(X_{K_0U^p},\sigma^\circ)_\m$ in 
	$R_{\overline{\cS}(\sigma)}$ is nilpotent, and $R_{\overline{\cS}(\sigma)}$ 
	is a finite $\cO$-algebra.
\end{cor}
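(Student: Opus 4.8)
The plan is to unwind the identifications set up above so as to describe $H_{q_0}(X_{K_0U^p},\sigma^\circ)_\m$ explicitly as a module over the global crystalline deformation ring, and then to feed in the structure of $\sigma^\circ\otimes_{\cO[[K_0]]}P$ coming from Pa{\v{s}}k{\=u}nas's work. Recall from the proof of Proposition~\ref{prop: local global compatibility for finite level} the quasi-isomorphism $\sigma^\circ\otimes_{\cO_\infty[[K_0]]}\widetilde{\cC}(\infty)\simeq\cC(K_0U^p,\sigma^\circ)_\m$; since $\widetilde{\cC}(\infty)$ has homology only in degree $q_0$ (Proposition~\ref{patchedCM}), the left-derived tensor $\sigma^\circ\otimes^\LL_{\cO_\infty[[K_0]]}\widetilde{\cC}(\infty)$ is supported in degrees $\le q_0$, and its degree-$q_0$ homology is simply $\sigma^\circ\otimes_{\cO_\infty[[K_0]]}H_{q_0}(\widetilde{\cC}(\infty))$. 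Combining this with $H_{q_0}(\widetilde{\cC}(\infty))/\ba H_{q_0}(\widetilde{\cC}(\infty))\cong\widetilde{H}_{q_0}(X_{U^p},\cO)_\m$ (Proposition~\ref{prop: killing patching variables gives completed homology}), the description $\widetilde{H}_{q_0}(X_{U^p},\cO)_\m\cong\TT^S(U^p)_\m\wotimes_{R_p^{\loc}}P^{\oplus m}$ together with $R_{\overline{\cS}}\cong\TT^S(U^p)_\m$ (Proposition~\ref{prop: big R equals T and describing completed homology}), and the fact that the $R_p^{\loc}$-action factors through $R_p^{\loc}(\sigma)$ (Proposition~\ref{prop: local global compatibility for finite level}), I would obtain a natural isomorphism
\[ H_{q_0}(X_{K_0U^p},\sigma^\circ)_\m\;\cong\;\bigl(R_{\overline{\cS}(\sigma)}\otimes_{R_p^{\loc}(\sigma)}N\bigr)^{\oplus m},\qquad N:=\sigma^\circ\otimes_{\cO[[K_0]]}P=\wotimes_{v|p}\bigl(\sigma_v^\circ\otimes_{\cO[[K_v]]}P_v\bigr), \]
under which, again by Proposition~\ref{prop: local global compatibility for finite level}, the actions of $\TT^S$ and $\cH(\sigma^\circ)$ are intertwined with the Galois side via $\eta$.

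The next ingredient is the structure of $N$. By \cite[Cor.~6.4, 6.5]{paskunasBM} and \cite[Prop.~6.17]{CEGGPSGL2} (the local inputs already used in the proof of Proposition~\ref{prop: local global compatibility for finite level}), for each $v|p$ the module $\sigma_v^\circ\otimes_{\cO[[K_v]]}P_v$ is a finitely generated $\Rdef_v(\sigma_v)$-module whose support is all of $\Spec\Rdef_v(\sigma_v)$, and $(\sigma_v^\circ\otimes_{\cO[[K_v]]}P_v)[1/p]$ is locally free of rank one over the regular ring $\Rdef_v(\sigma_v)[1/p]$. Taking completed tensor products over $v|p$, $N$ is a finitely generated $R_p^{\loc}(\sigma)$-module of full support, and $N[1/p]$ is locally free of rank one over $R_p^{\loc}(\sigma)[1/p]$.

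I would then read off the four assertions. Since $X_{K_0U^p}$ admits a Borel--Serre compactification, $H_{q_0}(X_{K_0U^p},\sigma^\circ)_\m$ is a finitely generated $\cO$-module. As $N$ has full support over $R_p^{\loc}(\sigma)$, the base change $R_{\overline{\cS}(\sigma)}\otimes_{R_p^{\loc}(\sigma)}N$ has full support over $R_{\overline{\cS}(\sigma)}$, so the annihilator of $H_{q_0}(X_{K_0U^p},\sigma^\circ)_\m$ in $R_{\overline{\cS}(\sigma)}$ is nilpotent; hence $R_{\overline{\cS}(\sigma)}$ --- a complete Noetherian local ring --- embeds modulo its nilradical into $\End_\cO$ of a finitely generated $\cO$-module and is therefore a finite $\cO$-algebra. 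Inverting $p$, the ring $R_{\overline{\cS}(\sigma)}[1/p]=R_{\overline{\cS}}\otimes_{R_p^{\loc}}R_p^{\loc}(\sigma)[1/p]$ is Artinian, hence a finite product of local Artinian $E$-algebras, over each factor of which locally free modules are free; therefore $H_{q_0}(X_{K_0U^p},\sigma)_\m\cong\bigl(R_{\overline{\cS}(\sigma)}[1/p]\otimes_{R_p^{\loc}(\sigma)[1/p]}N[1/p]\bigr)^{\oplus m}$ is free of rank $m$ over $R_{\overline{\cS}(\sigma)}[1/p]$ whenever the latter is non-zero. This action is then faithful, so every $\Qpbar$-point of $R_{\overline{\cS}(\sigma)}$ lies in the support of $H_{q_0}(X_{K_0U^p},\sigma)_\m$ and hence --- unwinding Proposition~\ref{prop: local global compatibility for finite level} --- is cut out by a system of Hecke eigenvalues occurring in $H_{q_0}(X_{K_0U^p},\sigma)_\m$, i.e.\ is automorphic; and the image of $R_{\overline{\cS}(\sigma)}$ in $\End_\cO(H_{q_0}(X_{K_0U^p},\sigma)_\m)$, which by this faithfulness is precisely the maximal $\varpi$-torsion free quotient of $R_{\overline{\cS}(\sigma)}$, is the asserted Hecke algebra.

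The heavy lifting has all been done in Propositions~\ref{prop: local global compatibility for finite level} and~\ref{prop: big R equals T and describing completed homology}, so the only non-formal point here is the input recalled in the second paragraph: the integral structure of $\sigma_v^\circ\otimes_{\cO[[K_v]]}P_v$ over the crystalline deformation ring $\Rdef_v(\sigma_v)$ (which need not be regular integrally), namely finite generation, full support, and generic freeness of rank one, together with the verification that these survive the completed tensor product over $v|p$ --- exactly what is supplied by \cite{paskunasBM} and \cite{CEGGPSGL2}. The subtlety peculiar to our setting is that, unlike in the $l_0=0$ situations of \cite{CEGGPSGL2}, $R_{\overline{\cS}(\sigma)}$ is not visibly a finite $\cO$-algebra, so its finiteness must be deduced as above from the $\cO$-finiteness of $H_{q_0}(X_{K_0U^p},\sigma^\circ)_\m$ and the nilpotence of the annihilator.
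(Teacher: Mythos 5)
Your proof is correct and follows essentially the same route as the paper: identify $H_{q_0}(X_{K_0U^p},\sigma^\circ)_\m$ with the reduction mod $\ba$ of $\sigma^\circ\otimes_{\cO[[K_0]]}H_{q_0}(\widetilde{\cC}(\infty))\cong\Rbar_\infty(\sigma)\otimes_{R_p^{\loc}(\sigma)}N^{\oplus m}$, feed in Pa{\v{s}}k{\=u}nas's results that $N$ has full support over $R_p^{\loc}(\sigma)$ and is generically locally free of rank one, and conclude by semi-locality of the Artinian ring $R_{\overline{\cS}(\sigma)}[1/p]$. The only cosmetic difference is that where you argue the nilpotence of the annihilator directly via base change of supports, the paper cites \cite[Lem.~2.2]{tay}, which encodes exactly that observation.
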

\begin{proof}
	By \cite[Cor.~6.5]{paskunasBM}, $P(\sigma^\circ) = \sigma^\circ 
	\otimes_{\cO[[K_0]]} P$ is a 
	maximal Cohen--Macaulay module with full support over $R^{\loc}_p(\sigma)$. 
	Since $R^{\loc}_p(\sigma)[1/p]$ is regular it follows that 
	$P(\sigma^\circ)[1/p]$ is locally free with full support over 
	$R^{\loc}_p(\sigma)[1/p]$. In fact, as explained in the proof of 
	\cite[Prop.~6.14]{CEGGPSGL2}, it follows from \cite[Prop.~4.14, 
	2.22]{paskunasBM} that $P(\sigma^\circ)[1/p]$ is locally free of rank one over 
	$R^{\loc}_p(\sigma)[1/p]$. We deduce from 
	Conjecture~\ref{conj: p-adic local Langlands 
		gives us the action for general
		F} that $\sigma^\circ \otimes_{\cO[[K_0]]} 
		H_{q_0}(\widetilde{\cC}(\infty))[1/p]$ is locally
	free of rank $m$ over  
	$\Rbar_\infty\otimes_{R^{\loc}_p}R^{\loc}_p(\sigma)[1/p]$. Reducing mod 
	$\ba$ 
	(and noting that $\Rbar_\infty/\ba \cong R_{\overline{\cS}}$ by Proposition 
	\ref{prop: big R equals T and describing completed homology}) we deduce 
	that 
	$\sigma^\circ \otimes_{\cO[[K_0]]} 
	\widetilde{H}_{q_0}(X_{U^p},\cO)_\m[1/p]$ is locally free of rank $m$ 
	over 
	$R_{\overline{\cS}}\otimes_{R^{\loc}_p}R_p(\sigma)[1/p]$. We complete
	the proof by noting that we have a natural isomorphism \[\sigma^\circ 
	\otimes_{\cO[[K_0]]} \widetilde{H}_{q_0}(X_{U^p},\cO)_\m \cong 
	H_{q_0}(X_{K_0U^p},\sigma^\circ)_\m\]  so 
	$R_{\overline{\cS}}\otimes_{R^{\loc}_p}R_p(\sigma)[1/p]$ is a 
	finite--dimensional algebra  (hence semi-local) and therefore the locally 
	free module of rank $m$, $H_{q_0}(X_{K_0U^p},\sigma^\circ)_\m$, is in fact 
	free of rank $m$.
	
	The moreover part follows from \cite[Lem.~2.2]{tay}, since $\sigma^\circ 
	\otimes_{\cO[[K_0]]} 
	H_{q_0}(\widetilde{\cC}(\infty))$ is a nearly faithful 
	$\Rbar_\infty(\sigma)$-module so reducing mod $\ba$ shows that 
	$H_{q_0}(X_{K_0U^p},\sigma^\circ)_\m$ is a 
	nearly faithful 
	$R_{\overline{\cS}(\sigma)}$-module, as well as a finite $\cO$-module.
\end{proof}
\begin{rem}
	As discussed in Remark~\ref{rem: could allow types as well as weights}, we 
	could 
	work with general potentially semistable types, and then the
	proof of Corollary~\ref{cor: modularity lifting} goes through
	unchanged to give an 
	automorphy 
	lifting theorem for arbitrary potentially semistable lifts of $\rhobar_\m$ with distinct Hodge--Tate weights, 
	which satisfy the conditions imposed by $\overline{\cS}$ at places $v \nmid 
	p$.
	\end{rem}
	
	\begin{rem}\label{emertonfmarg}
		Using Proposition~\ref{prop: big R equals T and describing completed 
		homology}, we can give an alternative argument to show that 
		Conjecture~\ref{conj: p-adic local Langlands gives us the action for 
		general
			F} implies many cases of the Fontaine--Mazur conjecture, in exactly 
			the same way that Emerton deduces \cite[Corollary 
			1.2.2]{emerton2010local} from his local-global compatibility 
			result. If we assume Conjecture~\ref{conj: p-adic local Langlands 
			gives us the action for general
			F} then any characteristic zero point of $R_{\overline{\cS}}$ whose 
			associated Galois representation is de Rham with distinct 
			Hodge--Tate weights at each place $v|p$ is automorphic, in the 
			sense that its associated system of Hecke eigenvalues appears in 
			$H_{q_0}(X_{KU^p},\sigma)_\m$ for some compact open $K \subset K_0$ 
			and some irreducible $E$-representation $\sigma$ of $K_0$.
		
		Moreover, again assuming Conjecture~\ref{conj: p-adic local Langlands 
		gives us the action for general
			F} and following Emerton's argument, we can show that any 
			characteristic zero point of $R_{\overline{\cS}}$ whose associated 
			Galois representation is trianguline at each place $v|p$ arises 
			from an overconvergent $p$-adic automorphic form of finite slope, 
			in the sense that its associated system of Hecke eigenvalues 
			appears in the Emerton--Jacquet module
			$J_B(((\widetilde{H}_{q_0}(X_{U^p},\cO)_\m)^d[\frac{1}{p}])^{an})$.
	\end{rem}
 \begin{rem}
   \label{rem: derivedgal}Assuming Conjecture~\ref{conj: p-adic local Langlands 
   gives us the action for general
   	F}, we 
   obtain an action of the graded $R_{\overline{\cS}(\sigma)}$-algebra 
   $\Tor_*^{R^{\loc}_p}(R_{\overline{\cS}},R_p^{\loc}(\sigma)) = 
   \Tor_*^{R_\infty}(R_\infty/\ba,R_\infty(\sigma))$ on the graded module
   \[H_{*}(X_{K_0U^p},\sigma^\circ)_\m = 
   H_*\left(R_\infty/\ba\otimes^\LL_{R_\infty}\left(\sigma^\circ\otimes_{\cO[[K_0]]}H_{q_0}(\widetilde{\cC}(\infty))\right)\right).\]
When $R_p^{\loc}(\sigma)$ is the representing object of a Fontaine--Laffaille 
moduli problem, the groups  
$\Tor_i^{R^{\loc}_p}(R_{\overline{\cS}},R_p^{\loc}(\sigma))$ are the 
homotopy groups of a derived Galois deformation ring (since 
$R_{\overline{\cS}}$ 
is a complete intersection of the predicted dimension, see the discussion in 
\cite[\S 1.3]{galvenk}) and the action of the graded algebra on 
$H_{*}(X_{K_0U^p},\sigma^\circ)_\m$ is free. This is an example of the main 
theorem of \cite{galvenk}. Note that it is not obvious that the action of 
$\Tor_*^{R^{\loc}_p}(R_{\overline{\cS}},R_p^{\loc}(\sigma))$ on 
$H_{*}(X_{K_0U^p},\sigma^\circ)_\m$ is independent of the choice of 
non-principal ultrafilter made to carry out the patching. Under some additional 
hypotheses, this independence is shown in \cite{galvenk}, by comparing the 
action of the derived Galois deformation ring with the action of a derived 
Hecke 
algebra.  
\end{rem}

 Proposition~\ref{prop: local global compatibility for finite level} shows that 
Conjecture~\ref{conj: p-adic local Langlands 
gives us the action for general
	F} implies a local--global compatibility statement at $p$. We are now going 
	to formulate a conjectural local--global compatibility statement which will 
	be sufficiently strong to imply Conjecture~\ref{conj: p-adic local 
	Langlands gives us the action for general
	F}. 

Note that for any 
Taylor--Wiles datum $(Q,(\gamma_{v,1},\dots,\gamma_{v,n})_{v\in Q})$,
 (\ref{eqn:R to T}) gives
an action of~$R_p^{\loc}$ on the complex \[\widetilde{\cC}(Q):= 
\invlim_{U_p,s}\cC(U_pU_1^p(Q),s)_{\m_{Q,1}}\] in $D(\cO[\Delta_Q])$. For 
any $\sigma$, the complex $\sigma^\circ\otimes_{\cO[[K_0]]}\widetilde{\cC}(Q)$ 
is naturally quasi-isomorphic (in particular, the quasi-isomorphism is 
$\cO[\Delta_Q]$-equivariant) to 
$\cC(K_0U_1^p(Q),\sigma^\circ)_{\m_{Q,1}}$ Again, 
this is deduced 
from Lemma~\ref{lem:flatnessinlimit}. We therefore obtain an action of 
$R_p^{loc}$ on $\cC(K_0U_1^p(Q),\sigma^\circ)_{\m_{Q,1}}$ in 
$D(\cO[\Delta_Q])$. We also have a natural action of $\cH(\sigma^\circ)$ on 
$\sigma^\circ\otimes_{\cO[[K_0]]}\widetilde{\cC}(Q)$ in 
$D(\cO[\Delta_Q])$, as described in section \ref{sec:actionsofhecke}. To 
apply the construction of that section, we must note that 
$\widetilde{\cC}(Q)$ has homology concentrated in degree $q_0$. Indeed, 
assumption $(a)$ in Proposition \ref{patchedCM} implies that the minimal 
resolution $\cF$ of $\widetilde{\cC}(Q)$ as a complex of 
$\cO[\Delta_Q][[K_1]]$-modules is concentrated in degrees $[q_0,q_0+l_0]$. We 
also have $j_{\cO[[K_0]]}(H_{q_0}(\widetilde{\cC}(Q)) \ge l_0$ because the 
quotient module $\cO\otimes_{\cO[\Delta_Q]}H_{q_0}(\widetilde{\cC}(Q)) \cong 
\widetilde{H}_{q_0}(X_{U^p},\cO)_\m$ has grade $l_0$ (by Proposition 
\ref{patchedCM}). Applying Lemma \ref{lem:CG} to the complex $\cF[-q_0]$, we 
deduce that $\widetilde{\cC}(Q)$ has homology concentrated in degree $q_0$.

Proposition~\ref{prop: local global compatibility for finite level}
motivates the following conjecture, which is a further refinement of
Conjectures~\ref{galconj} and~\ref{conj: compatibility at Q}.

\begin{conj}\label{conj: crystalline local global}For any Taylor--Wiles datum 
  $(Q,(\gamma_{v,1},\dots,\gamma_{v,n})_{v\in Q})$, and any irreducible 
  $E$-representation of $K_0$, $\sigma$,
  the action of~$R_p^{\loc}$ 
  on~$H_*(X_{K_0U_1^p(Q)},\sigma^\circ)_{\m_{Q,1}}$ factors through 
  $R_p^{\loc}(\sigma)$. Furthermore,
  if~$h\in\cH(\sigma^\circ)$ is such that $\eta(h)\in R_p^{\loc}(\sigma)$, 
  then~$h$ acts
  on~$H_{q_0}(X_{K_0U_1^p(Q)},\sigma^\circ)_{\m_{Q,1}}$ via~$\eta(h)$. 
  \end{conj}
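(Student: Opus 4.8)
We outline the strategy one would follow to prove Conjecture~\ref{conj: crystalline local global}. Morally it is an integral local--global compatibility statement at~$p$, in the crystalline range, for the arithmetic manifolds~$X_{K_0U_1^p(Q)}$; the plan is to reduce it to a characteristic-zero assertion, proved by the usual dictionary between cohomology and automorphic representations, together with an integral (``modulo~$\varpi^s$'') refinement supplied by integral~$p$-adic Hodge theory.

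\emph{The generic fibre.} Invert~$p$, so that
\[H_*(X_{K_0U_1^p(Q)},\sigma^\circ)_{\m_{Q,1}}[1/p]=H_*(X_{K_0U_1^p(Q)},\sigma)_{\m_{Q,1}}[1/p].\]
By the standard description of the~$\m_{Q,1}$-localised cohomology with algebraic coefficients (the Eisenstein part being killed by the non-Eisenstein localisation), this space is accounted for by cuspidal cohomological automorphic representations~$\pi$ of~$\PGL_2(\AA_F)$ of weight corresponding to~$\sigma$ contributing to it. To each such~$\pi$, Conjecture~\ref{galconj} together with its refinement Conjecture~\ref{conj: compatibility at Q} (both in force) attaches a Galois representation~$\rho_\pi$ of type~$\cS_Q$. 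The decisive classical input is local--global compatibility at the places~$v\mid p$: that~$\rho_\pi|_{G_{F_v}}$ is crystalline with Hodge--Tate weights prescribed by~$\sigma_v$, its crystalline Frobenius eigenvalues being the Satake parameters of~$\pi_v$, so that the~$\cH(\sigma_v^\circ)$-action on the~$\pi_v$-isotypic part is via~$\eta$. Since~$p>n=2$ this lies in the Fontaine--Laffaille range when~$\sigma$ has small weight, and for~$F$ CM or totally real it can be extracted from the realisation of~$\rho_\pi$ in the cohomology of a compact unitary Shimura variety (\cite{scholze-torsion}) and the~$p$-adic Hodge theory available there. Granting this, the action of~$R_p^{\loc}$ on~$H_*(X_{K_0U_1^p(Q)},\sigma^\circ)_{\m_{Q,1}}[1/p]$ factors through~$R_p^{\loc}(\sigma)[1/p]$, and an~$h\in\cH(\sigma^\circ)$ with~$\eta(h)\in R_p^{\loc}(\sigma)$ acts via~$\eta(h)$; this is both assertions of the conjecture after inverting~$p$.

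\emph{Integrality.} It then remains to show that the action of~$R_p^{\loc}$ on the integral module~$M:=H_*(X_{K_0U_1^p(Q)},\sigma^\circ)_{\m_{Q,1}}$ factors through the reduced,~$\varpi$-torsion-free quotient~$R_p^{\loc}(\sigma)$, not merely through~$R_p^{\loc}(\sigma)[1/p]$. In the Fontaine--Laffaille situation each~$\Rdef_v(\sigma_v)$ is formally smooth over~$\cO$, so~$\Rdef_v(\sigma_v)/\varpi^s$ represents the functor of torsion Fontaine--Laffaille deformations of~$\rhobar_\m|_{G_{F_v}}$ of Hodge type~$\sigma_v$; it therefore suffices to know that the deformation of~$\rhobar_\m$ carried by~$M/\varpi^s M$ (over the image of~$R_p^{\loc}$ in~$\End_\cO(M/\varpi^s M)$) is, at each~$v\mid p$, Fontaine--Laffaille of Hodge type~$\sigma_v$. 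This is local--global compatibility at~$p$ for the torsion cohomology groups~$H_*(X_{K_0U_1^p(Q)},\sigma^\circ/\varpi^s)_{\m_{Q,1}}$, again obtainable for~$F$ CM from the integral comparison of these groups with the cohomology of unitary Shimura varieties. The statement about Hecke operators follows similarly: it holds after inverting~$p$ by the previous step, and integrally it is immediate once~$H_{q_0}(X_{K_0U_1^p(Q)},\sigma^\circ)_{\m_{Q,1}}$ is~$\varpi$-torsion-free (e.g.\ under assumption~(c) of Proposition~\ref{patchedCM}), and otherwise is reduced, using the concentration of~$\widetilde{\cC}(Q)$ in degree~$q_0$ recorded before the conjecture, to the same torsion-level input.

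\emph{The main obstacle.} The crux is the integral ingredient of the second step and the formal smoothness of~$\Rdef_v(\sigma_v)$ on which it relies: torsion local--global compatibility at~$p$ for the~$X_{K_0U_1^p(Q)}$ is available only in the Fontaine--Laffaille range, and essentially only for~$F$ CM or totally real --- where one passes through a unitary Shimura variety and must control the comparison integrally --- while for general~$F$ the Galois representations are not even known to exist. For~$\sigma$ of larger weight, where~$\Rdef_v(\sigma_v)$ can be singular, even formulating the integral compatibility requires an integral theory of crystalline deformations adapted to this module-theoretic statement, which is not currently available. This is precisely why Conjecture~\ref{conj: crystalline local global} is recorded as a hypothesis rather than proved. (Alternatively, it would follow from the conjectural description of the patched module~$H_{q_0}(\widetilde{\cC}(\infty))$ via the~$p$-adic local Langlands correspondence for~$\GL_2(\Qp)$, by running the argument of Proposition~\ref{prop: local global compatibility for finite level} at Taylor--Wiles level, using that the representation theory at~$p$ is unaffected by adding Taylor--Wiles places.)
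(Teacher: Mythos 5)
The statement you are addressing is not proved in the paper at all: it is stated as Conjecture~\ref{conj: crystalline local global}, and the authors explicitly treat it as the main outstanding hypothesis (see the discussion in Section~\ref{subsec: totally real or imaginary quadratic}, where it is named as the biggest obstacle to unconditional results when $F$ is imaginary quadratic). So there is no proof in the paper to compare with; the paper's role for this statement is as an input, shown (together with the running assumptions) to imply Conjecture~\ref{conj: p-adic local Langlands gives us the action for general F} via the arithmetic-action axioms, with Proposition~\ref{prop: local global compatibility for finite level} supplying evidence in the converse direction at level $K_0U^p$. Your proposal is honest about this — you sketch a strategy (characteristic-zero local--global compatibility at $p$ from the automorphic description of $H_*(X_{K_0U_1^p(Q)},\sigma)_{\m_{Q,1}}[1/p]$, followed by an integral refinement in the Fontaine--Laffaille range) and you correctly identify the obstructions the paper itself points to: for general $F$ the Galois representations are not known to exist, for $F$ CM the representations are only known up to a nilpotent ideal and integral local--global compatibility at $p$ for torsion classes is not available, and outside the Fontaine--Laffaille range even the formulation of the integral statement is delicate because $\Rdef_v(\sigma_v)$ need not be formally smooth. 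This matches the paper's framing (Remarks~\ref{rem: existence of Galois repns in CM case}, \ref{rem: compatibility at Q in CM case}, \ref{rem: surprisingly strong local-global compatibility conjecture}).

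One caveat on your closing parenthetical: deducing Conjecture~\ref{conj: crystalline local global} from Conjecture~\ref{conj: p-adic local Langlands gives us the action for general F} ``by running the argument of Proposition~\ref{prop: local global compatibility for finite level} at Taylor--Wiles level'' is not available as stated. The patched complex $\widetilde{\cC}(\infty)$ recovers completed homology at level $U^p$ upon reduction modulo $\ba$, not the homology at a fixed auxiliary level $U_1^p(Q)$ for a specific Taylor--Wiles datum $Q$ (the ultraproduct mixes the data $Q_N$), so the conjectural description of $H_{q_0}(\widetilde{\cC}(\infty))$ does not directly control $H_*(X_{K_0U_1^p(Q)},\sigma^\circ)_{\m_{Q,1}}$. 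Moreover, in the paper's logical architecture the implication runs the other way — the crystalline compatibility at all Taylor--Wiles levels is exactly what is used (in Proposition~\ref{prop: arithmetic action on patched module}) to verify axioms (AA3) and (AA4) — so invoking the $p$-adic local Langlands description to get the statement at level $Q$ would be circular unless one first reformulated that description at auxiliary level, which the paper does not do.
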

\begin{rem}
  \label{rem: surprisingly strong local-global compatibility
    conjecture}The reader may be surprised by Conjecture~\ref{conj:
    crystalline local global}, which in particular implies that the
  factors at places dividing~$p$ of the Galois representations associated to 
  torsion
  classes in the homology groups $H_*(X_{K_0U_1^p(Q)},\sigma^\circ)_{\m_{Q,1}}$ 
  are controlled by the crystalline deformation
  rings, which are defined purely in terms of representations
  over~$p$-adic fields (and are $p$-torsion free by
  fiat). Nonetheless, since we believe that Conjecture~\ref{conj: p-adic local Langlands gives us the action for general
    F} is reasonable, Proposition~\ref{prop: local global
    compatibility for finite level} gives strong evidence for
  Conjecture~\ref{conj: crystalline local global}; similarly,
  \cite[Cor.\ 6.5]{paskunasBM} shows that the crystalline deformation
  rings can be reconstructed from~$P$, and this alternative
  construction makes it more plausible that they can also control
  integral phenomena. We are also optimistic that the
  natural analogues of Conjecture~\ref{conj: crystalline local global}
  should continue to hold beyond the case of~$\GL_2(\Qp)$.
  \end{rem}

\begin{rem}
  \label{rem: could allow types as well as weights}We have avoided
the notational clutter that would result from allowing non-trivial inertial types, but the
  natural generalisation of Proposition~\ref{prop: local global
    compatibility for finite level} to more general potentially
  crystalline (or even potentially semistable) representations can be
  proved in the same way. The axioms in Section~\ref{subsec:
    arithmetic actions} below only refer to crystalline representations;
  accordingly, Corollary~\ref{cor: local global for PGL2 Qp assuming
    crystalline local global} below shows that (in conjunction with our
  other assumptions) Conjecture~\ref{conj: crystalline local global}
  implies a local-global compatibility result for general potentially
  semistable representations. (It is perhaps also worth remarking that
  rather than assuming Conjecture~\ref{conj: crystalline local
    global}, we could instead assume a variant for arbitrary
  potentially Barsotti--Tate representations, or indeed any variant to
  which we can apply the ``capture'' machinery of~\cite[\S 2.4]{MR3272011}.)\end{rem}

In the rest of this section we will explain
(following~\cite{CEGGPSGL2}) that Conjecture~\ref{conj: crystalline local 
global} implies Conjecture~\ref{conj: p-adic local Langlands gives us the 
action for general
	F}.\subsection{Arithmetic actions}\label{subsec: arithmetic
  actions}We now introduce variants of the axioms of~\cite[\S 3.1]{CEGGPSGL2},
and prove Proposition~\ref{prop: all arithmetic actions are p from
  Colmez}, which shows that if the axioms are satisfied 
  for~$H_{q_0}(\widetilde{\cC}(\infty))$,
 then Conjecture~\ref{conj: p-adic local
  Langlands gives us the action for general F} holds. We will show in
Section~\ref{subsec: local global following CEGGPS} that (under our
various hypotheses)~$H_{q_0}(\widetilde{\cC}(\infty))$ indeed satisfies
these axioms. 

Fix an integer $g\ge 0$ and set
$\Rbar_\infty=R_p^{\loc}\wotimes_\cO\cO[[x_1,\dots,x_g]]$. (Of course, in our application
to~$H_{q_0}(\widetilde{\cC}(\infty))$ we will take~$g$ as in
Section~\ref{subsec: local global compatibility conjecture PGL2}.)

Then an
\emph{$\cO[G]$-module with an arithmetic action of $\Rbar_\infty$} 
is by definition a non-zero $\Rbar_\infty[G]$-module $M_\infty$
satisfying the  following axioms (AA1)--(AA4).
\begin{itemize}
\item[(AA1)] $M_\infty$ is a finitely generated
  $\Rbar_\infty[[K_0]]$-module.
\item[(AA2)] $M_\infty$ is projective in the category of pseudocompact
  $\cO[[K_0]]$-modules.
\end{itemize}

Set \[M_\infty(\sigma^\circ):=\sigma^\circ\otimes_{\cO[[K_0]]}M_\infty. \]This is
a finitely generated $\Rbar_\infty$-module by (AA1). For each $\sigma^\circ$, we have a natural action of
$\cH(\sigma^\circ)$ on $M_\infty(\sigma^\circ)$, and thus of
$\cH(\sigma)$ on $M_\infty(\sigma^\circ)[1/p]$.

\begin{itemize}
\item[(AA3)]For any $\sigma$, the action of $\Rbar_\infty$ on
  $M_\infty(\sigma^\circ)$ factors through
  $\Rbar_\infty(\sigma)$. Furthermore,
  $M_\infty(\sigma^\circ)$ is maximal Cohen--Macaulay over
  $\Rbar_\infty(\sigma)$. \end{itemize}

\begin{itemize}
\item[(AA4)] For any $\sigma$, the action of $\cH(\sigma)$ on
  $M_\infty(\sigma^\circ)[1/p]$ is given by the
  composite \[\cH(\sigma)\stackrel{\eta}{\to}R_p^{\loc}(\sigma)[1/p]\to \Rbar_\infty(\sigma)[1/p].\]
 
   \end{itemize}\begin{rem}
     \label{rem: differences in definition of AA}Our axioms are not
     quite the obvious translation of the axioms of~\cite[\S
     3.1]{CEGGPSGL2} to our setting. Firstly, our definition of
     $M_\infty(\sigma^\circ)$ is different; however, by Remark~\ref{rem: 
     doubledual} it is equivalent to the
     definition given there. More significantly, in~(AA3) we do not require
     that $M_\infty(\sigma^\circ)[1/p]$ is locally free of rank one
     over its support.

     Since $R_p^{\loc}(\sigma)[1/p]$ is equidimensional and regular (by~\cite[Thm.\ 
     3.3.8]{kisindefrings} and~\cite[Lem.\
     3.3]{blght}), $M_\infty(\sigma^\circ)[1/p]$ is (being maximal
     Cohen--Macaulay by~(AA3)) locally free over its support.  (This
     is standard, but for completeness we give an argument. Write
     $R=R_p^{\loc}(\sigma)[1/p]$, $M=M_\infty(\sigma^\circ)[1/p]$, and
     let $\frakp \in \mathrm{Supp}_{R}(M)$. By \cite[Ch.~0,
     Cor.~16.5.10]{EGAIV1}, $M_\frakp$ is Cohen--Macaulay over
     $R_\frakp$ and we have
     \[\dim_R(M) = \dim_R(M/\frakp M) + \dim_{R_\frakp}(M_\frakp).\]
     By \cite[Ch.~0, Prop.~16.5.9]{EGAIV1} we have
     $\dim_R(M/\frakp M) = \dim R/\frakp$ and since $M$ is maximal
     Cohen--Macaulay over $R$ we have $\dim_R(M) = \dim R$. Since
     $\dim R_\frakp + \dim R/\frakp \le \dim R$ (\cite[Ch.~0,
     16.1.4.1]{EGAIV1}) we deduce that
     $\dim_{R_\frakp}(M_\frakp) \ge \dim R_\frakp$ and therefore
     $\dim_{R_\frakp}(M_\frakp) = \dim R_\frakp$. So $M_\frakp$ is
     maximal Cohen--Macaulay over $R_\frakp$. Since $R$ is regular, and maximal Cohen--Macaulay modules over 
regular local rings are free 
\cite[\href{http://stacks.math.columbia.edu/tag/00NT}{Tag 
00NT}]{stacks-project}, we deduce that $M[1/p]$ is locally free over 
$\Rbar(\sigma)[1/p]$.)

     We do not make any prescription on the rank of
     $M_\infty(\sigma^\circ)[1/p]$ over its support (or even require this
     rank to be constant), and this is reflected in the
     multiplicity~$m$ in Proposition~\ref{prop: all arithmetic actions
       are p from Colmez} below.
   \end{rem}

We now follow the approach
of~\cite{CEGGPSGL2} to show that any~$\cO[G]$-module with an arithmetic
action of~$\Rbar_\infty$ is obtained from the $p$-adic local Langlands
correspondence for~$\GL_2(\Qp)$. The following result shows that in order to establish
Conjecture~\ref{conj: p-adic local Langlands gives us the action for
  general F}, it is enough to show that the action
of~$\Rbar_\infty[G]$ on~$H_{q_0}(\widetilde{\cC}(\infty))$ is
arithmetic. We will follow the proof of~\cite[Thm.\ 4.30]{CEGGPSGL2}
very closely, indicating what changes are necessary to go from
their~$G$ (which equals $\GL_2(\Qp)$) to our~$G$ (which is a product
of copies of~$\PGL_2(\Qp)$). We also need to make some additional
adjustments due to the absence of a rank one assumption in axiom~(AA3).

\begin{prop}
  \label{prop: all arithmetic actions are p from Colmez}Let~$M_\infty$
  be an $\cO[G]$-module with an arithmetic action
  of~$\Rbar_\infty$. Then for some integer $m\ge 1$ there is an isomorphism of
  $\Rbar_\infty\left[G\right]$-modules
  \[M_\infty\cong \Rbar_\infty\wotimes_{R_p^{\loc}}P^{\oplus
      m}.\]
\end{prop}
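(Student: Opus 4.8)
The plan is to follow the strategy of \cite[Thm.~4.30]{CEGGPSGL2}, factoring $M_\infty$ over the places $v \mid p$ and invoking the $p$-adic local Langlands correspondence for $\GL_2(\Qp)$ at each place, and then gluing with a patching-style argument over the regular ring $\cO[[x_1,\dots,x_g]]$. First I would recall that $\Rdef_v \cong \End_{\mathfrak{C}_{G_v}(\cO)}(P_v)$, so that $P_v$ is (up to the action of $\Rdef_v$) the unique projective object in $\mathfrak{C}_{G_v}(\cO)$ with the given reduction mod $\m_{\Rdef_v}$; hence any finitely generated projective $\cO[[K_v]]$-module carrying a compatible $G_v$-action whose Pontryagin dual lies in $\mathfrak{C}_{G_v}(\cO)$ and which satisfies the crystalline compatibility axioms must be a direct sum of copies of $P_v$ after suitable base change. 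Axioms (AA1) and (AA2) say exactly that $M_\infty$ is finitely generated over $\Rbar_\infty[[K_0]]$ and projective as a pseudocompact $\cO[[K_0]]$-module, so $M_\infty^\vee$ lies in the appropriate category $\mathfrak{C}_{G}(\cO)$ of locally admissible representations; and axioms (AA3), (AA4) are precisely the crystalline local-global compatibility conditions that pin down the $R_p^{\loc}$-action to match the one coming from $P = \wotimes_{v \mid p} P_v$.

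The key steps, in order, would be: (1) reduce the statement mod the regular sequence $(x_1,\dots,x_g)$ — using that $\Rbar_\infty = R_p^{\loc}\wotimes_\cO \cO[[x_1,\dots,x_g]]$ and that $M_\infty$ is finitely generated and (by (AA3), maximal Cohen--Macaulay over each $\Rbar_\infty(\sigma)$) flat over $\cO[[x_1,\dots,x_g]]$ — to the case $g = 0$, i.e.\ to showing $M_0 := M_\infty/(x_1,\dots,x_g)M_\infty \cong P^{\oplus m}$ as $R_p^{\loc}[G]$-modules; (2) show that $M_0^\vee$ is an object of $\mathfrak{C}_G(\cO)$ which is projective (using the projectivity over $\cO[[K_0]]$, which descends mod the $x_i$) and whose endomorphisms as such an object are governed by $R_p^{\loc}$; (3) use the ``capture'' formalism of \cite[\S 2.4]{MR3272011} together with the crystalline compatibility (AA3)--(AA4) across all weights $\sigma$ to show that the $R_p^{\loc}$-action on $M_0$ agrees with the canonical one, so that $M_0$ is a direct summand (or sum of copies) of a tensor product of $P_v$'s; (4) factor over the places $v \mid p$: since $R_p^{\loc} = \wotimes_{v \mid p,\cO}\Rdef_v$ and $K_0 = \prod_{v \mid p} K_v$, and since $P_v$ is the projective envelope of an absolutely irreducible object with $\End = \Rdef_v$, a module-theoretic rigidity argument (as in \cite[\S 4]{CEGGPSGL2}, using that $\Rdef_v$ is the full endomorphism ring) shows $M_0 \cong \wotimes_{v \mid p} P_v^{\oplus m_v}$; and (5) compare multiplicities across places to conclude $m_v = m$ is independent of $v$, giving $M_0 \cong P^{\oplus m}$, and then lift back up via formal smoothness of $\cO[[x_1,\dots,x_g]]$ and Nakayama to get $M_\infty \cong \Rbar_\infty \wotimes_{R_p^{\loc}} P^{\oplus m}$.

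The main obstacle I expect is step (3)--(4): namely, bootstrapping from the statement ``for every crystalline type $\sigma$, $M_\infty(\sigma^\circ)$ is maximal Cohen--Macaulay over $\Rbar_\infty(\sigma)$ with the expected Hecke action'' to the much stronger statement ``$M_\infty$ itself is $\Rbar_\infty \wotimes_{R_p^{\loc}} P^{\oplus m}$ as a $G$-module.'' In \cite{CEGGPSGL2} this is where Pa\v{s}k\=unas's theory (the fact, \cite[Cor.\ 6.5]{paskunasBM}, that the crystalline deformation rings together with the capture property characterise $P$) does the heavy lifting, and one must check that the absence of a rank-one hypothesis in our (AA3) does not break the argument — it should only affect the conclusion by allowing the multiplicity $m \ge 1$ rather than forcing $m = 1$, exactly as flagged in Remark~\ref{rem: differences in definition of AA}. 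A secondary technical point is verifying that reducing $M_\infty$ modulo $(x_1,\dots,x_g)$ is harmless: this needs that $M_\infty$ is flat over $\cO[[x_1,\dots,x_g]]$, which follows because $M_\infty(\sigma^\circ)$ is maximal Cohen--Macaulay over the equidimensional $\Rbar_\infty(\sigma)$ and the $x_i$ form part of a system of parameters — the same kind of argument already used in the proof of Proposition~\ref{prop: big R equals T}. The rest — Frobenius reciprocity identifications, the passage between $\sigma^\circ \otimes_{\cO[[K_0]]} M_\infty$ and $\Hom^{cts}_{\cO[[K_0]]}(M_\infty,(\sigma^\circ)^d)^d$, and the factorisation over $v \mid p$ of completed tensor products of projective envelopes — is handled by the lemmas already cited in the excerpt and in Appendix~\ref{appendix: tensor products projective covers}.
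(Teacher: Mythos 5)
Your overall strategy is recognisably the one the paper uses (following \cite[Thm.~4.30]{CEGGPSGL2}), and you correctly identify that the absence of a rank-one hypothesis in (AA3) only changes the multiplicity. But there are two genuine problems in the middle of your outline. First, your steps (4)--(5) propose to factor $M_0$ itself as an external tensor product $\wotimes_{v|p}P_v^{\oplus m_v}$ over the places and then ``compare multiplicities''. This is not established by any rigidity argument you cite, and it is not needed: the paper never factors $M_\infty$ over the places. Instead it works with the product group $G$ throughout, proves that $P=\wotimes_{v|p}P_v$ is a projective envelope of $\pi^\vee=\wotimes_{v|p}\pi_v^\vee$ in $\mathfrak{C}_G(\cO)$ (Lemma~\ref{lem: tensor products projective envelopes} --- the only place a tensor factorisation occurs), and then shows that $M_\infty$ is a projective object of $\mathfrak{C}_G(\cO)$ whose cosocle is $\pi^\vee$-isotypic, whence it is a direct sum of copies of $P$ after base change. (As stated, your step (5) also does not parse: $\wotimes_v P_v^{\oplus m_v}\cong P^{\oplus \prod_v m_v}$, so there is nothing forcing the $m_v$ to be equal, nor any need for them to be.) The real content you are missing is precisely the cosocle computation: one must show $\Hom_G(\pi',M_\infty^\vee)\ne 0$ only for $\pi'\cong\pi$, and that $\Hom_G(\pi,M_\infty^\vee)^\vee$ is free over $A/\varpi$ (with $A=\cO[[x_1,\dots,x_g]]$). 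This is where the weight-by-weight information of (AA3)--(AA4) actually enters, via the analogue of \cite[Prop.~4.2]{CEGGPSGL2}: for each Serre weight $\sigmabar$, $M_\infty(\sigma^\circ)$ is free over $\Rbar_\infty(\sigma)$ and $M_\infty(\sigmabar)$ is flat over $\cH(\sigmabar)\cong k[T_v]_{v|p}$, which identifies $\Hom_G(\pi,M_\infty^\vee)^\vee\cong M_\infty(\sigmabar)\otimes_{\cH(\sigmabar)}k$. Your proposal never isolates this step, and without it the claimed ``rigidity'' has nothing to bite on.

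Second, your reduce-mod-$(x_1,\dots,x_g)$-and-lift-by-Nakayama framing is not how the argument runs and would itself need justification: lifting an isomorphism $M_0\cong P^{\oplus m}$ to $M_\infty\cong A\wotimes_\cO P^{\oplus m}$ requires exactly the projectivity and cosocle control over $A$ that the paper establishes directly (via \cite[Prop.~4.19, Rem.~4.21]{CEGGPSGL2}), so nothing is gained by the reduction. Finally, the $R_p^{\loc}$-linearity of the isomorphism --- which you fold into a single appeal to ``capture'' in step (3) --- is the other substantive half of the proof: one first checks that the $\Rbar_\infty$-action on $A\wotimes_\cO P^{\oplus m}$ is itself arithmetic (using \cite[Cor.~5.3, 6.4, 6.5]{paskunasBM} and the main result of \cite{paskunasimage}), and then runs a density argument reducing linearity to each $M_\infty(\sigma)$, where it follows from (AA4) together with the fact that $\eta\colon\cH(\sigma)\to R_p^{\loc}(\sigma)[1/p]$ becomes an isomorphism after completion at maximal ideals --- a consequence of the uniqueness of the Hodge filtration that is special to $\GL_2(\Qp)$. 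These two ingredients need to be named explicitly for the argument to close.
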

\begin{proof}
  As we have already remarked, we will
  closely follow the arguments of~\cite[\S 4]{CEGGPSGL2}. To orient
  the reader unfamiliar with~\cite{CEGGPSGL2}, we make some brief
  preliminary remarks. As a consequence of the results
  of~\cite{paskunasimage,paskunasBM}, it is not hard to show that the natural
  action of~$\Rbar_\infty[G]$ on $\Rbar_\infty\wotimes_{R_p^{\loc}}P^{\oplus
      m}$ is an arithmetic action. We show that~$M_\infty$ is a
    projective object of~$\mathfrak{C}_G(\cO)$, and that its cosocle only contains
    copies of~$\pi^\vee:=\wotimes_{v|p}\pi_v^\vee$. From this we can
    deduce the existence of an isomorphism
    of~$\cO[[x_1,\dots,x_g]]$-modules of the required kind, and we
    need only check that it is~$R_p^{\loc}$-linear. By a density
    argument, we reduce to showing that the corresponding isomorphism
    for~$M_\infty(\sigma)$ is $R_p^{\loc}(\sigma)$-linear (for
    each~$\sigma$). This in turn follows from~(AA4) (and the fact that
    ~$\eta:\cH(\sigma)\to R_p^{\loc}(\sigma)[1/p]$ becomes an
    isomorphism upon passing to completions at maximal ideals, cf.\
    \cite[Prop.\ 2.13]{CEGGPSGL2}; this is due to the uniqueness of
    the Hodge filtration for crystalline representations, which is a
    phenomenon unique to the case of~$\GL_2(\Qp)$).

We now begin the proof
proper. Set~$\pi^\vee:=\wotimes_{v|p}\pi_v^\vee$; by Lemma~\ref{lem:
  tensor products projective envelopes}, $P$ is a projective envelope
of~$\pi^\vee$ in~$\mathfrak{C}_G(\cO)$. The argument of~\cite[Prop.\
4.2]{CEGGPSGL2} goes through essentially unchanged, and shows that for each Serre
weight~$\sigmabar$ with corresponding lift~$\sigma^\circ$, we have:
\begin{enumerate}\item if~$M_\infty(\sigma^\circ)\ne 0$, then it is a
  free~$\Rbar_\infty(\sigma)$-module of some rank~$m$. Furthermore, the
  action of~$\cH(\sigmabar)$ on~$M_\infty(\sigmabar)$ factors through
  the natural map~$R_p^{\loc}(\sigma)/\varpi\to
  \Rbar_\infty(\sigma)/\varpi$, and~$M_\infty(\sigmabar)$ is a flat~$\cH(\sigmabar)$-module.

\item If~$M_\infty(\sigma^\circ)\ne 0$, then there is a homomorphism
  $\cH(\sigmabar)\to k$ such that
  $\pi\cong\cInd_{K_0}^G\sigmabar\otimes_{\cH(\sigmabar)}k$. Accordingly,
  $\Hom_G(\pi,M_\infty^\vee)^\vee\cong M_\infty(\sigmabar)\otimes_{\cH(\sigmabar)}k$.
\item If~$\pi'$ is an irreducible smooth $k$-representation of~$G$
  then~$\Hom_G(\pi',M_\infty^\vee)\ne 0$ if and only
  if~$\pi'\cong\pi$.
\end{enumerate}
Since~$\cH(\sigmabar)=\otimes_{v|p}\cH(\sigmabar_v)\cong
k[T_v]_{v|p}$, the proofs of~\cite[Lem.\ 4.10, Lem.\ 4.11, Thm.\ 4.15]{CEGGPSGL2} go
through with only notational changes, so that~$M_\infty$ is a
projective object of~$\mathfrak{C}_G(\cO)$. 

Write~$A=\cO[[x_1,\dots,x_g]]$, and choose a
homomorphism~$A\to\Rbar_\infty$ inducing an isomorphism
$R_p^{\loc}\wotimes_\cO A\cong\Rbar_\infty$. We claim that there is
an isomorphism
in~$\mathfrak{C}_G(A)$\numequation\label{eqn: Minfty iso}
M_\infty\cong A\wotimes_{\cO} P^{\oplus m}.\end{equation} By~(3)
above, all of the irreducible subquotients
of~$\operatorname{cosoc}_{\mathfrak{C}_G(\cO)}M_\infty$ are isomorphic
to~$\pi^\vee$, so by~\cite[Prop.\ 4.19, Rem.\ 4.21]{CEGGPSGL2} it is
enough to show that~$\Hom_G(\pi,M_\infty^\vee)^\vee$ is a
free~$A/\varpi$-module of rank~$m$. To see this, note that by ~(2)
above we have
$\Hom_G(\pi,M_\infty^\vee)^\vee\cong
M_\infty(\sigmabar)\otimes_{\cH(\sigmabar)}k$, which by~(1) is a free
$\Rbar_\infty(\sigma)\otimes_{\cH(\sigmabar)}k$-module of
rank~$m$. By~(1) again (together with~\cite[Lem.\ 2.14]{CEGGPSGL2}),
the map $A\to\Rbar_\infty$ induces an isomorphism
$A/\varpi\cong \Rbar_\infty(\sigma)\otimes_{\cH(\sigmabar)}k$, as
required.

It remains to show that~(\ref{eqn: Minfty iso})
is~$R_p^{\loc}$-linear. We claim that the action of~$\Rbar_\infty$
on~$ A\wotimes_{\cO} P^{\oplus m}$ is arithmetic; admitting this claim, the
proofs of~\cite[Thm.\ 4.30, 4.32]{CEGGPSGL2} go over with only minor
notational changes to show the required $R_p^{\loc}$-linearity. 

It is obviously enough to show that the action of~$R_p^{\loc}$ on~$P$
is an arithmetic action (with~$g=0$). (AA1)
holds by the topological version of Nakayama's lemma (since
$\wotimes_{v|p}\kappa_v^\vee$ is a finitely
generated~$k[[K_0]]$-module), while (AA2) holds by~\cite[Cor.\
5.3]{paskunasBM}. (AA3) holds by~\cite[Cor.\ 6.4, 6.5]{paskunasBM},
while~(AA4) follows from the main result of~\cite{paskunasimage}
exactly as in the proof of~\cite[Prop.\ 6.17]{CEGGPSGL2}.
\end{proof}

\subsection{Local-global compatibility}\label{subsec: local global
  following CEGGPS}We now discuss the axioms (AA1)--(AA4) in the case
$M_\infty=H_{q_0}(\widetilde{\cC}(\infty))$.

\begin{prop}
  \label{prop: arithmetic action on patched module}Assume \emph{(}in
  addition to our running assumptions\emph{)}
  Conjecture~\ref{conj: crystalline local global}. Then the action of ~$\Rbar_\infty[G]$
  on~$H_{q_0}(\widetilde{\cC}(\infty))$ is arithmetic.
\end{prop}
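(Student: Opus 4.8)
The plan is to verify the four axioms (AA1)--(AA4) of Section~\ref{subsec: arithmetic actions} for $M_\infty:=H_{q_0}(\widetilde{\cC}(\infty))$, taking for the ring ``$\Rbar_\infty$'' of that section our $\Rbar_\infty=R_p^{\loc}\wotimes_\cO\cO[[x_1,\dots,x_g]]$ with $g$ as fixed in Section~\ref{subsec: local global compatibility conjecture PGL2}. Axiom (AA1) is already available: $H_i(\widetilde{\cC}(\infty))$ is finitely generated over $\Rbar_\infty[[K_0]]$ by the discussion preceding Proposition~\ref{patchedCM}, and the homology is concentrated in degree $q_0$ by Proposition~\ref{patchedCM}(2). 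The content therefore lies in (AA2)--(AA4), all of which I would deduce from Conjecture~\ref{conj: crystalline local global}. The device for importing the conjecture is that $\sigma^\circ\otimes_{\cO[[K_0]]}(-)$ commutes with every step of the ultraproduct patching construction: arbitrary products (since $\sigma^\circ$ is a finitely presented $\cO[[K_0]]$-module, cf.\ Lemma~\ref{patchingmodaug}), localisation at the ultrafilter, the base changes $\cO_\infty/J\otimes_{\cO[\Delta_{Q_N}]}(-)$, and the inverse limits over $(J,U_p)$ (via Lemma~\ref{goodinvlim}). Hence, using the quasi-isomorphism $\sigma^\circ\otimes_{\cO[[K_0]]}\widetilde{\cC}(Q_N)\simeq\cC(K_0U_1^p(Q_N),\sigma^\circ)_{\m_{Q_N,1}}$ recalled before Conjecture~\ref{conj: crystalline local global}, the complex $\sigma^\circ\otimes_{\cO[[K_0]]}\widetilde{\cC}(\infty)$ is exactly the complex patched from the $\cO_\infty/J\otimes_{\cO[\Delta_{Q_N}]}\cC(K_0U_1^p(Q_N),\sigma^\circ)_{\m_{Q_N,1}}$, and its $R_p^{\loc}$- and $\cH(\sigma^\circ)$-actions are the limits of the finite-level ones (the Hecke action being the ``direct'' one, which patches just like the $T_v^i$ away from $p$).

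Granting this, (AA3)'s first assertion and (AA4) follow quickly. Conjecture~\ref{conj: crystalline local global} says that for each $N$ the $R_p^{\loc}$-action on $H_{q_0}(X_{K_0U_1^p(Q_N)},\sigma^\circ)_{\m_{Q_N,1}}$ (which equals $H_*$ localised at $\m_{Q_N,1}$, since $\widetilde{\cC}(Q_N)$ is concentrated in degree $q_0$) factors through the fixed quotient $R_p^{\loc}(\sigma)$, and that $h\in\cH(\sigma^\circ)$ with $\eta(h)\in R_p^{\loc}(\sigma)$ acts through $\eta(h)$. As $R_p^{\loc}(\sigma)$ is independent of $N$, passing to the patched module shows that the $R_p^{\loc}$-action on $M_\infty(\sigma^\circ)=\sigma^\circ\otimes_{\cO[[K_0]]}M_\infty$ factors through $R_p^{\loc}(\sigma)$ --- so the $\Rbar_\infty$-action factors through $\Rbar_\infty(\sigma)$ --- and, after inverting $p$ (using that $\eta$ induces isomorphisms on completions at maximal ideals, as in the proof of Proposition~\ref{prop: all arithmetic actions are p from Colmez}), that $\cH(\sigma)$ acts on $M_\infty(\sigma^\circ)[1/p]$ through $\cH(\sigma)\xrightarrow{\eta}R_p^{\loc}(\sigma)[1/p]\to\Rbar_\infty(\sigma)[1/p]$. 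This is precisely (AA4).

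For (AA2) and the Cohen--Macaulay part of (AA3) I would argue as follows. Writing $\widetilde{\cC}(\infty)\simeq\cF$ with $\cF$ a bounded complex of finite free $\cO_\infty[[K_0]]$-modules (Proposition~\ref{completionofpatchedcomplexes}(2)) and using $\sigma^\circ\otimes_{\cO[[K_0]]}\cO_\infty[[K_0]]\cong\cO_\infty\wotimes_\cO\sigma^\circ$, the complex $\sigma^\circ\otimes^\LL_{\cO[[K_0]]}\widetilde{\cC}(\infty)\simeq\sigma^\circ\otimes_{\cO[[K_0]]}\cF$ is perfect over $\cO_\infty$, and by the previous paragraph its $R_p^{\loc}$-action factors through the $\cO$-flat ring $R_p^{\loc}(\sigma)$. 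Running this for $\sigma^\circ$ ranging over the lifts of the Serre weights and invoking a capture argument in the spirit of \cite[\S2.4]{MR3272011} and the Emerton--Pa{\v{s}}k{\=u}nas method (completed homology, and its reduction mod $p$, being captured by the Serre weights), one deduces that $M_\infty$ is $\cO$-flat and that $M_\infty/\varpi M_\infty$ is projective over $k[[K_0]]$, hence that $M_\infty$ is projective as a pseudocompact $\cO[[K_0]]$-module, which is (AA2). Granting (AA2), $\sigma^\circ\otimes^\LL_{\cO[[K_0]]}\widetilde{\cC}(\infty)$ is a perfect $\cO_\infty$-complex with homology $M_\infty(\sigma^\circ)$ concentrated in degree $q_0$, so $M_\infty(\sigma^\circ)$ has finite projective dimension over $\cO_\infty$; combining this with the Cohen--Macaulayness of $M_\infty$ over $\Rbar_\infty[[K_0]]$ from Proposition~\ref{patchedCM}(3), the numerology $\dim\Rbar_\infty(\sigma)=\dim\cO_\infty-l_0$, and the non-commutative grade/depth comparisons of Appendix~\ref{sec: non commutative algebra}, one gets that $M_\infty(\sigma^\circ)$ is maximal Cohen--Macaulay over $\Rbar_\infty(\sigma)$, finishing (AA3).

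The main obstacle is (AA2) (with the attached Cohen--Macaulay statement in (AA3)): in contrast to the $l_0=0$ setting of \cite{CEGGPSGL2}, $M_\infty$ is not finitely generated --- let alone free --- over the ring of formal variables, so its projectivity over $\cO[[K_0]]$, and in particular its $\varpi$-torsion freeness (which is \emph{not} among our running hypotheses, assumption~(c) of Proposition~\ref{patchedCM} being absent), cannot be read off from the patching construction itself and must be forced by the finite-level local--global compatibility of Conjecture~\ref{conj: crystalline local global} through the capture mechanism. Carrying out the capture step cleanly, together with the bookkeeping needed to check that $\sigma^\circ\otimes_{\cO[[K_0]]}(-)$ is compatible with every stage of the ultraproduct construction and that all the actions patch consistently, is where the real work lies.
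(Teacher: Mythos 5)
Your overall architecture (verify (AA1)--(AA4); (AA1) is already in hand; import Conjecture~\ref{conj: crystalline local global} by observing that $\sigma^\circ\otimes_{\cO[[K_0]]}(-)$ commutes with products, ultrafilter localisation and the inverse limits) is the right one, and your treatment of (AA4) together with the ``factors through $\Rbar_\infty(\sigma)$'' half of (AA3) matches the paper's proof. The problem is the step you yourself flag as the real work: (AA2) and the maximal Cohen--Macaulay part of (AA3). For these you invoke ``a capture argument in the spirit of \cite[\S 2.4]{MR3272011}''. That is not the mechanism that makes this work, and I do not see how capture could deliver projectivity of $M_\infty$ over $\cO[[K_0]]$: capture is a device for showing that a module (or an action on it) is determined by its $\sigma$-isotypic pieces, and in this paper it appears only in Remark~\ref{rem: could allow types as well as weights} as a way to substitute a different family of types for the crystalline ones in the \emph{input} conjecture --- it produces no flatness or projectivity on its own.

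The actual engine is Lemma~\ref{lem:CG} (the non-commutative Calegari--Geraghty vanishing lemma) combined with Brumer's criterion. By \cite[Prop.~3.1]{brumer}, (AA2) reduces to $\mathrm{Tor}_1^{\cO[[K_0]]}(\sigmabar,H_{q_0}(\widetilde{\cC}(\infty)))=0$ for every Serre weight $\sigmabar$. Since $\widetilde{\cC}(\infty)$ is represented by finite projectives in degrees $[q_0,q_0+l_0]$, the complex $\sigmabar\otimes_{\cO[[K_0]]}\widetilde{\cC}(\infty)$ of $\cO_\infty/\varpi$-modules lives in an interval of length $l_0$, and its homology is computed from $H_{q_0}(\sigma^\circ\otimes_{\cO[[K_0]]}\widetilde{\cC}(\infty))$ by reduction mod $\varpi$ and $\mathrm{Tor}_1^{\cO}$; by the part of the argument you did carry out, the $\cO_\infty/\varpi$-action on this homology factors through $\Rbar_\infty(\sigma)/\varpi$, which has codimension $l_0$ in $\cO_\infty/\varpi$, so $j_{\cO_\infty/\varpi}\bigl(H_*(\sigmabar\otimes_{\cO[[K_0]]}\widetilde{\cC}(\infty))\bigr)\ge l_0$ and Lemma~\ref{lem:CG} forces the homology into the single degree $q_0$; the vanishing of the degree-$(q_0+1)$ term is exactly the required $\mathrm{Tor}_1$ vanishing (and also the $\varpi$-torsion-freeness of $M_\infty(\sigma^\circ)$, with no appeal to assumption (c) of Proposition~\ref{patchedCM}). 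The same lemma applied to $\sigma^\circ\otimes_{\cO[[K_0]]}\widetilde{\cC}(\infty)$ over $\cO_\infty$ gives directly that $M_\infty(\sigma^\circ)$ has projective dimension $l_0$ over $\cO_\infty$ and hence is maximal Cohen--Macaulay over $\Rbar_\infty(\sigma)$; your alternative route through Proposition~\ref{patchedCM}(3) and the Appendix does not obviously transfer Cohen--Macaulayness of $M_\infty$ over $\Rbar_\infty[[K_0]]$ to Cohen--Macaulayness of the coinvariants $\sigma^\circ\otimes_{\cO[[K_0]]}M_\infty$ over $\Rbar_\infty(\sigma)$ without redoing this dimension count. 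As written, then, the proposal leaves the decisive step unproved and points to the wrong tool for it.
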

\begin{proof}
  Certainly~$H_{q_0}(\widetilde{\cC}(\infty))$ is finitely generated
  over~$R_\infty[[K_0]]$, by
  Proposition~\ref{completionofpatchedcomplexes}~(2) and
  Remark~\ref{liftOtoR}, so axiom~(AA1) holds. 
  
  Next we show that the $\Rbarinfty$ action on  
  $H_{i}(\sigma^{\circ}\otimes_{\cO[[K_0]]}\widetilde{\cC}(\infty))$ factors 
  through $\Rbarinfty(\sigma)$ for all $i$. Indeed, by \ref{goodinvlim}, we 
  have natural isomorphisms  
  \[H_{i}(\sigma^{\circ}\otimes_{\cO[[K_0]]}\widetilde{\cC}(\infty)) \cong 
  \invlim_{U_p,J}H_{i}(\sigma^{\circ}\otimes_{\cO[[K_0]]}{\cC}(U_p,J,\infty))\] 
  where the inverse limit is taken over pairs $(J,U_p)$ such that $U_p$ acts 
  trivially on $\sigma^{\circ}\otimes_\cO \cO_\infty/J$. Each homology group 
  $H_{i}(\sigma^{\circ}\otimes_{\cO[[K_0]]}{\cC}(U_p,J,\infty))$ can be 
  obtained by applying the ultraproduct construction to the groups 
  $H_{i}(\sigma^{\circ}\otimes_{\cO[[K_0]]}{\cC}(U_p,J,N))$, and it follows 
  from Conjecture~\ref{conj: crystalline local global} that the action of 
  $\Rbarinfty$ on all these groups factors through $\Rbarinfty(\sigma)$. It 
  follows in the same way from Conjecture~\ref{conj: crystalline local global} 
  that if~ $h\in\cH(\sigma^\circ)$ is such that
  $\eta(h)\in R_p^{\loc}(\sigma)$, then~$h$ acts on
  $H_{q_0}(\sigma^{\circ}\otimes_{\cO[[K_0]]}\widetilde{\cC}(\infty))$
  via~$\eta(h)$, so axiom~(AA4) holds. 
  
  We can now apply Lemma~\ref{lem:CG} (or
  \cite[Lem.~6.2]{1207.4224}) to the
  complex 
  of~$\cO_\infty$-modules~${\sigma^\circ}\otimes_{\cO[[K_0]]}\widetilde{\cC}(\infty)$
  (more precisely, we replace $\widetilde{\cC}(\infty)$ by a quasi-isomorphic 
  complex of finite projective modules in degrees $[q_0,q_0+l_0]$, which we can 
  do by Proposition \ref{patchedCM}(2)). As in the  proof of~\cite[Thm.\
  6.3]{1207.4224}, since the action of $\cO_\infty$ on
  $H_*({\sigma^\circ}\otimes_{\cO[[K_0]]}\widetilde{\cC}(\infty))$ factors
  through~$\Rbar_\infty(\sigma)$, and   
  $\dim
  \Rbar_\infty(\sigma)=\dim\cO_\infty-l_0$, we have 
  $j_{\cO_{\infty}}(H_*({\sigma^\circ}\otimes_{\cO[[K_0]]}\widetilde{\cC}(\infty)))\ge
  l_0$. We deduce that the complex 
  $\sigma^{\circ}\otimes_{\cO[[K_0]]}\widetilde{\cC}(\infty)$ has non-zero 
  homology only in degree $q_0$, and that 
  $H_{q_0}(\sigma^{\circ}\otimes_{\cO[[K_0]]}\widetilde{\cC}(\infty)) = 
  \sigma^{\circ}\otimes_{\cO[[K_0]]}H_{q_0}(\widetilde{\cC}(\infty))$ is 
  maximal 
  Cohen--Macaulay over $\Rbarinfty(\sigma)$. We have now established that axiom 
  (AA3) holds. 

Finally, it remains to check~(AA2). By~\cite[Prop.\ 3.1]{brumer}, it is enough 
to show
  that for each Serre weight~$\overline{\sigma}$ we
  have~$\Tor_1^{\cO[[K_0]]}(\overline{\sigma},H_{q_0}(\widetilde{\cC}(\infty)))=0$.
   Once again, we apply
  Lemma~\ref{lem:CG} (or
    \cite[Lem.~6.2]{1207.4224}) --- this time to the
  complex 
  of~$\cO_\infty/\varpi$-modules~$\overline{\sigma}\otimes_{\cO[[K_0]]}\widetilde{\cC}(\infty)$.
  We see that it suffices to prove that
  $j_{\cO_{\infty}/\varpi}(H_*(\overline{\sigma}\otimes_{\cO[[K_0]]}\widetilde{\cC}(\infty)))\ge
  l_0$. We let $\sigma^\circ$ be the lift of $\overline{\sigma}$. From what we 
  have already shown about the complex 
  ${\sigma^\circ}\otimes_{\cO[[K_0]]}\widetilde{\cC}(\infty)$ we deduce that 
  we have  
  \[H_{q_0}(\overline{\sigma}\otimes_{\cO[[K_0]]}\widetilde{\cC}(\infty)) = 
  \cO/\varpi\otimes_\cO 
  H_{q_0}({\sigma^\circ}\otimes_{\cO[[K_0]]}\widetilde{\cC}(\infty))\] and 
  \[H_{q_0+1}(\overline{\sigma}\otimes_{\cO[[K_0]]}\widetilde{\cC}(\infty)) = 
  \Tor_1^{\cO}(\cO/\varpi, 
  H_{q_0}({\sigma^\circ}\otimes_{\cO[[K_0]]}\widetilde{\cC}(\infty)))\] with 
  all 
  other homology groups vanishing. 
  
  The action of~$\cO_\infty/\varpi$ on
  these two groups factors
  through~$\Rbar_\infty(\sigma)/\varpi$, since the action on 
  $H_{q_0}({\sigma^\circ}\otimes_{\cO[[K_0]]}\widetilde{\cC}(\infty))$ factors 
  through $\Rbar_\infty(\sigma)$, and $\dim
  \Rbar_\infty(\sigma)/\varpi=\dim\cO_\infty/\varpi-l_0$, so we deduce the 
  desired inequality for
  $j_{\cO_{\infty}/\varpi}(H_*(\overline{\sigma}\otimes_{\cO[[K_0]]}\widetilde{\cC}(\infty)))$.
    
\end{proof}

\begin{cor}
  \label{cor: local global for PGL2 Qp assuming crystalline local global}Assume \emph{(}in
  addition to our running assumptions\emph{)}
  Conjecture~\ref{conj: crystalline local global}. Then Conjecture~\ref{conj: p-adic local Langlands gives us the action for general
    F} holds. In particular, we obtain as consequences the `big $R = \TT$' 
    result of Proposition~\ref{prop: big R equals T and describing completed 
    homology} and the automorphy lifting result of Corollary \ref{cor: 
    modularity lifting}.
\end{cor}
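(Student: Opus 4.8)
The plan is to simply combine the two main results of this section. First I would invoke Proposition~\ref{prop: arithmetic action on patched module}: under the running assumptions of Section~\ref{subsec: local global compatibility conjecture PGL2} together with Conjecture~\ref{conj: crystalline local global}, the action of $\Rbar_\infty[G]$ on $H_{q_0}(\widetilde{\cC}(\infty))$ is arithmetic in the sense of Section~\ref{subsec: arithmetic actions}, with $g$ taken to be the integer fixed in Section~\ref{subsec: local global compatibility conjecture PGL2} so that $\Rbar_\infty\cong R_p^{\loc}\wotimes_\cO\cO[[x_1,\dots,x_g]]$. Thus $M_\infty:=H_{q_0}(\widetilde{\cC}(\infty))$ is an $\cO[G]$-module with an arithmetic action of $\Rbar_\infty$.

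Next I would apply Proposition~\ref{prop: all arithmetic actions are p from Colmez} to $M_\infty$: it produces an integer $m\ge 1$ and an isomorphism of $\Rbar_\infty[G]$-modules $H_{q_0}(\widetilde{\cC}(\infty))\cong\Rbar_\infty\wotimes_{R_p^{\loc}}P^{\oplus m}$, which is precisely the assertion of Conjecture~\ref{conj: p-adic local Langlands gives us the action for general F}. The `in particular' clause is then automatic: Proposition~\ref{prop: big R equals T and describing completed homology} and Corollary~\ref{cor: modularity lifting} are proved under exactly the hypotheses now available to us (the running assumptions, assumptions~(a) and~(b) of Proposition~\ref{patchedCM}, and Conjecture~\ref{conj: p-adic local Langlands gives us the action for general F}), so the `big $R=\TT$' isomorphism of local complete intersections and the automorphy lifting statement follow at once.

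The point is that no real obstacle remains at this stage: all of the substance has been front-loaded into Proposition~\ref{prop: arithmetic action on patched module} (verifying axioms (AA1)--(AA4), where the Cohen--Macaulay and grade bookkeeping of Section~\ref{sec: applications of algebra to patched homology} meets Conjecture~\ref{conj: crystalline local global}) and Proposition~\ref{prop: all arithmetic actions are p from Colmez} (the adaptation of the purely local argument of~\cite{CEGGPSGL2}). The only thing to be careful about is matching conventions: one should check that the presentation $\Rbar_\infty\cong R_p^{\loc}\wotimes_\cO\cO[[x_1,\dots,x_g]]$ and the value of $g$ used in verifying arithmeticity are the same ones feeding into the structure theorem, and that the standing hypotheses of Section~\ref{subsec: local global compatibility conjecture PGL2} ($n=2$, $p$ split completely in $F$, no vexing primes at $v\nmid p$, and the prescribed local behaviour of $\rhobar_\m$ at $v\mid p$) are precisely the context in which both propositions are stated --- which they are.
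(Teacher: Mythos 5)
Your proposal is correct and matches the paper's proof, which simply cites Propositions~\ref{prop: arithmetic action on patched module} and~\ref{prop: all arithmetic actions are p from Colmez} in exactly the order and manner you describe. The extra care you take in checking that the conventions for $g$ and the running hypotheses match is sensible but not something the paper spells out.
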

\begin{proof}
  This is immediate from Propositions~\ref{prop: all arithmetic
    actions are p from Colmez} and~\ref{prop: arithmetic action on patched module}.
\end{proof}

\subsection{The totally real and imaginary quadratic
  cases}\label{subsec: totally real or imaginary quadratic}We conclude
by discussing the cases in which unconditional results seem most in
reach. If~$F$ is totally real, then~$l_0=0$, and the existence of
Galois representations is known; the only assumption that is not
established is assumption~(a) of Proposition~\ref{patchedCM}, that the
homology groups ${H}_i(X_{U^pK_1},k)_\m$ vanish for~$i\ne q_0$. It
might be hoped that a generalisation of the results
of~\cite{2015arXiv151102418C} to non-compact Shimura varieties could
establish this. Of course the totally real cases where~$l_0=0$ are
less interesting from the point of view of this paper, as they could
already have been studied using the methods
of~\cite{CEGGPSBreuilSchneider}.

If~$F$ is imaginary quadratic, then the biggest obstacle to
unconditional results is Conjecture~\ref{conj: crystalline local
  global}; indeed, as explained in Remarks~\ref{rem: existence of
  Galois repns in CM case} and~\ref{rem: compatibility at Q in CM
  case}, the other hypotheses on the Galois representations seem to be
close to being known, and as explained in Remark~\ref{rem: assumptions
  abc for IQF}, assumptions~(a) and~(b) of Proposition~\ref{patchedCM}
are known in this case.

\renewcommand{\theequation}{\Alph{section}.\arabic{subsection}}

\appendix
\section{Non-commutative algebra}\label{sec: non commutative
  algebra}In this section we make some definitions and establish some results for non-commutative Iwasawa algebras which generalise standard facts about complete regular local rings. Section \ref{basicdefs} contains the basic definitions which will be needed for discussing our results on patching completed homology. 
\subsection{Depth and dimension}\label{basicdefs}
\begin{adefn}\label{def: gradedepth}
	Let $A$ be a ring and let $M$ be a left or right $A$-module. We denote the 
	\emph{projective dimension} of $M$ over $A$ by $\pd_A(M)$. We define the 
	\emph{grade} $j_A(M)$ of $M$ over $A$ by \[j_A(M) = 
	\inf\{i: \Ext^i_A(M,A) \ne 0\}.\] If all the $\Ext^i_A(M,A)$ vanish we have 
	$j_A(M)=\infty$. If $A$ is local with maximal ideal 
	$\m_A$, then we define the \emph{depth} $\dph_A(M)$ of $M$ by \[\dph_A(M) = 
	\inf\{i:\Ext^i_A(A/\m_A,M)\ne 0\}.\] Similarly, if all the 
	$\Ext^i_A(A/\m_A,M)$ vanish we set $\dph_A(M) = \infty$.
	
	A Noetherian ring $A$ is called \emph{Auslander--Gorenstein} if it has 
	finite left and right injective dimension and if for any finitely generated 
	left or right $A$-module $M$, any integer $m$, and any submodule $N \subset 
	\Ext^m_A(M,A)$, we have $j_A(N) \ge m$. 
	
	An Auslander--Gorenstein ring is called \emph{Auslander regular} if it has finite global dimension.
	
	Finally, let $A$ be an Auslander regular ring and let $M$ be a finitely generated left $A$-module. We define the \emph{dimension} $\delta_A(M)$ of $M$ over $A$ by \[\delta_A(M) = \mathrm{gld}(A) - j_A(M),\] where $\mathrm{gld}(A)$ is the global dimension of $A$.
\end{adefn}

Let $K$ be a compact $p$-adic analytic group. We are going to apply
the above definitions for $A = \cO[[K]]$, the Iwasawa algebra
of $K$ with coefficients in $\cO$. Note that taking inverses of group
elements induces an isomorphism between $\cO[[K]]$ and its
opposite ring, so there is an equivalence between the categories of
left and right $\cO[[K]]$-modules. 

$\cO[[K]]$ is Noetherian, and when $K$ is moreover a pro-$p$ group, $\cO[[K]]$ is a
local ring with $\cO[[K]]/\m_{\cO[[K]]} =
k$.
\begin{aremark}
  \label{rem: grade and codimension}If~$M$ is an $\cO[[K]]$-module,
  then $j_{\cO[[K]]}(M)$ is sometimes referred to as the
  \emph{codimension} of~$M$ (cf.\ \cite[\S 1.2]{MR2905536}).
\end{aremark}

 When~$K$ is pro-$p$ and torsion-free, Venjakob \cite{Venjakob} has established that $\cO[[K]]$ has nice homological properties, which are summarised in the next proposition.

\begin{aprop}[Venjakob]\label{venja}
	Let $K$ be compact $p$-adic analytic group which is torsion free and 
	pro-$p$. Let $\Lambda = \cO[[K]]$ and let $M$ be a finitely generated 
	$\Lambda$-module. 
	\begin{enumerate}
		\item $\Lambda$ is Auslander regular with global
                  dimension $\mathrm{gld}(\Lambda)$ and depth $\dph_\Lambda(\Lambda)$ both equal to $1+\dim(K)$.
		\item The Auslander--Buchsbaum equality holds for $M$: \[\pd_\Lambda(M) + \dph_\Lambda(M) = \dph_\Lambda(\Lambda) = 1 + \dim(K).\]
		\item We have \[\pd_\Lambda(M) = \max\{i : \Ext^i_\Lambda(M,\Lambda) \ne 0\}.\] In particular, we have $\pd_\Lambda(M) \ge j_\Lambda(M)$.
	\end{enumerate}
	\end{aprop}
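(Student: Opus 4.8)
The plan is to obtain parts (1) and (2) from Venjakob's structure theory of Iwasawa algebras~\cite{Venjakob}, and to deduce part (3) from (1) by an elementary minimal-resolution argument. First I would reduce to the case in which $K$ is a uniform pro-$p$ group. Since $K$ is torsion-free and pro-$p$, it contains an open normal subgroup $K'$ which is uniform, and then $\Lambda = \cO[[K]]$ is a crossed product of $\cO[[K']]$ with the finite $p$-group $K/K'$; in particular $\Lambda$ is free of finite rank as a left (equivalently, right) $\cO[[K']]$-module. Standard change-of-rings arguments then transfer the relevant invariants: $\mathrm{gld}(\Lambda) = \mathrm{gld}(\cO[[K']]) = 1 + \dim K$ (note $\dim K' = \dim K$), the Auslander regular property passes from $\cO[[K']]$ to $\Lambda$, the depth of $\Lambda$ over itself equals that of $\cO[[K']]$, and the Auslander--Buchsbaum equality for finitely generated $\cO[[K']]$-modules yields the same equality for finitely generated $\Lambda$-modules.

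For $K$ uniform of dimension $d$, the key input is that $\Lambda = \cO[[K]]$ carries a separated, exhaustive, Zariskian filtration --- built from the $\varpi$-adic filtration on $\cO$ together with the filtration by powers of the augmentation ideal, after choosing an ordered topological generating set of $K$ --- whose associated graded ring $\mathrm{gr}\,\Lambda$ is the commutative polynomial ring $k[X_0, X_1, \dots, X_d]$ in $d+1$ variables over the residue field $k$. Since this polynomial ring is regular, hence Auslander regular of global dimension $d+1$ and of depth $d+1$, the transfer theorems for Zariskian filtered rings (as recalled in~\cite{Venjakob}) show that $\Lambda$ is Auslander regular with $\mathrm{gld}(\Lambda) = \mathrm{gld}(\mathrm{gr}\,\Lambda) = 1 + \dim K$ and $\dph_\Lambda(\Lambda) = \dph_{\mathrm{gr}\,\Lambda}(\mathrm{gr}\,\Lambda) = 1 + \dim K$, proving (1); part (2) follows similarly from the graded Auslander--Buchsbaum equality over $k[X_0,\dots,X_d]$, or formally from the fact that $\Lambda$ is a local Auslander--Gorenstein ring of finite global dimension.

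For part (3), fix a finitely generated $\Lambda$-module $M$ and set $n = \pd_\Lambda(M)$, which is finite by (1). Since $\Lambda$ is Noetherian and local, $M$ has a minimal free resolution $F_\bullet \to M$ by finite free $\Lambda$-modules, necessarily of length $n$, all of whose differentials $d_i$ have matrix entries in the maximal ideal $\m_\Lambda$. Applying $\Hom_\Lambda(-,\Lambda)$ and using the anti-involution of $\Lambda$ induced by inversion in $K$ (which fixes $\m_\Lambda$), the dual differentials again have entries in $\m_\Lambda$; hence $\Ext^n_\Lambda(M,\Lambda) = \coker(d_n^\ast)$ is the cokernel of a map whose image lies in $\m_\Lambda \cdot \Hom_\Lambda(F_n,\Lambda)$, and since $\Hom_\Lambda(F_n,\Lambda)$ is finite free and nonzero, Nakayama's lemma shows $\Ext^n_\Lambda(M,\Lambda) \neq 0$. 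As $\Ext^i_\Lambda(M,\Lambda)$ vanishes for $i > n$, this gives $\pd_\Lambda(M) = \max\{i : \Ext^i_\Lambda(M,\Lambda) \neq 0\}$, and the inequality $\pd_\Lambda(M) \ge j_\Lambda(M)$ is then immediate from the definition of the grade.

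The genuine content lies in the uniform case of (1) and (2) --- identifying $\mathrm{gr}\,\cO[[K]]$ with a polynomial ring and invoking the nontrivial transfer theorems for Auslander regularity and depth under Zariskian filtrations --- but this is precisely what is carried out in~\cite{Venjakob}, so in the write-up these parts can be quoted directly, leaving only the minimal-resolution argument for (3) to be spelled out.
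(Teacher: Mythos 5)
Your proposal is correct. For parts (1) and (2) you end up in the same place as the paper, which simply cites Venjakob: Auslander regularity is \cite[Thm.\ 3.26]{Venjakob}, the equality of depth and global dimension is \cite[Lem.\ 5.5(iii)]{Venjakob}, the value $1+\dim(K)$ comes from Brumer, Lazard and Serre, and Auslander--Buchsbaum is \cite[Thm.\ 6.2]{Venjakob}; your extra reduction to a uniform open normal subgroup via the crossed-product structure, and your sketch of the Zariskian-filtration mechanism with $\mathrm{gr}\,\Lambda \cong k[X_0,\dots,X_d]$, is exactly what happens inside Venjakob's proofs, so it is harmless but not needed once you quote his theorems (note only that the transfer of depth and of finiteness of global dimension across $\cO[[K']] \subset \Lambda$ is where torsion-freeness of $K$ is genuinely used, since $K/K'$ is a $p$-group and $p$ is not invertible). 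The real divergence is in part (3): the paper again just cites \cite[Cor.\ 6.3]{Venjakob}, whereas you give a self-contained argument --- take a minimal free resolution of length $n=\pd_\Lambda(M)$, dualise, observe via the anti-involution that the dual differentials still have entries in the two-sided maximal ideal, and conclude $\Ext^n_\Lambda(M,\Lambda)\neq 0$ by Nakayama. This argument is sound (it only uses that $\Lambda$ is Noetherian local with finite global dimension and that minimal resolutions compute $\pd$ via $\Tor_*(k,M)$), and it is arguably more illuminating than the citation, at the cost of a small amount of care with the left/right module structure on $\Hom_\Lambda(F_i,\Lambda)$ which you correctly flag.
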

\begin{proof}
	All these statements are contained in \cite{Venjakob}. For the
        first part of the proposition, Auslander regularity is
        \cite[Theorem 3.26]{Venjakob}. The depth of $\Lambda$ is equal
        to its global dimension by \cite[Lemma 5.5
        (iii)]{Venjakob}. The computation of the global dimension of
        $\Lambda$ follows from results of Brumer \cite[Theorem 4.1]{brumer}, Lazard \cite[Th\'{e}or\`{e}me V.2.2.8]{lazard} and Serre \cite{serrecd}.
	
	The Auslander--Buchsbaum equality is \cite[Theorem 6.2]{Venjakob}. Finally, the formula for $\pd_\Lambda(M)$ is \cite[Corollary 6.3]{Venjakob}.
\end{proof}

\begin{adefn}
  \label{defn: CM module}If $K$ is a compact $p$-adic analytic group,
  then a non-zero finitely generated~$\cO[[K]]$-module $M$ is
  \emph{Cohen--Macaulay} if $\Ext^i_{\cO[[K]]}(M,\cO[[K]])$ is non-zero for 
  just one 
  degree $i$. 
\end{adefn}

\begin{aremark}\label{rem: cm}
If~$K$ is furthermore torsion-free and pro-$p$, then by Proposition \ref{venja}, a finitely generated $\cO[[K]]$-module $M$ 
	is Cohen--Macaulay if and only if $\mathrm{depth}_{\cO[[K]]}(M) = 
	\delta_{\cO[[K]]}(M)$.
\end{aremark}

If $K$ is an arbitrary compact $p$-adic analytic group, then $\cO[[K]]$ is not necessarily local (although it is semilocal), and is not necessarily Auslander regular. But the notions of grade and projective dimension are still well-behaved, because we can apply the following lemma with $H$ a normal compact open subgroup of $K$ which is torsion free and pro-$p$.
\begin{alemma}\label{lem:samepd}
	Suppose $K$ is a compact $p$-adic analytic group and let $H \subset K$ be a 
	normal compact open subgroup. Let $M$ be a $\cO[[K]]$-module. 
	\begin{itemize}
		\item For all $i \ge 0$ we have an isomorphism of $\cO[[H]]$-modules 
		\[\Ext^i_{\cO[[H]]}(M,\cO[[H]])\cong \Ext^i_{\cO[[K]]}(M,\cO[[K]]).\] 
		In particular, we have $j_{\cO[[K]]}(M) = j_{\cO[[H]]}(M)$.
		\item Suppose $M$ is finitely generated and of finite projective dimension over $\cO[[K]]$. Suppose that $H$ is torsion free and pro-$p$. Then \[\mathrm{pd}_{\cO[[K]]}(M) = \mathrm{pd}_{\cO[[H]]}(M).\]\end{itemize}
\end{alemma}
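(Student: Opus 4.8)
The plan rests on two structural facts about the inclusion $\cO[[H]]\hookrightarrow\cO[[K]]$: it makes $\cO[[K]]$ a free $\cO[[H]]$-module of rank $[K:H]$ on either side (choosing coset representatives $g_1=e,\dots,g_n$ of $K/H$ and passing to the limit over the quotients $\cO[K/U]$ with $U$ an open normal subgroup of $K$ contained in $H$, one gets $\cO[[K]]=\bigoplus_i g_i\cO[[H]]=\bigoplus_i\cO[[H]]g_i$), and it is a free Frobenius extension.

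For the first assertion, I would fix a free resolution $P_\bullet\to M$ over $\cO[[K]]$. Since $\cO[[K]]$ is free over $\cO[[H]]$, restriction along $\cO[[H]]\hookrightarrow\cO[[K]]$ carries $P_\bullet$ to a free resolution of $M$ over $\cO[[H]]$, so both $\Ext^i_{\cO[[K]]}(M,\cO[[K]])$ and $\Ext^i_{\cO[[H]]}(M,\cO[[H]])$ are computed by applying the respective $\Hom(-,\text{ring})$ functor to $P_\bullet$. It then suffices to exhibit an isomorphism of right $\cO[[H]]$-modules $\Hom_{\cO[[K]]}(P,\cO[[K]])\cong\Hom_{\cO[[H]]}(P,\cO[[H]])$, natural in the free $\cO[[K]]$-module $P$. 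Let $\epsilon\colon\cO[[K]]\to\cO[[H]]$ be the projection onto the $g_1$-summand of $\cO[[K]]=\bigoplus_i g_i\cO[[H]]$; normality of $H$ makes $\epsilon$ a map of $(\cO[[H]],\cO[[H]])$-bimodules, and $\phi\mapsto\epsilon\circ\phi$ is the desired natural, right $\cO[[H]]$-linear transformation. By additivity it is an isomorphism as soon as it is so for $P=\cO[[K]]$, in which case it is the map $y\mapsto(x\mapsto\epsilon(xy))$, and a direct computation with the basis $\{g_i\}$ shows it carries $\{g_i\}$ to a right $\cO[[H]]$-basis of $\Hom_{\cO[[H]]}(\cO[[K]],\cO[[H]])$. (This is exactly the assertion that a finite-index subgroup gives a free Frobenius extension of Iwasawa algebras.) Passing to cohomology and then taking $\inf\{i:\Ext^i\ne0\}$ yields the isomorphism of $\Ext$-modules and $j_{\cO[[K]]}(M)=j_{\cO[[H]]}(M)$.

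For the second assertion, I would first note that $\pd_{\cO[[H]]}(M)\le\pd_{\cO[[K]]}(M)$, since restricting a length-$\pd_{\cO[[K]]}(M)$ projective $\cO[[K]]$-resolution of $M$ gives a projective $\cO[[H]]$-resolution of the same length. For the reverse inequality, set $d'=\pd_{\cO[[H]]}(M)$, which is finite and $\le\pd_{\cO[[K]]}(M)<\infty$. Choose a resolution of $M$ by finitely generated free $\cO[[K]]$-modules and let $\Omega$ be its $d'$-th syzygy; then $\Omega$ is finitely generated over $\cO[[K]]$, it is projective over $\cO[[H]]$ (because $\pd_{\cO[[H]]}(M)\le d'$), and $\pd_{\cO[[K]]}(\Omega)=\pd_{\cO[[K]]}(M)-d'<\infty$. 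As $\Omega$ is $\cO[[H]]$-projective we have $\Ext^i_{\cO[[H]]}(\Omega,\cO[[H]])=0$ for $i\ge1$, hence, by the first part, $\Ext^i_{\cO[[K]]}(\Omega,\cO[[K]])=0$ for $i\ge1$. Now take a resolution $0\to Q_e\to\cdots\to Q_0\to\Omega\to0$ by finitely generated projective $\cO[[K]]$-modules with $e=\pd_{\cO[[K]]}(\Omega)$. Applying $\Hom_{\cO[[K]]}(-,\cO[[K]])$ and using the vanishing of the higher self-$\Ext$'s of $\Omega$, the sequence $0\to\Omega^{\ast}\to Q_0^{\ast}\to\cdots\to Q_e^{\ast}\to0$ is exact; a finite exact complex of finitely generated projectives splits, so $\Omega^{\ast}$ is a finitely generated projective right $\cO[[K]]$-module and this complex is split exact. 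Applying $\Hom_{\cO[[K]]}(-,\cO[[K]])$ once more gives a split exact complex which, via the natural transformation $\mathrm{id}\to(-)^{\ast\ast}$ (an isomorphism on finitely generated projectives), is identified with the original resolution; comparing cokernels forces $\Omega\cong\Omega^{\ast\ast}$, so $\Omega$ is finitely generated projective over $\cO[[K]]$ and $e=0$. Therefore $\pd_{\cO[[K]]}(M)\le d'$, and equality holds.

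The step I expect to be the main obstacle is the Frobenius-extension computation in the first part — checking cleanly that $\phi\mapsto\epsilon\circ\phi$ is an isomorphism (equivalently, pinning down the bimodule structure of $\Hom_{\cO[[H]]}(\cO[[K]],\cO[[H]])$). Once the first part is in hand, the second part is formal: it is homological bookkeeping with syzygies and duality, and the finiteness hypothesis $\pd_{\cO[[K]]}(M)<\infty$ is precisely what is needed to run the reflexivity argument for $\Omega$.
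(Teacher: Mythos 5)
Your proof is correct, but it is organized quite differently from the paper's, which disposes of the lemma in two citations. For the first item the paper simply invokes Lemma 5.4 of Ardakov--Brown; your Frobenius-extension computation (the bimodule projection $\epsilon$ onto the identity-coset summand of $\cO[[K]]=\bigoplus_i g_i\cO[[H]]$, which is exactly where normality of $H$ enters) is in effect a self-contained proof of that cited result, so there you are re-deriving rather than diverging. The genuine difference is in the second item: the paper deduces it immediately from the identity $\pd_\Lambda(M)=\max\{i:\Ext^i_\Lambda(M,\Lambda)\ne 0\}$, quoting Proposition \ref{venja}(3) for $\Lambda=\cO[[H]]$ (this is where the torsion-free pro-$p$ hypothesis is used, via Auslander regularity) and Venjakob's Remark 6.4 for $\Lambda=\cO[[K]]$. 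You instead prove the only nontrivial inequality $\pd_{\cO[[K]]}(M)\le\pd_{\cO[[H]]}(M)$ by hand: pass to the $d'$-th syzygy $\Omega$, use item (1) to get $\Ext^i_{\cO[[K]]}(\Omega,\cO[[K]])=0$ for $i\ge 1$, and run the standard dualize-twice argument showing that a finitely generated module of finite projective dimension with vanishing higher self-Ext against the ring is projective. This buys independence from Auslander regularity --- your argument needs only that $\cO[[K]]$ is Noetherian and that $\pd_{\cO[[K]]}(M)<\infty$, and in fact never uses that $H$ is torsion-free pro-$p$ --- at the cost of length. One phrasing quibble: in the dualized complex $0\to\Omega^*\to Q_0^*\to\cdots\to Q_e^*\to 0$ you should split off summands starting from the right ($Q_e^*$ is projective, so the surjection onto it splits, and so on inductively); only at the end of that induction do you learn that $\Omega^*$ is projective, so ``a finite exact complex of finitely generated projectives splits'' is the conclusion of the induction rather than its input. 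The argument is standard and goes through.
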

\begin{proof}
	The first item follows from \cite[Lemma 5.4]{Ardakov-Brown-2}. The second 
	item is a combination of the first with the fact that we have 
	\[\pd_\Lambda(M) = \max\{i : \Ext^i_\Lambda(M,\Lambda) \ne 0\}\] for 
	$\Lambda = \cO[[H]]$ by Proposition \ref{venja} and we also have the 
	same equality for $\Lambda = \cO[[K]]$ by \cite[Remark 
	6.4]{Venjakob}. 
\end{proof}

From now on in this subsection we fix a compact $p$-adic analytic group $K$ and assume that $K$ is torsion free and pro-$p$. We let $\Lambda = \cO[[K]]$, and let $d = 1 + \dim(K)$, so $d$ is the global dimension of $\Lambda$. 

We use the following fundamental fact (again due to Venjakob) in this section:
\begin{alemma}\label{gradeses}
	If we have a short exact sequence of finitely generated $\Lambda$-modules $0 \rightarrow L \rightarrow M \rightarrow N \rightarrow 0$ then $j_\Lambda(M) = \min(j_\Lambda(L),j_\Lambda(N))$.
\end{alemma}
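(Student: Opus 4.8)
The plan is to apply the contravariant functor $\RHom_\Lambda(-,\Lambda)$ to the short exact sequence $0 \to L \to M \to N \to 0$ and to read off information from the long exact sequence
\[ \cdots \to \Ext^i_\Lambda(N,\Lambda) \to \Ext^i_\Lambda(M,\Lambda) \to \Ext^i_\Lambda(L,\Lambda) \to \Ext^{i+1}_\Lambda(N,\Lambda) \to \cdots, \]
using the Auslander regularity of $\Lambda$ from Proposition~\ref{venja} and the Auslander condition of Definition~\ref{def: gradedepth}. Write $a = j_\Lambda(L)$, $b = j_\Lambda(N)$, $c = j_\Lambda(M)$ and $t = \min(a,b)$. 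For $i < t$ the two outer terms in the displayed sequence vanish, hence so does $\Ext^i_\Lambda(M,\Lambda)$; this gives $c \ge t$, and I will reuse this easy half below.

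For $c \le t$ I would assume $c > t$ and derive a contradiction, using that then $\Ext^i_\Lambda(M,\Lambda) = 0$ for all $i \le c-1$, in particular for all $i \le t$. If $t = b$: when $t=0$, the injection $\Hom_\Lambda(N,\Lambda) \hookrightarrow \Hom_\Lambda(M,\Lambda) = 0$ forces $b \ge 1$, a contradiction; when $t \ge 1$, the segment $0 = \Ext^{t-1}_\Lambda(M,\Lambda) \to \Ext^{t-1}_\Lambda(L,\Lambda) \to \Ext^t_\Lambda(N,\Lambda) \to \Ext^t_\Lambda(M,\Lambda) = 0$ shows $\Ext^{t-1}_\Lambda(L,\Lambda) \cong \Ext^t_\Lambda(N,\Lambda) \ne 0$, so $a \le t-1 < t = b \le a$, a contradiction. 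If instead $t = a < b$: since $\Ext^t_\Lambda(M,\Lambda) = 0$ the long exact sequence gives an injection $\Ext^t_\Lambda(L,\Lambda) \hookrightarrow \Ext^{t+1}_\Lambda(N,\Lambda)$, so $\Ext^t_\Lambda(L,\Lambda) \ne 0$ forces $b = t+1$, and the Auslander condition applied to this submodule of $\Ext^{t+1}_\Lambda(N,\Lambda)$ yields $j_\Lambda(\Ext^t_\Lambda(L,\Lambda)) \ge t+1$.

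It then remains to contradict this last inequality by showing $j_\Lambda(\Ext^{j_\Lambda(L)}_\Lambda(L,\Lambda)) = j_\Lambda(L)$ (recall $t = j_\Lambda(L)$), i.e. $\Ext^t_\Lambda(\Ext^t_\Lambda(L,\Lambda),\Lambda) \ne 0$; this is the one point that genuinely uses Auslander regularity rather than a bare long exact sequence, and I regard it as the main obstacle. The idea is that, $\Lambda$ being Auslander regular of finite global dimension, $\RHom_\Lambda(-,\Lambda)$ is a duality on the bounded derived category of finitely generated $\Lambda$-modules, so $L \cong \RHom_\Lambda(\RHom_\Lambda(L,\Lambda),\Lambda)$, and the resulting biduality spectral sequence $E_2^{p,q} = \Ext^p_\Lambda(\Ext^q_\Lambda(L,\Lambda),\Lambda)$, which by the Auslander condition lives in the range $p \ge q$, filters $L$ by its diagonal terms $E^{p,p}_\infty$; the corner term $E_2^{t,t}$ receives no differentials, and if it vanished then $L$ would be a finite iterated extension of modules of grade $\ge t+1$ (the Auslander condition controls the graded pieces), whence $j_\Lambda(L) \ge t+1$ by the easy half above — impossible. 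Hence $\Ext^t_\Lambda(\Ext^t_\Lambda(L,\Lambda),\Lambda) \ne 0$, giving $t \ge t+1$, absurd, and the proof is complete. (This double-Ext statement, together with the lemma itself, is due to Venjakob; in the write-up one may simply cite \cite{Venjakob}, but I have indicated the argument since it is short once Proposition~\ref{venja} is available.)
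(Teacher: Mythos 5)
Your argument is correct in outline, but the paper does not actually prove this lemma: its entire proof is the citation ``This is [Venjakob, Proposition 3.6]'', and what you have written out is essentially a reconstruction of the standard Levasseur--Bj\"ork argument lying behind that citation. The long-exact-sequence bookkeeping is fine: the easy inequality $c \ge t$, the case $t=b$ (both subcases), and the case $t=a<b$ reducing everything to the statement $j_\Lambda\bigl(\Ext^{j_\Lambda(L)}_\Lambda(L,\Lambda)\bigr)=j_\Lambda(L)$ are all correct, and you rightly identify that corner purity as the only input beyond the Auslander condition itself.

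The one soft spot is inside your sketch of that corner statement. You say that if $E_2^{t,t}$ vanished then $L$ would be filtered by the diagonal terms $E_\infty^{p,p}$ with $p\ge t+1$, each of grade $\ge t+1$ ``since the Auslander condition controls the graded pieces''. But $E_\infty^{p,p}$ is only a \emph{subquotient} of $\Ext^p_\Lambda(\Ext^p_\Lambda(L,\Lambda),\Lambda)$, and the Auslander condition bounds the grade of \emph{submodules} of such an Ext group, not of their quotients. To close this you need the further standard fact that grade does not decrease under passage to quotients (equivalently, $\delta_\Lambda(M/N)\le\delta_\Lambda(M)$), which over a noncommutative Auslander--Gorenstein ring is itself a nontrivial consequence of the Auslander condition (it appears in Venjakob's Proposition 3.5). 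Since you explicitly allow yourself to cite Venjakob for the double-Ext statement, this is not a fatal gap, but as written the spectral-sequence paragraph does not quite stand on its own; either cite the corner purity outright, as the paper implicitly does, or supply the quotient-grade inequality as a separate lemma.
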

\begin{proof}
	This is \cite[Proposition 3.6]{Venjakob}.
\end{proof}

The next two lemmas are generalisations of \cite[Lemmas 6.1, 6.2]{1207.4224}:

\begin{alemma}\label{cg61}If $N$ is a finitely generated
  $\Lambda$-module with projective dimension $j$, and $0 \ne M
  \subseteq N$, then $j_\Lambda(M) \le j$. 
  
 \end{alemma}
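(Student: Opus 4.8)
The plan is to reduce the statement to a dimension-theoretic one and then induct on $j$. By Proposition~\ref{venja}, $\Lambda := \cO[[K]]$ is Auslander regular of global dimension $d = 1 + \dim(K)$, so the Auslander--Buchsbaum equality gives $\pd_\Lambda(N) = d - \dph_\Lambda(N)$; since $j_\Lambda(M) = d - \delta_\Lambda(M)$ by definition, the assertion $j_\Lambda(M) \le j = \pd_\Lambda(N)$ is equivalent to $\delta_\Lambda(M) \ge \dph_\Lambda(N)$, i.e.\ to the statement that every nonzero submodule of $N$ has dimension at least $\dph_\Lambda(N)$. (In the commutative case this is the familiar fact that the depth of a module is bounded above by $\dim R/\mathfrak{p}$ for every associated prime $\mathfrak{p}$, applied to an associated prime lying in a nonzero submodule.) It is convenient to argue directly with $j_\Lambda$ and $\pd_\Lambda$.

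I would prove $j_\Lambda(M) \le \pd_\Lambda(N)$ by induction on $j = \pd_\Lambda(N)$. For $j = 0$ the module $N$ is projective, hence a submodule of a finite free $\Lambda$-module $\Lambda^r$; then $M \subseteq \Lambda^r$ and, choosing $0 \ne x \in M$, some coordinate projection $\Lambda^r \to \Lambda$ is nonzero on $x$, so $\Hom_\Lambda(M, \Lambda) \ne 0$ and $j_\Lambda(M) = 0$. For the inductive step, choose a surjection $\pi \colon P \twoheadrightarrow N$ from a finite free module with kernel $N'$, so that $\pd_\Lambda(N') = j - 1$. Put $M' := \pi^{-1}(M) \subseteq P$, a nonzero submodule, fitting into $0 \to N' \to M' \to M \to 0$. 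As in the base case $j_\Lambda(M') = 0$, so Lemma~\ref{gradeses} gives $0 = j_\Lambda(M') = \min(j_\Lambda(N'), j_\Lambda(M))$; if $j_\Lambda(M) = 0$ we are done, so assume $j_\Lambda(N') = 0$ and $j_\Lambda(M) \ge 1$. Dualising the displayed sequence into $\Hom_\Lambda(-,\Lambda)$, we have $\Hom_\Lambda(M, \Lambda) = 0$, and the connecting map identifies the cokernel of the restriction $\Hom_\Lambda(M', \Lambda) \to \Hom_\Lambda(N', \Lambda)$ with a submodule of $\Ext^1_\Lambda(M, \Lambda)$; hence if that restriction map fails to be surjective then $\Ext^1_\Lambda(M, \Lambda) \ne 0$ and $j_\Lambda(M) \le 1 \le j$, completing the step.

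This leaves the single case in which the first syzygy $N'$ of $N$ is torsion-free ($j_\Lambda(N') = 0$) and every $\Lambda$-linear map $N' \to \Lambda$ extends over $M'$; this is where I expect the real work to be, since it is precisely the point at which the commutative argument invokes associated primes and there is no naive substitute over the non-commutative ring $\cO[[K]]$. I would dispose of it using the Auslander condition itself (Definition~\ref{def: gradedepth}) together with the biduality spectral sequence $\Ext^p_\Lambda(\Ext^q_\Lambda(-,\Lambda),\Lambda)$ for the Auslander-regular ring $\Lambda$, or --- probably more cleanly --- by first reducing to the case that $K$ is uniform pro-$p$ (permissible by Lemma~\ref{lem:samepd}, which shows $j_\Lambda$ and $\pd_\Lambda$ are unchanged on passing to an open subgroup) and then running a secondary induction on $\dim(K)$ via the normal regular element $g - 1$ for a suitable $g \in K$, using $\cO[[K]]/(g-1) \cong \cO[[K/\overline{\langle g\rangle}]]$ and treating the $(g-1)$-torsion and $(g-1)$-torsion-free parts of $M$ and $N$ separately. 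In either approach the bookkeeping around how submodules of $N$ interact with the dimension filtration is the main obstacle; the rest is formal.
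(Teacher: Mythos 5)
Your reduction steps are correct as far as they go: the base case ($N$ projective, hence $M$ embeds in a free module and $j_\Lambda(M)=0$), the syzygy sequence $0 \to N' \to M' \to M \to 0$ with $j_\Lambda(M')=0$, the application of Lemma~\ref{gradeses}, and the dualised long exact sequence are all fine. But the proof is not complete, because the entire content of the lemma sits in the case you defer to your final paragraph, and neither strategy sketched there is carried out. Note that your induction on $j$ never actually invokes its inductive hypothesis in that remaining case --- $M$ is a \emph{quotient} of $M'$, not a submodule of anything of projective dimension $j-1$ --- which is a sign the induction is not structured so as to close. Your second fallback (secondary induction on $\dim K$ via a normal element $g-1$, treating torsion and torsion-free parts separately) is particularly doubtful: the $(g-1)$-torsion submodule of $N$ has no controlled projective dimension, so the hypothesis does not descend to the pieces, and for general uniform $K$ there is no obvious supply of such normal elements.

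What is actually needed is your first suggestion, executed: the bidualizing spectral sequence $\Ext^p_\Lambda(\Ext^q_\Lambda(N,\Lambda),\Lambda)$ for the Auslander regular ring $\Lambda$ equips $N$ with a finite filtration whose graded pieces are (if nonzero) pure of grade $p$, and the vanishing of $\Ext^q_\Lambda(N,\Lambda)$ for $q>\pd_\Lambda(N)$ forces the filtration to stop at $p=\pd_\Lambda(N)$; a nonzero $M\subseteq N$ then meets some graded piece in a nonzero submodule of grade $\le \pd_\Lambda(N)$, and Lemma~\ref{gradeses} gives $j_\Lambda(M)\le\pd_\Lambda(N)$. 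This purity/filtration argument is precisely what \cite[Prop.\ 3.10]{Venjakob} packages, and the paper's own proof is simply a citation of that result. So you have correctly identified the right tool, but as written your final paragraph is a statement of intent rather than a proof, and the ``rest is formal'' claim conceals the only nontrivial step.
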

\begin{proof}
Since $\Lambda$ is Auslander regular, this follows immediately 
from \cite[Proposition 3.10]{Venjakob}. 
\end{proof}

\begin{alemma}\label{lem:CG}
	Suppose $l_0$ is an integer with $0 \le l_0 \le d$. Let $P_{\bullet}$ be a chain 
	complex of finite free $\Lambda$-modules, concentrated in degrees 
	$0,\ldots,l_0$. Assume that $H_*(P_{\bullet}) \ne 0$. Then $j_\Lambda(H_*(P_{\bullet})) \le 
	l_0$ and if equality occurs then:
	\begin{enumerate}
		\item $P_{\bullet}$ is a projective resolution of $H_0(P_{\bullet})$.
		\item We have $\pd_\Lambda(H_0(P_{\bullet})) = j_\Lambda(H_0(P_{\bullet})) = l_0$.
	\end{enumerate}
	
	We have the same statements if we replace $\Lambda$ with $\Omega := 
	\Lambda/\varpi = k[[K]]$.
\end{alemma}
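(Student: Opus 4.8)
The plan is to dualise $P_\bullet$. Write $R=\Lambda$ (the case $R=\Omega=k[[K]]$ is identical, using that $\Omega$ is again Auslander regular by~\cite{Venjakob}), put $j:=j_R(H_*(P_\bullet))$, and set $C^\bullet:=\Hom_R(P_\bullet,R)$, a cochain complex of finite free \emph{right} $R$-modules concentrated in cohomological degrees $[0,l_0]$; since each $P_i$ is finite free we have $\Hom_R(C^\bullet,R)\cong P_\bullet$. First I would note, using Lemma~\ref{gradeses}, that $j=\min_i j_R(H_i(P_\bullet))$. Because $P_\bullet$ is a bounded complex of projectives, $C^\bullet=\RHom_R(P_\bullet,R)$, and the spectral sequence of the canonical (homological) truncation filtration of $P_\bullet$ reads $E_2^{p,q}=\Ext^p_R(H_q(P_\bullet),R)\Rightarrow H^{p+q}(C^\bullet)$, where $q\ge 0$ since $P_\bullet$ is supported in non-negative degrees. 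As $E_2^{p,q}\ne 0$ forces $p\ge j_R(H_q(P_\bullet))\ge j$, every nonzero term has $p+q\ge j$, so $H^n(C^\bullet)=0$ for all $n<j$.

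For the inequality $j\le l_0$ I would argue by contradiction: if $j>l_0$ then $H^n(C^\bullet)=0$ for all $n\le l_0$, and since $C^\bullet$ lives in degrees $[0,l_0]$ it is acyclic; a bounded acyclic complex of finite projectives is contractible, hence so is $\Hom_R(C^\bullet,R)\cong P_\bullet$, contradicting $H_*(P_\bullet)\ne 0$.

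Now suppose $j=l_0$. Then $H^n(C^\bullet)=0$ for $n<l_0$ by the above and for $n>l_0$ since $C^\bullet$ is concentrated in degrees $[0,l_0]$; moreover in total degree $l_0$ the only term that can survive is $E_\infty^{l_0,0}$ (for $q\ge 1$ the term is $\Ext^{l_0-q}_R(H_q(P_\bullet),R)$ with $l_0-q<l_0\le j_R(H_q(P_\bullet))$, hence zero, and there are no differentials into or out of the edge position $E_\ast^{l_0,0}$), so $H^{l_0}(C^\bullet)\cong M:=\Ext^{l_0}_R(H_0(P_\bullet),R)$. Thus $C^\bullet$ has cohomology concentrated in a single degree, i.e. $C^\bullet\simeq M[-l_0]$ in $D(R^{\mathrm{op}})$; applying $\RHom_{R^{\mathrm{op}}}(-,R)$ and using $\Hom_R(C^\bullet,R)\cong P_\bullet$ gives $P_\bullet\simeq\RHom_{R^{\mathrm{op}}}(M,R)[l_0]$, that is $H_i(P_\bullet)\cong\Ext^{l_0-i}_{R^{\mathrm{op}}}(M,R)$ for all $i$. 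The Auslander condition (\cite[Prop.~3.10]{Venjakob}, as used in the proof of Lemma~\ref{cg61}; recall $R^{\mathrm{op}}\cong R$, so the Auslander--Gorenstein property is left--right symmetric) gives $j_{R^{\mathrm{op}}}(M)\ge l_0$, since $M$ is a submodule of $\Ext^{l_0}_R(H_0(P_\bullet),R)$. Hence $\Ext^{l_0-i}_{R^{\mathrm{op}}}(M,R)=0$ for $i>0$, so $H_i(P_\bullet)=0$ for $i\ne 0$: this is assertion~(1), namely $P_\bullet$ is a projective resolution of $H_0(P_\bullet)$, necessarily of length $\le l_0$. In particular $\pd_R(H_0(P_\bullet))\le l_0$, while $j_R(H_0(P_\bullet))=j_R(H_*(P_\bullet))=l_0\le\pd_R(H_0(P_\bullet))$ by Proposition~\ref{venja}(3), giving assertion~(2).

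The only genuine inputs are the Auslander regularity of $\Lambda$ and $\Omega$ from Proposition~\ref{venja}, used both for the vanishing of $\Ext^p_R(H_q(P_\bullet),R)$ for $p<j$ and for the Auslander inequality $j_{R^{\mathrm{op}}}(M)\ge l_0$; everything else is homological algebra. I expect the only real work to be bookkeeping: keeping straight the passage between left and right $R$-modules under $\Hom_R(-,R)$, the indexing conventions in the truncation spectral sequence, and the edge arguments in the equality case. There is no serious conceptual obstacle.
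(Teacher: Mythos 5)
Your proof is correct, but it takes a genuinely different route from the paper's. The paper's argument is short and elementary: let $m$ be the largest degree with $H_m(P_\bullet)\neq 0$; then $P_{l_0}\to\cdots\to P_{m+1}\to P_m$ is a projective resolution of $K_m:=P_m/\im(d_{m+1})$ of length $\le l_0-m$, and $H_m(P_\bullet)$ is a nonzero submodule of $K_m$, so Lemmas~\ref{gradeses} and~\ref{cg61} give $j_\Lambda(H_*(P_\bullet))\le j_\Lambda(H_m(P_\bullet))\le \pd_\Lambda(K_m)\le l_0-m\le l_0$; equality forces $m=0$, whence $K_0=H_0(P_\bullet)$ and both conclusions follow at once. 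You replace this with dualization, the hyperext spectral sequence, and a double-duality step. Both arguments ultimately rest on the same input, the Auslander condition for $\Lambda$ (resp.\ $\Omega$): it enters your proof once, via the bound $j_{R^{\mathrm{op}}}(M)\ge l_0$ for $M=\Ext^{l_0}_R(H_0(P_\bullet),R)$, and the paper's once, through Lemma~\ref{cg61} (whose proof is \cite[Prop.~3.10]{Venjakob}). Your spectral-sequence bookkeeping checks out — the vanishing of $E_2^{p,q}$ for $p<j$ is just the definition of grade combined with Lemma~\ref{gradeses}, and the edge argument at $(l_0,0)$ is right — and your approach buys slightly more in the equality case, namely the duality $H_i(P_\bullet)\cong\Ext^{l_0-i}_{R^{\mathrm{op}}}(\Ext^{l_0}_R(H_0(P_\bullet),R),R)$, which is the noncommutative analogue of the local-duality take on \cite[Lem.~6.2]{1207.4224}. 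The cost is the heavier machinery and the left/right-module bookkeeping, all of which the paper's truncate-at-the-top-degree argument avoids.
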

\begin{proof}
Let $m \ge 0$ be the largest integer such that $H_m(P_{\bullet}) \ne 0$. Consider the 
complex \[P_{l_0}  \rightarrow \cdots \rightarrow 
P_{m+1}\overset{d_{m+1}}{\rightarrow}P_m.\] By the definition of $m$, this 
complex is a projective resolution of $K_m:= P_m/\im(d_{m+1})$. It follows that 
$\pd_\Lambda(K_m) \le l_0-m$. 

Since $H_m(P_{\bullet})$ is a non-trivial submodule of $K_m$, by Lemmas~\ref{gradeses}
 and~\ref{cg61}  we have \[j_\Lambda(H_*(P_{\bullet})) \le
  j_\Lambda(H_m(P_{\bullet})) \le \pd_\Lambda(K_m)\le l_0 - m\le l_0,\]as claimed. 

If we have the equality $j_\Lambda(H_*(P_{\bullet})) = l_0$, then
equality holds in all the above inequalities, so that in particular $m
= 0$, $K_m=H_0(P_{\bullet})$, and the other claims follow immediately.

The proof with $\Lambda$ replaced by $\Omega$ is identical, using the fact that 
the relevant lemmas all hold with $\Lambda$ replaced by $\Omega$ (which is 
again Auslander regular). \end{proof}

We finish this subsection with a Lemma computing the codimension of a tensor 
product of two modules. 

\begin{alemma}\label{tensorgrade}
	Let $G, H$ be compact $p$-adic analytic groups. Let 
	$M, N$ be finitely generated $k[[G]]$- and $k[[H]]$-modules. Then 
	$j_{k[[G\times H]]}(M\wotimes_k N) = j_{k[[G]]}(M) + j_{k[[H]]}(N)$.
\end{alemma}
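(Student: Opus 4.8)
Write $A=k[[G]]$ and $B=k[[H]]$, so that $A\wotimes_k B=k[[G\times H]]$; all three rings are Noetherian, and each is canonically isomorphic to its opposite ring (via $g\mapsto g^{-1}$), so we need not distinguish left and right modules. We may assume $M,N\neq 0$, the contrary case being trivial. The plan is to establish a K\"unneth formula
\[\Ext^n_{A\wotimes_k B}(M\wotimes_k N,\,A\wotimes_k B)\;\cong\;\bigoplus_{i+j=n}\Ext^i_A(M,A)\wotimes_k\Ext^j_B(N,B)\]
for every $n\ge 0$, and then to read off the grade.

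First I would choose resolutions $P_\bullet\to M$ and $Q_\bullet\to N$ by finite projective $A$- resp.\ $B$-modules, which exist since $A$ and $B$ are Noetherian and $M,N$ are finitely generated (we do not need them to be bounded). Since $k$ is a field, $-\wotimes_k-$ is an exact functor on pseudocompact $k$-modules (see e.g.\ \cite{brumer}), so the completed tensor product over $k$ introduces no higher $\Tor$-terms. Hence the total complex $P_\bullet\wotimes_k Q_\bullet$ — whose degree-$n$ term $\bigoplus_{i+j=n}P_i\wotimes_k Q_j$ is a \emph{finite} direct sum of finite projective $A\wotimes_k B$-modules, using the identity $A^a\wotimes_k B^b=(A\wotimes_k B)^{ab}$ — is a resolution of $M\wotimes_k N$ by finite projective $A\wotimes_k B$-modules, and so computes $\Ext^*_{A\wotimes_k B}(M\wotimes_k N,A\wotimes_k B)$. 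Moreover, for finite projective $P$ over $A$ and $Q$ over $B$ there is a natural isomorphism $\Hom_A(P,A)\wotimes_k\Hom_B(Q,B)\cong\Hom_{A\wotimes_k B}(P\wotimes_k Q,\,A\wotimes_k B)$: it holds for $P=A$, $Q=B$, both sides then being $A\wotimes_k B$, and it extends additively to finite free modules and thence to direct summands. Applying it termwise gives an isomorphism of complexes $\Hom_{A\wotimes_k B}(P_\bullet\wotimes_k Q_\bullet,\,A\wotimes_k B)\cong\Hom_A(P_\bullet,A)\wotimes_k\Hom_B(Q_\bullet,B)$; taking cohomology and invoking the exactness of $-\wotimes_k-$ once more (K\"unneth for complexes over a field) yields the displayed formula.

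To conclude, each $\Ext^i_A(M,A)$ is a subquotient of the finite projective $A$-module $\Hom_A(P_i,A)$, with closed image by compactness, hence is a pseudocompact $k$-module; and $\Ext^{j_A(M)}_A(M,A)\neq 0$ by definition of the grade, while $\Ext^i_A(M,A)=0$ for $i<j_A(M)$; similarly for $B$. The completed tensor product over the field $k$ of two nonzero pseudocompact $k$-modules is nonzero — choose continuous $k$-linear surjections of each onto $k$; their completed tensor product is a surjection onto $k\wotimes_k k=k$. Therefore the right-hand side of the displayed formula vanishes for $n<j_A(M)+j_B(N)$ and is nonzero for $n=j_A(M)+j_B(N)$, whence $j_{A\wotimes_k B}(M\wotimes_k N)=j_A(M)+j_B(N)$, as required. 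The only genuine work here is the pseudocompact-module bookkeeping: the exactness of $-\wotimes_k-$ over $k$ and its compatibility with $\Hom$ into the ring, i.e.\ the two applications of K\"unneth above — this is standard formalism, but it is where the argument really lives.
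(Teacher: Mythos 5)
Your proposal is correct and follows essentially the same route as the paper: resolve $M$ and $N$ by finite projectives, identify the dual of the tensor-product resolution termwise with the completed tensor product of the duals, and apply K\"unneth over $k$. The only differences are cosmetic: the paper first reduces to $G,H$ torsion-free pro-$p$ (via Lemma~\ref{lem:samepd}) so that the resolutions can be taken to be bounded finite free complexes, whereas you allow unbounded projective resolutions and instead note that each graded piece of the total complex is a finite sum, and you spell out the final nonvanishing step (that a completed tensor product of nonzero pseudocompact $k$-modules is nonzero) which the paper leaves implicit.
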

\begin{proof}
	By Lemma \ref{lem:samepd} we can assume that $G$ and $H$ are torsion free 
	pro-$p$.
	
	Set $\Omega =  k[[G\times H]]$, $\Omega_1 = k[[G]]$ and $\Omega_2 = 
	k[[H]]$. Note that we can naturally identify $\Omega$ with the completed 
	tensor product $\Omega_1\wotimes_k \Omega_2$.
	Let $P_\bullet \rightarrow M$ and $Q_\bullet \rightarrow N$ be finite free 
	resolutions of $M$ and $N$ respectively (they exist since $\Omega_1$ and 
	$\Omega_2$ have finite global dimension). 
	
	We denote by $P_\bullet \wotimes_k Q_\bullet$ the finite free complex of 
	$\Omega$ modules obtained from totalizing the double complex $(P_i 
	\wotimes_k P_j)_{i,j}$. This is a finite free resolution of 
	$M\wotimes_k N$. We have natural isomorphisms \[\Hom_\Omega(P_\bullet 
	\wotimes_k Q_\bullet,\Omega) = 
	\Hom_{\Omega_1}(P_\bullet,\Omega_1)\wotimes_k\Hom_{\Omega_2}(Q_\bullet,\Omega_2).\]
	The equality $j_{k[[G\times H]]}(M\wotimes_k N) = j_{k[[G]]}(M) + 
	j_{k[[H]]}(N)$ follows immediately. Indeed, we have a spectral sequence 
	\[\Ext^i_{\Omega_1}(M,\Omega_1)\wotimes_k\Ext^j_{\Omega_2}(N,\Omega_2) 
	\Rightarrow \Ext^{i+j}_{\Omega}(M\wotimes_k N,\Omega).\qedhere\]
\end{proof}

\subsection{Gelfand--Kirillov dimension}In this section we assume that
$K$ is a compact $p$-adic analytic group which is uniform
pro-$p$. (Note that any compact $p$-adic analytic group contains a
normal open subgroup which is uniform pro-$p$, so this will not be a
problematic assumption in our applications.) We again let $\Lambda = \cO[[K]]$, and set $d = 1 + \dim(K)$. We let $\Omega = \Lambda/\varpi\Lambda$. We denote by $J_\Omega$ the Jacobson radical of $\Omega$. The ring $\Omega$ is again Auslander regular, and for finitely generated $\Omega$ modules the dimension $\delta_\Omega$ (or equivalently the grade $j_\Omega$) can be computed as a Gelfand--Kirillov dimension:

\begin{aprop}\label{GKdim}
	Let $M$ be a finitely generated $\Omega$-module. We have \[\delta_\Omega(M) = \limsup \log_n \dim_k M/J_\Omega^n M.\]
\end{aprop}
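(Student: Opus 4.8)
The plan is to pass to the associated graded ring of $\Omega$ for the $J_\Omega$-adic filtration and reduce the statement to commutative dimension theory. First I would invoke the structural input, due to Lazard: since $K$ is uniform pro-$p$ of dimension $e := \dim(K)$, filtering $\Omega = k[[K]]$ by the powers of its Jacobson radical $J_\Omega$ (the augmentation ideal), the associated graded ring $\mathrm{gr}\,\Omega := \bigoplus_{n\ge 0} J_\Omega^n/J_\Omega^{n+1}$ is isomorphic to the polynomial ring $k[\xi_1,\dots,\xi_e]$ on $e$ commuting variables of degree one (cf.\ \cite[\S 3]{Venjakob} and \cite{lazard}). In particular $\mathrm{gr}\,\Omega$ is a commutative regular Noetherian ring with $\mathrm{gld}(\mathrm{gr}\,\Omega) = e = \mathrm{gld}(\Omega)$, each graded piece $J_\Omega^n/J_\Omega^{n+1}$ is finite-dimensional over $k$ (so $\dim_k\Omega/J_\Omega^n < \infty$ for all $n$), and $\Omega$, being $J_\Omega$-adically complete, is a Zariskian filtered ring in the sense used in \cite{Venjakob}.

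Next, given a finitely generated $\Omega$-module $M$, I would equip it with the $J_\Omega$-adic filtration $F^nM = J_\Omega^n M$. Since $J_\Omega$ is a two-sided ideal and $M$ is generated by finitely many elements $m_1,\dots,m_r$ we have $J_\Omega^n M = \sum_i J_\Omega^n m_i$, so each $\mathrm{gr}_n M$ is a quotient of $(\mathrm{gr}_n\Omega)^r$; hence this filtration is good, $\mathrm{gr}\,M$ is a finitely generated graded $k[\xi_1,\dots,\xi_e]$-module, and $\dim_k M/J_\Omega^n M < \infty$ for all $n$. By the Hilbert--Serre theorem the function $n \mapsto \dim_k \mathrm{gr}_n M$ agrees for $n \gg 0$ with a polynomial of degree $\dim_{\mathrm{gr}\,\Omega}(\mathrm{gr}\,M) - 1$, so its partial sums $\dim_k M/J_\Omega^n M = \sum_{i<n}\dim_k\mathrm{gr}_i M$ agree for $n \gg 0$ with a polynomial of degree $\dim_{\mathrm{gr}\,\Omega}(\mathrm{gr}\,M)$, whence
\[\limsup_n \log_n \dim_k(M/J_\Omega^n M) = \dim_{\mathrm{gr}\,\Omega}(\mathrm{gr}\,M).\]

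It then remains to identify the right-hand side with $\delta_\Omega(M)$. The key point — and the main obstacle — is the equality of grades $j_\Omega(M) = j_{\mathrm{gr}\,\Omega}(\mathrm{gr}\,M)$ on passing to the associated graded ring; this is precisely the sort of statement that holds for Zariskian filtrations on Auslander regular rings, and is part of the dimension theory of \cite{Venjakob} (it is exactly what makes the ``canonical dimension'' $\delta_\Omega$ well-behaved). Granting it, since $\mathrm{gr}\,\Omega = k[\xi_1,\dots,\xi_e]$ is commutative, regular of global dimension $e$, and Cohen--Macaulay, every finitely generated graded $\mathrm{gr}\,\Omega$-module $N$ satisfies $j_{\mathrm{gr}\,\Omega}(N) = e - \dim_{\mathrm{gr}\,\Omega}(N)$ (by Auslander--Buchsbaum together with the Cohen--Macaulay property), so
\[\delta_\Omega(M) = \mathrm{gld}(\Omega) - j_\Omega(M) = e - j_{\mathrm{gr}\,\Omega}(\mathrm{gr}\,M) = \dim_{\mathrm{gr}\,\Omega}(\mathrm{gr}\,M),\]
which together with the previous display proves the proposition. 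The only routine point I would want to verify carefully along the way is that it suffices to use the particular good filtration $F^nM = J_\Omega^nM$ in the grade computation (rather than an arbitrary good filtration), but this is standard in the Zariskian setting and requires no special resolution of $M$; the genuinely nontrivial ingredients are entirely the two facts imported from \cite{Venjakob} and \cite{lazard}.
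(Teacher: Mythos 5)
Your argument is correct: the paper offers no proof of its own here, simply citing \cite[Prop.~5.4(3)]{Ardakov-Brown}, and what you have written is essentially the standard proof of that cited result --- Lazard's identification of the $J_\Omega$-adic associated graded ring of $k[[K]]$ (for $K$ uniform) with a polynomial ring in $\dim(K)$ variables, Hilbert--Serre to convert the growth of $\dim_k M/J_\Omega^n M$ into the Krull dimension of $\mathrm{gr}\,M$, and the Zariskian-filtration fact that $j_\Omega(M)=j_{\mathrm{gr}\,\Omega}(\mathrm{gr}\,M)$ for any good filtration, combined with $j = e - \dim$ over the Cohen--Macaulay graded ring. The only point worth recording is that the grade-transfer step and its independence of the chosen good filtration are exactly the Li--Van Oystaeyen/Bj\"ork machinery underlying Venjakob's proof of Auslander regularity, so nothing beyond the inputs you name is needed.
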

\begin{proof}
	This is \cite[Prop.\ 5.4~(3)]{Ardakov-Brown}.
\end{proof}

\subsection{Comparing dimensions}
We again assume that $K$ is uniform pro-$p$ and let
$\Lambda = \cO[[K]]$. Fix a topological generating set
$a_1,\ldots,a_m$ for $K$. We consider two more Auslander regular rings
$A = \Lambda\hat{\otimes}_{\cO}\cO[[x_1,\ldots,x_r]]$ and
$B = \Lambda\hat{\otimes}_{\cO}\cO[[y_1,\ldots,y_s]]$ together with a
map $A \rightarrow B$ induced from a (local $\cO$-algebra) map
$\cO[[x_1,\ldots,x_r]] \rightarrow \cO[[y_1,\ldots,y_s]]$. 

Note that we can 
think of $A$ and $B$ as the Iwasawa algebras $\Lambda_\cO[[K\times\Z_p^r]]$ for 
appropriate $r$, and $K \times \Z_p^r$ is uniform pro-$p$, so we can
apply the results of the previous subsections to~$A$ and~$B$.

We set
$\overline{A} = A/\varpi A$ and $\overline{B} = B/\varpi B$. The goal
of this subsection is Lemma \ref{samedim} which shows that if $M$ is a
finitely generated $B$-module, which is also finitely generated as an
$A$-module, then $\delta_A(M) = \delta_B(M)$. This generalises a
well-known fact in commutative algebra \cite[Ch.~0, Prop.~16.1.9]{EGAIV1}.

\begin{alemma}\label{changeofrings}
Suppose $M$ is a finitely generated $A$-module, and let $x$ be one of 
	$\varpi,x_1,\ldots,x_r$. Then \begin{itemize}
		\item if $M$ is killed by $x$, $\delta_{A}(M) = 
		\delta_{A/x}(M)$.
		\item if $M$ is $x$-torsion free, $\delta_{A}(M) = 1 + 
		\delta_{A/x}(M/x M)$.
	\end{itemize}
\end{alemma}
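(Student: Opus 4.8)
The plan is to reduce both statements to the analogous facts about Gelfand--Kirillov dimension, using Proposition \ref{GKdim} to express the dimension $\delta$ over each ring as a $\limsup$ of $\log_n$ of a growth function, and using Lemma \ref{lem:samepd} (plus the behaviour of $j$ under short exact sequences, Lemma \ref{gradeses}) to control grades. Write $\Lambda' = A/\varpi A = \overline{A}$ and note that $x$ is a central element of $A$. I will treat the two bullets separately.

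For the first bullet, if $M$ is killed by $x$, then $M$ is a finitely generated module over $A/x$, which is again one of our Auslander regular Iwasawa-type algebras (it is $\overline{A}$ if $x=\varpi$, and $\Lambda\wotimes_\cO\cO[[x_1,\dots,\widehat{x_i},\dots,x_r]]$ if $x=x_i$). The claim $\delta_A(M)=\delta_{A/x}(M)$ is then exactly the statement that grade is insensitive to passing to a quotient ring by a central non-zerodivisor that kills $M$; concretely, when $x=\varpi$ this is covered by the reduction arguments already used in Lemma \ref{lem:CG} (the relevant Ext groups agree because a finite free resolution of $M$ over $A/x$ gives, after applying $\Hom_{A/x}(-,A/x)$, the same cohomology as $\Hom_A(-,A)$ applied to a finite free resolution over $A$ — one uses the change-of-rings spectral sequence $\Ext^i_{A/x}(M,\Ext^j_A(A/x,A))\Rightarrow \Ext^{i+j}_A(M,A)$ together with $\Ext^j_A(A/x,A)=0$ for $j\neq 1$ and $\Ext^1_A(A/x,A)\cong A/x$, which shifts degrees by $1$ but also shifts the global dimension by $1$, so that $\delta$ is unchanged). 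Alternatively, and more uniformly, one reduces mod $\varpi$ first (if $x\ne\varpi$ this is harmless since the statement for $x=\varpi$ gives $\delta_A(M)=\delta_{\overline A}(M)$ and similarly over $A/x$) and then invokes Proposition \ref{GKdim}: the growth function $\dim_k M/J^n M$ is computed intrinsically from the $\overline{A}$-module structure on $M$, which factors through $\overline{A}/x = \overline{A/x}$, so the two $\limsup$s literally coincide.

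For the second bullet, suppose $M$ is $x$-torsion free. Then we have a short exact sequence $0\to M \xrightarrow{x} M \to M/xM \to 0$ of finitely generated $A$-modules. By Lemma \ref{gradeses}, $j_A(M) = \min(j_A(M), j_A(M/xM))$, which gives $j_A(M)\le j_A(M/xM)$; I will show the inequality is strict and in fact $j_A(M/xM) = j_A(M)+1$, equivalently $\delta_A(M) = \delta_A(M/xM)+1$, and then combine with the first bullet ($\delta_A(M/xM)=\delta_{A/x}(M/xM)$) to conclude. To get $j_A(M/xM)=j_A(M)+1$: from the long exact sequence of $\Ext_A(-,A)$ applied to the short exact sequence above, multiplication by $x$ on $\Ext^i_A(M,A)$ fits into $\Ext^{i}_A(M,A)\xrightarrow{x}\Ext^i_A(M,A)\to \Ext^{i+1}_A(M/xM,A)\to\Ext^{i+1}_A(M,A)\xrightarrow{x}\cdots$. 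Since $x$ lies in the Jacobson radical of $A$ (it is a central element of a local or semilocal ring with $x$ in the maximal ideals) and the $\Ext$ groups are finitely generated $A$-modules, Nakayama shows $\Ext^i_A(M,A)=x\Ext^i_A(M,A)$ would force $\Ext^i_A(M,A)=0$; hence for $i = j_A(M)$, the map $\Ext^i_A(M,A)\xrightarrow{x}\Ext^i_A(M,A)$ is not surjective, so $\Ext^{i+1}_A(M/xM,A)\ne 0$, giving $j_A(M/xM)\le j_A(M)+1$, while for $i<j_A(M)$ the outer terms vanish so $\Ext^{i}_A(M/xM,A)$ is squeezed between zero terms and vanishes, giving $j_A(M/xM)\ge j_A(M)+1$. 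Combining, $j_A(M/xM)=j_A(M)+1$, i.e. $\delta_A(M/xM)=\delta_A(M)-1$, and rearranging with the first bullet yields $\delta_A(M)=1+\delta_{A/x}(M/xM)$.

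The main obstacle is bookkeeping around the change-of-rings spectral sequence and making sure the global-dimension shift and the $\Ext$-degree shift cancel correctly so that $\delta$ (not $j$) is the invariant that transfers cleanly; this is where one must be careful about whether $x=\varpi$ or $x=x_i$ (the two reductions commute, but $A/\varpi$ and $A/x_i$ have global dimensions differing by one from $A$ in a way that must be tracked). Everything else is a routine application of Lemma \ref{gradeses}, Proposition \ref{GKdim}, and Nakayama's lemma for finitely generated modules over the (semi)local Noetherian ring $A$.
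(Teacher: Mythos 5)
Your first bullet is essentially the paper's argument: the change-of-rings spectral sequence $\Ext^i_{A/x}(M,\Ext^j_A(A/x,A))\Rightarrow\Ext^{i+j}_A(M,A)$, the computation $\Ext^j_A(A/x,A)=0$ for $j\ne 1$ and $\Ext^1_A(A/x,A)=A/x$, and the observation that the degree shift $j_A(M)=1+j_{A/x}(M)$ cancels against the drop in global dimension. That part is fine.

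The second bullet has a genuine gap in the lower bound. Your long exact sequence gives, for $i+1<j_A(M)$, that $\Ext^{i+1}_A(M/xM,A)$ is squeezed between two vanishing terms, but this only yields $j_A(M/xM)\ge j_A(M)$, not $\ge j_A(M)+1$ as you claim. At the critical degree $m=j_A(M)$ the relevant piece of the sequence is
\[0=\Ext^{m-1}_A(M,A)\to\Ext^{m}_A(M/xM,A)\to\Ext^{m}_A(M,A)\xrightarrow{\;x\;}\Ext^{m}_A(M,A),\]
so $\Ext^{m}_A(M/xM,A)$ is identified with the $x$-torsion of $E:=\Ext^{j_A(M)}_A(M,A)$, and nothing formal forces this to vanish: $M$ being $x$-torsion free does not by itself imply that $E$ is $x$-torsion free. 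This is exactly the point where the paper invokes a nontrivial input, namely \cite[Thm.\ 4.3]{Levasseur}, which uses the Auslander--Gorenstein property of $A$ to give $j_A(M/xM)\ge 1+j_A(M)$ directly. Your Nakayama argument for the upper bound $j_A(M/xM)\le j_A(M)+1$ is correct and is the same as the paper's; but without the Levasseur-type purity statement (or some other use of the Auslander condition, e.g.\ that every nonzero submodule of $E$ has grade exactly $j_A(M)$, which is incompatible with a nonzero submodule killed by $x$ once you feed it back into your first bullet), the equality $j_A(M/xM)=j_A(M)+1$ is not established. You should either cite such a result or supply the purity argument explicitly.
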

\begin{proof}
	First we assume that $M$ is killed by $x$. The base
	change spectral sequence~\cite[Ex.\ 5.6.3]{weibel} for
	$\mathrm{Ext}$ is \[E_2^{i,j}:
	\mathrm{Ext}^i_{A/x}(M,\mathrm{Ext}^j_A(A/x,A))\implies
	\mathrm{Ext}_A^{i+j}(M,A)\] and
	$\mathrm{Ext}^j_A(A/x,A)$ is zero unless $j=1$,
	when we have $\mathrm{Ext}^1_A(A/x,A) =
	A/x$. Since $M$ is killed by $x$, 
	$j_A(M)>0$, and we have \[\mathrm{Ext}^i_{A/x}(M,A/x) =
	\mathrm{Ext}^{i+1}_{A}(M,A)\] for $i \ge 0$. We
	deduce that $j_A(M) = 1+j_{A/x}(M)$, and therefore $\delta_A(M) = 
	\delta_{A/x}(M)$.
	
	Now we assume that $M$ is $x$-torsion free. \cite[Thm.\
	4.3]{Levasseur}  implies that $j_{A}(M/x M) \ge 1 +
	j_{A}(M)$, so we have an exact sequence \[0 \rightarrow
	\Ext_A^{j_A(M)}(M,A)\overset{\times x}{\rightarrow}
	\Ext_A^{j_A(M)}(M,A) \rightarrow \Ext_A^{1 +
		j_{A}(M)}(M/x M,A)\] and
	$\Ext_A^{j_A(M)}(M,A)$ is a non-zero finitely
	generated $A$-module. By Nakayama's lemma we see that
	$\Ext_A^{j_A(M)}(M,A)/x
	\Ext_A^{j_A(M)}(M,A)$ is non-zero, and so
	$\Ext_A^{1 + j_{A}(M)}(M/x M)$ is also non-zero. This 
	implies that $j_{A}(M/x M) = 1 + j_{A}(M)$. The first part 
	of the lemma then gives $j_{A/x}(M/x M) = j_{A}(M)$ and so 
	$\delta_{A}(M) = 1 + \delta_{A/x}(M/x M)$.
\end{proof}

\begin{alemma}\label{lem:obviouscodimineq}
	Suppose $M$ is a finitely generated $A$-module and let $x$ be one of 
	$\varpi, x_1,\ldots,x_r$. Then \[j_A(M) \ge j_{A/x}(M/xM).\] In particular, 
	\[j_A(M) \ge j_{\Lambda}(M/(x_1,\ldots,x_r)M)\] and \[j_A(M) \ge 
	j_{\Omega}(M/(\varpi,x_1,\ldots,x_r)M).\]
\end{alemma}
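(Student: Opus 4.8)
The plan is to reduce the statement to the two special cases already handled by Lemma~\ref{changeofrings} --- namely when $M$ is annihilated by $x$, and when $M$ is $x$-torsion free --- by splitting off the $x$-torsion submodule of $M$. The element $x$ is central in $A$ and a non-zero-divisor (this is implicit in the proof of Lemma~\ref{changeofrings}), and both $A$ and $A/x$ are Auslander regular Iwasawa algebras (with coefficients in $\cO$ or in $k$) of uniform pro-$p$ groups, so Lemmas~\ref{changeofrings} and~\ref{gradeses} are available over both rings.

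First I would set $M[x] = \{m \in M : xm = 0\}$ and $N = M/M[x]$, obtaining a short exact sequence $0 \to M[x] \to M \to N \to 0$ of finitely generated $A$-modules with $M[x]$ annihilated by $x$ and $N$ being $x$-torsion free. Tensoring over $A$ with $A/x$, and using the free resolution $0 \to A \xrightarrow{x} A \to A/x \to 0$ to see that $\mathrm{Tor}_1^A(N, A/x) = N[x] = 0$, one gets a short exact sequence $0 \to M[x] \to M/xM \to N/xN \to 0$ (note that $xM[x] = 0$, so $M[x]\otimes_A A/x = M[x]$). Then Lemma~\ref{gradeses} over $A$ gives $j_A(M) = \min(j_A(M[x]), j_A(N))$, while over $A/x$ it gives $j_{A/x}(M/xM) = \min(j_{A/x}(M[x]), j_{A/x}(N/xN))$. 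Now Lemma~\ref{changeofrings} applied to $M[x]$ gives $j_A(M[x]) = 1 + j_{A/x}(M[x]) > j_{A/x}(M/xM)$, and applied to $N$ gives $j_A(N) = j_{A/x}(N/xN) \ge j_{A/x}(M/xM)$; combining these yields $j_A(M) \ge j_{A/x}(M/xM)$.

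For the two displayed inequalities I would iterate: each quotient $A/(x_1,\dots,x_i)$ is again the Iwasawa algebra $\cO[[K \times \Zp^{r-i}]]$ of a uniform pro-$p$ group, in which $x_{i+1}$ is a central non-zero-divisor with quotient $A/(x_1,\dots,x_{i+1})$, so the argument above applies verbatim with $A$ replaced by $A/(x_1,\dots,x_i)$. Chaining these gives $j_A(M) \ge j_\Lambda(M/(x_1,\dots,x_r)M)$, since $A/(x_1,\dots,x_r) = \Lambda$; and one further application over $\Lambda$ with $x = \varpi$ (using that $\Omega = \Lambda/\varpi$ is Auslander regular) gives $j_\Lambda(M/(x_1,\dots,x_r)M) \ge j_\Omega(M/(\varpi,x_1,\dots,x_r)M)$.

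The whole argument is essentially routine once the two preceding lemmas are in hand, so I do not expect a genuine obstacle; the only step requiring a little care is the left-exactness of the torsion sequence after tensoring with $A/x$, i.e. the vanishing of $\mathrm{Tor}_1^A(N, A/x)$, which is precisely where $x$-torsion-freeness of $N$ and regularity of $x$ enter. (If one prefers to avoid this $\mathrm{Tor}$ computation, one can instead observe that the image of $M[x]$ in $M/xM$ is a quotient of $M[x]$, hence has grade at least $j_{A/x}(M[x])$ by Lemma~\ref{gradeses}, and that its cokernel in $M/xM$ is still $N/xN$; this already suffices for the estimate.)
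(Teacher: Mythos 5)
There is a genuine gap at the first step of your argument: the quotient $N=M/M[x]$ of $M$ by its $x$-torsion submodule $M[x]=\{m\in M: xm=0\}$ is \emph{not} $x$-torsion free in general. For instance, if $M=A/x^2A$ then $M[x]=xA/x^2A$ and $N\cong A/xA$ is killed by $x$; in general $N[x]=M[x^2]/M[x]$, which vanishes only when $M[x]=M[x^2]$. Consequently the application of Lemma~\ref{changeofrings} to $N$ (the step ``$j_A(N)=j_{A/x}(N/xN)$''), and likewise the claimed vanishing of $\Tor_1^A(N,A/x)=N[x]$, are unjustified, and the decomposition does not close after a single step. (In the example above one in fact has $j_A(N)=1$ while $j_{A/x}(N/xN)=0$, so the asserted equality is simply false.)

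The paper's proof avoids exactly this trap: it splits off the full $x$-power torsion $M[x^\infty]$, whose quotient $M/M[x^\infty]$ genuinely is $x$-torsion free, and then treats the torsion part --- a module killed by $x^N$ for some $N$ --- by a separate argument, filtering it by the submodules $x^iM$, applying Lemma~\ref{changeofrings} to each graded piece (which is honestly killed by $x$), and using the surjections $M/xM\twoheadrightarrow x^iM/x^{i+1}M$ together with Lemma~\ref{gradeses} to compare with $j_{A/x}(M/xM)$. Your one-step decomposition might be repairable by an induction on the least $N$ with $M[x^N]=M[x^\infty]$, using the surjection $M/xM\to N/xN$ to propagate the estimate, but one would still need to control $j_A(M[x])$ against $j_{A/x}(M/xM)$, which is not immediate; as written the proof is incomplete. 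Your reduction of the two displayed ``in particular'' inequalities to the main one by iteration is fine and agrees with the paper.
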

\begin{proof}
	The `in particular' part of the lemma follows from the first part by 
	induction.

Applying Lemma \ref{changeofrings}, we see that if $M$
is $x$-torsionfree, then $j_A(M) = j_{A/x}(M/xM)$. In general, we
have an exact sequence \[0\rightarrow M[x^\infty] \rightarrow M
  \rightarrow M/M[x^\infty]\rightarrow 0\] where $M/M[x^\infty]$ is
$x$-torsionfree, so we have a short exact
sequence \[0\rightarrow A/x\otimes_A M[x^\infty] \rightarrow
  A/x\otimes_AM \rightarrow A/x\otimes_A(M/M[x^\infty])\rightarrow
  0.\] By Lemma~\ref{gradeses}, it now suffices to show that if $M$ is killed by $x^N$ for some
$N \ge 1$, then $j_A(M) \ge j_{A/x}(M/xM)$. Consider the filtration $
\{0\} = x^NM \subset x^{N-1}M \subset \cdots \subset xM \subset M$. We
have $j_A(M) = \min_i(j_A(x^iM/x^{i+1}M))$ by a repeated application of 
Lemma~\ref{gradeses} and  we therefore have $j_A(M)= 1 +
\min_i(j_{A/x}(x^iM/x^{i+1}M))$ by another application of Lemma
\ref{changeofrings}. Multiplication by $x^i$ gives a surjective
$A$-linear map $M/xM \rightarrow x^iM/x^{i+1}M$, so $j_{A/x}(M/xM) \le
j_{A/x}(x^iM/x^{i+1}M)$ for all $i$ (by Lemma~\ref{gradeses} again). In particular we 
have $j_A(M) = 1
+ j_{A/x}(M/xM)$ which gives the desired conclusion.\end{proof}

\begin{alemma}\label{commuting}We have $J_{\overline{A}}\overline{B} = \overline{B}J_{\overline{A}}$ and $J_{\overline{A}}J_{\overline{B}} = J_{\overline{B}}J_{\overline{A}}$.
\end{alemma}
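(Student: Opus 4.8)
The plan is to unwind the completed–tensor–product descriptions of $\overline{A}$ and $\overline{B}$ and reduce everything to two facts: the power series variables are central, and $\m_\Omega$ is a two-sided ideal of the local ring $\Omega = \Lambda/\varpi\Lambda = k[[K]]$. Reducing mod $\varpi$ we have $\overline{A} = \Omega\wotimes_k k[[x_1,\dots,x_r]]$ and $\overline{B} = \Omega\wotimes_k k[[y_1,\dots,y_s]]$; the map $\overline{A}\to\overline{B}$ is the identity on the common subring $\Omega$ and carries each $x_j$ to a power series $\bar f_j\in k[[y_1,\dots,y_s]]$ with zero constant term (because the given map $\cO[[x_1,\dots,x_r]]\to\cO[[y_1,\dots,y_s]]$ is local). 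Both $\overline{A}$ and $\overline{B}$ are local, with Jacobson radicals $J_{\overline{A}}$ and $J_{\overline{B}}$ equal to their maximal ideals, and in each ring the power series variables lie in the centre, since they occupy the base-ring slot of the completed tensor product.

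First I would record the two structural inputs used throughout: (i) $\m_\Omega$, being the maximal ideal of the local ring $\Omega$, is a two-sided ideal, and it is generated — as a left ideal, as a right ideal, and as a two-sided ideal — by the finitely many elements $g-1$ with $g$ ranging over a topological generating set of the (topologically finitely generated) pro-$p$ group $K$; hence $\mathfrak n := \m_\Omega\overline{B} = \m_\Omega\wotimes_k k[[y_1,\dots,y_s]]$ is a closed two-sided ideal of $\overline{B}$ with $\mathfrak n = \overline{B}\m_\Omega$. (ii) For any two-sided ideal $I$ of $\overline{B}$ and any central elements $z_1,\dots,z_t$ one has $I\cdot(z_1,\dots,z_t)\overline{B} = \sum_i z_i I = (z_1,\dots,z_t)\overline{B}\cdot I$; in particular $(\bar f_1,\dots,\bar f_r)\overline{B} = \overline{B}(\bar f_1,\dots,\bar f_r)$ is a two-sided ideal.

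For the first identity, $J_{\overline{A}}$ is generated — as a left, right, or two-sided ideal of $\overline{A}$ — by $\m_\Omega$ together with $x_1,\dots,x_r$, so its image generates the right ideal $J_{\overline{A}}\overline{B} = \mathfrak n + (\bar f_1,\dots,\bar f_r)\overline{B}$; by (i) and (ii) both summands are two-sided ideals, hence $J_{\overline{A}}\overline{B}$ is two-sided, and the identical computation from the left gives $\overline{B}J_{\overline{A}} = \mathfrak n + (\bar f_1,\dots,\bar f_r)\overline{B}$, so the two coincide. For the second identity write $I = J_{\overline{A}}\overline{B} = \overline{B}J_{\overline{A}}$ and $C = (y_1,\dots,y_s)\overline{B}$, a two-sided ideal generated by central elements; since each $\bar f_j$ has zero constant term, $(\bar f_1,\dots,\bar f_r)\overline{B}\subseteq C$, whence $J_{\overline{B}} = \mathfrak n + C = I + C$. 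Then $I\cdot J_{\overline{B}} = I^2 + IC$ and $J_{\overline{B}}\cdot I = I^2 + CI$, while $IC = CI$ by (ii) (here it is essential that $I$ is two-sided, so that $I\overline{B} = I = \overline{B}I$). This gives $J_{\overline{A}}J_{\overline{B}} = J_{\overline{B}}J_{\overline{A}}$, where, as in the first identity, $J_{\overline{A}}$ is understood as the ideal $J_{\overline{A}}\overline{B}$ of $\overline{B}$.

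The only step that is not purely formal is input (i): that the same finite set generates $\m_\Omega$ as a left and as a right ideal, and that $\m_\Omega\wotimes_k k[[y_1,\dots,y_s]]$ really is a closed two-sided ideal of $\overline{B}$ agreeing with both the left and the right ideal generated by the image of $\m_\Omega$. These are standard facts about Iwasawa algebras of uniform (hence topologically finitely generated) pro-$p$ groups — the augmentation ideal of $k[[K]]$ is the closed two-sided, equivalently closed left, equivalently closed right, ideal generated by $\{g-1 : g\in K\}$ — together with the good behaviour of the base change $\wotimes_k k[[y_1,\dots,y_s]]$ on closed ideals. Once these are in hand, everything else is the formal ideal manipulation above, so I anticipate no real obstacle; the substance lies entirely in organising the completed–tensor–product bookkeeping around the centrality of the power series variables.
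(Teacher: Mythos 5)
Your proof is correct and follows essentially the same route as the paper's: both rest on the observation that $J_{\overline{A}}$ and $J_{\overline{B}}$ are generated (as left, right, or two-sided ideals) by $a_1-1,\ldots,a_m-1$ together with the power-series variables, plus the centrality of those variables in $\overline{B}$. The paper simply declares the lemma ``easy'' from these two facts; your decomposition $J_{\overline{B}} = J_{\overline{A}}\overline{B} + (y_1,\ldots,y_s)\overline{B}$ and the computation $IC=CI$ just supply the details it leaves implicit.
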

\begin{proof}
	$J_{\overline{A}}$ is the (right, left, two-sided) ideal of $\overline{A}$ generated by $a_1-1,\ldots,a_m-1,x_1,\ldots,x_r$ and $J_{\overline{B}}$ is the (right, left, two-sided) ideal of $\overline{B}$ generated by $a_1-1,\ldots,a_m-1,y_1,\ldots,y_s$. The lemma is now easy, since the $x_i$ map to central elements in $\overline{B}$.
\end{proof}
The next lemma is a mild variation on \cite[Lemma 3.1]{Wadsley}.
\begin{alemma}\label{GKcompare}
	Suppose $M$ is a finitely generated $\overline{B}$-module, which is also 
	finitely generated as an $\overline{A}$-module. Then 
	$\delta_{\overline{A}}(M) = \delta_{\overline{B}}(M)$.
\end{alemma}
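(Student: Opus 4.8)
The plan is to compute both sides via the Gelfand--Kirillov characterisation of Proposition~\ref{GKdim} and then to compare the two Jacobson-radical filtrations on $M$. Since $\overline{A} = A/\varpi A$ and $\overline{B} = B/\varpi B$ are Auslander regular, they have finite global dimension, so $\delta_{\overline{A}}(M)$ and $\delta_{\overline{B}}(M)$ are finite; and since $M$ is finitely generated over each of $\overline{A}$ and $\overline{B}$, all the quotients $M/J_{\overline{A}}^n M$ and $M/J_{\overline{B}}^n M$ are finite-dimensional over $k$. Proposition~\ref{GKdim} then gives
\[ \delta_{\overline{A}}(M) = \limsup_n \log_n \dim_k\!\left(M/J_{\overline{A}}^n M\right), \qquad \delta_{\overline{B}}(M) = \limsup_n \log_n \dim_k\!\left(M/J_{\overline{B}}^n M\right). \]

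For the inequality $\delta_{\overline{A}}(M) \ge \delta_{\overline{B}}(M)$: by the proof of Lemma~\ref{commuting} the ideal $J_{\overline{A}}$ is generated by the $a_i - 1$ and the $x_j$, and the map $\overline{A} \to \overline{B}$ carries $a_i - 1$ into $J_{\overline{B}}$ visibly and carries each $x_j$ into $J_{\overline{B}}$ because $\cO[[x_1,\dots,x_r]] \to \cO[[y_1,\dots,y_s]]$ is local; hence $J_{\overline{A}}\overline{B} \subseteq J_{\overline{B}}$. Using $M = \overline{B} M$ we get $J_{\overline{A}}^n M = (J_{\overline{A}}\overline{B})^n M \subseteq J_{\overline{B}}^n M$ for every $n$, so $\dim_k(M/J_{\overline{A}}^n M) \ge \dim_k(M/J_{\overline{B}}^n M)$ and the inequality of $\limsup$s follows.

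For the reverse inequality I would produce a constant $t \ge 1$ with $J_{\overline{B}}^{nt} M \subseteq J_{\overline{A}}^n M$ for all $n \ge 1$. First, $J_{\overline{A}} M = (J_{\overline{A}}\overline{B}) M$ is a $\overline{B}$-submodule of $M$ because $J_{\overline{A}}\overline{B} = \overline{B} J_{\overline{A}}$ by Lemma~\ref{commuting}; the quotient $M/J_{\overline{A}} M$ is a finite-dimensional $k$-vector space and a module over the local ring $\overline{B}$, so $J_{\overline{B}}^t$ annihilates it for $t$ large, i.e.\ $J_{\overline{B}}^t M \subseteq J_{\overline{A}} M$. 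Then, using $J_{\overline{A}}^n M = (J_{\overline{A}}\overline{B})^n M$ and the commutation $J_{\overline{B}}(J_{\overline{A}}\overline{B}) = (J_{\overline{A}}\overline{B})J_{\overline{B}}$ of Lemma~\ref{commuting}, one has by induction
\[ J_{\overline{B}}^{(n+1)t} M \subseteq J_{\overline{B}}^t\big((J_{\overline{A}}\overline{B})^n M\big) = (J_{\overline{A}}\overline{B})^n\, J_{\overline{B}}^t M \subseteq (J_{\overline{A}}\overline{B})^n\, J_{\overline{A}} M = J_{\overline{A}}^{n+1} M. \]
Consequently $\dim_k(M/J_{\overline{B}}^{nt}M) \ge \dim_k(M/J_{\overline{A}}^n M)$, hence $\log_{nt}\dim_k(M/J_{\overline{B}}^{nt}M) \ge \log_{nt}\dim_k(M/J_{\overline{A}}^n M)$. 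Taking $\limsup$ over $n$: the left-hand side runs over a subsequence of $m \mapsto \log_m \dim_k(M/J_{\overline{B}}^m M)$, so it is $\le \delta_{\overline{B}}(M)$; the right-hand side equals $\delta_{\overline{A}}(M)$, since $\log_{nt} x = \tfrac{\ln n}{\ln n + \ln t}\log_n x$ with $\tfrac{\ln n}{\ln n + \ln t} \to 1$ and $\log_n \dim_k(M/J_{\overline{A}}^n M)$ is bounded (as $\delta_{\overline{A}}(M) < \infty$). Thus $\delta_{\overline{B}}(M) \ge \delta_{\overline{A}}(M)$, and combined with the previous paragraph this gives $\delta_{\overline{A}}(M) = \delta_{\overline{B}}(M)$.

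The main obstacle will be keeping the non-commutative ideal bookkeeping in the second inequality honest — in particular checking that $J_{\overline{A}}M$ really is a $\overline{B}$-submodule and that the inductive containment step survives the non-commutativity — and here Lemma~\ref{commuting} is exactly what is needed. The only other mildly delicate point is the passage from $\log_{nt}$ back to $\log_n$, which uses the a priori finiteness of the Gelfand--Kirillov dimension coming from Auslander regularity.
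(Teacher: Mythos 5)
Your proof is correct and follows essentially the same route as the paper: both establish the sandwich $J_{\overline{B}}^{kN}M\subseteq J_{\overline{A}}^{N}M\subseteq J_{\overline{B}}^{N}M$ via Lemma~\ref{commuting} and conclude by comparing Gelfand--Kirillov dimensions through Proposition~\ref{GKdim}. Your write-up merely makes explicit the $\log_{nt}$ versus $\log_n$ renormalisation that the paper leaves implicit.
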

\begin{proof}
	We show the lemma by comparing Gelfand--Kirillov dimensions. Since $M$ is a finitely generated $\overline{A}$-module, $M/J_{\overline{A}}M$ is a finite dimensional $k$-vector space. By Lemma \ref{commuting}, $J_{\overline{A}}M$ is a $\overline{B}$-submodule of $M$. So $M/J_{\overline{A}}M$ is an Artinian $\overline{B}$-module. Therefore $J_{\overline{B}}^k (M/J_{\overline{A}}M) = 0$ for some positive integer $k$. So $J_{\overline{B}}^kM\subset J_{\overline{A}}M \subset J_{\overline{B}}M$. Using the fact that $J_{\overline{A}}J_{\overline{B}} = J_{\overline{B}}J_{\overline{A}}$ (Lemma \ref{commuting}) an induction shows that \[J_{\overline{B}}^{kN}M\subset J_{\overline{A}}^NM \subset J_{\overline{B}}^NM\] for all $N \ge 1$. Using Proposition \ref{GKdim} we conclude that $\delta_{\overline{A}}(M) = \delta_{\overline{B}}(M)$. 
\end{proof}

\begin{alemma}\label{samedim}
	Suppose $M$ is a finitely generated $B$-module, which is also finitely generated as an $A$-module. Then $\delta_{{A}}(M) = \delta_{{B}}(M)$.
\end{alemma}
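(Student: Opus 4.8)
The plan is to bootstrap from the characteristic~$p$ statement already established in Lemma~\ref{GKcompare}, using Lemma~\ref{changeofrings} to pass between a module and its reduction modulo~$\varpi$. Throughout I will use the elementary consequence of Lemma~\ref{gradeses} that for a short exact sequence $0 \to L \to M \to N \to 0$ of finitely generated modules over any of the rings $A$, $B$, $\overline{A}$, $\overline{B}$ one has $\delta(M) = \max(\delta(L),\delta(N))$, since $\delta = \mathrm{gld}(-) - j_{(-)}$ and $j$ of the total term is the minimum of the grades of $L$ and $N$.

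First I would dispose of the two extreme cases. If $M$ is killed by~$\varpi$, then $M$ is a finitely generated $\overline{B}$-module which is also finitely generated over $\overline{A}$, so Lemma~\ref{changeofrings} (applied over $A$, and over $B$ via the evident analogue of that lemma) gives $\delta_A(M) = \delta_{\overline{A}}(M)$ and $\delta_B(M) = \delta_{\overline{B}}(M)$, and these agree by Lemma~\ref{GKcompare}. If instead $M$ is $\varpi$-torsion free, then $M/\varpi M$ is finitely generated over both $\overline{A}$ and $\overline{B}$, and Lemma~\ref{changeofrings} gives $\delta_A(M) = 1 + \delta_{\overline{A}}(M/\varpi M)$ and $\delta_B(M) = 1 + \delta_{\overline{B}}(M/\varpi M)$, which agree again by Lemma~\ref{GKcompare}.

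For general~$M$, since $A$ (and $B$) is Noetherian the submodule $M[\varpi^\infty]$ is killed by some $\varpi^e$, and it is a $B$-submodule because $\varpi$ is central in $B$; moreover $M/M[\varpi^\infty]$ is $\varpi$-torsion free and finitely generated over both $A$ and $B$. To handle $M[\varpi^\infty]$ I would run exactly the dévissage used in the proof of Lemma~\ref{lem:obviouscodimineq}: the filtration $0 = \varpi^e M[\varpi^\infty] \subseteq \cdots \subseteq \varpi M[\varpi^\infty] \subseteq M[\varpi^\infty]$ has graded pieces killed by~$\varpi$ and finitely generated over $A$ and $B$ (being subquotients of $M$), so by repeated application of the short-exact-sequence formula and the killed-by-$\varpi$ case one gets $\delta_A(M[\varpi^\infty]) = \delta_B(M[\varpi^\infty])$. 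The torsion-free case gives $\delta_A(M/M[\varpi^\infty]) = \delta_B(M/M[\varpi^\infty])$, and combining via $0 \to M[\varpi^\infty] \to M \to M/M[\varpi^\infty] \to 0$ yields $\delta_A(M) = \delta_B(M)$, as required.

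The substantive input is entirely Lemma~\ref{GKcompare} (the characteristic~$p$ comparison via Gelfand--Kirillov dimension), which is already in hand; modulo that, the argument is a routine dévissage by $\varpi$. The one point requiring a little care — and the only place where anything could go wrong — is that \emph{every} module appearing in the reduction (the graded pieces of the $\varpi^\infty$-torsion filtration, $M[\varpi^\infty]$, $M/M[\varpi^\infty]$, and its reduction mod~$\varpi$) must be finitely generated over \emph{both} $A$ and $B$ so that Lemmas~\ref{changeofrings} and~\ref{GKcompare} apply; this holds because each such module is a subquotient of~$M$ or of a quotient of~$M$ and both rings are Noetherian.
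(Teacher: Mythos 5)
Your argument is correct and is essentially the paper's proof: the paper also reduces, via a finite filtration with graded pieces either $\varpi$-torsion free or killed by $\varpi$ together with the $\max$ formula from Lemma~\ref{gradeses}, to those two cases, and then applies Lemma~\ref{changeofrings} (over both $A$ and $B$) and Lemma~\ref{GKcompare}. Your version just makes the filtration explicit via $M[\varpi^\infty]$ and its $\varpi$-power filtration, which the paper leaves implicit.
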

\begin{proof}
	$M$ has a finite filtration by $B$-submodules $\{0\} = M_0
        \subset M_1 \subset \cdots \subset M_l = M$ such that each
        $M_i/M_{i-1}$ is either $\varpi$-torsionfree or killed by
        $\varpi$. Each $M_i$ is also a finitely generated
        $A$-module. By Lemma~\ref{gradeses}, we have $\delta_A(M) =
        \max_i(\delta_A(M_i/M_{i-1}))$ and $\delta_B(M) =
        \max_i(\delta_B(M_i/M_{i-1}))$, so we may assume that $M$ is either $\varpi$-torsionfree or killed 
        by $\varpi$. Applying Lemma \ref{changeofrings} and Lemma 
        \ref{GKcompare} gives $\delta_A(M) = \delta_B(M)$.
\end{proof}

\subsection{Comparing depths}
We retain the assumptions and notation of the previous
subsection. Recall that we have two $\Lambda$-algebras $A =
\Lambda\hat{\otimes}_{\cO}\cO[[x_1,\ldots,x_r]]$ and $B =
\Lambda\hat{\otimes}_{\cO}\cO[[y_1,\ldots,y_s]]$. The goal of this
subsection is Lemma \ref{depthineq}, which shows that if $M$ is a
finitely generated $B$-module, which is also finitely generated as an
$A$-module, then $\mathrm{depth}_{A}(M) \le \mathrm{depth}_{B}(M)$. In
fact, we can show that $\mathrm{depth}_{A}(M) = \mathrm{depth}_{B}(M)$
(which again generalises a well-known result in commutative algebra 
\cite[Ch.~0, Prop.~16.4.8]{EGAIV1})
but proving the inequality suffices for our applications and is
already sufficiently painful.

We set $\overline{R} = k[[x_1,\ldots,x_r]] = A/J_\Lambda A$ and $\overline{S} = k[[y_1,\ldots,y_s]] = B/J_\Lambda B$. We have a map of local $k$-algebras $\overline{R} \rightarrow \overline{S}$. 

\begin{alemma}\label{presinj}
	Suppose $I$ is an injective left $B$-module. Then $I$ is injective as a left $\Lambda$-module.
\end{alemma}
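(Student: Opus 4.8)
The plan is to deduce this from the formal principle that restriction of scalars along a flat ring extension preserves injectivity, once the relevant one-sided flatness is checked for our non-commutative Iwasawa algebras.

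First I would record that $B=\Lambda\widehat{\otimes}_{\cO}\cO[[y_1,\dots,y_s]]\cong\Lambda[[y_1,\dots,y_s]]$ is flat as a \emph{right} $\Lambda$-module. Writing each element of $B$ uniquely as a power series $\sum_{\nu}a_{\nu}y^{\nu}$ with $a_{\nu}\in\Lambda$ exhibits $B$, as a right $\Lambda$-module, as the product $\prod_{\nu}\Lambda$ over the monomials $y^{\nu}$, each factor being the free right $\Lambda$-module of rank one. Since $\Lambda=\cO[[K]]$ is (two-sided) Noetherian, hence coherent, an arbitrary direct product of flat right $\Lambda$-modules is flat; thus $B$ is flat (indeed topologically free) as a right $\Lambda$-module, so the extension-of-scalars functor $M\mapsto B\otimes_{\Lambda}M$ is exact on left $\Lambda$-modules.

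Next I would invoke the adjunction between extension and restriction of scalars along $\Lambda\to B$: for every left $\Lambda$-module $M$ there is an isomorphism $\Hom_{\Lambda}(M,I)\cong\Hom_{B}(B\otimes_{\Lambda}M,I)$, natural in $M$, where on the left $I$ is viewed as a $\Lambda$-module by restriction. The right-hand side is the composite of the exact functor $B\otimes_{\Lambda}(-)$ with $\Hom_{B}(-,I)$, the latter being exact because $I$ is injective over $B$. Hence $\Hom_{\Lambda}(-,I)$ is exact, i.e.\ $I$ is injective as a left $\Lambda$-module.

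The only substantive point is the flatness of $B$ over $\Lambda$; everything else is the standard ``flat base change preserves injectivity under restriction'' argument. If one wishes to avoid the product-of-flats input, one can instead use that $B\cong\cO[[K\times\Zp^{s}]]$ is a topologically free $\cO[[K]]$-module, and run the same adjunction in the pseudocompact category; this is the formulation most directly compatible with the rest of the appendix.
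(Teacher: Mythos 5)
Your argument is exactly the paper's: flatness of $B$ as a right $\Lambda$-module makes $B\otimes_\Lambda(-)$ exact, and the tensor--hom adjunction then transfers exactness of $\Hom_B(-,I)$ to $\Hom_\Lambda(-,I)$. The only difference is that you justify the flatness (via $B\cong\prod_\nu\Lambda$ and coherence of the Noetherian ring $\Lambda$) where the paper simply asserts it, so this is correct and essentially identical in approach.
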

\begin{proof}
	Suppose $0 \rightarrow L \rightarrow M \rightarrow N \rightarrow 0$ is a short exact sequence of left $\Lambda$-modules. Since $B$ is a flat right $\Lambda$-module, we have an exact sequence of left $B$-modules \[0 \rightarrow B\otimes_\Lambda L \rightarrow B\otimes_\Lambda M \rightarrow B\otimes_\Lambda N \rightarrow 0\] and hence an exact sequence \[0 \rightarrow \Hom_B(B\otimes_\Lambda N,I) \rightarrow \Hom_B(B\otimes_\Lambda M,I) \rightarrow \Hom_B(B\otimes_\Lambda L,I) \rightarrow 0.\] Finally, the tensor-hom adjunction implies that \[0 \rightarrow \Hom_\Lambda(N,I) \rightarrow \Hom_\Lambda(M,I) \rightarrow \Hom_\Lambda(L,I) \rightarrow 0\] is exact.
\end{proof}

For any left $B$-module $M$, note that $\Hom_\Lambda(k,M)=\{m \in M: J_\Lambda 
m = 0\}$ is naturally a left $\overline{S}$-module. We denote by 
$\RHom^{\overline{S}}_\Lambda(k,M)$ the object of ${D}^+(\overline{S})$ given 
by taking an injective $A$-module resolution of $M$ and applying 
$\Hom_\Lambda(k,-)$ to get a complex of $\overline{S}$-modules. By Lemma 
\ref{presinj}, we have natural isomorphisms of Abelian groups 
$H^i(\RHom^{\overline{S}}_\Lambda(k,M)) = \mathrm{Ext}^i_\Lambda(k,M)$.

\begin{aremark}
	Note that the natural $\overline{S}$-module structure on $\mathrm{Ext}^i_\Lambda(k,M)$ can also be defined using the facts that $\mathrm{Ext}^i_\Lambda(k,M) = \mathrm{Ext}^i_B(B\otimes_\Lambda k,M)$ (extension of scalars) and that $B\otimes_\Lambda k$ is a $(B,\overline{S})$-bimodule.
\end{aremark}
\begin{aremark}
	For an $A$-module $M$, we can similarly define $\RHom^{\overline{R}}_\Lambda(k,M)$. 
\end{aremark}
\begin{alemma}\label{forgetful}
	For a $B$-module $M$, there is a natural isomorphism \[\RHom^{\overline{R}}_\Lambda(k,M) = \iota^{\overline{S}}_{\overline{R}}\RHom^{\overline{S}}_\Lambda(k,M),\] where $\iota^{\overline{S}}_{\overline{R}}$ is the derived functor of the \emph{(}exact\emph{)} forgetful functor from $\overline{S}$-modules to $\overline{R}$-modules. 
\end{alemma}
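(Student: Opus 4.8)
The plan is to exhibit a \emph{single} complex which, carrying its two natural module structures, represents each side. Fix an injective resolution $M\to I^\bullet$ of $M$ in the category of left $B$-modules. By definition $\RHom^{\overline{S}}_\Lambda(k,M)$ is represented by $\Hom_\Lambda(k,I^\bullet)$: indeed each $I^n$ is $\Lambda$-injective by Lemma~\ref{presinj}, so $\Hom_\Lambda(k,-)$ applied to $I^\bullet$ computes $\mathrm{Ext}^\ast_\Lambda(k,M)$, and the $\overline{S}$-structure on $\Hom_\Lambda(k,I^n)=\Hom_B(B\otimes_\Lambda k,I^n)$ comes from the right $\overline{S}$-action on the $(B,\overline{S})$-bimodule $B\otimes_\Lambda k$ (which, since $\Lambda$ is local with residue field $k$, is just $B/J_\Lambda B=\overline{S}$). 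The first thing I would check is that the $\overline{R}$-structure obtained by restricting this $\overline{S}$-structure along $\overline{R}\to\overline{S}$ coincides with the intrinsic $\overline{R}$-structure on $\Hom_\Lambda(k,I^n)=\Hom_A(A\otimes_\Lambda k,I^n)$ coming from the $(A,\overline{R})$-bimodule $A\otimes_\Lambda k\cong A/J_\Lambda A=\overline{R}$. This holds because the map $A\otimes_\Lambda k\to B\otimes_\Lambda k$ induced by $A\to B$ is identified with the ring homomorphism $\overline{R}\to\overline{S}$ and is equivariant for right multiplication; applying $\Hom(-,I^n)$ together with the extension-of-scalars adjunctions $\Hom_A(A\otimes_\Lambda k,-)=\Hom_\Lambda(k,-)=\Hom_B(B\otimes_\Lambda k,-)$ on (restrictions of) $I^n$ then gives the claimed compatibility. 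Consequently $\iota^{\overline{S}}_{\overline{R}}\RHom^{\overline{S}}_\Lambda(k,M)$ is represented by $\Hom_\Lambda(k,I^\bullet)$ viewed as a complex of $\overline{R}$-modules via its intrinsic structure.

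It then remains to identify that same $\overline{R}$-complex with $\RHom^{\overline{R}}_\Lambda(k,M)$, where $M$ is restricted to $A$ along $A\to B$. Here I would use that $\RHom^{\overline{R}}_\Lambda(k,-)$ is the right derived functor of the left-exact functor $\Hom_\Lambda(k,-)=\Hom_A(A\otimes_\Lambda k,-)\colon A\text{-Mod}\to\overline{R}\text{-Mod}$, and so may be computed by any resolution of $M$ by $\Hom_\Lambda(k,-)$-acyclic $A$-modules; since $\mathrm{Ext}^i_A(A\otimes_\Lambda k,N)=\mathrm{Ext}^i_\Lambda(k,N)$ for $A$-modules $N$ (the base-change spectral sequence for $\mathrm{Ext}$, as in the proof of Lemma~\ref{changeofrings}, using that $A$ is flat over $\Lambda$), a $\Lambda$-injective $A$-module is such an acyclic. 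As each $I^n$ is a $B$-module, hence an $A$-module, and is $\Lambda$-injective by Lemma~\ref{presinj}, the complex $\Hom_\Lambda(k,I^\bullet)$ indeed represents $\RHom^{\overline{R}}_\Lambda(k,M)$ in $D^+(\overline{R})$. (Alternatively, and equivalently, one compares $I^\bullet$ with a genuine $A$-injective resolution $M\to J^\bullet$ via an $A$-linear quasi-isomorphism extending $\mathrm{id}_M$, and notes that $\Hom_\Lambda(k,-)$ carries a quasi-isomorphism between bounded-below complexes of $\Lambda$-injectives to a quasi-isomorphism; for this one also invokes the analogue of Lemma~\ref{presinj} with $B$ replaced by $A$.) Chaining the two identifications yields the desired isomorphism, and naturality in $M$ is automatic since every step is functorial.

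The genuinely routine inputs are the bimodule identifications $A\otimes_\Lambda k\cong\overline{R}$, $B\otimes_\Lambda k\cong\overline{S}$ and the two adjunction isomorphisms; the step I would be most careful about is the claim that $\Hom_\Lambda(k,I^\bullet)$ computes $\RHom^{\overline{R}}_\Lambda(k,M)$ \emph{even though} $I^\bullet$ need not be an injective resolution over $A$. This is not a real obstacle — it is the standard fact that derived functors are computed by acyclic resolutions, applied to the $\Hom_\Lambda(k,-)$-acyclic class of $\Lambda$-injective $A$-modules — but it is the place where one must keep the two ring structures $\overline{R}$ and $\overline{S}$ (and the restriction functor between them) straight throughout, which is really the only subtlety in the argument.
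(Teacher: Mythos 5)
Your proof is correct and takes essentially the same route as the paper: both compute the two sides from a single injective $B$-module resolution of $M$, using Lemma~\ref{presinj} to see that its terms are acyclic for $\Hom_\Lambda(k,-)$ regarded as a functor to $\overline{R}$-modules. You merely spell out the compatibility of the two module structures and the ``acyclic resolutions compute derived functors'' step, which the paper leaves implicit.
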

\begin{proof}
	We can compute $\RHom^{\overline{R}}_\Lambda(k,M)$ using an injective 
	$B$-module resolution of $M$, since an injective $B$-module is acyclic for 
	the functor $\Hom_\Lambda(k,-)$ from $A$-modules to $\overline{R}$-modules. 
	Computing $\RHom^{\overline{S}}_\Lambda(k,M)$ using the same injective 
	resolution gives the desired isomorphism.
\end{proof}

\begin{alemma}\label{depthreducecomm}
	For $B$-modules $M$, we have natural isomorphisms \[\RHom_B(k,M) = \RHom_{\overline{S}}(k,\RHom^{\overline{S}}_\Lambda(k,M))\] and \[\RHom_A(k,M) = \RHom_{\overline{R}}(k,\iota^{\overline{S}}_{\overline{R}}\RHom^{\overline{S}}_\Lambda(k,M)).\]
\end{alemma}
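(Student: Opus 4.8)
The plan is to realise both isomorphisms as instances of the Grothendieck composite-functor principle: the point will be that $\Hom_\Lambda(k,-)$ carries injective modules to objects for which $\Hom(k,-)$ already computes $\RHom(k,-)$ termwise.

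First I would record the underlying identities of functors. Since $B\otimes_\Lambda k = B/J_\Lambda B = \overline S$ (as in the proof of Lemma \ref{commuting}, using that $y_1,\dots,y_s$ are central in $B$), the restriction/coinduction adjunction for the ring map $\Lambda\to B$ gives a natural isomorphism of left $\overline S$-modules $\Hom_\Lambda(k,N)\cong\Hom_B(\overline S,N)$ for every left $B$-module $N$. Combining this with the evident identity $\Hom_{\overline S}(k,P)=\{p\in P:\m_{\overline S}p=0\}$, and using that $J_B$ is generated as a left ideal of $B$ by $J_\Lambda$ together with $y_1,\dots,y_s$, one obtains $\Hom_{\overline S}(k,\Hom_\Lambda(k,N))=\{n\in N:J_Bn=0\}=\Hom_B(k,N)$; the same computation with $A$ and $\overline R$ in place of $B$ and $\overline S$ gives $\Hom_{\overline R}(k,\Hom_\Lambda(k,N'))=\Hom_A(k,N')$ for every left $A$-module $N'$. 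Next I would observe that restriction of scalars along the surjection $B\twoheadrightarrow\overline S$ is exact, so its right adjoint $\Hom_B(\overline S,-)$ preserves injectives; hence $\Hom_\Lambda(k,I)\cong\Hom_B(\overline S,I)$ is an injective $\overline S$-module whenever $I$ is an injective $B$-module, and likewise $\Hom_\Lambda(k,J)\cong\Hom_A(\overline R,J)$ is an injective $\overline R$-module whenever $J$ is an injective $A$-module.

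For the first isomorphism, I would choose an injective resolution $M\to I^\bullet$ in the category of $B$-modules, so that $\RHom^{\overline S}_\Lambda(k,M)$ is represented by $\Hom_\Lambda(k,I^\bullet)$. By the previous paragraph this is a bounded-below complex of injective $\overline S$-modules, so $\RHom_{\overline S}(k,-)$ of it is computed by applying $\Hom_{\overline S}(k,-)$ termwise; the functorial identity of the previous paragraph then rewrites this as $\Hom_B(k,I^\bullet)=\RHom_B(k,M)$. For the second isomorphism I would invoke Lemma \ref{forgetful} to identify $\iota^{\overline S}_{\overline R}\RHom^{\overline S}_\Lambda(k,M)$ with $\RHom^{\overline R}_\Lambda(k,M)$, which by definition is computed from an injective resolution $M\to J^\bullet$ in the category of $A$-modules as $\Hom_\Lambda(k,J^\bullet)$; this is a bounded-below complex of injective $\overline R$-modules by the injectivity fact above, and the same argument (now using the $\overline R$-version of the functorial identity) yields $\RHom_{\overline R}(k,\Hom_\Lambda(k,J^\bullet))=\Hom_A(k,J^\bullet)=\RHom_A(k,M)$.

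I do not anticipate a serious obstacle: the content is entirely the two functorial identities and the preservation of injectives, together with the standard fact that $\Hom(k,-)$ applied termwise to a bounded-below complex of injectives computes $\RHom(k,-)$. The one point that requires care is that $B$ need not be flat over $A$ (a local map of power series rings can fail to be flat), so one cannot simply reuse the injective $B$-resolution $I^\bullet$ to compute $\RHom^{\overline R}_\Lambda(k,M)$; routing through Lemma \ref{forgetful} and an injective $A$-resolution is exactly what handles this.
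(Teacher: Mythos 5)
Your proposal is correct and follows essentially the same route as the paper: both rest on the observations that $\Hom_\Lambda(k,-)\cong\Hom_B(\overline S,-)$ preserves injectives (being right adjoint to exact restriction of scalars), that the composite of the underived functors is $\Hom_B(k,-)$, and that Lemma~\ref{forgetful} reconciles the two resolutions needed for the $A$-statement. The paper merely phrases the final step as the derived functor of a composite rather than spelling out the termwise computation on an injective resolution.
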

\begin{proof}
	Consider the functor
        $\Hom_\Lambda(k,-)$ from $B$-modules to $\overline{S}$-modules. This 
          takes injectives to injectives, since for an $\overline{S}$-module 
          $X$ we have $\Hom_{\overline{S}}(X,\Hom_\Lambda(k,M)) 
          = \Hom_B(X,M)$. 
	
	The functor $\Hom_{\overline{S}}(k,\Hom_\Lambda(k,-))$ from $B$-modules to 
	Abelian groups is naturally equivalent to the functor $\Hom_B(k,M)$. The 
	derived functor of the composition of functors is given by 
	$\RHom_{\overline{S}}(k,\RHom^{\overline{S}}_\Lambda(k,-))$, and this gives 
	the first collection of natural isomorphisms.
	
	Applying the same argument to $A$-modules, together with Lemma \ref{forgetful}, we get the second collection of natural isomorphisms.
\end{proof}

At this point we recall that for a commutative Noetherian local ring
$X$ with maximal ideal $\m_X$ there is a good notion of depth for
objects in $D^+(X)$
\cite{Iyengar-depth}\footnote{In fact one needn't restrict to bounded
  complexes, see \cite{depth-unbounded}}. 

\begin{adefn}
	For $M \in {D}^+(X)$ we define \[\mathrm{depth}_X(M) = 
	\mathrm{inf}\{i:\mathrm{Ext}^i_X(X/\m_X,M) \ne 0 \}.\]
\end{adefn}

\begin{alemma}\label{commdepthineq}
	Let $M \in  {D}^+(\overline{S})$. We have 
	\[\mathrm{depth}_{\overline{R}}(\iota^{\overline{S}}_{\overline{R}}M) \le 
	\mathrm{depth}_{\overline{S}}(M).\]
\end{alemma}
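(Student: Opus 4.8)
The plan is to reduce everything to two standard facts about depth of complexes over a commutative Noetherian local ring: depth along an ideal is monotone under enlarging the ideal, and the formation of a Koszul complex depends only on the underlying sequence of ``multiplication by $x_i$'' operators, not on the ambient ring.

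First I would use that $\overline{R} = k[[x_1,\dots,x_r]]$ is regular: the regular system of parameters $\underline{x} = (x_1,\dots,x_r)$ gives a finite free resolution $K_\bullet(\underline{x}) \xrightarrow{\sim} k$ of $k = \overline{R}/\m_{\overline{R}}$, so for any $N \in D^+(\overline{R})$ one has $\RHom_{\overline{R}}(k,N) \simeq \Hom_{\overline{R}}(K_\bullet(\underline{x}),N)$, hence
\[\mathrm{depth}_{\overline{R}}(N) = \inf\{\,i : H^i(\Hom_{\overline{R}}(K_\bullet(\underline{x}),N)) \ne 0\,\}.\]
Applying this to $N = \iota^{\overline{S}}_{\overline{R}}M$ --- which lies in $D^+(\overline{R})$ since the forgetful functor is exact --- I would then observe that $\Hom_{\overline{R}}(K_\bullet(\underline{x}), \iota^{\overline{S}}_{\overline{R}}M)$ is, as a complex of abelian groups with its differentials, literally the Koszul cochain complex of $M$ with respect to the sequence $\phi(\underline{x}) = (\phi(x_1),\dots,\phi(x_r))$ of images in $\overline{S}$, where $\phi\colon\overline{R}\to\overline{S}$ is the structure map, because building a Koszul complex uses only multiplication by the chosen elements. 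Consequently $\mathrm{depth}_{\overline{R}}(\iota^{\overline{S}}_{\overline{R}}M)$ equals the depth of $M$, viewed in $D^+(\overline{S})$, along the ideal $\mathfrak{a} := \phi(\underline{x})\overline{S} = \m_{\overline{R}}\overline{S}$.

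It then remains to invoke the theory of depth for complexes over a commutative Noetherian local ring, as in \cite{Iyengar-depth}: for $M \in D^+(\overline{S})$ and an ideal $\mathfrak{a}\subseteq\overline{S}$, the Koszul depth $\inf\{i : H^i(\Hom_{\overline{S}}(K_\bullet(\underline{z}),M)) \ne 0\}$ is independent of the generating set $\underline{z}$ of $\mathfrak{a}$, equals $\mathrm{depth}_{\overline{S}}(\mathfrak{a},M)$, and is monotone in $\mathfrak{a}$. Since $\phi$ is a local homomorphism, $\mathfrak{a} = \m_{\overline{R}}\overline{S}\subseteq\m_{\overline{S}}$, so $\mathrm{depth}_{\overline{S}}(\mathfrak{a},M)\le \mathrm{depth}_{\overline{S}}(\m_{\overline{S}},M) = \mathrm{depth}_{\overline{S}}(M)$, and chaining these (in)equalities with the identifications above yields the asserted inequality.

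The one point I would take care over --- the main obstacle --- is the monotonicity of Koszul depth under adjoining an extra element to the sequence in the derived (complex) setting. This may be cited from \cite{Iyengar-depth}, or proved directly from the mapping-cone description $K_\bullet(\underline{z},w) = \mathrm{Cone}\big(K_\bullet(\underline{z}) \xrightarrow{\,w\,} K_\bullet(\underline{z})\big)$: the resulting long exact cohomology sequence shows that $H^i(\Hom(K_\bullet(\underline{z},w),M)) = 0$ whenever $i < \inf\{j : H^j(\Hom(K_\bullet(\underline{z}),M))\ne 0\}$, which is exactly what is needed (and then one extends $\phi(\underline{x})$ to a generating set of $\m_{\overline{S}}$). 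Everything else is routine bookkeeping, using only standard commutative algebra once the Koszul complex is recognised as a ring-independent gadget.
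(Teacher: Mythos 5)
Your argument is correct and is essentially the route the paper takes: the paper simply cites \cite[Thm.~6.1]{Iyengar-depth} for the identification of Ext-depth with Koszul depth and \cite[Prop.~5.2~(2)]{Iyengar-depth} for the resulting monotonicity along the local homomorphism $\overline{R}\to\overline{S}$, which are exactly the two facts you unpack (using the regularity of $\overline{R}$ to realise $\RHom_{\overline{R}}(k,-)$ by the Koszul complex on $x_1,\dots,x_r$, base-changing it to $\overline{S}$, and comparing with a generating set of $\m_{\overline{S}}$ via the mapping-cone argument). Your version is just a self-contained expansion of those citations, so nothing further is needed.
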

\begin{proof}Combine \cite[Thm.\ 6.1]{Iyengar-depth} 
  (which shows that our definition of depth coincides with the
  definition given in~\cite[\S2]{Iyengar-depth}) with
  \cite[Prop.\ 5.2~(2)]{Iyengar-depth}.
\end{proof}
\begin{alemma}\label{depthineq}
	Let $M$ be a $B$-module. We have \[\mathrm{depth}_{A}(M) \le \mathrm{depth}_{B}(M).\]
\end{alemma}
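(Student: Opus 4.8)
The plan is to reduce the inequality $\mathrm{depth}_A(M) \le \mathrm{depth}_B(M)$, which is a statement about depths over the \emph{non-commutative} rings $A$ and $B$, to the commutative inequality already available as Lemma~\ref{commdepthineq}, via the change-of-rings machinery built up in Lemmas~\ref{presinj}, \ref{forgetful} and \ref{depthreducecomm}.

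First I would recall the definition $\mathrm{depth}_A(M) = \inf\{i : \Ext^i_A(k,M) \ne 0\}$; since $A$ is local with residue field $k$ (as $K \times \Z_p^r$ is pro-$p$), this equals $\inf\{i : H^i(\RHom_A(k,M)) \ne 0\}$, which is precisely $\mathrm{depth}_{\overline{R}}$ of the complex $\RHom_A(k,M) \in D^+(\overline{R})$ in the sense of the commutative notion recalled just above (applied to the commutative local ring $\overline{R}$ acting on this complex, whose cohomology groups are $\overline{R}$-modules). Likewise $\mathrm{depth}_B(M) = \mathrm{depth}_{\overline{S}}(\RHom_B(k,M))$. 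So the statement becomes
\[\mathrm{depth}_{\overline{R}}(\RHom_A(k,M)) \le \mathrm{depth}_{\overline{S}}(\RHom_B(k,M)).\]

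Next I would invoke Lemma~\ref{depthreducecomm}: writing $N := \RHom^{\overline{S}}_\Lambda(k,M) \in D^+(\overline{S})$, we have natural isomorphisms $\RHom_B(k,M) \cong \RHom_{\overline{S}}(k,N)$ and $\RHom_A(k,M) \cong \RHom_{\overline{R}}(k,\iota^{\overline{S}}_{\overline{R}}N)$. The depth of $\RHom_X(k,-)$ applied to a complex over a commutative local ring $X$ is just the depth of that complex over $X$ (this is immediate from the definitions, as $\RHom_X(k,\RHom_X(k,C))$ and $\RHom_X(k,C)$ have the same first non-vanishing cohomological degree — or more simply one takes the definition of depth of a complex directly). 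Hence $\mathrm{depth}_B(M) = \mathrm{depth}_{\overline{S}}(N)$ and $\mathrm{depth}_A(M) = \mathrm{depth}_{\overline{R}}(\iota^{\overline{S}}_{\overline{R}}N)$. The desired inequality is now exactly Lemma~\ref{commdepthineq} applied to $N$: $\mathrm{depth}_{\overline{R}}(\iota^{\overline{S}}_{\overline{R}}N) \le \mathrm{depth}_{\overline{S}}(N)$.

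The main point requiring care — and the only place the argument is genuinely doing work rather than bookkeeping — is the clean identification of $\mathrm{depth}_A(M)$ (defined via $\Ext^i_A(k,M)$) with the commutative-complex depth of $\RHom_A(k,M)$ over $\overline{R}$, i.e.\ checking that the functor $\Hom_\Lambda(k,-)$ lands in $\overline{R}$-modules and that the spectral-sequence / composite-of-derived-functors bookkeeping of Lemma~\ref{depthreducecomm} is set up so the cohomology of $\RHom_{\overline{R}}(k, \iota^{\overline{S}}_{\overline{R}}N)$ really computes $\Ext^\ast_A(k,M)$ with its $\overline{R}$-module structure. Once that is in place, everything is formal. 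I should also note that the stronger equality $\mathrm{depth}_A(M) = \mathrm{depth}_B(M)$ would follow if $\iota^{\overline{S}}_{\overline{R}}$ preserved depth, but as the paper remarks only the inequality is needed and proving it is already enough, so I would not pursue the reverse direction.
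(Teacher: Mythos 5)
Your proposal is correct and follows the same route as the paper: identify $\mathrm{depth}_A(M)$ and $\mathrm{depth}_B(M)$ with the commutative depths of $\iota^{\overline{S}}_{\overline{R}}\RHom^{\overline{S}}_\Lambda(k,M)$ over $\overline{R}$ and of $\RHom^{\overline{S}}_\Lambda(k,M)$ over $\overline{S}$ via Lemma~\ref{depthreducecomm}, then apply Lemma~\ref{commdepthineq}. The detour in your first paragraph through $\inf\{i : H^i(\RHom_A(k,M))\ne 0\}$ is harmless but unnecessary, since the second paragraph's direct identification is all that is used.
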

\begin{proof}
	By Lemma \ref{depthreducecomm} and Lemma \ref{commdepthineq} we have \begin{align*}\mathrm{depth}_{A}(M) &= \mathrm{depth}_{\overline{R}}(\iota^{\overline{S}}_{\overline{R}}\RHom^{\overline{S}}_\Lambda(k,M))\\& \le \mathrm{depth}_{\overline{S}}(\RHom^{\overline{S}}_\Lambda(k,M)) = \mathrm{depth}_{B}(M).\qedhere\end{align*}
\end{proof}
\begin{acor}\label{cor:stayCM}
	Suppose $M$ is a finitely generated $B$-module, which is also finitely generated as an $A$-module. Moreover, suppose that $M$ is a Cohen--Macaulay $A$-module. Then $M$ is a Cohen--Macaulay $B$-module, with $\mathrm{depth}_{B}(M) = \delta_B(M) = \delta_A(M)$.
\end{acor}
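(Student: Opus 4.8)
The plan is to deduce the Cohen--Macaulay property over $B$ from a short chain of (in)equalities comparing depth and dimension over $A$ and over $B$, using that the homological dictionary of Proposition~\ref{venja} applies to both rings. Recall that $A \cong \cO[[K \times \Zp^r]]$ and $B \cong \cO[[K\times\Zp^s]]$ are Iwasawa algebras of uniform — hence torsion-free — pro-$p$ groups, so both Proposition~\ref{venja} and the criterion of Remark~\ref{rem: cm} are available for each of them, and in particular each has finite global dimension so that $\pd$ is finite for any finitely generated module.

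First I would record the general inequality $\dph_{\Lambda'}(N) \le \delta_{\Lambda'}(N)$, valid for any finitely generated module $N$ over $\Lambda' \in \{A,B\}$, with equality precisely when $N$ is Cohen--Macaulay: by the Auslander--Buchsbaum equality (Proposition~\ref{venja}(2)) we have $\dph_{\Lambda'}(N) = \mathrm{gld}(\Lambda') - \pd_{\Lambda'}(N)$, while $\delta_{\Lambda'}(N) = \mathrm{gld}(\Lambda') - j_{\Lambda'}(N)$ by definition, and $\pd_{\Lambda'}(N) \ge j_{\Lambda'}(N)$ by Proposition~\ref{venja}(3); the equality case is exactly the content of Remark~\ref{rem: cm}. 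Applying this with $N = M$: since $M$ is Cohen--Macaulay over $A$ we have $\dph_A(M) = \delta_A(M)$; by Lemma~\ref{depthineq}, $\dph_A(M) \le \dph_B(M)$; by the inequality just recorded over $B$, $\dph_B(M) \le \delta_B(M)$; and by Lemma~\ref{samedim} (which applies since $M$ is finitely generated over both $A$ and $B$), $\delta_B(M) = \delta_A(M)$. Stringing these together yields
\[\delta_A(M) = \dph_A(M) \le \dph_B(M) \le \delta_B(M) = \delta_A(M),\]
so all four quantities coincide. In particular $\dph_B(M) = \delta_B(M)$, which by Remark~\ref{rem: cm} says exactly that $M$ is Cohen--Macaulay over $B$, and the chain of equalities gives $\dph_B(M) = \delta_B(M) = \delta_A(M)$ as asserted.

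Since Lemmas~\ref{depthineq} and~\ref{samedim} are the substantive inputs and have already been established, there is essentially no obstacle remaining; the corollary is a formal consequence. The only point needing (minor) attention is that both the depth–dimension inequality and the characterisation of Cohen--Macaulayness via $\dph = \delta$ rest on $A$ and $B$ being Auslander regular with the Auslander--Buchsbaum property, which is precisely why we arranged for $K$ — and hence $K\times\Zp^r$ and $K\times\Zp^s$ — to be uniform pro-$p$ throughout this subsection.
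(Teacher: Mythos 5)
Your argument is correct and is essentially identical to the paper's proof: both establish the chain $\delta_A(M)=\dph_A(M)\le\dph_B(M)\le\delta_B(M)=\delta_A(M)$ using Lemma~\ref{depthineq}, the inequality $\dph_B(M)\le\delta_B(M)$ from Proposition~\ref{venja}, and Lemma~\ref{samedim}, then conclude by the equality case. You merely spell out the justification of the depth--dimension inequality and the Cohen--Macaulay criterion in slightly more detail than the paper does.
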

\begin{proof}
	By Lemma \ref{depthineq} we have $\delta_A(M) =
        \mathrm{depth}_{A}(M) \le \mathrm{depth}_{B}(M)$. We also have
        $\mathrm{depth}_B(M) \le \delta_B(M)$, by parts ~(2) and~(3) of
        Proposition~\ref{venja} (or by local duality). Since $\delta_A(M) = \delta_B(M)$ (by Lemma \ref{samedim}), all these inequalities are equalities.
\end{proof}
\begin{aprop}[Miracle Flatness]\label{miracleflatness}
	Let $M$ be a finitely generated Cohen--Macaulay $A$-module. 
	
	Then $M$ is a flat $\cO[[x_1,\ldots,x_r]]$-module if and only if \[j_A(M) = 
	j_{\Omega}(M/(\varpi,x_1,\ldots,x_r)M).\]
\end{aprop}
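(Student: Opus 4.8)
The plan is to transcribe the commutative ``miracle flatness'' argument (\cite[Thm.~23.1]{MR1011461}) into the Auslander regular setting, using the grade--dimension dictionary established above. Write $C=\cO[[x_1,\dots,x_r]]$, so that $A=\Lambda\wotimes_\cO C$ is a Noetherian local ring, $C\hookrightarrow A$ is a local homomorphism ($\m_C=(\varpi,x_1,\dots,x_r)$ maps into $\m_A$), and $A/(\varpi,x_1,\dots,x_r)A=\Omega$. First I would reduce the flatness statement to a statement about regular sequences: since $M$ is finitely generated over the Noetherian local ring $A$ and $C$ is regular local with regular system of parameters $\varpi,x_1,\dots,x_r$, the local criterion for flatness (\cite[\S 22]{MR1011461}) says $M$ is $C$-flat iff $\Tor^C_i(k,M)=0$ for $i>0$; computing $\Tor^C_*(k,M)$ by the Koszul resolution of $k$ over $C$ and using that $M/(\varpi,x_1,\dots,x_r)M\neq 0$ for $M\neq 0$ (Nakayama over $A$), this holds iff $\varpi,x_1,\dots,x_r$ is an $M$-regular sequence. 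On the numerical side, $\mathrm{gld}(A)=1+\dim K+r$ by Proposition~\ref{venja} and $\mathrm{gld}(\Omega)=\dim K$, so, since $\delta=\mathrm{gld}-j$ throughout, the condition $j_A(M)=j_\Omega(M/(\varpi,x_1,\dots,x_r)M)$ is equivalent to $\delta_A(M)=(r+1)+\delta_\Omega(M/(\varpi,x_1,\dots,x_r)M)$, i.e.\ exactly the classical ``miracle flatness'' dimension equality. Moreover, iterating Lemma~\ref{lem:obviouscodimineq} along the chain of central quotients $A\to A/\varpi A\to A/(\varpi,x_1)A\to\cdots\to\Omega$ (each term again an Auslander regular local Iwasawa algebra of a uniform pro-$p$ group, so all the lemmas above apply to it) gives $j_A(M)\ge j_{A/\varpi A}(M/\varpi M)\ge\cdots\ge j_\Omega(M/(\varpi,x_1,\dots,x_r)M)$, so the hypothesis is equivalent to equality holding at every step of the chain.

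For the ``only if'' direction, no Cohen--Macaulayness is needed: if $\varpi,x_1,\dots,x_r$ is $M$-regular, then at each step of the chain $M/(\varpi,x_1,\dots,x_i)M$ is torsion-free over the next central element, so the torsion-free case of Lemma~\ref{changeofrings} gives equality $j_{\cA/x}(N/xN)=j_\cA(N)$ at that step, and chaining these yields $j_A(M)=j_\Omega(M/(\varpi,x_1,\dots,x_r)M)$.

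For the ``if'' direction I would prove the following one-step lemma and apply it $r+1$ times along the chain: \emph{if $\cA$ is an Auslander regular local ring, $x\in\m_\cA$ a central nonzerodivisor, and $N\neq 0$ a finitely generated Cohen--Macaulay $\cA$-module with $j_{\cA/x}(N/xN)=j_\cA(N)$, then $x$ is a nonzerodivisor on $N$ and $N/xN$ is Cohen--Macaulay over $\cA/x$.} Once $x$ is known to be a nonzerodivisor, the Cohen--Macaulay conclusion is routine: applying $\Ext^*_\cA(k,-)$ to $0\to N\xrightarrow{x}N\to N/xN\to 0$ (on which $x$ acts as $0$) gives $\dph_\cA(N/xN)=\dph_\cA(N)-1$, whence, using Remark~\ref{rem: cm}, Lemma~\ref{changeofrings} and the analogue of Lemma~\ref{lem:samepd} for modules killed by a central regular element, $\dph_{\cA/x}(N/xN)=\delta_\cA(N)-1=\delta_{\cA/x}(N/xN)$. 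Feeding this back into the reduction of the first paragraph then completes the proof.

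The hard part will be the nonzerodivisor assertion: this is the non-commutative replacement for ``the parameters of the base form a regular sequence on an unmixed module''. The plan is: suppose $N[x]\neq 0$; since $N$ is Cohen--Macaulay over the Auslander regular ring $\cA$ it is \emph{pure} of grade $j_0:=j_\cA(N)$, in the sense that every nonzero submodule has grade $j_0$ (this is forced by the Auslander condition of Definition~\ref{def: gradedepth}, in the spirit of the proof of Lemma~\ref{cg61}), so $j_\cA(N[x])=j_0$, and since $N[x]$ is killed by $x$, Lemma~\ref{changeofrings} gives $j_{\cA/x}(N[x])=j_0-1$; on the other hand a comparison of $N[x]$ with $N/xN$ as $\cA/x$-modules, extracted from the four-term exact sequence $0\to N[x]\to N\xrightarrow{x}N\to N/xN\to 0$, should yield $j_{\cA/x}(N/xN)\le j_{\cA/x}(N[x])=j_0-1$, contradicting $j_{\cA/x}(N/xN)=j_0$. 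Pinning down that last inequality precisely -- the exact analogue of ``$x$ lies in an associated prime of $M$ $\Rightarrow$ $\dim M/xM=\dim M$'' -- is the step I expect to require the most care; if it proves awkward, one can instead mimic Matsumura's induction on $r$ more literally, producing at each stage a nonzerodivisor from $\dph_\cA(N)=\delta_\cA(N)\ge 1$ together with purity.
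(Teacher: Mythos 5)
Your overall architecture coincides with the paper's: reduce flatness to the regularity of $(\varpi,x_1,\dots,x_r)$ on $M$ (using the Koszul complex, Nakayama, and the fact that finite generation over $A$ supplies the ideal-separatedness needed for Matsumura's local criterion), handle the ``only if'' direction by chaining the torsion-free case of Lemma~\ref{changeofrings}, and prove the ``if'' direction by a one-variable induction establishing simultaneously the step-by-step grade equality, the regularity of $x$, and the preservation of the Cohen--Macaulay property. Your purity claim for Cohen--Macaulay modules over Auslander regular rings is also correct (it is \cite[Prop.~3.5, 3.9]{Venjakob}), and your depth-based route to the Cohen--Macaulay propagation is a workable alternative to the paper's direct computation with $\Ext^*_\cA(-,\cA)$.

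However, the one step you leave open is the only genuinely non-routine one, and the route you sketch for it does not work. The four-term sequence $0\to N[x]\to N\xrightarrow{\,x\,}N\to N/xN\to 0$ does \emph{not} yield $j_{\cA/x}(N/xN)\le j_{\cA/x}(N[x])$: the natural map $N[x]\to N/xN$ has kernel $N[x]\cap xN$, which can be all of $N[x]$ (already for $\Z/4$ with $x=2$), and passing from $N[x]$ to a quotient of it only \emph{raises} the grade by Lemma~\ref{gradeses}, so no upper bound on $j_{\cA/x}(N/xN)$ comes out this way. The repair is to work with $T:=N[x^\infty]$ instead of $N[x]$ (as the paper does): since $N$ is Noetherian, $T$ is killed by a power of $x$, and the quotient $N/T$ is $x$-torsion-free, so the snake lemma for multiplication by $x$ on $0\to T\to N\to N/T\to 0$ gives a genuine injection $T/xT\hookrightarrow N/xN$. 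Purity gives $j_\cA(T)=j_0$ when $T\neq 0$, the filtration argument in the proof of Lemma~\ref{lem:obviouscodimineq} gives $j_{\cA/x}(T/xT)=j_\cA(T)-1=j_0-1$ for the $x$-power-torsion module $T$, and then Lemma~\ref{gradeses} applied to the submodule $T/xT\subset N/xN$ forces $j_{\cA/x}(N/xN)\le j_0-1$, the desired contradiction. (Your fallback of ``producing a nonzerodivisor from $\dph_\cA(N)\ge 1$'' does not address the issue either, since you need the specific central elements $\varpi,x_i$ to be $N$-regular, not merely the existence of some regular element.)
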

\begin{proof}
	We let $R = \cO[[x_1,\ldots,x_r]]$ and $\m_R = (\varpi,x_1,\ldots,x_r) 
	\subset R$. First suppose $M$ is a flat 
	$\cO[[x_1,\ldots,x_r]]$-module. Then 
	$(\varpi,x_1,\ldots,x_r)$ is an $M$-regular sequence (using Nakayama's 
	lemma for finitely generated $A$-modules to see that 
	$M/(\varpi,x_1,\ldots,x_r) \ne 0$; we are assuming $M \ne 0$ since 
	Cohen--Macaulay modules are by definition non-zero). It follows from Lemma 
	\ref{changeofrings} that we have the desired equality of
        codimensions.

	Conversely, suppose that $j_A(M) = 
	j_{\Omega}(M/(\varpi,x_1,\ldots,x_r)M)$. We claim that 
	$(\varpi,x_1,\ldots,x_r)$ is an $M$-regular sequence. To prove the claim, 
	it suffices (by induction on~$r$) to show that for $x \in \{\varpi,x_1,\ldots,x_r\}$ we have the 
	following \begin{enumerate}
		\item $j_A(M) = j_{A/x}(M/xM).$
		\item $x$ is $M$-regular.
		\item $M/xM$ is a Cohen--Macaulay $A/x$-module.
	\end{enumerate} By Lemma \ref{lem:obviouscodimineq}, 
	we have $j_A(M)\ge j_{A/x}(M/xM)\ge j_{\Omega}(M/\m_RM)$, so our assumption 
	implies that (1) holds. 
	
	Next we check that $x$ is $M$-regular. As in the 
	proof of Lemma \ref{lem:obviouscodimineq}, we have a short exact sequence 
	\[0 \rightarrow M[x^\infty]\rightarrow M \rightarrow 
	M/M[x^\infty]\rightarrow 0\] where $M/M[x^\infty]$ is $x$-torsion free. 
	Suppose for a contradiction that $M[x^\infty]$ is nonzero. By 
	\cite[Prop.~3.9, Prop.~3.5(v)]{Venjakob} $M$ has pure $\delta$-dimension 
	$\dim_A(M)$. By \cite[Prop.~3.5(vi)(b)]{Venjakob} we therefore have 
	$j_A(M[x^\infty]) = j_A(M)$ (if a module has pure $\delta$-dimension, all 
	its non-zero submodules have the same dimension). As in the proof of Lemma \ref{lem:obviouscodimineq} we also have 
	$j_A(M[x^\infty])  = 1 + j_{A/x}(M[x^\infty]/xM[x^\infty])$. Combining the 
	two equalities, we get $j_{A/x}(M[x^\infty]/xM[x^\infty]) = j_A(M) - 1$, 
	which (by Lemma~\ref{gradeses}) contradicts (1), since $M[x^\infty]/xM[x^\infty]$ is a submodule of 
	$M/xM$. This completes the proof that (2) holds.
	
	Now we must show that $M/xM$ is a Cohen--Macaulay $A/x$-module. By 
	Lemma \ref{changeofrings} we have $j_A(M/xM) = 1 + j_{A/x}(M/xM) = 1 + 
	j_A(M)$. By (2), we have a short exact sequence \[0 \rightarrow M 
	\overset{\times 
	x}{\rightarrow} M \rightarrow M/xM \rightarrow 0.\]  Considering the long 
	exact sequence for $\Hom_A(-,A)$ we see that $\Ext_A^i(M/xM,A) = 0$ for all 
	$i \ne 1 + j_A(M)$. The argument of the first paragraph of the
        proof of Lemma \ref{changeofrings} now implies 
	that  $\Ext_{A/x}^i(M/xM,A/x) = 0$ for all 
	$i \ne j_A(M)$, and this shows that $M/xM$ is
        Cohen--Macaulay (by Remark \ref{rem: cm}).
	
	Finally, we have established the claim that $(\varpi,x_1,\ldots,x_r)$ is an 
	$M$-regular sequence. It follows that $\mathrm{Tor}_1^R(R/\m_R,M) = 0$. If 
	$I$ is an ideal in $R$ then $I \otimes_R M$ is naturally a finitely 
	generated $A$-module and is therefore separated for the $\m_R$-adic 
	topology. Now \cite[Theorem 22.3]{MR1011461} implies that $M$ is a flat 
	$R$-module (the 
	previous sentence shows that $M$ is $\m_R$-adically ideal-separated, in 
	Matsumura's terminology).	
	\end{proof}
\subsection{An application of the Artin--Rees lemma}We now recall a version of the Artin--Rees Lemma.\begin{alem}\label{lem:artinrees}
	Let $K$ be a compact $p$-adic analytic group, and let $M$ be a 
	$\cO[[K]]$-submodule of $\cO[[K]]^{\oplus t}$, for 
	some $t \ge 1$. Let $K'$ be an open uniform pro-$p$ subgroup of $K$, and 
	let $\cJ$ denote the two-sided ideal of $\cO[[K]]$ generated by the 
	maximal ideal $\m$ of the local ring $\cO[[K']]$. Then there is a 
	constant $c \ge 0$ such that $M \cap (\cJ^{m+c})^{\oplus t} \subset \cJ^mM$ 
	for all $m \ge 0$.
\end{alem}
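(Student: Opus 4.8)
The statement is a non-commutative Artin--Rees lemma, and my plan is to deduce it from the Artin--Rees property of the Iwasawa algebra $\cO[[K']]$ with respect to its maximal ideal $\m$. For $K'$ uniform, the associated graded ring $\operatorname{gr}_\m\cO[[K']]$ is a commutative polynomial ring $k[X_0,\dots,X_d]$ in $d+1=1+\dim K'$ variables (Lazard; see \cite{lazard,Venjakob}), hence Noetherian, so the Rees ring $\bigoplus_n\m^n$ is Noetherian and $\m$ satisfies Artin--Rees: for any finitely generated $\cO[[K']]$-module $N$ and submodule $N_0\subseteq N$ there is $c\ge 0$ with $N_0\cap\m^{n+c}N\subseteq\m^n N_0$ for all $n$. (Equivalently, $\cO[[K']]$ is Zariskian with respect to the $\m$-adic filtration.)

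The heart of the matter is the case where $K'$ is \emph{normal} in $K$, and I would treat this first. Then conjugation by $K$ preserves $\cO[[K']]$ and hence its Jacobson radical $\m$, so $\cJ=\cO[[K]]\m\cO[[K]]=\m\cO[[K]]=\cO[[K]]\m$ and therefore $\cJ^n=\m^n\cO[[K]]$ for all $n$. Choosing representatives $h_1,\dots,h_r$ for the right cosets $K'\backslash K$, we have $\cO[[K]]=\bigoplus_i\cO[[K']]h_i$ as a left $\cO[[K']]$-module, so $\cO[[K]]^{\oplus t}$, regarded as an $\cO[[K']]$-module by restriction, is free of finite rank, and in these coordinates $(\cJ^n)^{\oplus t}=\m^n\cdot\cO[[K]]^{\oplus t}$ is precisely ``$\m^n$ in each coordinate''. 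Viewing $M$ as an $\cO[[K']]$-submodule of this free module and applying Artin--Rees over $\cO[[K']]$ yields $c$ with
\[M\cap(\cJ^{m+c})^{\oplus t}=M\cap\m^{m+c}\cO[[K]]^{\oplus t}\subseteq\m^m M\subseteq\cJ^m M\]
for all $m$, which is the assertion of the lemma. Here one uses that $\cJ^m M\supseteq\m^m M$ since $\m^m\subseteq\cJ^m$.

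The remaining point --- and the one I expect to be the main obstacle --- is the passage to a general uniform (not necessarily normal) open subgroup $K'$. One cannot simply replace $K'$ by a uniform open subgroup normal in $K$: the $\cJ$-adic filtrations attached to different open subgroups are cofinal (each defines the profinite topology on $\cO[[K]]$) but in general not linearly equivalent, and Artin--Rees does not transfer across a change of ``speed''. Instead I would argue directly that the Rees ring $\mathcal{R}=\bigoplus_{n\ge 0}\cJ^n$ of $\cO[[K]]$ is left Noetherian --- equivalently that $\operatorname{gr}_\cJ\cO[[K]]$ is Noetherian --- and then invoke the standard fact that a two-sided ideal with Noetherian Rees ring has the Artin--Rees property. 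One knows each graded piece $\cJ^n/\cJ^{n+1}$ is finite (it is a finitely generated bimodule over the finite ring $\cO[[K]]/\cJ$), and the work is to promote this to Noetherianity of $\operatorname{gr}_\cJ\cO[[K]]$ by realising it as a finite module over a commutative Noetherian subring coming from the uniform structure of $K'$, exactly as in the normal case, where $\mathcal{R}$ is visibly a free module of finite rank over the Noetherian ring $\bigoplus_n\m^n$. I note, finally, that in every application of this lemma in the paper one may arrange that $K'$ is normal in $K$ (for instance $K=\Zp^g\times K_0$ with $K'$ chosen to contain a congruence subgroup normal in $K_0$), so that the normal case already suffices for our purposes.
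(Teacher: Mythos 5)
Your argument in the case that $K'$ is \emph{normal} in $K$ is complete and correct, and it takes a mildly different route from the paper's: you apply the Artin--Rees property of the $\m$-adic filtration on $\cO[[K']]$ (valid because $\operatorname{gr}_\m\cO[[K']]$ is a polynomial ring over $k$ by Lazard, so the filtration is Zariskian) to the submodule $M$ of the finite free left $\cO[[K']]$-module $\cO[[K]]^{\oplus t}$, using the identities $\cJ^n=\m^n\cO[[K]]$ and $\m^n\cO[[K]]=\bigoplus_i\m^n h_i$, whereas the paper establishes the Artin--Rees property for the $\cJ$-adic filtration on $\cO[[K]]$ itself by asserting that $\operatorname{gr}_\cJ\cO[[K]]$ is finite over the Noetherian ring $\operatorname{gr}_\m\cO[[K']]$ and invoking the Zariskian-filtration results of Li--van Oystaeyen. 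Your route has the advantage of needing that machinery only for the uniform group $K'$, where the associated graded is visibly commutative Noetherian. For general (non-normal) $K'$ you correctly diagnose the obstruction --- shrinking to a normal uniform subgroup changes the ``speed'' of the filtration, so Artin--Rees with an additive shift does not transfer, and finiteness of the graded pieces of $\cJ$ alone is not enough --- and you then converge on exactly the paper's argument, namely Noetherianity of $\operatorname{gr}_\cJ\cO[[K]]$ via finiteness over $\operatorname{gr}_\m\cO[[K']]$; but you leave that finiteness as ``the work to be done'', so in the stated generality your proof is incomplete at precisely the point where the paper contents itself with a one-sentence assertion. Your fallback observation is sound: every use of the lemma in the paper (via Lemma~\ref{lem:flatnessinlimit}, in the proofs of Propositions~\ref{completionofpatchedcomplexes}, \ref{prop: killing patching variables gives completed homology} and~\ref{prop: local global compatibility for finite level}) permits choosing the uniform subgroup to be normal in the ambient group, so the case you have fully proved does suffice for the applications.
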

\begin{proof}
	The associated graded of $\cO[[K]]$ 
	for the $\cJ$-adic filtration is finite over the Noetherian ring 
	$\mathrm{gr}_{\m}\cO[[K']]$, so it is itself 
	Noetherian. Now we can apply \cite[Prop.~II.2.2.1, 
	Thm.~II.2.1.2(2)]{zariskian}. This shows that the $\cJ$-adic filtration on 
	$\cO[[K]]$ has the Artin--Rees property (defined in 
	\cite[Defn.~II.1.1.1]{zariskian}), and the statement of the Lemma is a 
	special case of this property.
\end{proof}
\begin{alem}\label{lem:flatnessinlimit}
	Keep the same notation as in the previous Lemma. Suppose we have flat 
	$\cO[[K]]/\cJ^m$-modules $M_m$ for each $n\ge 1$, with $M_m = 
	M_{m+1}/\cJ^mM_{m+1}$. Then $M:=\invlim_m M_m$ is a flat 
	$\cO[[K]]$-module and \[Q \otimes_{\cO[[K]]} M = \invlim Q 
	\otimes_{\cO[[K]]} M_m\] for 
	every finitely generated (right) $\cO[[K]]$-module $Q$. 
	
	In particular, we have $M/\cJ^mM = M_m$.
\end{alem}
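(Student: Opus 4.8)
The plan is to deduce everything from the Artin--Rees lemma~\ref{lem:artinrees} together with the standard vanishing of $\varprojlim^1$ for countable inverse systems of profinite abelian groups. Throughout write $\Lambda=\cO[[K]]$ and $\Lambda_m=\Lambda/\cJ^m$.

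First I would record the structural preliminaries. After replacing $K'$ by a normal open subgroup (so that $\cJ^m=\m^m\Lambda=\Lambda\otimes_{\cO[[K']]}\m^m$, with $\m=\m_{\cO[[K']]}$), each $\Lambda_m$ is a finite $\cO$-algebra, since $\cO[[K']]/\m^m$ is Artinian and $\Lambda$ is finite free over $\cO[[K']]$. Hence each $M_m$ is a finitely generated, so profinite, $\cO$-module, $M=\varprojlim_m M_m$ is pseudocompact, and all maps occurring below (transition maps, differentials of complexes, structure maps) are automatically continuous. The transition maps $M_{m+1}\to M_m$ are surjective by hypothesis, and an elementary iteration of $M_{m+1}/\cJ^mM_{m+1}=M_m$ gives compatible isomorphisms $M_k/\cJ^mM_k\cong M_m$ for all $k\ge m$.

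For flatness I would fix a finitely generated right ideal $L\subset\Lambda$ and show $\Tor_1^\Lambda(\Lambda/L,M)=0$. Choosing a resolution $P_\bullet\to\Lambda/L$ by finite free right $\Lambda$-modules, one has $P_\bullet\otimes_\Lambda M=\varprojlim_m(P_\bullet\otimes_\Lambda M_m)$ as a tower of complexes of profinite modules with termwise surjective transition maps; since each $\Tor_i^\Lambda(\Lambda/L,M_m)$ is profinite, the standard $\varprojlim^1$-sequence for the homology of such a tower (cf.\ \cite[Thm.~3.5.8]{weibel}) collapses to give $\Tor_i^\Lambda(\Lambda/L,M)=\varprojlim_m\Tor_i^\Lambda(\Lambda/L,M_m)$ for all $i$. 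Now flatness of $M_m$ over $\Lambda_m$ yields the change-of-rings identity $\Tor_i^\Lambda(\Lambda/L,M_m)=\Tor_i^\Lambda(\Lambda/L,\Lambda_m)\otimes_{\Lambda_m}M_m$, and $\Tor_1^\Lambda(\Lambda/L,\Lambda_m)=(L\cap\cJ^m)/L\cJ^m$. By Artin--Rees~\ref{lem:artinrees} applied to the right submodule $L\subset\Lambda$ there is a constant $c$ with $L\cap\cJ^m\subseteq L\cJ^{m-c}$ for $m\gg 0$; hence for $m'\ge m+c$ the transition map $(L\cap\cJ^{m'})/L\cJ^{m'}\to(L\cap\cJ^m)/L\cJ^m$ is zero, so the system $\bigl((L\cap\cJ^m)/L\cJ^m\bigr)_m$ is pro-zero, hence so is $\bigl(\Tor_1^\Lambda(\Lambda/L,M_m)\bigr)_m$ after tensoring with the $M_m$, and therefore $\Tor_1^\Lambda(\Lambda/L,M)=0$.

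The displayed formula then follows by the same device with no further input: for a finitely generated right $\Lambda$-module $Q$, resolve $Q$ by finite free right modules $P_\bullet$, observe $P_\bullet\otimes_\Lambda M=\varprojlim_m(P_\bullet\otimes_\Lambda M_m)$, and use that $\Tor_1^\Lambda(Q,M_m)$ is profinite to conclude $Q\otimes_\Lambda M=H_0(P_\bullet\otimes_\Lambda M)=\varprojlim_m H_0(P_\bullet\otimes_\Lambda M_m)=\varprojlim_m Q\otimes_\Lambda M_m$. Finally, taking $Q=\Lambda/\cJ^m$ gives $M/\cJ^mM=\varprojlim_k M_k/\cJ^mM_k=M_m$, the last equality because the tower $(M_k/\cJ^mM_k)_k$ is eventually the constant tower $M_m$. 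The step I expect to need the most care is the identification $\Tor_i^\Lambda(\Lambda/L,M)=\varprojlim_m\Tor_i^\Lambda(\Lambda/L,M_m)$: the relevant towers are not Mittag--Leffler in the naive sense, so one genuinely relies on the profinite $\varprojlim^1$-vanishing, and one must use Artin--Rees in the sharper \emph{pro-zero} form (which survives tensoring with $M_m$) rather than merely as a statement about inverse limits.
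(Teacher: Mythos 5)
Your overall strategy is close to the paper's --- both arguments hinge on Artin--Rees making the systems $\Tor_1^\Lambda(Q,\Lambda/\cJ^m)$ pro-zero, and your derivation of $Q\otimes_\Lambda M=\varprojlim_m Q\otimes_\Lambda M_m$ and of $M/\cJ^mM=M_m$ is correct as written, since there only ${\varprojlim}^1\Tor_1^\Lambda(Q,M_m)=0$ is needed and pro-zero-ness suffices. The gap is in the flatness step. You assert that ``each $M_m$ is a finitely generated, so profinite, $\cO$-module'': this is not a hypothesis of the lemma, and it is false in the intended applications (the $M_m$ arise from ultraproducts of singular chain complexes and are nowhere near finitely generated --- flat modules over the finite ring $\Lambda/\cJ^m$ can be arbitrarily large). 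This claim is load-bearing for you: to pass from the Weibel sequence
\[0\to{\varprojlim}^1\,\Tor_2^\Lambda(\Lambda/L,M_m)\to\Tor_1^\Lambda(\Lambda/L,M)\to\varprojlim_m\Tor_1^\Lambda(\Lambda/L,M_m)\to0\]
to $\Tor_1^\Lambda(\Lambda/L,M)=0$ you must kill the ${\varprojlim}^1$ of the $\Tor_2$ tower, and profiniteness is the only justification you offer. Artin--Rees applied to $L\subset\Lambda$ controls $\Tor_1(\Lambda/L,\Lambda/\cJ^m)$ but says nothing directly about $\Tor_2$.

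The gap is fixable with the tools at hand: writing $0\to K\to F_0\to L\to 0$ with $F_0$ finite free, one has $\Tor_2^\Lambda(\Lambda/L,\Lambda/\cJ^m)\cong\Tor_1^\Lambda(L,\Lambda/\cJ^m)\cong(K\cap\cJ^mF_0)/\cJ^mK$, and Lemma~\ref{lem:artinrees} is deliberately stated for submodules of finite free modules precisely so that this system is also pro-zero; tensoring with $M_m$ preserves pro-zero-ness, and a pro-zero tower has vanishing ${\varprojlim}^1$ with no compactness needed. You should make this dimension shift explicit rather than invoking profiniteness. Note that the paper's proof sidesteps $\Tor_2$ entirely: it proves flatness by showing $Q\otimes_\Lambda M\to Q'\otimes_\Lambda M$ is injective for every injection $Q\hookrightarrow Q'$ of finitely generated right modules, using the four-term exact sequence $\Tor_1(Q',M_m)\to\Tor_1(Q'',M_m)\to Q\otimes M_m\to Q'\otimes M_m$ together with the three-term inverse-limit lemma \cite[\href{http://stacks.math.columbia.edu/tag/070E}{Tag 070E}]{stacks-project}, so that only $\Tor_1$ towers ever appear; your ideal-theoretic criterion is equally valid but forces you to confront one more homological degree.
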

\begin{proof}
	This follows from \cite[\href{http://stacks.math.columbia.edu/tag/0912}{Tag 
	0912}]{stacks-project}. The reference assumes 
	that the rings in question are commutative, so we will write out the proof 
	in our setting. Set $A = \cO[[K]]$ to abbreviate our notation.
	
	We first show that $Q \otimes_A M = \invlim Q \otimes_A M_m$ for every 
	finitely generated (right)
	$A$-module $Q$. Since~$A$ is Noetherian, we may choose a 
	resolution $F_2 \to F_1 \to F_0 \to Q \to 0$
	by finite free $A$-modules $F_i$.  Then
	$$
	F_2 \otimes_A M_m \to F_1 \otimes_A M_m \to F_0 \otimes_A M_m
	$$
	is a chain complex whose homology in degree $0$ is $Q \otimes_A M_m$
	and whose homology in degree $1$ is
	$$
	\text{Tor}_1^A(Q, M_m) = \text{Tor}_1^A(Q, A/\cJ^m) \otimes_{A/\cJ^m} M_m
	$$
	as $M_m$ is flat over $A/\cJ^m$. Set $K= \ker(F_0\to Q)$. We have \[\text{Tor}_1^A(Q, A/\cJ^m) = 
	(K\cap(\cJ^m F_0))/\cJ^mK\] so Lemma \ref{lem:artinrees} implies that there 
	exists a $c \ge 0$ such that the map \[\text{Tor}_1^A(Q, A/\cJ^{n+c})\to 
	\text{Tor}_1^A(Q, A/\cJ^m)\] is zero for all $m$.
	
	It follows from \cite[\href{http://stacks.math.columbia.edu/tag/070E}{Tag 
	070E}]{stacks-project}
	that $\invlim Q \otimes_A M_m =
	\coker(\invlim F_1 \otimes_A M_m \to \invlim F_0 \otimes_A M_m)$.
	Since the $F_i$ are finite free this equals
	$\coker(F_1 \otimes_A M \to F_0 \otimes_A M) = Q \otimes_A M$,
        as claimed.  Taking $Q = A/\cJ^m$, we obtain $M/\cJ^mM = M_m$.
	
	It remains to show that $M$ is flat. Let $Q \to Q'$ be an injective map of finitely generated right 
	$A$-modules; we must show that $Q \otimes_A M \to Q' \otimes_A 
	M$ is injective. By the above we see
	$$
	\ker(Q \otimes_A M \to Q' \otimes_A M) =
	\ker(\invlim Q \otimes_A M_m \to \invlim Q' \otimes_A M_m).
	$$
	For each $m$ we have an exact sequence
	$$
	\text{Tor}_1^A(Q', M_m) \to \text{Tor}_1^A(Q'', M_m) \to
	Q \otimes_A M_m \to Q' \otimes_A M_m
	$$
	where $Q'' = \coker(Q \to Q')$. Above we have seen that the
	inverse systems of Tor's are essentially constant with value $0$.
	It follows from
	\cite[\href{http://stacks.math.columbia.edu/tag/070E}{Tag 
	070E}]{stacks-project}
	that the inverse limit of the right most maps is injective, as
        required.\end{proof}

\section{Tensor products and projective covers}\label{appendix: tensor products projective covers}
\subsection{Tensor products}We recall from \cite[\S2]{brumer} that if $R$ is 
a pseudocompact ring and $M, N$ are pseudocompact (right, resp.~left) 
$R$-modules, then the 
completed tensor product $M \wotimes_R N$ is a pseudocompact $R$-module, which 
satisfies the usual universal property for the tensor product in the category 
of pseudocompact $R$-modules. $M\wotimes_R N$ is the completion of $M\otimes_R 
N$ in the topology induced
by taking $\mathrm{Im}(M\otimes_R V + U \otimes_R N)$ as a fundamental system 
of 
open neighborhoods of $0$, where $U$ (resp.~$V$) runs through the open 
submodules of $M$ (resp.~$N$).

If $A$ and $B$ are pseudocompact $R$-algebras, and $M, N$ (respectively) 
are pseudocompact $A$ and $B$-modules, then $M\wotimes_R N$ is naturally a 
pseudocompact $A\wotimes_R B$-module.

\begin{alemma}\label{brumercont} Let $M$, $N$ be pseudocompact $\cO$-modules.
	Suppose $M = \invlim_i M_i$ and $N = \invlim_j N_j$, where $M_i$ and 
	$N_j$ are also pseudocompact $\cO$-modules. Suppose that the 
	transition maps $M_j \rightarrow M_i$ and $N_j \rightarrow M_i$ are 
	surjective. Then the natural map 
	\[\dirlim_{i,j}\Hom_\cO^{cts}(M_i,N_j^\vee) \rightarrow 
	\Hom_\cO^{cts}(M,N^\vee)\]
	is an isomorphism.
	
	The natural map \[M\wotimes_\cO N \rightarrow \invlim_{i,j}M_i\wotimes_\cO 
	N_j \] is also an isomorphism.
\end{alemma}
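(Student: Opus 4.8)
The plan is to deduce both assertions from a single statement — that the completed tensor product commutes with cofiltered limits of pseudocompact $\cO$-modules taken along surjective transition maps — together with Pontryagin duality. I will prove the tensor-product statement first and then read off the $\Hom$ statement.

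For the tensor product, recall from \cite[\S 2]{brumer} (and the description of $\wotimes_\cO$ given above) that for pseudocompact $\cO$-modules $X,Y$ one has $X\wotimes_\cO Y = \invlim_{U,V}(X/U)\otimes_\cO(Y/V)$, the limit running over open submodules $U\subseteq X$ and $V\subseteq Y$; here I use that $(X\otimes_\cO Y)/\operatorname{Im}(X\otimes_\cO V+U\otimes_\cO Y)\cong (X/U)\otimes_\cO(Y/V)$ by right-exactness of $\otimes_\cO$, and that a quotient of a pseudocompact $\cO$-module by an open submodule has finite length (as $\cO$ has finite residue field), hence is already complete. The essential point is a cofinality claim: since $M=\invlim_i M_i$ with surjective transition maps, the submodules $\pi_i^{-1}(U_i)$ — for $i$ in the index category and $U_i\subseteq M_i$ open — form a fundamental system of open neighbourhoods of $0$ in $M$, so the quotients $M_i/U_i$ are cofinal among the finite-length quotients of $M$; likewise for $N$. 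Substituting these cofinal systems into the double limit gives
\[M \wotimes_\cO N \;=\; \invlim_{i,j}\ \invlim_{U_i,V_j}\,(M_i/U_i)\otimes_\cO(N_j/V_j)\;=\;\invlim_{i,j} M_i\wotimes_\cO N_j,\]
and one checks directly that this identification is the natural map.

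For the $\Hom$ statement I would first record the adjunction $\Hom_\cO^{cts}(X,Y^\vee)\cong (X\wotimes_\cO Y)^\vee$ for pseudocompact $X,Y$: by the universal property of $\wotimes_\cO$ a continuous $\cO$-linear map $X\wotimes_\cO Y\to E/\cO$ is the same as a continuous $\cO$-bilinear map $X\times Y\to E/\cO$, which by adjunction in the $Y$-variable is the same as a continuous map $X\to \Hom_\cO^{cts}(Y,E/\cO)=Y^\vee$. Applying this with $(X,Y)=(M_i,N_j)$ and with $(X,Y)=(M,N)$, and using that $(-)^\vee$ is an exact anti-equivalence carrying $\invlim$ to $\dirlim$ (and carrying the surjection $N\twoheadrightarrow N_j$ to the inclusion $N_j^\vee\hookrightarrow N^\vee$, so that $\dirlim_j N_j^\vee=N^\vee$ — again by the cofinality claim), the tensor-product statement just proved yields
\[\Hom_\cO^{cts}(M,N^\vee)\;\cong\;(M\wotimes_\cO N)^\vee\;\cong\;\bigl(\invlim_{i,j} M_i\wotimes_\cO N_j\bigr)^\vee\;\cong\;\dirlim_{i,j}(M_i\wotimes_\cO N_j)^\vee\;\cong\;\dirlim_{i,j}\Hom_\cO^{cts}(M_i,N_j^\vee),\]
and tracing through the isomorphisms identifies this composite with the natural map in the statement.

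I expect the main obstacle to be the cofinality step: checking carefully that, for an inverse limit of pseudocompact $\cO$-modules along surjective transition maps, the preimages of open submodules of the $M_i$ genuinely form a neighbourhood basis of $0$ (equivalently, that every finite-length quotient of $M$ factors through some $M_i$), and likewise for $N$ — this is exactly where the surjectivity hypothesis is used. The remaining manipulations (interchanging the nested inverse limits, which is harmless since the whole thing is one cofiltered limit indexed by the quadruples $(i,U_i,j,V_j)$, and the formal properties of Pontryagin duality and of $\wotimes_\cO$) are routine and I would not dwell on them.
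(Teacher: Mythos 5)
Your proof is correct, but it takes a genuinely different route from the paper: the paper's entire proof of this lemma is a citation of \cite[Lem.~A.3, Lem.~A.4]{brumer}, whereas you supply a self-contained argument. Your strategy --- prove the tensor-product statement first via the cofinality of the submodules $\pi_i^{-1}(U_i)$ among the open submodules of $M$ (and likewise for $N$), then deduce the $\Hom$ statement by Pontryagin duality --- is sound, and you have correctly isolated where surjectivity of the transition maps enters: it guarantees that the projections $M\to M_i$ are surjective (inverse limits of compact Hausdorff groups along surjections), so that $M/\pi_i^{-1}(U_i)\cong M_i/U_i$ and the $M_i/U_i$ really are cofinal among the finite-length discrete quotients of $M$; regrouping the resulting cofiltered limit then gives $M\wotimes_\cO N=\invlim_{i,j}M_i\wotimes_\cO N_j$. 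One point deserves care in your second step. The identification $\Hom_\cO^{cts}(X,Y^\vee)\cong(X\wotimes_\cO Y)^\vee$ is the paper's Lemma~\ref{lem: tensor dual Brumer}, which is proved there \emph{using} the present lemma, so you are right to give it an independent justification --- but your appeal to ``the universal property of $\wotimes_\cO$'' for maps into $E/\cO$ is slightly loose, since $E/\cO$ is discrete rather than pseudocompact and the universal property as recalled in the paper only concerns pseudocompact targets. The repair is routine and worth writing out: any continuous map from a pseudocompact module to a discrete one factors through a finite-length open quotient, so
\[\Hom_\cO^{cts}(X\wotimes_\cO Y,E/\cO)=\dirlim_{U,V}\Hom_\cO\bigl((X/U)\otimes_\cO(Y/V),E/\cO\bigr)=\dirlim_{U}\Hom_\cO\bigl(X/U,\dirlim_V(Y/V)^\vee\bigr)=\Hom_\cO^{cts}(X,Y^\vee),\]
using the finite-level tensor--hom adjunction and the fact that $\Hom_\cO(X/U,-)$ commutes with filtered colimits. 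With that patch, your derivation of the $\Hom$ statement from the tensor statement goes through. What your approach buys is transparency (no black-box citation); what the paper's buys is brevity and the reassurance that Brumer's lemmas are stated in exactly the generality needed.
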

\begin{proof}
	The first claim is (a special case of) \cite[Lem.~A.3]{brumer}. The second 
	claim is a special case of 
	\cite[Lem.~A.4]{brumer}.\end{proof}

\begin{alemma}\label{lem: tensor dual Brumer}Let $M, N$ be pseudocompact 
$\cO$-modules.
	There is a natural isomorphism \[\left(M \wotimes_\cO N\right)^\vee \cong 
	\Hom_\cO^{cts}(M,N^\vee)\] where $N^\vee$ has the discrete topology.
\end{alemma}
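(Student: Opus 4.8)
The plan is to unwind both sides of the claimed isomorphism in terms of discrete $\cO$-modules and reduce to the universal property of the completed tensor product. First I would note that since $N^\vee = \Hom_\cO^{cts}(N, E/\cO)$ carries the discrete topology, a continuous homomorphism $M \to N^\vee$ is the same thing as a continuous $\cO$-bilinear pairing $M \times N \to E/\cO$, i.e.\ a continuous $\cO$-linear map $M \otimes_\cO N \to E/\cO$ that is continuous for the topology on $M \otimes_\cO N$ described in the text (generated by $\operatorname{Im}(M \otimes_\cO V + U \otimes_\cO N)$ as $U, V$ range over open submodules). By the universal property of the completion, such a map extends uniquely to a continuous map $M \wotimes_\cO N \to E/\cO$, and since $E/\cO$ is discrete this is exactly an element of $(M \wotimes_\cO N)^\vee$. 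So at the level of underlying abelian groups the two sides agree; one then checks the identification is $\cO$-linear and natural in $M$ and $N$, which is immediate from the construction.

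Concretely, I would carry this out in the following steps. Step one: write $M = \invlim_i M_i$ and $N = \invlim_j N_j$ with $M_i$, $N_j$ finite (or at least discrete quotients), with surjective transition maps, using that $M$ and $N$ are pseudocompact. Step two: invoke Lemma~\ref{brumercont}: its first assertion gives $\dirlim_{i,j}\Hom_\cO^{cts}(M_i, N_j^\vee) \isoto \Hom_\cO^{cts}(M, N^\vee)$, and its second gives $M \wotimes_\cO N \isoto \invlim_{i,j} M_i \wotimes_\cO N_j$, hence $(M\wotimes_\cO N)^\vee \cong \dirlim_{i,j}(M_i \wotimes_\cO N_j)^\vee$ since Pontryagin duality turns the inverse limit into a direct limit. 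Step three: reduce to the case where $M$ and $N$ are themselves finite discrete $\cO$-modules, where $M \wotimes_\cO N = M \otimes_\cO N$ and the desired statement $(M \otimes_\cO N)^\vee \cong \Hom_\cO(M, N^\vee)$ is the standard tensor--hom adjunction for Pontryagin duals of finite modules (or finite length modules over $\cO$). Step four: check that the isomorphisms in steps two and three are compatible with the transition maps in the ind/pro-systems, so that passing to the (co)limit yields the claimed natural isomorphism for general pseudocompact $M, N$.

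The main obstacle — though it is more bookkeeping than genuine difficulty — is verifying that the topology on $M \otimes_\cO N$ used to define $M \wotimes_\cO N$ is exactly the one for which continuous maps to the discrete module $E/\cO$ correspond to compatible families of maps $M_i \otimes_\cO N_j \to E/\cO$; equivalently, that the fundamental system of open neighbourhoods $\operatorname{Im}(M \otimes_\cO V + U \otimes_\cO N)$ has as quotients precisely the $M_i \otimes_\cO N_j$. This is essentially the content already packaged into Lemma~\ref{brumercont} (which is cited from \cite{brumer}), so in practice I would simply quote that lemma and not belabour the point. An alternative, cleaner route that avoids choosing the presentations explicitly: observe directly that $\Hom_\cO^{cts}(M, N^\vee)$ represents the functor of continuous $\cO$-bilinear maps $M \times N \to E/\cO$ (by the adjunction between $-\wotimes$ and $\Hom^{cts}$, or by hand), and that $(M\wotimes_\cO N)^\vee$ represents the same functor by the defining universal property of $\wotimes_\cO$ together with the fact that continuous homs from $M\wotimes_\cO N$ to the discrete module $E/\cO$ factor through a discrete quotient. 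Either way the proof is short; I would present the functor-of-points version as the main argument and mention Lemma~\ref{brumercont} as the concrete incarnation.
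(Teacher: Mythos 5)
Your proposal is correct and follows essentially the same route as the paper: reduce via Lemma~\ref{brumercont} to the case of finite length $\cO$-modules (where $\wotimes_\cO$ is $\otimes_\cO$) and then apply the standard tensor--hom adjunction for Pontryagin duals. The paper's proof is exactly your steps one through four, stated tersely; the functor-of-points variant you sketch is a harmless repackaging of the same argument.
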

\begin{proof}
	By Lemma \ref{brumercont} we may assume that $M$ and $N$ are finite length 
	$\cO$-modules. By the universal property of the tensor product, we have 
	\[(M\otimes_\cO N)^\vee = \Hom_\cO(M,N^\vee).\qedhere\]
\end{proof}

We now recall some terminology about categories of smooth representations of 
$p$-adic analytic groups from \cite{emordone}. Let $G$ be a $p$-adic analytic 
group, with a compact open subgroup $K_0$ (all the notions recalled below will 
be independent of the choice of $K_0$). We let $A$ denote
a complete Noetherian local $\cO$-algebra with finite residue field and maximal 
ideal $\m_A$. In particular, $A$ is a pseudocompact $\cO$-algebra. 
$\Mod_G^{sm}(A)$ denotes the abelian category of smooth 
$G$-representations with coefficients in $A$ \cite[Defn.~2.2.5]{emordone}. 
Pontryagin duality gives an anti-equivalence of categories between 
$\Mod_G^{sm}(A)$ and the category of pseudocompact $A[[K_0]]$-modules with a 
compatible $G$-action \cite[(2.2.8)]{emordone}. Here we write $A[[K_0]]$ for 
$A\wotimes_\cO \cO[[K_0]]$.

An object $V \in 
\Mod_G^{sm}(A)$ is admissible if $V^\vee$ is a finitely generated 
$A[[K_0]]$-module (we take this as the definition, but see 
\cite[Lem.~2.2.11]{emordone}). An element $v \in V$ is called locally 
admissible if the $G$-subrepresentation of $V$ generated by $v$ is admissible, 
and $V$ is called locally admissible if every element of $V$ is locally 
admissible. 

Similarly, an element $v\in V$ is called locally finite if the 
$G$-subrepresentation of $V$ generated by $v$ is a finite length object in 
$\Mod_G^{sm}(A)$, and $V$ is called locally finite if every element of $V$ is 
locally 
finite.

\begin{alemma}\label{tensorisadm}
	Let $G, H$ be $p$-adic analytic groups and suppose that $V \in 
	\Mod_G^{sm}(\cO)$ and $W \in \Mod_H^{sm}(\cO)$. Suppose that $V$ and $W$ 
	are locally admissible. Then $(V^\vee \wotimes_\cO W^\vee)^\vee = 
	\Hom_\cO^{cts}(V^\vee,W)$ is a locally admissible object of $\Mod_{G\times 
	H}^{sm}(\cO)$. \end{alemma}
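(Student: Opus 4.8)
The plan is to reduce the statement about $G \times H$ to two separate statements about $G$ and $H$, via the structure of locally admissible representations as unions of admissible subrepresentations. First I would unwind the duality: by Lemma~\ref{lem: tensor dual Brumer} we have $(V^\vee \wotimes_\cO W^\vee)^\vee \cong \Hom_\cO^{cts}(V^\vee, W)$ as $\cO$-modules, and this carries a natural smooth action of $G \times H$ (with $G$ acting through $V^\vee$ and $H$ acting on $W$), so the content is that this representation is \emph{locally admissible}.

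The key step is to check local admissibility on a generating set. A locally admissible $V$ is the union of its admissible $G$-subrepresentations; dually, $V^\vee = \invlim_i V_i^\vee$ where $V_i$ runs over admissible quotients (so each $V_i^\vee$ is a finitely generated $\cO[[K_0^G]]$-module) with surjective transition maps. Similarly $W = \dirlim_j W_j$ with each $W_j$ admissible, i.e.\ $W_j^\vee$ finitely generated over $\cO[[K_0^H]]$. By Lemma~\ref{brumercont}, $\Hom_\cO^{cts}(V^\vee, W) = \dirlim_{i,j}\Hom_\cO^{cts}(V_i^\vee, W_j)$. So it suffices to show that each $\Hom_\cO^{cts}(V_i^\vee, W_j) = (V_i^\vee \wotimes_\cO W_j^\vee)^\vee$ is an admissible $G \times H$-representation — once each term in the directed union is admissible, the union is locally admissible, as every element lies in one of these admissible subrepresentations (here one uses that the maps in the directed system are $G\times H$-equivariant, so the image of an admissible subrepresentation is admissible).

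Thus I am reduced to: if $M = V_i^\vee$ is finitely generated over $\cO[[K_0^G]]$ and $N = W_j^\vee$ is finitely generated over $\cO[[K_0^H]]$, then $M \wotimes_\cO N$ is finitely generated over $\cO[[K_0^G \times K_0^H]] = \cO[[K_0^G]] \wotimes_\cO \cO[[K_0^H]]$. This is a general fact about completed tensor products of pseudocompact modules: choosing finite generating sets for $M$ and $N$ gives a surjection $\cO[[K_0^G]]^{\oplus a} \wotimes_\cO \cO[[K_0^H]]^{\oplus b} \onto M \wotimes_\cO N$ (completed tensor product is right exact and commutes with finite direct sums), and the left-hand side is free of finite rank over $\cO[[K_0^G \times K_0^H]]$. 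Dualising back, $(M \wotimes_\cO N)^\vee$ is then an admissible $G \times H$-representation by definition.

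The main obstacle I anticipate is bookkeeping with the topologies and the precise compatibility of the $G\times H$-action with the directed/inverse limit presentations — in particular, verifying that the transition maps in the systems $\{V_i\}$, $\{W_j\}$ can be taken $G$- (resp.\ $H$-) equivariant and that the identification of Lemma~\ref{brumercont} is equivariant, so that the directed union $\dirlim_{i,j}\Hom_\cO^{cts}(V_i^\vee, W_j)$ really is a union of $G\times H$-subrepresentations of $\Hom_\cO^{cts}(V^\vee, W)$. The purely algebraic finite-generation step is routine given the properties of $\wotimes$ recalled at the start of this appendix; the right-exactness of $\wotimes_\cO$ on pseudocompact modules (which I would cite from \cite{brumer}) is the one external input needed.
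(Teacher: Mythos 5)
Your proposal is correct and follows essentially the same route as the paper's proof: reduce via Lemma~\ref{brumercont} to the case where $V$ and $W$ are admissible, then exhibit $M\wotimes_\cO N$ as a quotient of a finite free $\cO[[K_1\times K_2]]$-module using surjections $\cO[[K_1]]^{\oplus a}\onto M$ and $\cO[[K_2]]^{\oplus b}\onto N$. The equivariance bookkeeping you flag as a potential obstacle is handled silently in the paper and poses no real difficulty.
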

\begin{proof}
Let $M = V^\vee$ and $N = W^\vee$. Since $V$ and $W$ are locally admissible, we 
can write $M = 
\invlim_i 
M_i$ and $N = \invlim_j N_j$ where the $M_i^\vee$ and $N_j^\vee$ are admissible 
and the transition maps in the inverse systems are surjective. It 
follows from Lemma \ref{brumercont} that it suffices to prove the Lemma 
under the additional assumption that $V$ and $W$ are admissible.

Let $K_1$ and $K_2$ be compact open subgroups of $G$ and $H$ respectively. We 
may assume that $M$ and $N$ are finitely generated $\cO[[K_1]]$- and 
$\cO[[K_2]]$-modules respectively. In particular, we have (continuous) 
surjections $\cO[[K_1]]^{\oplus a} \rightarrow M$ and $\cO[[K_2]]^{\oplus b} 
\rightarrow N$. Therefore, we have a surjective map of 
$\cO[[K_1]]\wotimes_\cO\cO[[K_2]] = \cO[[K_1\times K_2]]$-modules:

\[\cO[[K_1]]^{\oplus a}\wotimes_\cO \cO[[K_2]]^{\oplus b} = \cO[[K_1\times 
K_2]]^{\oplus ab} \rightarrow M\wotimes_\cO N.\] In particular, $(M\wotimes_\cO 
N)^\vee$ is admissible. \end{proof}

We recall that an irreducible admissible object $V$ of $\Mod_G^{sm}(k)$ is 
called 
\emph{absolutely irreducible} if $V\otimes_k{k'}$ is irreducible in 
$\Mod_G^{sm}(k')$ for every field extension $k'/k$ (or equivalently for every 
finite extension). See \cite[\S4.1]{emordtwo} for this definition and the 
following facts. If $V$ is an 
admissible irreducible in $\Mod_G^{sm}(k)$ then $k' = \End_G(V)$ is a finite 
extension of $k$ and $V\otimes_k k'$ is a finite direct sum of admissible 
absolutely 
irreducible 
objects of $\Mod_G^{sm}(k')$.

\begin{alemma}\label{tensorissimple}
	Let $G, H$ be $p$-adic analytic groups and suppose that $V \in 
	\Mod_G^{sm}(\cO)$ and $W \in \Mod_H^{sm}(\cO)$. Suppose that $V$ and $W$ 
	are locally finite and locally 
	admissible. Then $(V^\vee \wotimes_\cO W^\vee)^\vee = 
	\Hom_\cO^{cts}(V^\vee,W)$ is a locally finite object of $\Mod_{G\times 
		H}^{sm}(\cO)$. 
	
	If $V$ and $W$ are abmissible absolutely irreducible then 
	$(V^\vee \wotimes_\cO W^\vee)^\vee = 
	V\otimes_k W$ is an admissible absolutely irreducible representation of $G 
	\times H$. 
\end{alemma}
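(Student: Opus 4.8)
The plan is to reduce the local finiteness assertion to the irreducibility assertion, and then to prove the latter by identifying $(V^\vee \wotimes_\cO W^\vee)^\vee$ explicitly and applying the classical argument for irreducibility of an external tensor product. First I would write $V = \varinjlim_i V_i$ and $W = \varinjlim_j W_j$ as the directed unions of their finite length subrepresentations; since $W$ is locally admissible, each $W_j$ is a finite sum of admissible subrepresentations, hence admissible. Pontryagin duality turns these into inverse systems with surjective transition maps, so Lemma~\ref{brumercont} gives $V^\vee \wotimes_\cO W^\vee = \varprojlim_{i,j} V_i^\vee \wotimes_\cO W_j^\vee$, and dualizing back, $(V^\vee \wotimes_\cO W^\vee)^\vee = \varinjlim_{i,j} (V_i^\vee \wotimes_\cO W_j^\vee)^\vee$. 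As a directed union of locally finite representations is locally finite, this reduces the first assertion to the case that $V$ and $W$ have finite length, with $W$ admissible (and hence with all Jordan--H\"older factors admissible).

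Next I would d\'evissage in both variables. Given a short exact sequence $0 \to V' \to V \to V'' \to 0$ in $\Mod_G^{sm}(\cO)$, Pontryagin duality produces $0 \to (V'')^\vee \to V^\vee \to (V')^\vee \to 0$, and applying the left-exact functor $\Hom_\cO^{cts}(-,W) = (- \wotimes_\cO W^\vee)^\vee$ exhibits $(V^\vee \wotimes_\cO W^\vee)^\vee$ as an extension of a subrepresentation of $((V'')^\vee \wotimes_\cO W^\vee)^\vee$ by $((V')^\vee \wotimes_\cO W^\vee)^\vee$; the analogue in the $W$-variable follows from left-exactness of $\Hom_\cO^{cts}(V^\vee,-)$. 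Since locally finite representations form a Serre subcategory of $\Mod_{G\times H}^{sm}(\cO)$ (in particular closed under subobjects and extensions), a double induction on the lengths of $V$ and $W$ reduces the local finiteness claim to the case that $V$ and $W$ are irreducible with $W$ admissible --- i.e.\ to the second assertion, because an irreducible representation is itself locally finite.

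For the second assertion I would first note that $W$ (being admissible and irreducible) is killed by $\varpi$: otherwise $\varpi W = W$ by irreducibility, and as $W$ has no $\varpi$-torsion, any $\varpi$-preimage of a $K_2$-fixed vector is $K_2$-fixed, so the finitely generated $\cO$-module $W^{K_2}$ satisfies $W^{K_2} = \varpi W^{K_2}$ and vanishes by Nakayama, contradicting smoothness --- and the same argument shows every irreducible smooth $\cO$-representation is a $k$-representation. Thus $V^\vee = \Hom_k(V,k)$, $W^\vee = \Hom_k(W,k)$, and $\wotimes_\cO$ agrees with $\wotimes_k$ here. Writing $V^\vee = \varprojlim_U \Hom_k(U,k)$ over the finite-dimensional subspaces $U \subseteq V$, Lemma~\ref{lem: tensor dual Brumer} and the biduality $(W^\vee)^\vee = W$ give $(V^\vee \wotimes_k W^\vee)^\vee = \Hom_k^{cts}(V^\vee, W) = \varinjlim_U \Hom_k(\Hom_k(U,k),W) = \varinjlim_U (U \otimes_k W) = V \otimes_k W$, compatibly with the $G \times H$-action. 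Finally I would show $V \otimes_k W$ is irreducible: for a nonzero $G\times H$-subrepresentation $Z$, choose $0 \ne z = \sum_{i=1}^n v_i \otimes w_i \in Z$ with $n$ minimal (so the $v_i$, and the $w_i$, are $k$-linearly independent), and use Jacobson density for $k[G]$ acting on $V$, together with smoothness and irreducibility of $W$ as a $k[H]$-module, to force $Z = V \otimes_k W$; when $\End_{k[G]}(V) \neq k$ one first reduces to the absolutely irreducible case by extending scalars along this division algebra, or simply enlarges $k$ as elsewhere in the paper, and admissibility of $W$ is used to keep the relevant spaces of invariants finite dimensional. In particular $V\otimes_k W$ is locally finite, finishing the first assertion too. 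I expect the main obstacle to be the d\'evissage step --- specifically, keeping the bookkeeping between completed tensor products and Pontryagin duality straight so that the left-exactness being invoked is genuinely available --- rather than the final irreducibility computation, which is classical.
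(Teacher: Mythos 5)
Your proposal is correct, and its overall architecture coincides with the paper's: reduce via Lemma~\ref{brumercont} to the case of finite length $V$ and $W$, then by d\'evissage to the irreducible case, identify $(V^\vee\wotimes_\cO W^\vee)^\vee$ with $V\otimes_k W$, and finally prove irreducibility of the external tensor product. The one step where you genuinely diverge is the last one. The paper identifies $\Hom_G(V,V\otimes_k W)$ with $W$ using Schur's lemma for the admissible factor (after extending scalars to make it absolutely irreducible), notes that $V\otimes_k W$ is $V$-isotypic as a $G$-representation so that $\Hom_G(V,U)\neq 0$ for any nonzero subrepresentation $U$, and concludes that $\Hom_G(V,U)$ is all of $W$. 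You instead take an element of minimal tensor length in $U$ and apply Jacobson density to $k[G]$ acting on $V$. The two arguments need the same inputs (Schur's lemma for the admissible irreducible factor, hence control of $\End_{k[G]}(V)$), and are of comparable difficulty; the paper's version avoids manipulating individual tensors, while yours is the more classical module-theoretic argument.

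Two small points to tighten. First, your d\'evissage invokes closure of locally finite representations under extensions; this is the one closure property that is not formal (subobjects, quotients and direct limits are easy), and it is not needed: since the base case of the induction produces an irreducible representation, the induction in fact shows that $(V^\vee\wotimes_\cO W^\vee)^\vee$ has finite length (at most $\ell(V)\ell(W)$) whenever $V$ and $W$ do, and finite length objects are trivially closed under subobjects and extensions. Second, in the density argument the admissibility hypothesis is used on the factor to which Schur's lemma and density are applied --- that is, on $V$, not on $W$ as your parenthetical suggests; since the hypothesis is symmetric in $V$ and $W$, simply relabel so that the admissible factor is the one acted on by $k[G]$.
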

\begin{proof}
	Let $M = V^\vee$ and $N = W^\vee$. Since $V$ and $W$ are locally finite, we 
	can write $M = 
	\invlim_i 
	M_i$ and $N = \invlim_j N_j$ where the $M_i^\vee$ and $N_j^\vee$ are finite 
	length and the transition maps in the inverse systems are surjective. It 
	follows from Lemma \ref{brumercont} that it suffices to prove the Lemma 
	under the additional assumption that $V$ and $W$ are finite 
	length. By induction on the length, we can assume that $V$ and $W$ are 
	irreducible admissible. In this case (since $V$ and 
	$W$ are killed by $\varpi$), $\Hom_\cO^{cts}(M,N^\vee) = 
	\dirlim_U\Hom_k(M/U,N^\vee) = V \otimes_k W$, where $U$ runs over 
	open submodules of $M$, and the first equality follows from Lemma 
	\ref{brumercont}. 
	
	Now it remains to show that if $V$ and $W$ are irreducible admissible then 
	$V \otimes_k W$ has finite length, and if moreover $V$ and $W$ are 
	absolutely irreducible then $V \otimes_k W$ is absolutely irreducible. By 
	extending scalars to a finite extension of $k$ over which both $V$ and $W$ 
	are direct sums of absolutely irreducible representations, we can 
	reduce to the case where $V$ and $W$ are absolutely irreducible  
	(descending back, 
	we see that $V \otimes_k W$ is a finite direct sum of irreducibles which 
	can be obtained by Galois descent from a direct sum of absolutely 
	irreducible represnetations in the extension of scalars).

	We have 
	\[\Hom_G(V,V\otimes_k 
	W) = \Hom_G(V,V)\otimes_kW\] since $V$ has finite length. By Schur's 
	lemma we can identify 
	$\Hom_G(V,V\otimes_k W)$ with $W$.  
	
	Suppose $U \subset V \otimes_k W$ is a nonzero $G\times 
	H$-subrepresentation. Then 
	$\Hom_G(V,U)$ is an $H$-subrepresentation of $\Hom_G(V,V\otimes_k W) 
	= W$. Since $V\otimes_k W$ is locally finite as a $G$-representation, with 
	every 
	simple submodule isomorphic to $V$, we have $\Hom_G(V,U) \ne 0$ and 
	therefore $\Hom_G(V,U) = W$. This says that for all $w \in W$, the map $v 
	\mapsto v\otimes w$ lies in $\Hom_G(V,U)$. In other words, $v\otimes w \in 
	U$ for all $v \in V, w\in W$. So $U = V\otimes W$. The same argument 
	applies after any extension of scalars $k'/k$, so we deduce that 
	$V\otimes_k W$ is absolutely irreducible.
\end{proof}

\begin{alemma}\label{simpleprodweak}
	Let $G, H$ be $p$-adic analytic groups. Suppose that both $G$ and $H$ have the 
	property that locally admissible representations are locally finite. 
	Let $X$ be an admissible absolutely irreducible 
	object of $\Mod_{G\times 
		H}^{sm}(k)$. Then there is a finite extension $k'/k$ such that the 
		extension of scalars $X_{k'} \in \Mod_{G\times 
		H}^{sm}(k')$
	is isomorphic to $V\otimes_{k'} W$, for some admissible absolutely 
	irreducible representations
	$V 
	\in\Mod_{G}^{sm}(k')$ 
	and $W \in \Mod_{H}^{sm}(k')$.
\end{alemma}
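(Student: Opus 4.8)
The plan is to adapt the classical proof that a simple module over a tensor product $A\otimes_k B$ is an external tensor product of simple modules; the two points requiring care are a preliminary reduction to $\varpi$-torsion coefficients and the use of the local finiteness hypothesis to manufacture simple subobjects after restricting to one factor.

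First I would reduce to the case that $X$ is killed by $\varpi$. The submodule $\varpi X$ is $G\times H$-stable, so it is $0$ or $X$; if $\varpi X=X$ then $\varpi X^\vee=X^\vee$, and since $X^\vee$ is finitely generated over the pseudocompact (hence $\varpi$-adically complete) ring $\cO[[K_0^G\times K_0^H]]$, Nakayama's lemma forces $X=0$. Hence $X[\varpi]$ is a nonzero $G\times H$-subobject, so $X=X[\varpi]$ and we may regard $X$ as a simple admissible object of $\Mod_{G\times H}^{sm}(k)$.

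The heart of the argument is to show that $X|_G$ is semisimple and isotypic. Given $0\ne v\in X$, I would choose a compact open $K_2\le H$ fixing $v$; then $v\in X^{K_2}$, which is a $G$-subrepresentation of $X|_G$ that is moreover \emph{admissible} over $G$, because $(X^{K_2})^\vee$ is a quotient of $X^\vee$ that is finitely generated over $\cO[[K_0^G\times K_2]]/I(K_2)=\cO[[K_0^G]]$. By the hypothesis on $G$, $X^{K_2}$ is locally finite, hence contains a simple $G$-subobject; so $X|_G$ contains a simple subobject $M$, necessarily admissible as a subobject of $X^{K_2}$. The $M$-isotypic part $X_M\subseteq X$ is $G$-stable, and it is also $H$-stable, since for $h\in H$ the map $x\mapsto hx$ is a $G$-equivariant automorphism of $X$ (the actions of $G$ and $H$ commute in $G\times H$) and so carries each $G$-subobject isomorphic to $M$ to another such. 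Thus $X_M$ is a nonzero $G\times H$-subobject, whence $X_M=X$ by simplicity, and $X|_G\cong M^{\oplus I}$ for some set $I$.

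Next, setting $D=\End_G(M)$ (a division ring) and $N=\Hom_G(M,X)$ — with its left $H$-action by post-composition and right $D$-action by pre-composition — I would use that $M$ is cyclic, hence finitely generated as a $G$-representation, to see that the evaluation map $M\otimes_D N\to X$ is an isomorphism of $G\times H$-representations. As before, $N$ is a smooth $H$-representation embedding $H$-equivariantly into $X^{K_3}$ (via $f\mapsto f(m_0)$ for a generator $m_0$ of $M$ with open stabiliser $K_3\le G$), hence admissible over $H$; by the hypothesis on $H$ it is locally finite and so has a simple, admissible, $H$-subobject $W$. Since the $D$-action on $N$ is $H$-equivariant, the $W$-isotypic part $N_W$ of $N|_H$ is $D$-stable, so $M\otimes_D N_W$ is a nonzero $G\times H$-subobject of $X$, forcing $N_W=N$; thus $N|_H$ is $W$-isotypic. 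It remains to identify $X\cong M\otimes_D N$ with an external tensor product $V\otimes W$ of admissible simple representations. When $\End_G(M)=k$ this is immediate: $N$ is then itself simple over $H$ (any nonzero $H$-subobject $N'$ gives $M\otimes_k N'=X$, so $N'=N$), and one takes $V=M$, $W=N$. The general case follows from this one after enlarging the coefficient field so that $M$ becomes absolutely irreducible, exactly as in the proof of Lemma~\ref{tensorissimple}.

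I expect the main obstacle to be this final identification — the Schur-type bookkeeping needed to pass from the intrinsic description $X\cong M\otimes_{\End_G(M)}N$ to a genuine external tensor product — together with the input used repeatedly above, namely that invariants under a compact open subgroup of the other factor turn an admissible $G\times H$-representation into an admissible representation of a single factor, which is precisely what makes the local finiteness hypothesis applicable.
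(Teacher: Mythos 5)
Your argument is correct in substance but follows a genuinely different, more classical route than the paper. The paper's proof is much shorter: it observes that $X$ is locally admissible over $G$ (via $K_2$-invariants for $K_2\subset H$ compact open, exactly as in your second paragraph), extracts a simple admissible $V$ with $\Hom_G(V,X)\neq 0$, shows $\Hom_G(V,X)$ is an admissible $H$-representation (using $\Hom_G(V,X)^{K_2}=\Hom_G(V,X^{K_2})$ rather than your embedding $f\mapsto f(m_0)$ into $X^{K_3}$), extracts a simple admissible $W\hookrightarrow\Hom_G(V,X)$, and then concludes from the resulting nonzero map $V\otimes_k W\to X$ together with Lemma~\ref{tensorissimple}: since source and target are both irreducible, the map is an isomorphism. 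You instead run the full double-centralizer argument: isotypic decomposition of $X|_G$, the multiplicity space $N=\Hom_G(M,X)$ over $D=\End_G(M)$, and the evaluation isomorphism $M\otimes_D N\to X$. This buys you more (the isotypy of $X|_G$ and of $N|_H$, and a clean identification of exactly where a Schur-type hypothesis enters), at the cost of length; the paper's version outsources all of the work to the prior irreducibility lemma. Your preliminary reduction to $\varpi$-torsion is not in the paper but is harmless and correct.

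The one soft spot is your final sentence handling $D\neq k$. Enlarging the coefficient field gives a decomposition of $X\otimes_k k'$ as an external tensor product over $k'$, but the lemma asserts a decomposition of $X$ itself over $k$, and this descent can genuinely fail: already for finite cyclic groups one can have a simple $k[G\times H]$-module on which both factors act nontrivially but whose dimension is too small to be any $V\otimes_k W$ (take $G=H=\ZZ/3$, $k=\FF_2$, and a $2$-dimensional simple constituent of $\FF_4\otimes_{\FF_2}\FF_4$). So the statement in full generality requires an absolute irreducibility (or $\End=k$) hypothesis somewhere. The paper buries the same issue inside the proof of Lemma~\ref{tensorissimple} (``by extending scalars\dots we just do this for simplicity of exposition''), and in the intended application $k$ is taken large enough that all the relevant simples are absolutely irreducible, so your proof is at the same level of rigor as the paper's; but you should not present the reduction to the case $\End_G(M)=k$ as a formal consequence of base change, since it is precisely the step that can fail.
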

\begin{proof}
	Since $X$ is admissible as a $G \times H$-representation, it is locally 
	admissible as a $G$-representation. Indeed for every $x \in X$ there is a 
	compact 
	open subgroup $K_2 \subset H$ such that $x \in X^{K_2}$, and $X^{K_2}$ is a 
	locally admissible $G$-representation. It follows from 
	our assumptions that $X$ is a locally finite $G$-representation.
	
	So, there is a simple admissible $V 
	\in\Mod_{G}^{sm}(\cO)$ with $\Hom_G(V,X) \ne 0$. The $H$-representation 
	$\Hom_G(V,X)$ is admissible, and hence locally finite. Indeed, if $K_2 
	\subset H$ is compact open, then $X^{K_2}$ is an admissible 
	$G$-representation and $\Hom_G(V,X)^{K_2} = \Hom_G(V,X^{K_2})$ is a 
	finitely generated $\cO$-module by \cite[Lem.~2.3.10]{emordone}. We 
	conclude that there is a simple admissible $W 
	\in \Mod_{H}^{sm}(\cO)$ with a injective $H$-linear map $W \rightarrow 
	\Hom_G(V,X)$. It follows that we have a non-zero $G\times H$-linear map 
	$V\otimes_k W 
	\rightarrow X$. There is a finite extension $k'/k$ such that the extensions 
	of scalars $V_{k'}$ and $W_{k'}$ are direct sums of absolutely irreducible 
	representations. By Lemma \ref{tensorissimple}, $X_{k'}$ is isomorphic to 
	the tensor product of two of these absolutely irreducible representations.
\end{proof}
\begin{alemma}\label{locfinlocadm}
	Let $G = \prod_{i = 1}^m G_i$, where $G_i = \PGL_2(\Q_p)$. Let $V \in 
	\Mod_G^{sm}(\cO)$ be admissible and finitely generated over 
	$\cO[G]$. Then $V$ is of finite length. In particular, locally admissible 
	$G$-representations are locally finite.
	
	If $V$ is absolutely irreducible as a $G$-representation, there 
	is a finite extension $k'/k$ such that $V_{k'}$ is 
	isomorphic to $\otimes_{i=1}^m V_i$, where the $V_i$ are absolutely 
	irreducible 
	$G_i$-representations over $k'$.
\end{alemma}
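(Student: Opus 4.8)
Looking at Lemma~\ref{locfinlocadm}, I need to prove that for $G = \prod_{i=1}^m G_i$ with $G_i = \PGL_2(\Q_p)$, an admissible $V \in \Mod_G^{sm}(\cO)$ finitely generated over $\cO[G]$ has finite length, and deduce that locally admissible representations are locally finite; moreover an irreducible admissible $V$ decomposes as a tensor product of irreducible $G_i$-representations.

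\textbf{Proof proposal.}

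The plan is to induct on $m$, the key input being the known case $m=1$: for $G_1 = \PGL_2(\Q_p)$, an admissible smooth $\cO[G_1]$-representation which is finitely generated over $\cO[G_1]$ has finite length. (This is the analogue for $\PGL_2(\Q_p)$ of the finite-length results of Emerton and of Pa{\v{s}}k{\=u}nas; it follows for $\PGL_2$ by the same arguments as for $\GL_2(\Q_p)$, or by passing through the central character.) First I would treat the base case $m=1$ by citing this. For the inductive step, write $G = G' \times G_m$ with $G' = \prod_{i<m} G_i$, and suppose the statement holds for $G'$. Let $V$ be admissible and finitely generated over $\cO[G]$. Choosing a compact open $K_m \subset G_m$, the space $V^{K_m}$ is an admissible $G'$-representation (this is part of the admissibility bookkeeping, using that $V$ is admissible over $K_0 = K' \times K_m$), and it is finitely generated over $\cO[G']$ since $V$ is finitely generated over $\cO[G]$ and $V = \cO[G_m]\cdot V^{K_m}$ for $K_m$ small enough; hence by the inductive hypothesis $V^{K_m}$ has finite length over $G'$. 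Running over a cofinal system of $K_m$, one concludes $V$ is locally finite as a $G'$-representation with only finitely many isomorphism classes of $G'$-subquotient appearing; combined with admissibility over $G_m$ this should force finite length over $G$.

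Alternatively, and perhaps more cleanly, I would extract the decomposition statement first and deduce finite length from it. So: assume $V$ is irreducible and admissible over $G = G' \times G_m$. As in the proof of Lemma~\ref{simpleprodweak}, $V$ is locally admissible — hence (by the inductive hypothesis applied to $G'$, which gives that locally admissible $G'$-representations are locally finite) locally finite — as a $G'$-representation. Therefore there is an irreducible admissible $G'$-representation $V'$ with $\Hom_{G'}(V', V) \neq 0$; this Hom-space is an admissible $G_m$-representation (by \cite[Lem.~2.3.10]{emordone}, exactly as in Lemma~\ref{simpleprodweak}), and again locally admissible, hence it contains an irreducible admissible $G_m$-subrepresentation $V_m$. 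Then Lemma~\ref{tensorissimple} shows $V' \otimes_k V_m$ is irreducible, and the induced nonzero $G$-map $V' \otimes_k V_m \to V$ is an isomorphism. Applying the inductive hypothesis once more to decompose $V' = \otimes_{i<m} V_i$ gives $V \cong \otimes_{i=1}^m V_i$ with each $V_i$ irreducible; each $V_i$ is automatically admissible (it is a quotient of $V$ restricted appropriately, or: an irreducible constituent of an admissible representation of $\PGL_2(\Q_p)$, hence admissible by the $m=1$ theory).

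For the general (not necessarily irreducible) finite-length claim: given $V$ admissible and finitely generated over $\cO[G]$, I would argue that it has a nonzero irreducible admissible sub or quotient and induct on a suitable invariant. Concretely, $V$ has an irreducible $G$-subquotient (using local admissibility plus the decomposition just proved to see irreducible pieces exist and are tensor products of admissibles, hence admissible), and one pushes through a dévissage: pass to $V^{K_m}$, which has finite length $\ell$ over $G'$; I claim the length of $V$ over $G$ is bounded in terms of $\ell$ and the lengths of the $G_m$-representations $\Hom_{G'}(V_i', V)$, each of which is admissible over $G_m$ and finitely generated, hence of finite length by the $m=1$ case. Assembling a composition series of $V$ compatible with the $G'$-socle filtration then yields finite length over $G$. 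The final assertion — locally admissible implies locally finite over $G$ — is then immediate: any $v$ in a locally admissible $V$ generates an admissible (hence finitely generated) subrepresentation, which by what we just proved has finite length.

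The main obstacle I anticipate is the bookkeeping in the general finite-length dévissage: making precise that finite length over $G'$ of $V^{K_m}$ (for all small $K_m$) together with admissibility over $G_m$ forces finite length over $G$, rather than merely local finiteness. The clean way around this is to first nail down the irreducible decomposition (as above) — which is where Lemmas~\ref{tensorissimple} and~\ref{simpleprodweak} do the real work — and then observe that an admissible, finitely generated $V$ has, by $\Hom_{G'}$ into it from the finitely many irreducible $G'$-types of $V^{K_m}$, only finitely many $G$-composition factors, each admissible; finite generation then caps the multiplicities, giving finite length. Everything else is a routine induction feeding on the established $\PGL_2(\Q_p)$ theory.
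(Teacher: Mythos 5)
Your proposal for the tensor decomposition of an irreducible $V$ is essentially the argument the paper uses (repeated application of Lemma~\ref{simpleprodweak}, which rests on Lemma~\ref{tensorissimple}), and your deduction of ``locally admissible $\Rightarrow$ locally finite'' from the finite length statement is also correct. But the core of the lemma is the finite length claim itself, and there your induction on $m$ has two genuine gaps. First, the assertion that $V^{K_m}$ is finitely generated over $\cO[G']$ is not justified: from $V=\cO[G]\cdot\{v_j\}$ you get $\cO[G']\cdot\{v_j\}\subseteq V^{K_m}$, but $V^{K_m}$ is the $K_m$-invariants of a sum $\sum_{g\in G'} g\,(\cO[G_m]\cdot\{v_j\})$, and taking invariants does not commute with that sum; so you cannot feed $V^{K_m}$ into the inductive hypothesis (which needs admissible \emph{and} finitely generated). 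Second, and more seriously, the final step --- ``finite generation then caps the multiplicities, giving finite length'' --- is exactly the hard point and is asserted rather than proved. An admissible, finitely generated smooth representation whose irreducible subquotients lie in a fixed finite set of admissible irreducibles is not \emph{formally} of finite length; one needs an actual argument bounding the length, and you explicitly flag this as an anticipated obstacle without resolving it.

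The paper closes this gap by a different route, adapting the proof of \cite[Thm.~2.3.8]{emordone} directly to the product group rather than inducting on $m$: a finitely generated admissible $V$ is a quotient of finitely many compact inductions $\cInd_{K_0}^G W$ with $W=\otimes_i W_i$ irreducible over $K_0=\prod_i\PGL_2(\Z_p)$; the space $\Hom_{k[G]}(\cInd_{K_0}^G W,V)$ is finite-dimensional over the Hecke algebra $\cH(W)\cong\otimes_i\cH_i(W_i)\cong k[T_1,\dots,T_m]$ (Barthel--Livn\'e); so one reduces to showing that $\otimes_{i=1}^m\cInd_{\PGL_2(\Z_p)}^{G_i}W_i/(T_i-\lambda_i)$ has finite length, which follows from Lemma~\ref{tensorissimple} together with the explicit classification results of Barthel--Livn\'e and Breuil for $\GL_2(\Q_p)$. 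This Hecke-module presentation is the ingredient your argument is missing; without it (or some substitute) the dévissage in your last paragraph does not go through.
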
\begin{proof}
	Let $K_0 = \prod_{i=1}^m \PGL_2(\Z_p)$. 
	Following the argument of \cite[Thm.~2.3.8]{emordone}, it suffices to show 
	that every admissible quotient $V$ of $\cInd_{K_0}^GW$ is of finite length, 
	where $W$ is a finite dimensional absolutely irreducible representation of 
	$K_0$ over $k$. After extending scalars if necessary, $W$ decomposes as a 
	tensor product $W = \otimes_{i=1}^m 
	W_i$ 
	of representations of $PGL_2(\Z_p)$. As in \emph{loc.~cit.}~we consider 
	$\Hom_{k[G]}(\cInd_{K_0}^GW,V)$ which is a finite dimensional $k$-vector 
	space and a module over $\cH(W):= \End_{k[G]}(\cInd_{K_0}^GW)$. We have a 
	surjective map 	\[\Hom_{k[G]}(\cInd_{K_0}^GW,V) \otimes_{\cH(W)} 
	\cInd_{K_0}^GW \rightarrow V.\]
	
	The Hecke algebra $\cH(W)$ is isomorphic to the convolution algebra of 
	compactly supported functions $f: G \rightarrow \End_k(W)$ such that 
	$f(h_1g h_2) = h_1\circ f(g) \circ h_2$ for all $h_1,h_2 \in K_0$ and $g 
	\in 
	G$. With this description, one can show that \[\cH(W) \cong 
	\otimes_{i=1}^m\cH_i(W_i)\] where $\cH_i(W_i) = 
	\End_{k[G_i]}(\cInd_{\PGL_2(\Z_p)}^{G_i}W_i)$. By 
	\cite[Prop.~8]{barthel-livne}, we  have $\cH_i(W_i) \cong k[T_i]$ 
	and therefore we have $\cH(W)\cong k[T_1,\ldots,T_m]$. 
	
	Now it suffices to show that \[X \otimes_{\cH(W)} 
	\cInd_{K_0}^GW\] is of finite length, where $X$ is a finite dimensional 
	$\cH(W)$-module. By induction on the dimension of $X$, extending scalars if 
	necessary, we may assume that $X 
	\cong 
	\cH(W)/(T_1-\lambda_1,\ldots,T_m-\lambda_m)$, with $\lambda_i \in k$. 
	
	Since $\cInd_{K_0}^GW \cong \otimes_{i=1}^m 
	\cInd_{\PGL_2(\Z_p)}^{G_i}W_i$ we need to show 
	that 
	\[\otimes_{i=1}^m 
	\cInd_{\PGL_2(\Z_p)}^{G_i}W_i/(T_i - \lambda_i)\] has finite length, 
	which follows from Lemma \ref{tensorissimple} and the results of 
	\cite{barthel-livne,breuil1}.
	
	Finally, we repeatedly apply Lemma \ref{simpleprodweak} to show that if $V$ 
	is absolutely irreducible it factors as a tensor product after an extension 
	of scalars.
\end{proof}

\begin{alemma}\label{lem: tensor products projective envelopes}
	Let $G = \prod_{i = 1}^m G_i$, where $G_i = \PGL_2(\Q_p)$. Let $V = 
	\otimes_{i=1}^m V_i$ be an absolutely irreducible admissible representation 
	of $G$ (which factorises as shown). 
	Let $V_i \hookrightarrow I_i$, $i = 1,\ldots m$ be injective envelopes of 
	$V_i$ in 
	$\Mod^{\mathrm{loc~adm}}_{G_i}(\cO)$ (the category of locally admissible 
	representations). Dually, set $M_i = V_i^\vee$ 
	and $P_i = I_i^\vee$. 
	
	Then $\wotimes_{i=1}^m P_i \rightarrow \wotimes_{i=1}^mM_i$ is a 
	projective 
	envelope in $\mathfrak{C}_{G}(\cO)$ (see Definition~\ref{defn: gothic C 
	category}).\end{alemma}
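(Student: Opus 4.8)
The plan is to establish the two defining properties of a projective envelope (= projective cover): first, that $Q := P_1 \wotimes_\cO \cdots \wotimes_\cO P_m$ is a projective object of $\mathfrak{C}_G(\cO)$; second, that the kernel of the given surjection $Q \twoheadrightarrow \wotimes_{i}M_i$ is a superfluous subobject (equivalently, that $Q \twoheadrightarrow \wotimes_i M_i$ is the cosocle quotient). I would work throughout on the dual side, via the anti-equivalence of $\mathfrak{C}_H(\cO)$ with the category $\Mod^{\mathrm{loc~adm}}_H(\cO)$ of locally admissible representations (Definition~\ref{defn: gothic C category}). Writing $I := (P_1 \wotimes_\cO \cdots \wotimes_\cO P_m)^\vee = I_1 \boxtimes \cdots \boxtimes I_m$ for the external tensor product, Lemma~\ref{tensorisadm} (iterated) shows that $I$ is locally admissible and that $Q = I^\vee$, and the surjection $Q \to \wotimes_i M_i$ dualises to an injection $V_1 \boxtimes \cdots \boxtimes V_m \hookrightarrow I$. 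So it suffices to prove that $I$ is injective in $\Mod^{\mathrm{loc~adm}}_G(\cO)$ and that $V_1 \boxtimes \cdots \boxtimes V_m \hookrightarrow I$ is an essential extension; together these say that this map is an injective envelope, which is precisely the dual of the desired statement.

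The essential extension is the easy half. First I would note that every nonzero subobject of $I$ contains a simple subobject, which by Lemma~\ref{simpleprodweak} (applicable because, by Lemma~\ref{locfinlocadm}, locally admissible representations of each $G_i = \PGL_2(\Qp)$ are locally finite) has the form $W_1 \boxtimes \cdots \boxtimes W_m$ with each $W_i$ simple. Running the Schur's-lemma computation from the proof of Lemma~\ref{tensorissimple}, one obtains $\Hom_G(W_1\boxtimes\cdots\boxtimes W_m, I) \cong \bigotimes_{i=1}^m \Hom_{G_i}(W_i,I_i)$; since $I_i$ is an injective envelope of the simple $V_i$ we have $\operatorname{soc}(I_i) = V_i$, so this Hom space is nonzero only when each $W_i \cong V_i$, in which case it is one-dimensional. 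Hence $\operatorname{soc}(I) = V_1 \boxtimes \cdots \boxtimes V_m$, a single simple object (again by Lemmas~\ref{tensorissimple} and~\ref{locfinlocadm}), and it is the image of our embedding; so every nonzero subobject of $I$ meets that image, i.e.\ the extension is essential.

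The real content is the injectivity of $I$. Here I would use the description (following~\cite{emordone}) of the injective objects of $\Mod^{\mathrm{loc~adm}}_{G_i}(\cO)$ as the direct summands of objects $(\Ind_{K_i}^{G_i}J_i)^{\mathrm{loc~adm}}$, where $J_i$ is an injective object of $\Mod^{sm}_{K_i}(\cO)$ (equivalently, $J_i^\vee$ is a projective pseudocompact $\cO[[K_i]]$-module); since direct summands of injectives are injective, it is enough to treat $I_i = (\Ind_{K_i}^{G_i}J_i)^{\mathrm{loc~adm}}$. One then checks that the external tensor product commutes with smooth induction and with the locally-admissible-part functor (the second using Lemma~\ref{tensorisadm}), giving $I \cong \big(\Ind_{\prod_i K_i}^{\prod_i G_i}(J_1 \boxtimes \cdots \boxtimes J_m)\big)^{\mathrm{loc~adm}}$. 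Now $J_1 \boxtimes \cdots \boxtimes J_m$ is injective in $\Mod^{sm}_{\prod_i K_i}(\cO)$ because its Pontryagin dual $J_1^\vee \wotimes_\cO \cdots \wotimes_\cO J_m^\vee$ is projective over $\cO[[\prod_i K_i]] = \cO[[K_1]]\wotimes_\cO\cdots\wotimes_\cO\cO[[K_m]]$, a completed tensor product of projective pseudocompact modules being projective. Finally $\Ind_{\prod_i K_i}^{\prod_i G_i}$ sends injective smooth representations to injective smooth representations (being right adjoint to the exact restriction functor), and the locally-admissible-part functor preserves injectivity (being right adjoint to the exact inclusion $\Mod^{\mathrm{loc~adm}}\hookrightarrow\Mod^{sm}$), so $I$ is injective. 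Dualising, $Q$ is projective and $Q \to \wotimes_i M_i$ is a projective envelope.

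The hard part will be the bookkeeping in this last step: verifying carefully that the completed external tensor product — a priori just a pseudocompact module carrying commuting continuous actions of $\prod_i K_i$ and $\prod_i G_i$ — genuinely commutes with smooth induction, with formation of locally admissible parts, and with passage to direct summands, and likewise the Künneth identity for $\Hom$ used in the essential-extension step. These are all "soft" statements, but they are exactly the kind of manipulation of pseudocompact modules with two compatible group actions that the earlier lemmas of this appendix are designed to support; granting those, the argument above is essentially formal.
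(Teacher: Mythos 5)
Your essentiality half is sound, and it is precisely the Pontryagin dual of what the paper does: local finiteness of $I$ (via Lemmas~\ref{tensorisadm} and~\ref{locfinlocadm}) reduces essentiality to computing $\operatorname{soc}(I)$, the simple objects factor as external tensor products by Lemmas~\ref{simpleprodweak} and~\ref{locfinlocadm}, and your K\"unneth identity for $\Hom$ out of a simple is exactly the displayed computation in the paper's proof (using Lemma~\ref{brumercont} and the finite length of the simple factors).

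The injectivity half is where there is a genuine gap, and it is also where you diverge from the paper. Your argument hinges on the identification $I_1\boxtimes\cdots\boxtimes I_m \cong \bigl(\Ind_{\prod_i K_i}^{G}(J_1\boxtimes\cdots\boxtimes J_m)\bigr)^{\mathrm{loc~adm}}$, i.e.\ on the completed external tensor product commuting both with full smooth induction and with the locally-admissible-part functor. Neither commutation follows from the appendix lemmas: Lemma~\ref{tensorisadm} only says that a box product of locally admissible representations is again locally admissible; it says nothing about $(-)^{\mathrm{loc~adm}}$ applied to a box product of the (very far from admissible) representations $\Ind_{K_i}^{G_i}J_i$, and since your $\boxtimes$ is defined as the dual of $\wotimes_\cO$ of Pontryagin duals, identifying $(\Ind_{K_1}^{G_1}J_1)^\vee\wotimes_\cO(\Ind_{K_2}^{G_2}J_2)^\vee$ with the dual of an induced representation is not formal (Pontryagin duality interchanges products and sums, so $\cInd$ and $\Ind$ behave quite differently under it). A one-sided inclusion would not suffice either: you need $I$ itself to be injective, not merely to embed in an injective. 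So what you describe as ``soft bookkeeping'' is in fact the mathematical core of the claim, and it is unproven. The paper avoids all structural analysis of injectives by proving projectivity of $P=\wotimes_i P_i$ directly, by induction on $m$: the universal property of the completed tensor product gives $\Hom^{cts}_{G_1\times G'}(P_1\wotimes P',M)=\Hom^{cts}_{G_1}(P_1,\Hom^{cts}_{G'}(P',M))$ as in~(\ref{tensorhomident}), and exactness of the right-hand side in $M$ follows from projectivity of $P'$ and of $P_1$ separately. I would either adopt that adjunction argument or supply complete proofs of the two commutation statements; as written, the injectivity of $I$ is not established.
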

\begin{proof}
	First we show that $P:=\wotimes_{i=1}^m P_i$ is projective in 
	$\mathfrak{C}_{G}(\cO)$. Note that it follows from Lemma \ref{tensorisadm} 
	and 
	Lemma \ref{locfinlocadm} that $P^\vee$ is locally admissible and locally 
	finite. 
	Let $M = \wotimes_{i=1}^mM_i \in 
	\mathfrak{C}_{G}(\cO)$. We induct on $m$. Let $P' = \wotimes_{i=2}^mP_i$ 
	and 
	$G' = \prod_{i=2}^{m}G_i$. By the universal property of the completed 
	tensor product we have \anumequation\label{tensorhomident}\Hom_{G_1\times 
	G'}^{cts}(P_1 \wotimes P', M)=   
	\Hom^{cts}_{G_1} (P_1, \Hom^{cts}_{G'}(P', M)),\end{equation} so 
	projectivity of $P$ 
	follows from projectivity of $P'$ and $P_1$.
	
	Now we prove that $P \rightarrow M$ is an essential surjection. Since 
	$P^\vee$ is locally finite, it suffices to show that $M = 
	\mathrm{cosoc}(P)$ (see \cite[Lem.~4.6]{CEGGPSGL2}). Again we proceed by 
	induction on $m$. So we assume that $\mathrm{cosoc}(P') = 
	\wotimes_{i=2}^mM_i$. Let $N \not\cong M$ be a simple object of 
	$\mathfrak{C}_{G}(\cO)$. We want to show that $\Hom_{G}^{cts}(P, N) = 0$. 
	Extending scalars to a field where $N^\vee$ is a direct sum of absolutely 
	irreducible representations, we reduce (using Lemma \ref{locfinlocadm}) to 
	the case where $N^\vee$ is absolutely irreducible and we have a 
	factorisation $N \cong \wotimes_{i=1}^mN_i$ 
	where the $N_i^\vee$ are absolutely irreducible. Let 
	$N'=
	\wotimes_{i=2}^mN_i$. By (\ref{tensorhomident}), we have 
	\[\Hom_{G}^{cts}(P, N)=   
	\Hom^{cts}_{G_1} (P_1, \Hom^{cts}_{G'}(P', N)).\] As an object of 
	$\mathfrak{C}_{G'}(\cO)$, we have $N = N_1 \wotimes_\cO N' = (\invlim N_1/U) 
	\wotimes_\cO N'$ where the limit runs over open submodules of $N_1$ and so 
	$N_1/U$ is a finite length $\cO$-module. In fact, since $N_1$ is simple, 
	$N_1/U$ is just a finite dimensional $k$-vector space. It follows from 
	Lemma \ref{brumercont} that, in $\mathfrak{C}_{G'}(\cO)$, we have an isomorphism $N 
	\cong \invlim (N_1/U \otimes_\cO N')$ and so we obtain isomorphisms 
	\[\Hom^{cts}_{G'}(P', N) \cong 
	\invlim \Hom^{cts}_{G'}(P', N_1/U \otimes_\cO N') = \invlim 
	\Hom^{cts}_{G'}(P', N')\otimes_\cO N_1/U\]
	
	Applying a similar argument, we conclude that \[\Hom^{cts}_{G}(P, N) \cong 
	\Hom^{cts}_{G_1}(P_1,N_1)\wotimes_\cO\Hom^{cts}_{G'}(P', 
	N').\]
	
We immediately deduce (from our inductive hypothesis) that  
$\Hom^{cts}_{G}(P, N)=0$. On the other hand, the same argument shows that we 
have \[\Hom^{cts}_{G}(P, 
M)=\Hom^{cts}_{G_1}(M_1,M_1)\wotimes_\cO\Hom^{cts}_{G'}(M', 
M') = \Hom^{cts}_{G}(M, 
M) = k.\] We deduce that $\mathrm{cosoc}(P) = M$, as desired.
\end{proof}

\emergencystretch=3em
\printbibliography

\end{document}